\documentclass[reqno]{amsart}
\usepackage[a4paper,hmarginratio=1:1]{geometry}

\usepackage{amsmath}
\usepackage{amssymb}
\usepackage{mathtools}
\usepackage{amsthm}
\usepackage{graphicx}
\usepackage{titlesec}
\usepackage{lipsum}
\usepackage{enumitem}
\usepackage{mathabx}
\usepackage{tikz-cd}
\usepackage{dsfont}
\usepackage{lmodern}
\usepackage{bbm}
\usepackage{adjustbox}
\usepackage{hyperref}
\usepackage{mathrsfs}
\definecolor{dark-red}{rgb}{0.5,0.15,0.15}
\definecolor{dark-blue}{rgb}{0.15,0.15,0.6}
\definecolor{dark-green}{rgb}{0.15,0.6,0.15}
\hypersetup{
    colorlinks, linkcolor=dark-red,
    citecolor=dark-blue, urlcolor=dark-green
}
\usepackage[capitalize]{cleveref}
\newtheorem{theorem}{Theorem}[section]
\newtheorem{lemma}[theorem]{Lemma}
\newtheorem{proposition}[theorem]{Proposition}
\newtheorem{corollary}[theorem]{Corollary}
\theoremstyle{definition}
\newtheorem{definition}[theorem]{Definition}
\newtheorem{example}[theorem]{Example}

\newtheorem{remark}[theorem]{Remark}

\newtheorem{notation}[theorem]{Notation}

\numberwithin{equation}{section}

\newcommand{\gp}{\underline{\textup{GP}}}
\newcommand{\gi}{\overline{\textup{GI}}}

\newcommand{\lhf}{\textup{LH}\mathfrak{F}}
\newcommand{\lhx}{\textup{LH}\mathfrak{X}}
\newcommand{\lhg}{\textup{LH}\mathfrak{G}}

\newcommand{\tacstab}{\widetilde{\textup{Mod}}}

\newcommand{\tachom}{\widetilde{\textup{Hom}}}

\titleformat{\section}
  {\normalfont\normalsize\bfseries}{\thesection.}{1em}{}
  \titleformat{\subsection}
  {\normalfont\normalsize\bfseries}{\thesubsection.}{1em}{}
\titleformat{\subsubsection}
  {\normalfont\normalsize\bfseries}{\thesubsubsection.}{1em}{}
\title[Stable module category]{The stable module category and model structures for hierarchically defined groups}
\author{Gregory Kendall}
\begin{document}
\begin{abstract}
    In this work we construct a compactly generated tensor-triangulated stable category for a large class of infinite groups, including those in Kropholler's hierarchy $\mathrm{LH}\mathfrak{F}$. This can be constructed as the homotopy category of a certain model category structure, which we show is Quillen equivalent to several other model categories, including those constructed by Bravo, Gillespie, and Hovey in their work on stable module categories for general rings. We also investigate the compact objects in this category. In particular, we give a characterisation of those groups of finite global Gorenstein AC-projective dimension such that the trivial representation $\mathbb{Z}$ is compact. 
\end{abstract}
\date{\today}
\subjclass[2020]{Primary: 20C07, Secondary: 18G80, 20C12}
\keywords{stable module category, compact objects, Tate cohomology, hierarchically defined groups}
\maketitle
\section{Introduction}
For finite groups, the stable module category is usually constructed by quotienting out by morphisms which factor through a projective module and it has the structure of a rigidly-compactly generated tensor-triangulated category. For infinite groups, this does not in general even form a triangulated category. In order to construct an appropriate stable category for infinite groups one must decide which properties to generalise. 
\par 
Our main motivation is to construct a compactly generated, tensor-triangulated category for a class of infinite groups including those in Kropholler's hierarchy $\lhf$ \cite{krop}. We work with the class of $\lhg$ and $\textup{LH}\mathfrak{G}_{AC}$ groups, where $\mathfrak{G}$ and $\mathfrak{G}_{AC}$ are the class of groups of finite Gorenstein cohomological dimension and finite global Gorenstein AC-projective dimension respectively. The following is a combination of \Cref{ourbigmodel}, \Cref{ttcatth}, and \Cref{somecge}. 
\begin{theorem}
    Let $k$ be a commutative noetherian ring of finite global dimension and $G$ an $\lhg$ group. Then there is an abelian model structure on $\textup{Mod}(kG)$ such that the homotopy category, which is equivalent to the stable category of Gorenstein projectives $\gp(kG)$, is a tensor-triangulated category. If $G$ is an $\textup{LH}\mathfrak{G}_{AC}$ group, then the homotopy category is compactly generated.
\end{theorem}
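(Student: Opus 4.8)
The plan is to obtain the model structure from Hovey's correspondence between abelian model structures and complete cotorsion pairs, then to extract the triangulated, monoidal and compactness features from the construction; the genuinely new content is a transfinite induction over Kropholler's hierarchy that makes the Gorenstein projective cotorsion pair available over the (generally badly behaved) group ring $kG$.

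\emph{The abelian model structure.} The central step is to show that for an $\lhg$ group $G$ the class $\mathcal{GP}(kG)$ of Gorenstein projective $kG$-modules is the left-hand class of a complete hereditary cotorsion pair $(\mathcal{GP}(kG),\mathcal{W})$ whose right-hand class $\mathcal{W}=\mathcal{GP}(kG)^{\perp}$ is thick and satisfies $\mathcal{GP}(kG)\cap\mathcal{W}=$ the projectives. Granting this, Hovey's theorem (applied to $(\mathcal{GP}(kG),\mathcal{W})$ together with the trivial cotorsion pair of projectives) produces an abelian, projective model structure on $\textup{Mod}(kG)$ with cofibrant objects $\mathcal{GP}(kG)$, trivial objects $\mathcal{W}$, and every object fibrant; its homotopy category is $\mathcal{GP}(kG)$ modulo maps factoring through a projective, i.e.\ $\gp(kG)$. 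To establish the cotorsion pair I would induct over Kropholler's hierarchy. The base case is $\mathfrak{G}$: for a group of finite Gorenstein cohomological dimension $kG$ has finite Gorenstein global dimension (using that $k$ is noetherian of finite global dimension), so $\mathcal{GP}(kG)$ is deconstructible and the cotorsion pair is complete and hereditary with $\mathcal{W}$ thick, by the usual Gorenstein-homological-algebra arguments. For the $\textup{H}$-step one takes a group $G$ acting on a finite-dimensional contractible CW-complex with cell stabilizers in a class for which the statement holds, and assembles, along the augmented cellular chain complex and over the finitely many stabilizer conjugacy classes, Gorenstein projective special precovers of $kG$-modules — a finite homotopy-colimit / Kropholler-type gluing — whence completeness, heredity and thickness descend to $kG$. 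Passing from the bounded hierarchy to $\lhg$ is a filtered-colimit argument via the finitely generated subgroup criterion, using that a cotorsion pair cogenerated by a set is preserved under the colimits involved.

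\emph{Tensor-triangulated structure.} Since the model structure is abelian with all objects fibrant and the syzygy shift $\shift$ is invertible on $\gp(kG)$, the model structure is stable and the homotopy category is triangulated. To make it tensor-triangulated I would equip $\textup{Mod}(kG)$ with $-\otimes_{k}-$ carrying the diagonal $G$-action and unit the trivial module $k$, and verify the pushout--product and unit axioms against the above (trivial) cofibrations. This comes down to two closure statements: $\mathcal{GP}(kG)$ is closed under $-\otimes_{k}-$, and $P\otimes_{k}W\in\mathcal{W}$ whenever $P\in\mathcal{GP}(kG)$ and $W\in\mathcal{W}$. The second follows from the fact that Gorenstein projective $kG$-modules have finite projective dimension over $k$ (a consequence of $\mathrm{gldim}(k)<\infty$), so that $P\otimes_{k}-$ carries the finite $kG$-projective resolution of $W$ to one of $P\otimes_{k}W$ up to a bounded correction. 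The first I would again prove by hierarchy induction, the base case being that the $k$-tensor product of two complete projective resolutions of $kG$-modules over a group of finite Gorenstein cohomological dimension is again a complete projective resolution. One also checks that $k$ is cofibrant, i.e.\ Gorenstein projective over $kG$, which holds along the same induction.

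\emph{Compact generation for $\textup{LH}\mathfrak{G}_{AC}$.} Assume now $G\in\textup{LH}\mathfrak{G}_{AC}$. The cotorsion pairs used above are cogenerated by sets, so the model structure is cofibrantly generated by maps between small modules; hence $\gp(kG)$ has set-indexed coproducts and is generated, triangulatedly, by the cofibrant replacements of the codomains of the generating cofibrations. I would organise a set of compact generators around the induced modules $k[G/H]=\textup{Ind}_{H}^{G}k$ with $H$ a finitely generated subgroup (equivalently, in $\mathfrak{G}_{AC}$), together with their syzygies. For generation, $\loc$ of this set contains $k$, and since every $kG$-module is a filtered colimit of modules induced from finitely generated subgroups, and every group in the bounded part of the hierarchy is assembled from its cell stabilizers as in the model-structure step, a parallel hierarchy induction gives $\loc$ of the set $=\gp(kG)$. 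For compactness, finiteness of the global Gorenstein AC-projective dimension at each finitely generated $H\le G$ makes the syzygy tower of the relevant $kH$-modules stabilise after finitely many steps, so that, via Shapiro's lemma, $\homm$ out of $k[G/H]$ is computed by a bounded complex and therefore commutes with coproducts. Hence the chosen objects are compact and $\gp(kG)$ is a compactly generated tensor-triangulated category.

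\emph{Main obstacle.} The crux is the very first step: completeness, heredity and thickness of the Gorenstein projective cotorsion pair for a general $\lhg$ group, since $kG$ is typically neither coherent nor of finite global dimension and the standard Gorenstein-ring technology does not apply directly. Everything depends on setting up the Kropholler-hierarchy induction so that each step — above all the passage through a group acting on a finite-dimensional contractible complex — transports completeness of the cotorsion pair and closure of $\mathcal{GP}(kG)$ under $\otimes_{k}$. For the final statement the delicate point is the compactness of $k[G/H]$, which is precisely where finite global Gorenstein AC-projective dimension, rather than mere membership in $\lhg$, is needed.
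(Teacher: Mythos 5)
Your strategy for the model structure itself---Hovey's correspondence plus a transfinite induction over Kropholler's hierarchy showing the Gorenstein projective cotorsion pair is cogenerated by a set---is the paper's strategy, although the hierarchy imposes no bound on the number of cell orbits, so there are not ``finitely many stabilizer conjugacy classes''; the actual mechanism is to cogenerate by $\bigcup_H\mathcal{X}_H{\uparrow}_H^G$ and check orthogonality via the cellular chain complex, Eckmann--Shapiro and dimension shifting, which requires the nontrivial input (\Cref{resgp1}) that restriction to $\lhg$ subgroups preserves Gorenstein projectivity. The first genuine gap is in your unit axiom: you deduce $P\otimes_k W\in\mathcal{W}$ by tensoring ``the finite $kG$-projective resolution of $W$''. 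For a general $\lhg$ group, modules in $\mathcal{W}=\mathcal{GP}^{\perp}$ need \emph{not} have finite projective dimension over $kG$; that characterisation (\Cref{fpdext}) is only valid when $\textup{Gcd}_k(G)<\infty$. Indeed, for an $\lhf$ group not of type $\Phi$ there is a module of infinite projective dimension whose restrictions to all finite subgroups have finite projective dimension, and \Cref{corfsub} places it in $\mathcal{GP}^{\perp}$. The repair is to restrict: $(P\otimes W){\downarrow}_H^G$ has finite projective dimension for every $\mathfrak{G}$-subgroup $H$, where \Cref{fpdext} does apply, and then \Cref{corfsub} gives $P\otimes W\in\mathcal{GP}^{\perp}$; this is how the paper treats the kernel of the special precover of $k$.

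The second, more serious gap is the compact generation. Your proposed generators $k{\uparrow}_H^G$ for finitely generated $H$ are not compact in general: by adjunction, stable maps out of (a cofibrant replacement of) $k{\uparrow}_H^G$ are computed by $\tachom_{kH}(k,-{\downarrow}_H^G)$, i.e.\ by the complete cohomology of $H$, and this fails to commute with coproducts even for finitely presented groups of type $\Phi$ (Houghton's groups, \Cref{houghtonex}; or $\bigast_n C_n$ via \Cref{boundlem}). Finiteness of $\textup{glGPD}_{AC}(kH)$ bounds the \emph{length} of a Gorenstein projective resolution, not the size of the complete resolution in each degree, so your ``bounded complex'' argument does not yield commutation with coproducts. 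Moreover, exhibiting every module as a filtered colimit of induced modules does not place it in the localising subcategory generated by them. The paper's route is different and is the one that works: for $\textup{LH}\mathfrak{G}_{AC}$ groups the cotorsion pair is of finite type (\Cref{fintypec}), the $\textup{FP}_{\infty}$ Gorenstein projectives are compact (\Cref{fhsao}, \Cref{idcomps}), and a module orthogonal to the cogenerating set lies in $\mathcal{GP}^{\perp}$, hence is zero in $\tacstab(kG)$.
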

We note that the difficult part of showing the tensor-triangulated structure is in showing that there exists a tensor unit. We do this by showing the existence of Gorenstein projective special precovers for $\lhg$ groups, see \Cref{induc}. For a general ring it is an open question if such special precovers have to exist, although this has been shown in the affirmative by Cortés-Izurdiaga and \v{S}aroch \cite{cortésizurdiaga2023cotorsion} given an additional large cardinal assumption.
\par 
We make a few remarks on the classes of groups with which we work. Every $\lhf$ group is also an $\lhg$ group, but it is unknown in general whether there exist any $\lhg$ groups which are not in $\lhf$. However, it has been conjectured by Bahlekeh, Dembegioti, and Talelli \cite[Conjecture~3.5]{ADTgor} that groups of finite Gorenstein cohomological dimension are in $\textup{H}_1\mathfrak{F}$. The truth of this conjecture would imply that the classes $\lhg$ and $\lhf$ coincide, although it is still open to the best of our knowledge. 
\par
We also have that every group of type $\Phi_k$ \cite{talphi} is an $\textup{LH}\mathfrak{G}_{AC}$, and hence an $\lhg$, group. Mazza and Symonds \cite{MSstabcat} construct a stable category for groups of type $\Phi_k$ and as one would hope it agrees with our stable category in this case. 
\par 
The Mazza-Symonds stable category can be constructed by formally inverting the syzygy functor via the complete cohomology of Benson and Carlson \cite{BCtate}. As mentioned in the introduction to \cite{MSstabcat}, this is the largest quotient category on which the syzygy functor is well defined and invertible. For more general $\lhg$ groups, this construction does not generalise very well. It is not even clear if the resulting category even has infinite coproducts, for example. 
\par 
The model category we construct is similar to that constructed by Benson \cite{bensoninf} and indeed for $\lhf$ groups they will have equivalent homotopy categories, as we show in \Cref{benstmodeq}. However, we have no restriction on the fibrant objects, which allows us to have more control over the behaviour of the category. This is the key, for example, in showing that the stable category is a tensor-triangulated category.
\par 
There are implications for the Gorenstein homological algebra of the hierarchically defined groups. It is well known that if $k$ is a field and $G$ is a finite group, then the projective modules and injective modules coincide. One consequence of this fact is that the Gorenstein projective and Gorenstein injective modules also coincide. For infinite groups, this will no longer be true, but we show that a weaker statement holds for $\lhg$ groups. In particular, we show that these groups are virtually Gorenstein in the sense that $\mathcal{GP}^{\perp} = {}^{\perp}\mathcal{GI}$; note that this property has been investigated over Artin algebras by Beligiannis and Krause \cite{BelKrause}. 
\begin{theorem}(See \Cref{eqperpsd}) 
    Suppose $k$ is a commutative ring of finite global dimension and $G$ is an $\textup{LH}\mathfrak{G}$ group. Then there is an equivalence of triangulated categories $\gp(kG) \simeq \gi(kG)$.
\end{theorem}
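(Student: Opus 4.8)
The plan is to produce the equivalence $\gp(kG) \simeq \gi(kG)$ by exhibiting both stable categories as localisations of a common model structure on $\textup{Mod}(kG)$ and checking that the Quillen pair relating the Gorenstein-projective and Gorenstein-injective model structures is a Quillen equivalence. Concretely, I would first recall that for an $\lhg$ group the earlier results (the abelian model structure of \Cref{ourbigmodel} together with the existence of Gorenstein projective special precovers, \Cref{induc}) furnish a cofibrantly generated abelian model structure whose cofibrant–fibrant objects are the Gorenstein projectives and whose homotopy category is $\gp(kG)$. Dually — using that $k$ has finite global dimension, so that $kG$ is (at least locally) a ring over which the Gorenstein-injective cotorsion pair is complete — one gets an abelian model structure whose cofibrant–fibrant objects are the Gorenstein injectives with homotopy category $\gi(kG)$. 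The key structural input is that these are two ``compatible'' abelian model structures on the same exact category with the same class of trivial objects (the modules of finite projective, equivalently finite injective, dimension — here the finite global dimension of $k$ is exactly what makes these coincide, cf. the hypothesis).

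The central step is to identify the trivial objects of the two model structures. I would show that over $kG$ with $G$ an $\lhg$ group, the class $\mathcal{GP}^{\perp}$ and the class ${}^{\perp}\mathcal{GI}$ both equal the class $\mathcal{W}$ of modules of finite Gorenstein projective dimension that are also of finite Gorenstein injective dimension, or — better — that both cotorsion pairs $(\mathcal{GP},\mathcal{GP}^{\perp})$ and $({}^{\perp}\mathcal{GI},\mathcal{GI})$ have the same ``trivial'' part, so that $\mathcal{GP}^{\perp} = {}^{\perp}\mathcal{GI}$ as asserted in the theorem's statement about virtual Gorensteinness. This is where I expect the main obstacle to lie: one must leverage the hierarchical structure of $\lhg$ to transfer the finite-global-dimension behaviour of $k$ up through the ordinal-indexed construction of $\mathrm{LH}$, presumably by transfinite induction on the $\mathrm{LH}$-hierarchy, using that for groups in $\mathfrak{G}$ (finite Gorenstein cohomological dimension) the relevant Ext-orthogonality classes are controlled, and then that the classes $\mathcal{GP}$, $\mathcal{GI}$ and their orthogonals are closed under the operations ($\mathrm{H}_1$ and countable ascending unions) used to build the hierarchy. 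The induction step for $\mathrm{H}_1$ should use a Gorenstein-cohomological analogue of the standard argument that finite projective/injective dimension can be detected on finite subgroups together with a dimension-shifting / complete-cohomology vanishing argument.

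Once the two cotorsion pairs are shown to have a common trivial class $\mathcal{W}$, the formal machinery of Hovey's correspondence (and the standard results on Quillen equivalences between abelian model structures sharing the class of trivially cofibrant–fibrant objects) gives that the identity functor is a Quillen equivalence between the Gorenstein-projective and Gorenstein-injective model structures; hence the induced functor on homotopy categories is an equivalence. To see it is triangulated, I would note both homotopy categories carry their canonical triangulated structure (as stable categories of Frobenius-type exact categories, or as homotopy categories of stable abelian model structures), and the derived identity functor is exact because it sends the suspension (cosyzygy) of one structure to that of the other — this is automatic for a Quillen equivalence of stable model structures. Finally I would remark that a cleaner, more explicit form of the equivalence can be given: the composite $\gp(kG) \to \mathrm{Ho}(\textup{Mod}(kG)) \xrightarrow{\sim} \gi(kG)$ sends a Gorenstein projective $M$ to a Gorenstein injective fibrant replacement, with quasi-inverse taking cofibrant replacement; concreteness here is optional, and I would only include it if it streamlines later applications (e.g. the tensor-triangulated comparison in \Cref{ttcatth}).
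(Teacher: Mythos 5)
Your proposal takes essentially the same route as the paper: identify the two stable categories as homotopy categories of abelian model structures on $\textup{Mod}(kG)$, show they share the class of trivial objects by proving $\mathcal{GP}^{\perp} = {}^{\perp}\mathcal{GI}$ via transfinite induction through the $\lhg$ hierarchy (the paper does this by showing membership in each orthogonal class is detected on $\mathfrak{G}$-subgroups, where both classes coincide with the modules of finite projective dimension), and then conclude that the identity functor is a Quillen equivalence inducing a triangulated equivalence. One small correction: the Gorenstein injective cotorsion pair is complete over \emph{every} ring by \v{S}aroch--\v{S}t'ov\'i\v{c}ek, so no hypothesis on $k$ is needed to obtain the Gorenstein injective model structure itself, only to identify its trivial class with $\mathcal{GP}^{\perp}$.
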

Bravo, Gillespie, and Hovey \cite{Bravo2014TheSM} introduced the Gorenstein AC-projective modules in their work to generalise the stable module category to arbitrary rings, where these modules are shown to always sit in the left hand side of a complete cotorsion pair, and hence have good approximation properties. \v{S}aroch and \v{S}t'ovíček \cite{stovsarmodel} defined the projectively coresolved Gorenstein flat modules and again show these always form the left hand side of a complete cotorsion pair. 
\par 
For any ring, there are inclusions $\mathcal{GP}_{AC} \subseteq \mathcal{PGF} \subseteq \mathcal{GP}$ by definition but it is unknown in general if these are ever strict. For $\textup{LH}\mathfrak{G}_{AC}$ groups, we show that these are indeed equalities. In particular, this implies that every Gorenstein projective is Gorenstein flat; it is another open question whether this is true in general.
\begin{theorem}(See \Cref{gpeqgpac} and \Cref{giegiac}) 
Suppose $k$ is a commutative noetherian ring of finite global dimension and $G$ is an $\textup{LH}\mathfrak{G}_{AC}$ group. Then the classes of Gorenstein projectives, Gorenstein AC-projectives, and projectively coresolved Gorenstein flat modules coincide. Dually, the class of Gorenstein injectives equals the class of Gorenstein AC-injectives. 
\end{theorem}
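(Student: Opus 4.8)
The plan is as follows. Since over an arbitrary ring one has the inclusions $\mathcal{GP}_{AC}\subseteq\mathcal{PGF}\subseteq\mathcal{GP}$, the first assertion is equivalent to the single inclusion $\mathcal{GP}(kG)\subseteq\mathcal{GP}_{AC}(kG)$; likewise, as $\mathcal{GI}_{AC}\subseteq\mathcal{GI}$ over any ring, the second assertion amounts to $\mathcal{GI}(kG)\subseteq\mathcal{GI}_{AC}(kG)$. I shall describe the projective case in detail; the injective case is proved by the formally dual argument, replacing projectives, induction and homology by injectives, coinduction and cohomology throughout (this is also forced by \Cref{eqperpsd} once the projective case is known, together with the equivalence of AC-stable categories coming from the two model structures of \cite{Bravo2014TheSM}).

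The first ingredient is a general ring-theoretic reduction: \emph{a Gorenstein projective module of finite Gorenstein AC-projective dimension is Gorenstein AC-projective.} Recall from \cite{Bravo2014TheSM} that $(\mathcal{GP}_{AC},\mathcal{GP}_{AC}^{\perp})$ is a complete hereditary cotorsion pair with $\mathcal{GP}_{AC}\cap\mathcal{GP}_{AC}^{\perp}$ equal to the class of projectives. Since passing to a syzygy lowers Gorenstein AC-projective dimension by one and preserves Gorenstein projectivity, a descent on syzygies reduces the claim to the case $\mathcal{GP}_{AC}\text{-}\mathrm{pd}(M)\le 1$. There, choose a special $\mathcal{GP}_{AC}$-precover $0\to L\to C\to M\to 0$ with $C\in\mathcal{GP}_{AC}$ and $L\in\mathcal{GP}_{AC}^{\perp}$; a dimension shift forces $\mathcal{GP}_{AC}\text{-}\mathrm{pd}(L)=0$, so $L\in\mathcal{GP}_{AC}\cap\mathcal{GP}_{AC}^{\perp}$ is projective, and then $\mathrm{Ext}^{1}_{kG}(M,L)=0$ because $M$ is Gorenstein projective, so the sequence splits and $M$ is a direct summand of $C\in\mathcal{GP}_{AC}$. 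Granting this, it remains to prove that \emph{every Gorenstein projective $kG$-module has finite Gorenstein AC-projective dimension} whenever $G\in\textup{LH}\mathfrak{G}_{AC}$.

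I would establish this by transfinite induction along Kropholler's hierarchy, in the same style as the proof of \Cref{induc}. If $G\in\mathfrak{G}_{AC}$ there is nothing to prove, since $kG$ then has finite global Gorenstein AC-projective dimension by definition. For the successor step, suppose $G$ acts on a finite-dimensional contractible $G$-CW-complex $X$ whose cell stabilisers $G_{\sigma}$ lie in the previous stage. Given $M\in\mathcal{GP}(kG)$, restriction to each $G_{\sigma}$ keeps $M$ Gorenstein projective (here $kG$ is free as a $kG_{\sigma}$-module), so by the inductive hypothesis $\mathrm{Res}_{G_{\sigma}}M$ has finite Gorenstein AC-projective dimension over $kG_{\sigma}$. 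Tensoring the cellular chain complex of $X$ with $M$ over $k$ and using the tensor identity exhibits $M$ as having a resolution of length $\dim X$ by finite direct sums of the induced modules $kG\otimes_{kG_{\sigma}}\mathrm{Res}_{G_{\sigma}}M$; as $kG\otimes_{kG_{\sigma}}(-)$ is exact, carries projectives to projectives, and does not increase Gorenstein AC-projective dimension, a dimension count gives $\mathcal{GP}_{AC}\text{-}\mathrm{pd}_{kG}(M)<\infty$. Limit stages are vacuous, and the passage from $\textup{H}\mathfrak{G}_{AC}$ to $\textup{LH}\mathfrak{G}_{AC}$ is handled by restricting to finitely generated subgroups, using that $\mathcal{PGF}$ is closed under direct limits (\cite{stovsarmodel}) and that, just as for ordinary projective dimension in Kropholler's theorem, these classes are detected on finitely generated subgroups.

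The main obstacle is the successor step, and specifically the behaviour of the Gorenstein AC classes under the change of rings $kG_{\sigma}\hookrightarrow kG$ along a stabiliser of possibly infinite index: one must check that restriction sends Gorenstein projectives to Gorenstein projectives and that induction does not raise Gorenstein AC-projective dimension, which comes down to tracking level modules (respectively, in the dual argument, absolutely clean modules) through restriction and (co)induction — this is where the hypothesis that $k$ is noetherian of finite global dimension is used. Once these stability properties are secured, the two reductions assemble to give $\mathcal{GP}(kG)=\mathcal{PGF}(kG)=\mathcal{GP}_{AC}(kG)$, and the dual chain of equalities for the Gorenstein injective and Gorenstein AC-injective modules follows by running the entire scheme with injectives, coinduction and the cohomology of $X$.
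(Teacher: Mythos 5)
Your overall strategy differs genuinely from the paper's. The paper proves the projective case by showing the perpendicular inclusion $\mathcal{GP}_{AC}^{\perp}\subseteq\mathcal{GP}^{\perp}$, reducing to $\mathfrak{G}$-subgroups via the detection result \Cref{corfsub}, invoking \Cref{ddff} to see those subgroups are in fact $\mathfrak{G}_{AC}$, and then using that induction carries $\mathcal{GP}_{AC}$ to $\mathcal{GP}_{AC}$ (via \Cref{cutelem}, restriction preserves level modules). You instead argue directly with the modules: a reduction lemma (``Gorenstein projective of finite $\mathcal{GP}_{AC}$-dimension is Gorenstein AC-projective'') plus a transfinite induction to bound the $\mathcal{GP}_{AC}$-dimension. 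The reduction lemma itself is fine.

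The transfinite induction, however, has a concrete gap. You claim the cellular chain complex of the contractible complex from Kropholler's hierarchy gives a resolution of $M$ by \emph{finite} direct sums of $kG\otimes_{kG_\sigma}\mathrm{Res}_{G_\sigma}M$. That is false in general: the definition of $\textup{H}_\alpha\mathfrak{X}$ only requires a finite-\emph{dimensional} contractible complex, not a cocompact one, so each $C_i$ is typically an \emph{infinite} direct sum over orbits of $i$-cells, with the stabilisers ranging over possibly infinitely many conjugacy classes. Your inductive hypothesis then gives, for each stabiliser $G_\sigma$, some finite $\mathcal{GP}_{AC}$-dimension bound on $\mathrm{Res}_{G_\sigma}M$, but these bounds depend on $\sigma$ and nothing ensures they are uniform. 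Since $\mathcal{GP}_{AC}$-dimension of an infinite direct sum is the supremum of the dimensions of the summands, without a uniform bound the direct sum terms can have infinite $\mathcal{GP}_{AC}$-dimension, and the dimension count breaks. This is exactly the difficulty the paper's \Cref{use3} and \Cref{resgp1} are engineered to circumvent (by considering $\bigoplus_n\Omega^n(M)$ and using that all Gorenstein projectives restrict nicely), and you would need an analogous trick for $\mathcal{GP}_{AC}$-dimension; as written the argument doesn't touch it. A second gap: the passage from $\textup{H}\mathfrak{G}_{AC}$ to $\textup{LH}\mathfrak{G}_{AC}$ is asserted via ``these classes are detected on finitely generated subgroups,'' but that detection is precisely what is nontrivial --- it is what \Cref{corfsub} proves for $\mathcal{GP}^{\perp}$, and you have no analogue for $\mathcal{GP}_{AC}$ or for the dimension. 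Finally, your ``formal dual'' claim for the injective case is not automatic: the paper points out explicitly, after \Cref{dugi}, that the Emmanouil--Talelli direct-limit result \cite[Proposition~3.1]{ETgcd} used in the projective hierarchy argument is not dualisable, so the injective argument requires its own treatment (as in \Cref{corfsubinj}). Relying instead on \Cref{eqperpsd} is legitimate, but then you should spell out why that equivalence gives an identification of $\mathcal{GI}$ with $\mathcal{GI}_{AC}$ (again one compares perpendicular classes).

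In short, your plan is a workable direction but, to close the dimension-count step, you would have to re-derive roughly the same perpendicular-class and subgroup-detection machinery the paper develops, and as written the proof has a genuine hole at the ``finite direct sums'' assertion.
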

Finally, we investigate the compact objects in the stable category. For finite groups over a field, it is well known that these correspond to those modules which are stably isomorphic to a finitely generated module. In analogy with this, we show that the compact objects are those which are stable summands of $\textup{FP}_{\infty}$ Gorenstein projective modules in \Cref{thicksubcateescom}. In fact, we give another characterisation of the compact objects in \Cref{idcomps} and, using this, we prove the following; this is the Gorenstein equivalent of the fact that finitely generated projectives are finitely presented. 
\begin{theorem}(See \Cref{cute}) 
    Suppose $k$ is a commutative ring of finite global dimension and $G$ is an $\textup{LH}\mathfrak{G}$ group. Then every finitely generated Gorenstein projective is $\textup{FP}_{\infty}$, and furthermore has a complete resolution which is finitely generated in each degree. 
\end{theorem}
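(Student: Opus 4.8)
The plan is to deduce the statement from the description of the compact objects of $\gp(kG)$ obtained in \Cref{idcomps}, after first verifying that a finitely generated Gorenstein projective module $M$ is a compact object. For the latter, observe that the class of Gorenstein projective $kG$-modules is closed under coproducts — a coproduct of totally acyclic complexes of projectives is totally acyclic, since a product of acyclic complexes of modules is acyclic — so that coproducts in $\gp(kG)$ are computed in $\textup{Mod}(kG)$. When $M$ is finitely generated, $\textup{Hom}_{kG}(M,-)$ commutes with coproducts because the image of $M$ meets only finitely many summands; moreover a morphism from $M$ into a coproduct factors through a projective if and only if each of its components does, so the subgroup of morphisms factoring through a projective is also compatible with coproducts. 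Passing to the quotient, $\gphom_{kG}(M,-)$ commutes with coproducts, i.e.\ $M$ is compact in $\gp(kG)$.

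By \Cref{idcomps}, the compact objects of $\gp(kG)$ are, up to isomorphism in $\gp(kG)$, exactly the Gorenstein projective modules that admit a complete resolution finitely generated in each degree; hence $M$ is stably isomorphic to such a module $N$. To transfer this to $M$ itself, I would lift a stable isomorphism $M\to N$ and its inverse to honest module maps $f\colon M\to N$ and $g\colon N\to M$. Then $gf-\mathrm{id}_M$ factors through a projective, and since $M$ is finitely generated its image there lies in a finitely generated free direct summand of an ambient free module, so $gf-\mathrm{id}_M=cd$ for some $d\colon M\to F$ and $c\colon F\to M$ with $F$ finitely generated free. Consequently $\binom{f}{d}\colon M\to N\oplus F$ is split by $(g,-c)$, and $M$ is a direct summand of $N\oplus F$, which again admits a complete resolution finitely generated in each degree.

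It then remains to see that this property of Gorenstein projective modules is closed under direct summands. For this I would prove the characterisation that a Gorenstein projective module $X$ has a complete resolution finitely generated in each degree if and only if the biduality map $X\to X^{\ast\ast}$ is an isomorphism, $X$ is $\textup{FP}_\infty$, and $X^{\ast}=\textup{Hom}_{kG}(X,kG)$ is $\textup{FP}_\infty$ over $kG^{\mathrm{op}}$; the non-trivial implication is to splice a resolution of $X$ by finitely generated projectives with the $kG$-dual of such a resolution of $X^{\ast}$, using $X^{\ast\ast}\cong X$ and $\textup{Ext}^{\geq 1}_{kG}(X,kG)=0$, obtaining a totally acyclic complex of finitely generated projectives with $X$ as a cocycle. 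Reflexivity, being $\textup{FP}_\infty$, having $\textup{FP}_\infty$ dual, and Gorenstein projectivity all pass to direct summands (using $(X_1\oplus X_2)^{\ast}\cong X_1^{\ast}\oplus X_2^{\ast}$), so $M$ inherits a complete resolution finitely generated in each degree; its positive part is a resolution of $M$ by finitely generated projectives, so $M$ is $\textup{FP}_\infty$ as well. The main obstacle I expect is this final step: \Cref{idcomps} only controls compact objects up to stable isomorphism, and promoting it to a module-level statement about $M$ needs both the finitely-generated-free refinement of that isomorphism and the summand-closure just discussed; by comparison, the compactness established in the first paragraph is routine.
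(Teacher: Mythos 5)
Your first paragraph, establishing that a finitely generated Gorenstein projective $M$ is compact in $\gp(kG)$, is correct and essentially the same (if more verbose) as the paper's one-line observation.

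The fatal gap is in the second paragraph: you assert that \Cref{idcomps} identifies the compact objects of $\gp(kG)$, up to stable isomorphism, with the Gorenstein projective modules admitting a complete resolution finitely generated in each degree. That is \emph{not} what \Cref{idcomps} says. \Cref{idcomps} states only that the compact objects are the $\mathcal{GP}$-finitary modules — those for which the functors $\textup{Ext}^n_{\mathcal{GP}}(M,-)$ commute with filtered colimits. There is no mention of finitely generated complete resolutions. The statement you attribute to \Cref{idcomps} is, in substance, the very thing \Cref{cute} is designed to prove, so the argument is circular: the description of compacts as (stable summands of) $\textup{FP}_\infty$ Gorenstein projectives appears in the paper only later, in \Cref{thicksubcateescom}, which is proved for $\textup{LH}\mathfrak{G}_{AC}$ groups \emph{using} \Cref{cute}. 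Your splitting trick (lifting a stable isomorphism to obtain $M$ as a module-level summand of $N\oplus F$ with $F$ finitely generated free) and the summand-closure argument via the reflexivity/$\textup{FP}_\infty$-duality characterisation are both reasonable in themselves, but they sit downstream of the unsupported step and cannot repair it.

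What is actually needed, and what the paper does, is to extract from ``$M$ is $\mathcal{GP}$-finitary'' the concrete finiteness of $M$ itself. Being $\mathcal{GP}$-finitary gives in particular that $\underline{\textup{Hom}}_{kG}(M,-)$ commutes with filtered colimits; the paper invokes a result from the reference \cite{Eprecov} characterising such $M$ as direct summands of $N\oplus P$ with $N$ finitely presented and $P$ projective, so that a finitely generated such $M$ is finitely presented. Then \Cref{cor:fpgpfpinf} upgrades a finitely presented Gorenstein projective to $\textup{FP}_\infty$ (using that $\textup{Ext}^{\geq 1}_{kG}(M,-)$ already commutes with filtered colimits because $M$ is Gorenstein projective and $\mathcal{GP}$-finitary, and that $\textup{Hom}_{kG}(M,-)$ does because $M$ is finitely presented), and finally the cited construction in \cite{ADTgor} produces the finitely generated complete resolution. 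Your proposal is missing exactly this bridge from the abstract filtered-colimit condition to finite presentation of $M$; without it the proof does not go through.
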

We show in \Cref{dcohco} that this has the consequence that groups of finite Gorenstein cohomological dimension are $d$-coherent for some $d > 0$, in the language of \cite{BRAVOdcoh}. This generalises the fact that $\mathbb{Z}G$ is noetherian (i.e. $0$-coherent) for $G$ a finite group.
\par
We see here another difference with finite groups: the tensor unit, which is the Gorenstein projective special precover of the trivial representation $k$, is not necessarily a compact object. We study for which groups it is compact and relate this to finiteness conditions on models for the classifying space for proper actions $\underline{E}G$, as well as finiteness conditions on the Eilenberg-Maclane space of the group. This builds on \cite[Theorem~A]{finconCEKT}.
\subsection*{Acknowledgements}
I would like to thank my supervisor Peter Symonds for his support and encouragement.
\section{Stable category}
We will construct our stable category as the homotopy category of a suitable model structure on $\textup{Mod}(kG)$. In order to construct this model structure, we will use the fundamental result of Hovey \cite[Theorem~2.2]{hovmod} showing a correspondence between model structures and certain cotorsion pairs. \par 
We begin by recalling the definitions we require. For any class of modules $\mathcal{S}$ we let 
\[\mathcal{S}^{\perp} := \{M \in \textup{Mod}(kG)\ |\ \textup{Ext}^1_{kG}(S,M) = 0 \textup{ for all } S \in \mathcal{S}\}\]
\[{}^{\perp}\mathcal{S} := \{M \in \textup{Mod}(kG)\ |\ \textup{Ext}^1_{kG}(M,S) = 0 \textup{ for all } S \in \mathcal{S}\}\]
Given two classes of modules $\mathcal{X}$ and $\mathcal{Y}$, we say $(\mathcal{X},\mathcal{Y})$ is a cotorsion pair if $\mathcal{X} = {}^{\perp}\mathcal{Y}$ and $\mathcal{X}^{\perp} = \mathcal{Y}$. We say a cotorsion pair $(\mathcal{X},\mathcal{Y})$ is complete if for any $kG$-module $M$ we have short exact sequences 
\[0 \to Y \to X \to M \to 0 \textup{ and } 0 \to M \to Y' \to X' \to 0\]
such that $X,X' \in \mathcal{X}$ and $Y,Y' \in \mathcal{Y}$. In this situation, we say that $X$ is an $\mathcal{X}$ special precover of $M$, and that $Y'$ is a $\mathcal{Y}$ special preenvelope of $M$. If these short exact sequences can be chosen functorially, we say $(\mathcal{X},\mathcal{Y})$ is functorially complete. 
\par We say a cotorsion pair is cogenerated by a set if there exists a set $\mathcal{S}$ (and not a proper class) such that $\mathcal{Y} = \mathcal{S}^{\perp}$; note that some sources refer to this as being generated by a set, but we follow Hovey's terminology in \cite{hovmod}. The importance of being cogenerated by a set lies in the following result of Eklof and Trlifaj.
\begin{theorem}\label{EkTrl}\cite[Theorem~10]{EkTrl} 
    Every cotorsion pair cogenerated by a set is complete.
\end{theorem}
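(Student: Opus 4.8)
The plan is to establish completeness by producing the two required short exact sequences separately: first I would construct special $\mathcal{Y}$-preenvelopes by a transfinite ``small object'' argument, and then deduce special $\mathcal{X}$-precovers from them. Write $\mathcal{Y} = \mathcal{S}^{\perp}$ and $\mathcal{X} = {}^{\perp}\mathcal{Y}$, and record that $\mathcal{S} \subseteq \mathcal{X}$, that $\mathcal{X}$ is closed under direct sums (since $\textup{Ext}^1_{kG}(-,Y)$ sends sums to products), under extensions (being the left class of a cotorsion pair), and under transfinite extensions by Eklof's lemma. For each $S \in \mathcal{S}$ I would fix a short exact sequence $0 \to K_S \to F_S \to S \to 0$ with $F_S$ free; applying $\textup{Hom}_{kG}(-,N)$ then shows $\textup{Ext}^1_{kG}(S,N) = 0$ exactly when every map $K_S \to N$ extends along $K_S \hookrightarrow F_S$, and this is the lever for forcing $\textup{Ext}$-vanishing.

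Given a module $M$, I would build a continuous increasing chain $(M_\alpha)_{\alpha \le \lambda}$ with $M_0 = M$. At successor stages, let $J_\alpha$ be the set of all pairs $(S,g)$ with $S \in \mathcal{S}$ and $g \colon K_S \to M_\alpha$ (a set, since $\mathcal{S}$ is), and let $M_{\alpha+1}$ be the pushout of $\bigoplus_{(S,g) \in J_\alpha} F_S \longleftarrow \bigoplus_{(S,g) \in J_\alpha} K_S \xrightarrow{(g)} M_\alpha$; then $M_\alpha \hookrightarrow M_{\alpha+1}$ is a monomorphism with cokernel $\bigoplus_{(S,g)\in J_\alpha} S$, and every $g \in J_\alpha$ acquires an extension to $F_S \to M_{\alpha+1}$. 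At limit stages take unions, and stop at a regular cardinal $\lambda$ chosen larger than $\lvert kG \rvert$ and than every $\lvert K_S \rvert$. Setting $Y' := M_\lambda$: any map $K_S \to Y'$ factors through some $M_\alpha$ with $\alpha < \lambda$, hence was listed in $J_\alpha$ and so extends to $F_S \to Y'$, giving $\textup{Ext}^1_{kG}(S, Y') = 0$ for all $S$, i.e.\ $Y' \in \mathcal{Y}$; and $X' := Y'/M$ inherits the filtration with successive quotients $M_{\alpha+1}/M_\alpha$, each a direct sum of members of $\mathcal{S}$, so $X' \in \mathcal{X}$ by the closure properties above. This yields the special $\mathcal{Y}$-preenvelope $0 \to M \to Y' \to X' \to 0$.

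For the special $\mathcal{X}$-precover of a module $M$, I would pick a surjection $P \twoheadrightarrow M$ with $P$ free and kernel $N$, apply the preenvelope construction to $N$ to obtain $0 \to N \to Y \to X' \to 0$ with $Y \in \mathcal{Y}$ and $X' \in \mathcal{X}$, and form the pushout $X$ of $Y \longleftarrow N \hookrightarrow P$. Since $N \to Y$ is a monomorphism the pushout square gives an exact sequence $0 \to P \to X \to X' \to 0$, so $X \in \mathcal{X}$ because $\mathcal{X}$ is closed under extensions; and since $N \to P$ is a monomorphism it also gives an exact sequence $0 \to Y \to X \to M \to 0$, which is the desired special $\mathcal{X}$-precover. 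Hence $(\mathcal{X},\mathcal{Y})$ is complete.

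The hard part is the set-theoretic core: proving Eklof's lemma — a transfinite induction showing that vanishing of $\textup{Ext}^1_{kG}(-,Y)$, reformulated as a lifting property against a representing extension, propagates along continuous filtrations — and pinning down a single regular cardinal $\lambda$, depending only on $\mathcal{S}$ and $\lvert kG\rvert$ rather than on $M$, for which maps out of the modules $K_S$ are guaranteed to factor through a bounded stage of the chain, so that the construction genuinely terminates with $Y' \in \mathcal{S}^{\perp}$. Everything else is diagram chasing with pushouts and long exact sequences.
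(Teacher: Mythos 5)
The paper proves nothing here — it simply cites Eklof--Trlifaj's Theorem 10 — and your argument is a correct reconstruction of the standard proof of that theorem: a transfinite small-object-style iteration (pushing out along the presentations $0 \to K_S \to F_S \to S \to 0$ over all maps $K_S \to M_\alpha$, iterated up to a suitable regular $\lambda$, with Eklof's lemma guaranteeing that the quotient $Y'/M$ stays in $\mathcal{X}$) to produce special $\mathcal{Y}$-preenvelopes, followed by Salce's pushout trick to convert these into special $\mathcal{X}$-precovers. This matches the argument in the cited source; the one piece you wave at rather than carry out, Eklof's lemma and the factorization of maps from $K_S$ through a bounded stage of the $\lambda$-chain, is correctly identified as the technical heart and is standard.
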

\par 
Furthermore, a cotorsion pair is hereditary if $\textup{Ext}^i_{kG}(X,Y) = 0$ for all $i > 0$, $X \in \mathcal{X}$, and $Y \in \mathcal{Y}$. Finally, we call a class of modules $\mathcal{C}$ thick if it is closed under summands and for every short exact sequence $0 \to X \to Y \to Z \to 0$, if two of the modules are in $\mathcal{C}$ so is the third.
\par 
We can now state Hovey's theorem as follows; in fact the converse also holds but we will not use that here.
\begin{theorem}\label{HOVEYCORRESPONDONCE} \cite[Theorem~2.2]{hovmod} 
    Let $\mathcal{A}$ be an abelian category. Suppose $\mathcal{C},\mathcal{F},$ and $\mathcal{W}$ are classes of objects such that $\mathcal{W}$ is thick, and we have two complete cotorsion pairs $(\mathcal{C},\mathcal{W} \cap \mathcal{F})$ and $(\mathcal{C} \cap \mathcal{W}, \mathcal{F})$. Then there is a unique model structure on $\mathcal{A}$ such that $\mathcal{C}, \mathcal{F},$ and $\mathcal{W}$ are the class of cofibrant, fibrant, and trivial objects respectively. 
\end{theorem}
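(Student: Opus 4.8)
The statement is exactly \cite[Theorem~2.2]{hovmod}, so in the paper we simply quote it; here we sketch the argument one would give. The plan is to declare a morphism $f$ of $\mathcal{A}$ to be a \emph{cofibration} if it is a monomorphism with cokernel in $\mathcal{C}$, a \emph{fibration} if it is an epimorphism with kernel in $\mathcal{F}$, and a \emph{weak equivalence} if it admits a factorisation $f = p \circ i$ in which $i$ is a monomorphism with cokernel in $\mathcal{C} \cap \mathcal{W}$ and $p$ is an epimorphism with kernel in $\mathcal{F} \cap \mathcal{W}$. With these definitions one first records the easy identifications: $X$ is cofibrant exactly when $X \in \mathcal{C}$, fibrant exactly when $X \in \mathcal{F}$, and trivial exactly when $X \in \mathcal{W}$, where the last uses thickness of $\mathcal{W}$. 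The retract axiom is then formal, since for any class $\mathcal{S}$ the monomorphisms with cokernel in $\mathcal{S}$, and the epimorphisms with kernel in $\mathcal{S}$, are closed under retracts.

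The first piece of genuine work is to show that a cofibration is a weak equivalence precisely when its cokernel lies in $\mathcal{C} \cap \mathcal{W}$, and dually that a fibration is a weak equivalence precisely when its kernel lies in $\mathcal{F} \cap \mathcal{W}$; this is where thickness of $\mathcal{W}$ and the interaction of the two cotorsion pairs really enter, and it is also what makes the two-out-of-three property for weak equivalences work. Granting this, the lifting axioms follow from a standard abelian-category lemma: if $i$ is a monomorphism with cokernel $C$ and $p$ is an epimorphism with kernel $K$, then every commutative square from $i$ to $p$ has a diagonal filler as soon as $\textup{Ext}^1_{\mathcal{A}}(C,K)=0$. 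Applying this with the orthogonality built into $(\mathcal{C},\mathcal{W}\cap\mathcal{F})$ and $(\mathcal{C}\cap\mathcal{W},\mathcal{F})$ shows that (trivial) cofibrations lift against (trivial) fibrations; the reverse implications use that in a cotorsion pair each class is the full $\textup{Ext}^1$-orthogonal of the other, so that a map lifting against all (trivial) fibrations is forced to be a (trivial) cofibration, and vice versa.

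It remains to produce the two factorisations, and this is the step we expect to be the main obstacle. One starts from the tautological factorisation of an arbitrary $f\colon X\to Y$ as
\[
X \xrightarrow{\;x\mapsto(f(x),x)\;} Y\oplus X \xrightarrow{\;(y,x)\mapsto y\;} Y,
\]
a monomorphism with cokernel $\cong Y$ followed by an epimorphism with kernel $\cong X$. To convert this into a (cofibration, trivial fibration) factorisation one must replace the cokernel $Y$ by an object of $\mathcal{C}$ and the kernel $X$ by an object of $\mathcal{F}\cap\mathcal{W}$, and into a (trivial cofibration, fibration) factorisation one replaces $Y$ by an object of $\mathcal{C}\cap\mathcal{W}$ and $X$ by an object of $\mathcal{F}$. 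This is done by feeding the special precover and special preenvelope sequences coming from completeness of the two cotorsion pairs into suitable pushout and pullback squares, in general applying completeness twice --- once to correct the cokernel and once the kernel --- the delicate point being to arrange the two corrections so that neither spoils the other. That both hypotheses, thickness of $\mathcal{W}$ \emph{and} completeness of \emph{both} cotorsion pairs, are used essentially at this stage is precisely why the statement is packaged with all of this data. Uniqueness is then immediate, since a model structure on an abelian category is determined by its classes of cofibrant, fibrant, and trivial objects.
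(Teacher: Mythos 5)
The paper states this result as a cited theorem of Hovey and gives no proof of its own, so there is nothing in the paper to compare against; what you have written is a sketch of how one proves Hovey's correspondence from scratch, and it is essentially the standard argument. The identifications of cofibrations, fibrations, and weak equivalences, the remark that the retract axiom is formal, the key lemma that a (co)fibration is trivial exactly when its (co)kernel lies in the appropriate intersection (this is where thickness of $\mathcal{W}$ and the compatibility of the two cotorsion pairs do the real work, and where the two-out-of-three property is ultimately hiding), and the lifting-from-$\textup{Ext}^1$-vanishing step are all correctly placed.

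One point is worth flagging as under-argued. Your factorisation strategy --- start from the tautological factorisation $X \to Y\oplus X \to Y$ and then ``correct'' the cokernel and kernel by pushing out along a special preenvelope and pulling back along a special precover --- is a legitimate route, but the ``delicate point'' you gesture at is genuinely delicate: the naive correction of the cokernel changes the middle object, and one must check that the subsequent correction of the kernel does not destroy the cofibrancy already achieved. Concretely, one uses completeness of $(\mathcal{C}\cap\mathcal{W},\mathcal{F})$ to push out the first map along a monomorphism into an object of $\mathcal{F}$ with cokernel in $\mathcal{C}\cap\mathcal{W}$, and then uses that $\mathcal{C}\cap\mathcal{W}$ is closed under extensions (which follows from the cotorsion pair, not from thickness alone) together with the first cotorsion pair to finish; the symmetric argument handles the other factorisation. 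A reader filling in your sketch would need to spell out exactly this interplay. Similarly, the uniqueness assertion at the end relies on the fact that the resulting model structure is \emph{abelian}, so that cofibrations and fibrations are recovered from cofibrant and fibrant objects; this is part of what must be proved, not merely observed. Neither of these is an error in your outline, but they are the places where, if asked to write a full proof, you would discover most of the content lives.
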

We now recall the required definitions from Gorenstein homological algebra. In order to state these, we first recall the following definitions from \cite{Bravo2014TheSM}. 
\begin{definition}
    A module $A$ is absolutely clean if $\textup{Ext}^1_{kG}(X,A) = 0$ for all modules $X$ of type $\textup{FP}_{\infty}$. Similarly, $L$ is called level if $\textup{Tor}_1^{kG}(X,L) = 0$ for all $X$ of type $\textup{FP}_{\infty}$.
\end{definition}
For example, if $kG$ is coherent, then the absolutely clean modules coincide with the injectives and the level modules are exactly the flat modules. 
\par 
We can now define the following types of modules. 
\begin{definition}\label{gorensteindef}
Let $M$ be a kernel in an acyclic complex $C_*$. We say $M$ is:
\begin{enumerate}[label=(\roman*)]
    \item Gorenstein projective if $C_*$ is a complex of projectives and $\textup{Hom}_{kG}(C_*,P)$ is acyclic for any projective $P$
    \item Gorenstein injective if $C_*$ is a complex of injectives and $\textup{Hom}_{kG}(I,C_*)$ is acyclic for any injective $I$
    \item Gorenstein AC-projective $C_*$ is a complex of projectives and $\textup{Hom}_{kG}(C_*,L)$ is acyclic for any level module $L$
    \item Gorenstein AC-injective if $C_*$ is a complex of injectives and $\textup{Hom}_{kG}(A,C_*)$ is acyclic for any absolutely clean module $A$
    \item Gorenstein flat if $C_*$ is a complex of flat modules and $I \otimes_{kG} C_*$ is acyclic for any injective (right) module $I$
    \item projectively coresolved Gorenstein flat if $M$ is Gorenstein flat and $C_*$ is a complex of projectives
\end{enumerate}
For a group $G$ we will use $\mathcal{GP}_G$ to denote the class of all Gorenstein projective $kG$-modules; sometimes we will omit the subscript $G$ if it is clear from context. 
\par 
Similarly, we will use $\mathcal{GI},\mathcal{GP}_{AC},\mathcal{GI}_{AC},\mathcal{GF},$ and $\mathcal{PGF}$ respectively, for the other classes of modules in the above definition.
\end{definition}
A complex such as in $(i)$ will be called a totally acyclic complex of projectives. 
\par 
We also need the notion of dimensions over the above types of modules. 
\begin{definition}
    We define the Gorenstein projective dimension of a module $M$ over $kG$, denoted $\textup{GPD}(M)$, to be the minimum integer $n$ such that there exists an exact sequence $0 \to G_n \to \dots \to G_0 \to M \to 0$ such that $G_i$ is a Gorenstein projective $kG$-module for each $0 \leq i \leq n$. 
    The global Gorenstein projective dimension is then the supremum over all $kG$-modules of $\textup{GPD}(M)$ and will be denoted $\textup{glGPD}(kG)$.
    \par 
    The (global) Gorenstein AC-projective dimension,(global) Gorenstein injective dimension and (global) Gorenstein AC-injective dimension all have similar definitions and notations. 
\end{definition}
We will use the notation $\mathfrak{G}$ (resp. $\mathfrak{G}_{AC})$ to denote the class of groups of finite global Gorenstein projective dimension (resp. finite global Gorenstein AC-projective dimension). 
\par 
We also fix some notation for later use.
\begin{notation}
    Let $H \leq G$ be a subgroup, $M$ a $kH$-module and $N$ a $kG$-module. We will write $N{\downarrow}_H^G$ to be the restriction of $N$. Similarly, we will write $M{\uparrow}_H^G$ and $M{\Uparrow}_H^G$ to denote induction and coinduction respectively.
\end{notation}
\begin{notation}
    We let $\underline{\textup{Hom}}_{kG}(M,N)$ be the additive quotient of $\textup{Hom}_{kG}(M,N)$ by the ideal of all maps which factor through a projective module. Similarly, we define $\overline{\textup{Hom}}_{kG}(M,N)$ to be the quotient by the ideal of all maps which factor through an injective module. 
    \par 
    The categories $\gp(kG)$ and $\gi(kG)$ are then defined in the obvious way.
\end{notation}
Finally, we recall Kropholler's hierarchically defined groups from \cite{krop}. 
\begin{definition}
    Let $\mathfrak{X}$ be a class of groups and let $\textup{H}_0\mathfrak{X} = \mathfrak{X}$. For every ordinal $\alpha > 0$, we define $\textup{H}_{\alpha}\mathfrak{X}$ inductively. If $\alpha$ is a successor ordinal, we let $\textup{H}_{\alpha}\mathfrak{X}$ be the class of groups which admit a cellular action on a finite dimensional contractible cell complex such that each stabiliser is in $\textup{H}_{\beta}\mathfrak{X}$ for some $\beta < \alpha$. For limit ordinals $\alpha$ let $H_{\alpha}\mathfrak{X} = \bigcup\limits_{\beta<\alpha}H_{\beta}\mathfrak{X}$. We define $\textup{H}\mathfrak{X}$ to be the union of $H_{\alpha}\mathfrak{X}$ over all ordinals $\alpha$. Finally, let $\lhx$ be the class of groups such that every finite subset of elements is contained in an $\textup{H}\mathfrak{X}$ subgroup.
\end{definition}
\subsection{Groups of finite Gorenstein cohomological dimension}\label{gcdse}
We will construct our stable category for $\textup{LH}\mathfrak{G}$ groups. We begin by investigating this class $\mathfrak{G}$, focusing on the properties we require for our later constructions. Throughout we always take $k$ to be a commutative ring of finite global dimension. 
\par 
We introduce the following standard terminology. 
\begin{definition}
    Let $k$ be a commutative ring and $G$ a group. The Gorenstein cohomological dimension of $G$ over $k$, denoted $\textup{Gcd}_k(G)$, is the Gorenstein projective dimension of the trivial module $k$. 
\end{definition}
We also have the following two cohomological invariants. 
\begin{definition}
    We define $\textup{silp}(kG)$ to be the supremum of the injective dimensions of the projective $kG$-modules. Similarly, $\textup{spli}(kG)$ is the supremum of the projective dimensions of the injective $kG$-modules. 
\end{definition}
We then have the following equivalent descriptions of groups of finite Gorenstein cohomological dimension.
\begin{theorem}\label{ETThe}
    Suppose $k$ is a commutative ring of finite global dimension and $G$ a group. Then the following are equivalent. 
    \begin{enumerate}[label=(\roman*)]
        \item $\textup{Gcd}_k(G) < \infty$
        \item $\textup{silp}(kG) = \textup{spli}(kG) < \infty$  
        \item Every $kG$-module has finite Gorenstein projective dimension
        \item $\textup{glGPD}(kG) < \infty$
    \end{enumerate}
\end{theorem}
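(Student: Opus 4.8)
The plan is to read the equivalences as a circle $(i)\Rightarrow(iv)\Rightarrow(iii)\Rightarrow(i)$ together with $(i)\Leftrightarrow(ii)$. Two of the arrows are formal: $(iv)\Rightarrow(iii)$ is the definition of $\textup{glGPD}(kG)$, and $(iii)\Rightarrow(i)$ holds because $k$ is a $kG$-module. The equivalence $(i)\Leftrightarrow(ii)$ belongs to the circle of ideas originating with Gedrich--Gruenberg: the direction $(ii)\Rightarrow(i)$ rests on the inequality $\textup{Gcd}_k(G)\le\textup{spli}(kG)$ (when $\textup{spli}(kG)=n<\infty$ the syzygy $\Omega^nk$ has no higher $\textup{Ext}$ into projectives, and one builds a totally acyclic complex around it), while $(i)\Rightarrow(ii)$ — the substantive step, which also yields $(i)\Rightarrow(iv)$ — needs the finiteness of $\textup{spli}(kG)$ and $\textup{silp}(kG)$, their equality coming from the standard comparisons of these invariants (Gedrich--Gruenberg: $\textup{spli}(kG)<\infty\Rightarrow\textup{silp}(kG)\le\textup{spli}(kG)$; Emmanouil and Bennis--Mahdou for the identification with $\textup{glGPD}(kG)$).

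So the heart of the proof is to show that $\textup{Gcd}_k(G)=d<\infty$ forces $\textup{glGPD}(kG)<\infty$ (equivalently $\textup{spli}(kG)<\infty$), and I would do this by a change-of-rings argument along the free ring extension $k\to kG$, with target bound $\textup{glGPD}(kG)\le d+g$ where $g=\textup{gl.dim}(k)$. Two elementary facts drive it. First, for a projective $kG$-module $P$ and any $kG$-module $N$, the diagonal module $P\otimes_kN$ is naturally a direct sum of copies of the induced module $(N{\downarrow}_1^G){\uparrow}_1^G$, so $\textup{pd}_{kG}(P\otimes_kN)\le\textup{pd}_k(N{\downarrow}_1^G)\le g$. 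Second, because $k$ has finite global dimension, every cycle of an acyclic (possibly doubly infinite) complex of projective $kG$-modules is projective over $k$: each such cycle has $k$-projective dimension $\le g$, and feeding this bound repeatedly through the short exact sequences $0\to Z_{n+1}\to C_{n+1}\to Z_n\to0$ linking consecutive cycles forces it down to $0$.

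Now fix a $kG$-projective resolution $P_\bullet\to k$; since $\textup{GPD}_{kG}(k)=d$, the syzygy $\Omega^dk$ is Gorenstein projective, hence is a cycle of a totally acyclic complex $T_\bullet$ of projective $kG$-modules. By the second fact all cycles of $T_\bullet$ and all syzygies $\Omega^ik$ with $i\ge0$ are $k$-projective, so the relevant short exact sequences are $k$-split and stay exact under $-\otimes_kN$. For any $kG$-module $N$, tensoring $P_\bullet$ with $N$ over $k$ yields an exact sequence
\[0\to(\Omega^dk)\otimes_kN\to P_{d-1}\otimes_kN\to\dots\to P_0\otimes_kN\to N\to0\]
whose first $d$ terms have projective dimension $\le g$ by the first fact, while tensoring $T_\bullet$ with $N$ realises $(\Omega^dk)\otimes_kN$ as a cycle of an acyclic complex all of whose terms have projective dimension $\le g$. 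Granting that this last feature forces $\textup{GPD}_{kG}\big((\Omega^dk)\otimes_kN\big)\le g$, the displayed sequence gives $\textup{GPD}_{kG}(N)\le d+g$ for every $N$.

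The main obstacle, as I see it, is exactly that granting step: a naive dimension shift only reduces a bound for $N$ to a bound for $(\Omega^dk)\otimes_kN$, a module of the same shape, so the reduction looks circular. Breaking it requires genuine input — either pushing the tensor construction far enough to produce an honest \emph{complete} resolution of $(\Omega^dk)\otimes_kN$ and checking that $\textup{Hom}_{kG}(-,Q)$ leaves it acyclic for every projective $Q$ (this is the technical crux), or invoking the fact that the Gorenstein-projective cotorsion pair on $\textup{Mod}(kG)$ is cogenerated by a set, so that once every $kG$-module has finite Gorenstein projective dimension those dimensions are automatically bounded. Once the finiteness of $\textup{glGPD}(kG)$, and hence of $\textup{spli}(kG)$, is secured, the remaining comparisons with $\textup{silp}(kG)$ are routine and follow from the cited results of Gedrich--Gruenberg, Emmanouil and Bennis--Mahdou.
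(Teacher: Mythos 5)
The paper does not actually argue this theorem; it proves it by citation: the equivalence $(i)\Leftrightarrow(ii)\Leftrightarrow(iii)$ is taken from \cite[Theorem~1.7]{ETgcd}, and the equivalence with $(iv)$ from \cite[Corollary~1.5]{ETgcd}. Your proposal instead tries to reconstruct the argument, and your dissection of the formal pieces (the trivial arrows $(iv)\Rightarrow(iii)\Rightarrow(i)$, the Gedrich--Gruenberg comparisons of $\textup{silp}$ and $\textup{spli}$) is sound; the change-of-rings setup via $P_\bullet\otimes_kN$ and $T_\bullet\otimes_kN$, together with the two elementary facts you isolate, is indeed in the spirit of the Emmanouil--Talelli proof.

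However, the gap you honestly flag is a genuine one, and neither of the two patches you propose closes it within the logic available at this point in the paper. The crucial claim --- that a cycle in an acyclic complex of $kG$-modules, all of whose terms have $\textup{pd}_{kG}\le g$, must have $\textup{GPD}_{kG}\le g$ --- is exactly where the substance lives. Its $g=0$ case is the question of whether every acyclic complex of projectives is totally acyclic, which is open over general rings and is precisely one of the properties being \emph{derived} from $\textup{Gcd}_k(G)<\infty$ (cf.\ the way the paper later obtains \Cref{costabil}); it cannot be fed in as an ingredient. Your second route, appealing to cogeneration of $(\mathcal{GP},\mathcal{GP}^{\perp})$ by a set to turn ``every module has finite GPD'' into a uniform bound, is likewise not available here: in the paper that cogeneration for $\mathfrak{G}$-groups is \Cref{perpequal}, which itself rests on \Cref{ETThe}, and for arbitrary rings it is open without extra set-theoretic hypotheses, as the introduction points out. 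So as written the proposal correctly identifies where the difficulty is but does not resolve it, and the author's choice to cite \cite{ETgcd} for precisely these implications is the step your sketch is missing.
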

\begin{proof}
    The equivalence $(i) \iff (ii) \iff (iii)$ are \cite[Theorem~1.7]{ETgcd}. The equivalence of these with $(iv)$ then follows from \cite[Corollary~1.5]{ETgcd}.
\end{proof}
Using this, we can show that being Gorenstein projective is a subgroup closed property as long as the subgroup has finite Gorenstein cohomological dimension. 
\begin{lemma}\label{allsnf}
    Suppose $G$ is a group and $M$ is a Gorenstein projective $kG$-module. Then $M{\downarrow}_H^G$ is Gorenstein projective if $\textup{Gcd}_k(H) < \infty$.
    \par 
    Similarly, if $N$ is a Gorenstein injective $kG$-module, then so is $M{\downarrow}_H^G$ as long as $\textup{Gcd}_k(H) < \infty$.
\end{lemma}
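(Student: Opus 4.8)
The plan is to take a totally acyclic complex witnessing that $M$ is Gorenstein projective (resp.\ that $N$ is Gorenstein injective), restrict it to $kH$, and argue that it remains totally acyclic. The hypothesis $\textup{Gcd}_k(H) < \infty$ will enter only through \Cref{ETThe}, which tells us that $\textup{silp}(kH)$ and $\textup{spli}(kH)$ are finite.

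First I would record two elementary facts about restriction along $kH \hookrightarrow kG$. Since $kG$ is free as a left $kH$-module, restriction $(-){\downarrow}_H^G$ is exact and carries projective $kG$-modules to projective $kH$-modules. Dually, since induction $kG \otimes_{kH} (-)$ is exact (as $kG$ is flat over $kH$) and is left adjoint to restriction, restriction carries injective $kG$-modules to injective $kH$-modules.

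For the first statement, write $M$ as a kernel in a totally acyclic complex $C_*$ of projective $kG$-modules. Because restriction is exact it commutes with forming the kernels of $C_*$, so $C_*{\downarrow}_H^G$ is an acyclic complex of projective $kH$-modules having $M{\downarrow}_H^G$ among its kernels; it therefore suffices to show that $C_*{\downarrow}_H^G$ is \emph{totally} acyclic, i.e.\ that $\textup{Hom}_{kH}(C_*{\downarrow}_H^G, Q)$ is acyclic for each projective $kH$-module $Q$. Splicing the short exact sequences $0 \to Z_i \to (C_i){\downarrow}_H^G \to Z_{i-1} \to 0$ formed by the kernels $Z_i$, this in turn follows once I show $\textup{Ext}^1_{kH}(Z, Q) = 0$ for every kernel $Z$ of $C_*{\downarrow}_H^G$ and every projective $kH$-module $Q$. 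Here I would use that $Z$ is itself a kernel in the exact complex $C_*{\downarrow}_H^G$ of projectives, hence sits in exact sequences $0 \to Z \to P' \to Z' \to 0$ with $P'$ projective and $Z'$ again a kernel of the complex; iterating and dimension-shifting $s+1$ times, where $s := \textup{silp}(kH)$ is finite by \Cref{ETThe}, yields $\textup{Ext}^1_{kH}(Z, Q) \cong \textup{Ext}^{s+2}_{kH}(Z^{(s+1)}, Q)$, which vanishes because the injective dimension of $Q$ is at most $\textup{silp}(kH) = s$. Thus $C_*{\downarrow}_H^G$ is totally acyclic and $M{\downarrow}_H^G$ is Gorenstein projective.

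The Gorenstein injective statement would be handled by the dual argument: restricting a totally acyclic complex of injective $kG$-modules with $N$ as a kernel gives an acyclic complex $E_*{\downarrow}_H^G$ of injective $kH$-modules, and I would show $\textup{Ext}^1_{kH}(J, Z) = 0$ for every injective $kH$-module $J$ and every kernel $Z$ of this complex, now dimension-shifting through the syzygy-type sequences $0 \to Z' \to E' \to Z \to 0$ the complex supplies and using $\textup{pd}_{kH}(J) \le \textup{spli}(kH) < \infty$ (again \Cref{ETThe}). I expect the only real obstacle to be exactly this point — checking that \emph{total} acyclicity, and not merely acyclicity, survives restriction — which is precisely where the finiteness of $\textup{silp}(kH)$ (resp.\ $\textup{spli}(kH)$) gets used; the two facts about restriction in the second paragraph, and the bookkeeping that restriction commutes with the kernels of the complex, are routine.
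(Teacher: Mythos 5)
Your proposal is correct and follows essentially the same route as the paper: restrict the (totally) acyclic complex, observe it is an exact complex of $kH$-projectives (resp.\ injectives), and deduce total acyclicity from the finiteness of $\textup{silp}(kH)$ (resp.\ $\textup{spli}(kH)$) supplied by \Cref{ETThe}. The paper delegates the dimension-shifting step to \cite[Lemma~3.14]{MSstabcat}, whereas you write it out explicitly, but the argument is the same.
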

\begin{proof}
    It suffices to show that every exact complex of $kH$-projectives is totally acyclic. However, every $kH$-projective has finite injective dimension from \Cref{ETThe} and so the result follows by an induction on the injective dimenison cf. \cite[Lemma~3.14]{MSstabcat}.
    \par 
    The statement about Gorenstein injectives follows similarly, using that every $kH$-injective has finite projective dimension by \Cref{ETThe}.
\end{proof}

\begin{proposition}\label{fpdext}
    Suppose $\textup{Gcd}_k(G) < \infty$ and let $M$ be a $kG$-module. Then $M \in \mathcal{GP}^{\perp}$ if and only if $M$ has finite projective dimension. Furthermore, $M \in {}^{\perp}\mathcal{GI}$ if and only if $M$ has finite injective dimension.
\end{proposition}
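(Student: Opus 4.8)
The plan is to prove the two biconditionals by dual arguments, and I spell out the Gorenstein projective case. Throughout I use the elementary fact, immediate from \Cref{gorensteindef}, that $\textup{Ext}^i_{kG}(N,P) = 0$ for every Gorenstein projective $N$, every projective $P$, and every $i \geq 1$: this is exactly the acyclicity of $\textup{Hom}_{kG}(C_*,P)$ together with dimension shifting inside the totally acyclic complex $C_*$ realising $N$. For the ``easy'' direction, if $M$ has finite projective dimension then dimension shifting along a finite projective resolution of $M$ promotes this vanishing to $\textup{Ext}^i_{kG}(N,M) = 0$ for all $i \geq 1$ and all Gorenstein projective $N$, so in particular $M \in \mathcal{GP}^{\perp}$; this implication uses nothing about $\textup{Gcd}_k(G)$.

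For the converse, suppose $M \in \mathcal{GP}^{\perp}$. By \Cref{ETThe} we have $n := \textup{glGPD}(kG) < \infty$, so in a projective resolution $\dots \to P_1 \to P_0 \to M \to 0$ the $n$-th syzygy $K := \Omega^n M$ is Gorenstein projective (a standard consequence of $\textup{GPD}(M) \leq n$). The crux is to show that $K$ also lies in $\mathcal{GP}^{\perp}$. Granting this: since $K$ is a kernel in a totally acyclic complex of projectives, it sits in a short exact sequence $0 \to K \to Q \to K' \to 0$ with $Q$ projective and $K'$ Gorenstein projective; as $K' \in \mathcal{GP}$ and $K \in \mathcal{GP}^{\perp}$ we get $\textup{Ext}^1_{kG}(K',K) = 0$, so the sequence splits, $K$ is a summand of $Q$, and hence is projective. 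Therefore $M$ has projective dimension at most $n$.

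To prove $K \in \mathcal{GP}^{\perp}$, fix a Gorenstein projective $N$ and, from a totally acyclic complex realising it, short exact sequences $0 \to N^{(j)} \to Q_j \to N^{(j+1)} \to 0$ with $Q_j$ projective and each $N^{(j)}$ Gorenstein projective, $N^{(0)} = N$. Since the $Q_j$ are projective, iterating the long exact sequences obtained by applying $\textup{Hom}_{kG}(-,K)$ yields $\textup{Ext}^1_{kG}(N,K) \cong \textup{Ext}^{n+1}_{kG}(N^{(n)},K)$. On the other hand, breaking $0 \to K \to P_{n-1} \to \dots \to P_0 \to M \to 0$ into short exact sequences, applying $\textup{Hom}_{kG}(N^{(n)},-)$, and using $\textup{Ext}^i_{kG}(N^{(n)},P_\ell) = 0$ for $i \geq 1$ (the elementary fact above, as $N^{(n)}$ is Gorenstein projective) gives $\textup{Ext}^{n+1}_{kG}(N^{(n)},K) \cong \textup{Ext}^1_{kG}(N^{(n)},M)$, which vanishes because $N^{(n)} \in \mathcal{GP}$ and $M \in \mathcal{GP}^{\perp}$. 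Hence $\textup{Ext}^1_{kG}(N,K) = 0$, as required.

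The statement for $\mathcal{GI}$ and injective dimension follows by dualising verbatim: \Cref{ETThe} (through $\textup{spli}(kG) < \infty$) yields finite global Gorenstein injective dimension, Gorenstein injectives occur as cokernels in totally acyclic complexes of injectives, any module in $\mathcal{GI} \cap {}^{\perp}\mathcal{GI}$ is injective, and the same double dimension shift puts the relevant cosyzygy of $M$ into ${}^{\perp}\mathcal{GI}$. I expect the main obstacle to be precisely the claim that $\mathcal{GP}^{\perp}$ is closed under passing to the $n$-th syzygy --- equivalently, that the cotorsion pair $(\mathcal{GP},\mathcal{GP}^{\perp})$ is hereditary in this setting --- which the double dimension shift above settles; everything else is definitional bookkeeping or a routine Ext computation. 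One could alternatively quote known structure theory for rings of finite Gorenstein global dimension, but the direct argument keeps things self-contained.
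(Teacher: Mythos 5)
Your argument is correct and follows the same route as the paper: pass to the Gorenstein-projective syzygy $\Omega^n M$, show it still lies in $\mathcal{GP}^{\perp}$ (the paper invokes \Cref{thicle} for this, whereas you reprove the relevant hereditariness by the double dimension shift), and conclude projectivity from $\mathcal{GP} \cap \mathcal{GP}^{\perp} = \mathcal{P}$. The only substantive difference is presentational: the paper delegates the ``easy'' direction to \cite[Theorem~2.20]{HOLM2004167} and the injective-dimension bookkeeping to \cite[Theorem~4.2]{em22}, whereas you carry these out by hand, which is fine but makes the dual case slightly less ``verbatim'' than you suggest since one still needs some input (e.g.\ $\textup{spli}(kG)<\infty$ forcing finite global Gorenstein injective dimension) to locate a Gorenstein-injective cosyzygy.
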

\begin{proof}
Suppose $M$ has finite projective dimension. It is shown in  \cite[Theorem~2.20]{HOLM2004167} that this implies that $\textup{Ext}^1_{kG}(N,M) = 0$ for all Gorenstein projectives $N$. 
\par 
On the other hand, suppose $M \in \mathcal{GP}^{\perp}$. Since $\textup{Gcd}_k(G) < \infty$, we know $\textup{glGPD}(kG) < \infty$ and hence some syzygy $\Omega^d(M)$ must be Gorenstein projective. By \Cref{thicle} we know that $\mathcal{GP}^{\perp}$ is thick and so $\Omega^d(M) \in \mathcal{GP}^{\perp}$. However, $\mathcal{GP} \cap \mathcal{GP}^{\perp}$ coincides with the class of projective modules and hence $M$ has finite projective dimension.
\par 
The dual statement follows similarly, using \cite[Theorem~4.2]{em22} to see that every module has finite Gorenstein injective dimension in our situation.
\end{proof}
\begin{corollary}\label{perpequal}
    Suppose $G$ is a group with $\textup{Gcd}_k(G) < \infty$. Then $\mathcal{GP}^{\perp} = {}^{\perp}\mathcal{GI}$. In particular, $(\mathcal{GP},\mathcal{GP}^{\perp})$ is a cotorsion pair cogenerated by a set.
\end{corollary}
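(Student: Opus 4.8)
The plan is to extract both assertions from \Cref{fpdext} and \Cref{ETThe}. By \Cref{fpdext}, $\mathcal{GP}^{\perp}$ is exactly the class of $kG$-modules of finite projective dimension and ${}^{\perp}\mathcal{GI}$ is exactly the class of $kG$-modules of finite injective dimension. By \Cref{ETThe} we have $\textup{silp}(kG) = \textup{spli}(kG) < \infty$. Since every projective module then has finite injective dimension, a module of finite projective dimension has finite injective dimension; dually every injective has finite projective dimension, so a module of finite injective dimension has finite projective dimension. Hence the two classes coincide and $\mathcal{GP}^{\perp} = {}^{\perp}\mathcal{GI}$.

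For the second assertion, I would first observe that, since $\textup{glGPD}(kG) =: d < \infty$ (\Cref{ETThe}) and since $\mathcal{GP} \cap \mathcal{GP}^{\perp}$ is the class of projectives, a module of finite projective dimension in fact has projective dimension at most $d$ (its $d$-th syzygy is Gorenstein projective of finite projective dimension, hence projective). Consequently every module in $\mathcal{GP}^{\perp}$ has injective dimension bounded by some fixed $n < \infty$, while conversely $\{M : \textup{id}(M) \leq n\} \subseteq \{M : \textup{id}(M) < \infty\} = {}^{\perp}\mathcal{GI} = \mathcal{GP}^{\perp}$; thus $\mathcal{GP}^{\perp} = \{M : \textup{id}(M) \leq n\}$. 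By the generalised Baer criterion this last class equals $\mathcal{S}^{\perp}$ for the \emph{set} $\mathcal{S} := \{\Omega^{n}(kG/I) : I \text{ a left ideal of } kG\}$, since $\textup{id}(M) \leq n$ if and only if $\textup{Ext}^{1}_{kG}(\Omega^{n}(kG/I), M) \cong \textup{Ext}^{n+1}_{kG}(kG/I, M)$ vanishes for every left ideal $I$. So $\mathcal{GP}^{\perp}$ is of the form $\mathcal{S}^{\perp}$ for a set, and then $({}^{\perp}(\mathcal{S}^{\perp}), \mathcal{S}^{\perp})$ is automatically a cotorsion pair cogenerated by a set (and complete, by \Cref{EkTrl}).

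It remains to identify the left-hand class ${}^{\perp}(\mathcal{GP}^{\perp})$ with $\mathcal{GP}$. The inclusion $\mathcal{GP} \subseteq {}^{\perp}(\mathcal{GP}^{\perp})$ is immediate from the definition of $\mathcal{GP}^{\perp}$. For the reverse, take $M \in {}^{\perp}(\mathcal{GP}^{\perp})$: every injective lies in $\mathcal{GP}^{\perp}$ and $\mathcal{GP}^{\perp}$ is thick by \Cref{thicle}, so cosyzygies of modules of finite projective dimension again have finite projective dimension; feeding this into a dimension shift (and using that $\textup{Ext}^{i}_{kG}(M,-)$ vanishes on injectives for $i \geq 1$) yields $\textup{Ext}^{i}_{kG}(M,L) = 0$ for all $i \geq 1$ and all $L$ of finite projective dimension. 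Since $\textup{GPD}(M) \leq d < \infty$, Holm's cohomological characterisation \cite[Theorem~2.20]{HOLM2004167} of Gorenstein projective dimension now forces $\textup{GPD}(M) \leq 0$, i.e.\ $M \in \mathcal{GP}$. Hence $(\mathcal{GP}, \mathcal{GP}^{\perp}) = ({}^{\perp}(\mathcal{S}^{\perp}), \mathcal{S}^{\perp})$ is a cotorsion pair cogenerated by the set $\mathcal{S}$.

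The only real obstacle is this last point: proving ${}^{\perp}(\mathcal{GP}^{\perp}) = \mathcal{GP}$ rather than merely $\mathcal{GP} \subseteq {}^{\perp}(\mathcal{GP}^{\perp})$, which is exactly where the finiteness of $\textup{glGPD}(kG)$ together with Holm's characterisation is used essentially; everything else is a combination of \Cref{fpdext}, \Cref{ETThe}, and a standard Baer-type argument. Alternatively, one may simply quote the known fact that $\textup{glGPD}(kG) < \infty$ makes $(\mathcal{GP}, \mathcal{GP}^{\perp})$ a hereditary complete cotorsion pair and skip the direct argument of the previous paragraph.
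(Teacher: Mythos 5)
Your proof is correct. For the equality $\mathcal{GP}^{\perp} = {}^{\perp}\mathcal{GI}$ and for the identification ${}^{\perp}(\mathcal{GP}^{\perp}) = \mathcal{GP}$ you follow essentially the paper's route: the paper likewise reduces the first claim to the equivalence of finite projective and finite injective dimension (quoting \cite[Corollary~4.3]{em22}, where you rederive it directly from $\textup{silp}(kG) = \textup{spli}(kG) < \infty$ of \Cref{ETThe} --- the same content), and likewise obtains ${}^{\perp}(\mathcal{GP}^{\perp}) \subseteq \mathcal{GP}$ from $\textup{glGPD}(kG) < \infty$ together with \cite[Theorem~2.20]{HOLM2004167}. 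Where you genuinely diverge is the cogeneration claim. The paper defers this to \Cref{gpandpgf}: once $\mathcal{GP}^{\perp} = {}^{\perp}\mathcal{GI}$ is known, $\mathcal{GP}^{\perp}$ is closed under direct limits, and a theorem of \v{S}aroch then gives that the pair is of countable type, hence cogenerated by a set. You instead exhibit an explicit cogenerating set: the uniform bound $\textup{pd}(M) \leq d$ for $M \in \mathcal{GP}^{\perp}$ combined with $\textup{silp}(kG) < \infty$ gives a uniform bound $n$ on injective dimensions, so $\mathcal{GP}^{\perp} = \{M : \textup{id}(M) \leq n\}$, which by the generalised Baer criterion equals $\mathcal{S}^{\perp}$ for the set $\mathcal{S} = \{\Omega^n(kG/I)\}$. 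This is more elementary and self-contained --- it avoids both the forward reference and the appeal to the closure-under-direct-limits machinery of \cite{stovsarmodel} and \cite{aroch2016ApproximationsAM} --- at the cost of not delivering the extra conclusions (countable type and $\mathcal{GP} = \mathcal{PGF}$) that the paper's route provides and uses elsewhere. Note also that your set $\mathcal{S}$ does not consist of Gorenstein projectives, but the paper's definition of cogeneration only requires $\mathcal{GP}^{\perp} = \mathcal{S}^{\perp}$, so this is harmless.
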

\begin{proof}
    In view of \Cref{fpdext} it suffices to show that a module has finite projective dimension if and only if it has finite injective dimension. From \Cref{ETThe} we know that $\textup{glGPD}(kG) < \infty$.  Then this is exactly the statement of \cite[Corollary~4.3]{em22}. 
    \par 
    To show $(\mathcal{GP},\mathcal{GP}^{\perp})$ is a cotorsion pair, we only need to show that ${}^{\perp}(\mathcal{GP}^{\perp}) \subseteq \mathcal{GP}$. Given that $\mathcal{GP}^{\perp}$ consists exactly of the modules of finite projective dimension and that every $kG$-module has finite Gorenstein projective dimension, this in turn follows immediately from \cite[Theorem~2.20]{HOLM2004167}. Alternatively, we could use \cite[Corollary~3.4]{cortésizurdiaga2023cotorsion} which shows that $(\mathcal{GP},\mathcal{GP}^{\perp})$ is a cotorsion pair over any ring. 
    \par 
    The statement about $(\mathcal{GP},\mathcal{GP}^{\perp})$ being cogenerated by a set follows from \Cref{gpandpgf} ahead.
\end{proof}
Hovey's \Cref{HOVEYCORRESPONDONCE} now shows that there is a model structure on $\textup{Mod}(kG)$ such that the Gorenstein projectives are the cofibrant objects, see \Cref{ourbigmodel} for the details if necessary. We will refer to this model structure as the Gorenstein projective model structure.  
\par 
We now show that the Gorenstein projective model structure is monoidal in the sense of Hovey \cite[Definition~4.2.6]{hovey2007model} for every group $G$ of finite Gorenstein cohomological dimension. This means that the following two statements hold. 
\begin{enumerate}
    \item \textit{(pushout-product axiom)} For any cofibrations $f: U \to V$ and $g: W \to X$ the pushout product $V\otimes W \bigsqcup\limits_{U \otimes W} U \otimes X \to V \otimes X$ is a cofibration, which is trivial if either $f$ or $g$ is trivial.
    \item \textit{(unit axiom)} Let $\mathcal{G}(\mathbbm{1}) \to \mathbbm{1}$ be the cofibrant replacement of the tensor unit $\mathbbm{1}$. Then the natural map $\mathcal{G}(\mathbbm{1}) \otimes X \to X$ is a weak equivalence for all cofibrant $X$.
\end{enumerate}
The reason for this definition stems from \cite[Theorem~4.3.2]{hovey2007model}, which shows that the homotopy category of a monoidal model category is itself a monoidal category, with the monoidal structure being given by the derived tensor product. 
\begin{remark}
The unit axiom is stated for a particular choice of (functorial) cofibrant replacement. However, if it holds for one choice then it holds for any cofibrant replacement. Indeed, consider two Gorenstein projective special precovers of $k$ as follows: 
\[0 \to K \to Q \to k \to 0 \textup{ and } 0 \to K' \to Q' \to k \to 0\]
Assume that the unit axiom holds for the cofibrant replacement $Q \to k$. 
Using that $K' \in \mathcal{GP}^{\perp}$ and $Q \in \mathcal{GP}$ we find that the map $Q \to k$ lifts through the identity to a map $Q \to Q'$. Necessarily, this map is a weak equivalence. The pushout product axiom ensures that the tensor is a left Quillen functor and so by Ken Brown's Lemma \cite[Lemma~1.1.12]{hovey2007model}, tensoring with a Gorenstein projective preserves weak equivalences between cofibrant objects. Therefore, we have the following commutative square. 
\[\begin{tikzcd}
    Q \otimes X \arrow{r}\arrow{d} & k \otimes X \arrow[equal]{d}\\ Q'\otimes X \arrow{r} & k \otimes X
\end{tikzcd}\]
The left vertical arrow and top horizontal arrow are weak equivalences and hence by the 2-out-of-3 property for weak equivalences, it follows that so is the bottom horizontal arrow.
\end{remark}
We use this in what follows, when we show the unit axiom holds for groups of finite Gorenstein cohomological dimension. 
\begin{proposition}\label{basemono}
    Suppose $G$ is a group with $\textup{Gcd}_k(G)< \infty$. Then the Gorenstein projective model structure is monoidal.
\end{proposition}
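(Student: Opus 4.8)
The plan is to verify Hovey's two monoidal axioms directly, using the explicit description of the model structure supplied by \Cref{HOVEYCORRESPONDONCE}: in the Gorenstein projective model structure the cofibrations are the monomorphisms with cokernel in $\mathcal{GP}$, and (since $\mathcal{GP}\cap\mathcal{GP}^{\perp}$ is the class of projectives) the trivial cofibrations are the monomorphisms with projective cokernel. Everything rests on two facts about the diagonal tensor product $\otimes_k$ on $\textup{Mod}(kG)$, which I would isolate first.

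\emph{Step 1.} Every $M\in\mathcal{GP}$ is projective, hence flat, as a $k$-module. Since $kG$ is free over $k$, projective $kG$-modules are $k$-projective, so a complete resolution of $M$ gives an exact sequence $0\to M\to P^0\to P^1\to\cdots$ with each $P^i$ projective over $k$; truncating it after $d=\textup{gl.dim}(k)$ terms exhibits $M$ as a $d$-th $k$-syzygy of a module which over $k$ has projective dimension at most $d$, whence $M$ is $k$-projective. (This, or at least the flatness, may well already be available; it is short enough to include in any case.)

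\emph{Step 2: closure of $\mathcal{GP}$ under $\otimes_k$ with the diagonal $G$-action.} Let $M,N\in\mathcal{GP}$. By Step 1 the functor $-\otimes_k N$ on $\textup{Mod}(kG)$ is exact, it sends projective $kG$-modules to projective $kG$-modules (for any $k$-projective $L$ the diagonal module $kG\otimes_k L$ is a summand of a free $kG$-module), and it is left adjoint to $\textup{Hom}_k(N,-)$; hence $\textup{Ext}^1_{kG}(M\otimes_k N,W)\cong\textup{Ext}^1_{kG}\big(M,\textup{Hom}_k(N,W)\big)$ for every $W$. Now let $W\in\mathcal{GP}^{\perp}$. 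By \Cref{fpdext} together with the argument used in \Cref{perpequal} (via \cite[Corollary~4.3]{em22}), $W$ has finite injective dimension over $kG$; since $-\otimes_k N$ is exact, its right adjoint $\textup{Hom}_k(N,-)$ preserves $kG$-injectives, so $\textup{Hom}_k(N,W)$ has finite injective dimension, hence finite projective dimension, i.e.\ $\textup{Hom}_k(N,W)\in\mathcal{GP}^{\perp}$. Thus the displayed $\textup{Ext}^1$ vanishes because $M\in\mathcal{GP}$, and therefore $M\otimes_k N\in{}^{\perp}(\mathcal{GP}^{\perp})=\mathcal{GP}$ by \Cref{perpequal}. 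I expect this to be the one genuinely substantial point.

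\emph{Step 3: the two axioms.} For the pushout-product axiom, given cofibrations $f\colon U\to V$ and $g\colon W\to X$ the cokernels $V/U,X/W$ lie in $\mathcal{GP}$, hence are $k$-flat by Step 1, which is exactly what makes the pushout-product map $V\otimes_k W\sqcup_{U\otimes_k W}U\otimes_k X\to V\otimes_k X$ a monomorphism with cokernel $(V/U)\otimes_k(X/W)$; this cokernel is in $\mathcal{GP}$ by Step 2, so the map is a cofibration. If in addition $f$ (say) is trivial then $V/U$ is projective, so $(V/U)\otimes_k(X/W)$ is a summand of a free $kG$-module tensored over $k$ with a $k$-projective module and is therefore $kG$-projective, making the pushout-product a trivial cofibration; the case of $g$ trivial is symmetric. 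For the unit axiom, take the cofibrant replacement of $\mathbbm{1}=k$ to be a Gorenstein projective special precover $0\to K\to Q\to k\to 0$, so that $K\in\mathcal{GP}^{\perp}$ has finite projective dimension. For cofibrant $X$ (so $X\in\mathcal{GP}$, hence $k$-flat) the sequence $0\to K\otimes_k X\to Q\otimes_k X\to X\to 0$ is exact, and tensoring a finite $kG$-projective resolution of $K$ with the $k$-projective module $X$ shows that $K\otimes_k X$ has finite projective dimension, i.e.\ is a trivial object; as all objects are fibrant, $Q\otimes_k X\to X$ is then a trivial fibration, hence a weak equivalence. By the preceding remark this establishes the unit axiom, and the proof is complete. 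The main obstacle, as indicated, is Step 2.
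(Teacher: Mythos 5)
Your proof is correct, but your key Step 2 takes a genuinely different route from the paper. The paper establishes closure of $\mathcal{GP}$ under $\otimes_k$ by writing $X$ as a kernel in a totally acyclic complex of projectives $P_*$, observing that $P_*\otimes_k Y$ is an exact complex of projectives, and then invoking that for groups with $\textup{Gcd}_k(G)<\infty$ every exact complex of projectives is automatically totally acyclic (the observation made in the proof of \Cref{allsnf}, which ultimately rests on $\textup{silp}(kG)<\infty$ from \Cref{ETThe}). You instead use the tensor--Hom adjunction for the diagonal action: $\textup{Ext}^1_{kG}(M\otimes_k N,W)\cong\textup{Ext}^1_{kG}(M,\textup{Hom}_k(N,W))$, together with the observation that $\textup{Hom}_k(N,-)$ is exact (as $N$ is $k$-projective), preserves $kG$-injectives, and hence preserves the class of modules of finite injective, equivalently finite projective, dimension --- which by \Cref{fpdext} is exactly $\mathcal{GP}^{\perp}$. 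You then conclude $M\otimes_k N\in{}^{\perp}(\mathcal{GP}^{\perp})=\mathcal{GP}$. Both arguments rely on the identification of $\mathcal{GP}^{\perp}$ with the modules of finite projective/injective dimension, but the paper's approach is shorter given that \Cref{allsnf} is already proved, whereas your adjunction argument is more self-contained and avoids any appeal to total acyclicity of exact complexes of projectives. Similarly, for the pushout-product axiom the paper simply reduces to checking the hypotheses of \cite[Theorem~7.2]{hovmod}, while you verify the axiom directly --- a matter of presentation rather than substance. The unit-axiom argument is essentially identical to the paper's. One small expository point: in Step 3 the injectivity of the pushout-product map and the identification of its cokernel as $(V/U)\otimes_k(X/W)$ is exactly the content of Hovey's criterion, and while it is true given the $k$-flatness you established, it would be worth either citing the precise statement or spelling out the diagram chase; as written you assert it rather than prove it.
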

\begin{proof}
To show the pushout-product axiom we wish to apply \cite[Theorem~7.2]{hovmod}. Translating this to our context we need to show the following, noting that we have replaced hypothesis $(a)$ of \cite[Theorem~7.2]{hovmod} by the statement following \cite[Lemma~7.3]{hovmod}. 
\begin{enumerate}[label=(\roman*)]
        \item Every Gorenstein projective module is flat over $k$ and for every $kG$-module $M$ there is a surjective map $X \to M$ with $X$ Gorenstein projective
        \item If $X,Y$ are Gorenstein projective, so is $X \otimes Y$
        \item If $X,Y$ are Gorenstein projective, and at least one of them is projective then $X \otimes Y$ is projective 
\end{enumerate}
We also need to show the unit axiom, which takes the following form in our context. 
\begin{enumerate}[resume*]
\item For any Gorenstein projective $X$ the natural map $\mathcal{G}(k)\otimes X \to k \otimes X$ is a weak equivalence, where $\mathcal{G}(k) \to k$ is a Gorenstein projective special precover of $k$
\end{enumerate}
Since $k$ has finite global dimension, every Gorenstein projective is projective over $k$. We also know that Gorenstein projective special precovers exist; this follows by \Cref{EkTrl} and \Cref{perpequal}. Altogether, we see that $(i)$ holds. 
\par 
We now show that $(ii)$ holds. Suppose that $X$ and $Y$ are Gorenstein projective. Then $X$ is the kernel in a totally acyclic complex of projectives $P_*$. Hence, $X \otimes Y$ is the kernel in $P_* \otimes Y$. This is an exact complex of projectives and hence, as we showed in the proof of \Cref{allsnf}, it must be totally acyclic. It follows that $X \otimes Y$ is Gorenstein projective. 
    \par 
The statement of $(iii)$ is shown in \cite[Lemma~4.2]{MSstabcat}. 
\par 
    Now, we show the unit axiom, which is part $(iv)$. Consider the special precover of $k$, which is a short exact sequence $0 \to K \to \mathcal{G}(k) \to k \to 0$ such that $K \in \mathcal{GP}^{\perp}$. From \Cref{fpdext} this implies that $K$ has finite projective dimension and therefore, since $X$ is projective over $k$, we see that $K \otimes X$ must also have finite projective dimension and hence $K \otimes X \in \mathcal{GP}^{\perp}$. 
    \par 
    Tensoring the special precover of $k$ above with $X$ now implies that the map $\mathcal{G}(k) \otimes X \to k \otimes X$ is a surjection with kernel in $\mathcal{GP}^{\perp}$, i.e. it is a trivial fibration. In particular, it is a weak equivalence as we desired. 
\end{proof}
In fact, in this case the model structure satisfies the monoid axiom. As explained in \cite[Section~7]{hovmod}, this implies that the category of monoids form a model category, with the structure being induced by that of the Gorenstein projective model structure. The monoid axiom is actually satisfied for $\lhg$ groups and so we prove this now, although we have not yet proved the existence of the model structure in general, see \Cref{ttcatth}. 
\begin{proposition}
    The Gorenstein projective model structure satisfies the monoid axiom for every $\textup{LH}\mathfrak{G}$ group $G$.
\end{proposition}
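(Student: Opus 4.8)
The plan is to verify the monoid axiom in the form used for cotorsion-pair model structures in \cite[Section~7]{hovmod}: writing $\otimes$ for the diagonal tensor product $\otimes_k$ on $\textup{Mod}(kG)$, one must show that every morphism built from the class $\{f\otimes M : f\text{ a trivial cofibration},\ M\in\textup{Mod}(kG)\}$ by pushouts and transfinite compositions is a weak equivalence. The starting observation is a concrete description of the trivial cofibrations in the Gorenstein projective model structure (whose existence for $\lhg$ groups is \Cref{ttcatth}). Since all objects are fibrant, its cofibrations are the monomorphisms with cokernel in $\mathcal{GP}$, its trivial objects form the class $\mathcal{W}=\mathcal{GP}^{\perp}$, and --- because $\mathcal{GP}\cap\mathcal{GP}^{\perp}$ is exactly the class of projective modules (as used in the proof of \Cref{fpdext}) --- its trivial cofibrations are the monomorphisms $f\colon A\to B$ with $B/A$ projective. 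Such an extension splits, so up to isomorphism under $A$ every trivial cofibration is the inclusion $A\hookrightarrow A\oplus P$ of a direct summand with $P$ projective.

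First I would handle a single map $f\otimes M$, which is then the split monomorphism $A\otimes M\hookrightarrow (A\otimes M)\oplus(P\otimes M)$ with cokernel $P\otimes M$ carrying the diagonal action. The key point is that $P\otimes M$ has finite projective dimension over $kG$: writing $P$ as a summand of a free module $kG^{(I)}$ realises $P\otimes M$ as a summand of $kG^{(I)}\otimes M\cong (kG\otimes M)^{(I)}$, and $kG\otimes M$ with the diagonal action is isomorphic to $kG\otimes_k M_0$ with $G$ acting only on the left factor (via $g\otimes m\mapsto g\otimes g^{-1}m$), where $M_0$ is the underlying $k$-module of $M$; that is, it is induced from the trivial subgroup. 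As $k$ has finite global dimension, $M_0$ admits a finite $k$-projective resolution, and applying the exact functor $kG\otimes_k(-)$ shows $\textup{pd}_{kG}(kG\otimes M)\leq\textup{gl.dim}(k)$; hence $\textup{pd}_{kG}(P\otimes M)\leq\textup{gl.dim}(k)$, and in particular $P\otimes M\in\mathcal{GP}^{\perp}=\mathcal{W}$ by \cite[Theorem~2.20]{HOLM2004167}. Thus $f\otimes M$ is a monomorphism with cokernel in $\mathcal{W}$.

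Next I would propagate this through the cellular constructions. A pushout of $f\otimes M$ along any morphism is again a monomorphism with the same cokernel $P\otimes M\in\mathcal{W}$, since a pushout of a monomorphism in an abelian category is a monomorphism with isomorphic cokernel. A transfinite composition of monomorphisms whose consecutive cokernels lie in $\mathcal{W}$ is a monomorphism whose cokernel is filtered by those cokernels; and since $\lhg$ groups are virtually Gorenstein, i.e. $\mathcal{W}=\mathcal{GP}^{\perp}={}^{\perp}\mathcal{GI}$ (\Cref{eqperpsd}), the class $\mathcal{W}$ is the left-hand side of a cotorsion pair and is therefore closed under transfinite extensions by Eklof's lemma \cite{EkTrl}. (Alternatively one may invoke directly that the class of modules of projective dimension at most $\textup{gl.dim}(k)$ is closed under transfinite extensions.) Consequently every morphism obtained from the maps $f\otimes M$ by pushouts and transfinite compositions is a monomorphism with cokernel in $\mathcal{W}$.

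Finally I would use the standard fact that in an abelian model structure a monomorphism with trivial cokernel is a weak equivalence --- one sees this by factoring it as a trivial cofibration followed by a fibration and checking that the fibration is an epimorphism whose kernel, being an extension of two objects of the thick class $\mathcal{W}$, again lies in $\mathcal{W}$, so that the fibration is trivial. As weak equivalences are closed under retracts, this is exactly the monoid axiom. The one step requiring genuine input rather than formal manipulation is the closure of $\mathcal{W}$ under transfinite extensions; this is where the hypothesis that $G$ is $\lhg$ (so that $kG$ is virtually Gorenstein) enters, and it is the part of the argument I expect to be the real obstacle.
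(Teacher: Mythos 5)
Your proposal is correct and, despite unwinding the monoid axiom by hand rather than invoking Hovey's packaging \cite[Theorem~7.4]{hovmod} as the paper does, it rests on the same two essential ingredients: that $P\otimes M$ has finite projective dimension (hence lies in $\mathcal{W}=\mathcal{GP}^{\perp}$) for $P$ projective, and that $\mathcal{W}={}^{\perp}\mathcal{GI}$ is closed under transfinite extensions via \Cref{eqperpsd} and the Eklof Lemma. The only cosmetic difference is that you prove the first ingredient directly via $kG\otimes M\cong\textup{Ind}_1^G(M_0)$, where the paper cites \cite[Lemma~4.2]{MSstabcat} for the same fact.
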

\begin{proof}
    By \cite[Theorem~7.4]{hovmod} it suffices to prove the following two statements. 
    \begin{enumerate}[label=(\roman*)]
        \item If $X \in \mathcal{GP} \cap \mathcal{GP}^{\perp}$ and $Y$ is arbitrary, then $X \otimes Y \in \mathcal{GP}^{\perp}$
        \item $\mathcal{GP}^{\perp}$ is closed under transfinite compositions of pure monomorphisms
    \end{enumerate}
    We prove $(i)$ first, and begin by noting that necessarily $X$ is projective. It follows from \cite[Lemma~4.2]{MSstabcat} that $X \otimes Y$ has finite projective dimension. But it is straightforward to see that every module of finite projective dimension is contained in $\mathcal{GP}^{\perp}$, see e.g. \cite[Proposition~2.3]{HOLM2004167}. 
    \par 
    For part $(ii)$ we first recall from \cite[Lemma~6.2]{hovmod} that the left hand side of a cotorsion pair is closed under transfinite extensions. In particular ${}^{\perp}\mathcal{GI}$ must be closed under transfinite extensions. However, we know from \Cref{eqperpsd} and \Cref{perpequal} that for our groups we have equality $\mathcal{GP}^{\perp} = {}^{\perp}\mathcal{GI}$. Hence, $\mathcal{GP}^{\perp}$ is closed under transfinite compositions of pure monomorphisms, since it is closed under all transfinite extensions. 
\end{proof}
We now restrict ourselves to groups with $\textup{glGPD}_{AC}(kG) < \infty$. As we now show, these contain the class of groups of type $\Phi_k$, defined as follows. These are the groups considered by Mazza and Symonds in their work on the stable module category \cite{MSstabcat}.
\begin{definition}\cite{talphi}  We say that a group is of type $\Phi_k$ if any $kG$-module $M$ has finite projective dimension if and only if $M{\downarrow}_F^G$ has finite projective dimension for every finite subgroup $F \leq G$\end{definition}

\begin{proposition}
    Suppose $k$ is a commutative noetherian ring of finite global dimension and $G$ is a group of type $\Phi_k$. Then $\textup{glGPD}_{AC}(kG) < \infty$.
\end{proposition}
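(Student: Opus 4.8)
The plan is to reduce to finite subgroups via the defining property of type $\Phi_k$ --- over a finite subgroup everything is classical and uniformly controlled --- and then to show that over $G$ itself the Gorenstein AC-projective dimension collapses onto the ordinary Gorenstein projective dimension. First I record the relevant facts about a finite subgroup $F\leq G$. Since $k$ is commutative noetherian of finite global dimension $d$, the ring $kF$ is noetherian, is a Frobenius extension of $k$ and hence Iwanaga--Gorenstein of self-injective dimension at most $d$, and has finite Krull dimension. Consequently the absolutely clean $kF$-modules are precisely the injective ones and the level $kF$-modules precisely the flat ones; every flat $kF$-module has projective dimension at most $d$ (the Gruson--Jensen bound for noetherian rings of finite Krull dimension); and every $kF$-module of finite projective dimension has projective and injective dimension at most $d$. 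In particular $\textup{spli}(kF)=\textup{silp}(kF)\leq d$, a bound independent of $F$.

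Next, the transfer to $kG$. Let $A$ be an absolutely clean $kG$-module and $F\leq G$ finite. If $X$ is an $\textup{FP}_{\infty}$ $kF$-module, then $X{\uparrow}_F^G=X\otimes_{kF}kG$ is an $\textup{FP}_{\infty}$ $kG$-module, because $kG$ is free as a $kF$-module; hence $\textup{Ext}^1_{kF}(X,A{\downarrow}_F^G)\cong\textup{Ext}^1_{kG}(X{\uparrow}_F^G,A)=0$ by Shapiro's lemma, so $A{\downarrow}_F^G$ is absolutely clean, i.e.\ injective, over $kF$ and hence of finite projective dimension there. The same computation with $\textup{Tor}$ in place of $\textup{Ext}$ shows that a level $kG$-module restricts to a level, hence flat, $kF$-module, again of finite projective dimension over $kF$. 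Invoking the definition of type $\Phi_k$, every absolutely clean and every level $kG$-module has finite projective dimension over $kG$. In particular every injective $kG$-module, being absolutely clean, has finite projective dimension; after upgrading this to a uniform bound (see below) one obtains $\textup{spli}(kG)<\infty$, hence $\textup{Gcd}_k(G)<\infty$ and $\textup{glGPD}(kG)=m<\infty$ by \Cref{ETThe}.

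I would then conclude by proving $\mathcal{GP}_{AC}(kG)=\mathcal{GP}(kG)$, which forces $\textup{glGPD}_{AC}(kG)=\textup{glGPD}(kG)=m$. The inclusion $\mathcal{GP}_{AC}\subseteq\mathcal{GP}$ is automatic, so let $M\in\mathcal{GP}(kG)$ be a kernel in a totally acyclic complex $C_*$ of projective $kG$-modules and let $L$ be any level $kG$-module. By the previous paragraph $L$ has finite projective dimension over $kG$, so dimension-shifting along a finite projective resolution of $L$ shows that $\textup{Hom}_{kG}(C_*,L)$ is acyclic. Hence $C_*$ already witnesses $M\in\mathcal{GP}_{AC}(kG)$, so $\mathcal{GP}(kG)\subseteq\mathcal{GP}_{AC}(kG)$, and a Gorenstein projective resolution of length $m$ of an arbitrary $kG$-module is automatically a Gorenstein AC-projective resolution.

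The main obstacle is the step flagged above: passing from ``every injective $kG$-module has finite projective dimension'' to the uniform bound $\textup{spli}(kG)<\infty$, equivalently to $\textup{Gcd}_k(G)<\infty$. Since type $\Phi_k$ is a purely qualitative hypothesis, securing a uniform bound requires a quantitative refinement of it, which I would extract from the structure theory of groups of type $\Phi$ (Talelli; Mazza--Symonds). Once this is in hand, everything else above is formal.
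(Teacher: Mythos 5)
The genuine gap is the one you flag yourself: establishing $\textup{Gcd}_k(G)<\infty$ (equivalently, by \Cref{ETThe}, $\textup{glGPD}(kG)<\infty$). Your proposed route --- every absolutely clean, in particular every injective, $kG$-module has finite projective dimension, hence $\textup{spli}(kG)<\infty$ --- does not close, because type $\Phi_k$ is purely qualitative: it gives finiteness of $\textup{pd}_{kG}(I)$ for each injective $I$ separately, with no uniform bound, and ``every injective has finite projective dimension'' does not by itself yield $\textup{spli}(kG)<\infty$ (one cannot reduce to an injective cogenerator, since products need not preserve finite projective dimension). The paper closes this step differently and more cheaply: by \cite[Theorem~3.9]{MSstabcat}, every module over a group of type $\Phi_k$ admits a complete resolution; applied to the single module $k$, this produces a totally acyclic complex of projectives agreeing with a projective resolution of $k$ in high degrees, so some syzygy $\Omega^d(k)$ is Gorenstein projective and $\textup{Gcd}_k(G)<\infty$ follows at once. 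No uniform bound over a whole class of modules is required. You should substitute this (or some other quantitative input from the structure theory of type $\Phi$ groups) for your $\textup{spli}$ argument; as written, the crucial finiteness statement is assumed rather than proved.

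The rest of your argument is correct, and your proof that $\mathcal{GP}(kG)\subseteq\mathcal{GP}_{AC}(kG)$ is actually more direct than the paper's. The paper argues through perpendicular classes: it shows $\mathcal{GP}_{AC}^{\perp}\subseteq\mathcal{GP}^{\perp}$ by restricting to finite subgroups and inducing Gorenstein AC-projectives back up (as in \Cref{gpeqgpac}), then invokes the two cotorsion pairs. You instead show directly that every level $kG$-module has finite projective dimension (level restricts to level by your Tor computation, level equals flat over the noetherian ring $kF$, flat modules over $kF$ have finite projective dimension, and then type $\Phi_k$ lifts finiteness to $kG$), after which a dimension shift along a finite projective resolution of $L$ shows that any totally acyclic complex of projectives is already $\textup{Hom}_{kG}(-,L)$-acyclic. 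This is the same mechanism the paper uses in \Cref{gpequalgacp} and \Cref{ddff}, and it avoids the forward reference to the induction argument; once the finiteness of $\textup{Gcd}_k(G)$ is secured, it completes the proof.
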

\begin{proof}
    It is shown in \cite[Theorem~3.9]{MSstabcat} that for groups of type $\Phi_k$, every module must have a complete resolution. This immediately implies that some syzygy of the trivial module $k$ must be Gorenstein projective, and hence $\textup{Gcd}_k(G)< \infty$. 
    \par 
    From \Cref{ETThe} we know that this is equivalent to $\textup{glGPD}(kG) < \infty$. We claim that $\mathcal{GP} = \mathcal{GP}_{AC}$; this implies the statement of the proposition.
    \par 
    In fact, we instead show that $\mathcal{GP}_{AC}^{\perp} \subseteq \mathcal{GP}^{\perp}$. To see why this suffices, note that this in turn implies that ${}^{\perp}(\mathcal{GP}^{\perp}) \subseteq {}^{\perp}(\mathcal{GP}_{AC}^{\perp})$. However, $(\mathcal{GP},\mathcal{GP}^{\perp})$ is a cotorsion pair and hence ${}^{\perp}(\mathcal{GP}^{\perp}) = \mathcal{GP}$. A similar statement is true for $\mathcal{GP}_{AC}$ and so altogether this shows that $\mathcal{GP} \subseteq \mathcal{GP}_{AC}$. By definition there is an inclusion $\mathcal{GP}_{AC} \subset \mathcal{GP}$ and so we conclude that $\mathcal{GP} = \mathcal{GP}_{AC}$ as desired. 
    \par 
    It remains to show that $\mathcal{GP}_{AC}^{\perp} \subseteq \mathcal{GP}^{\perp}$ as claimed. First, note that by \Cref{fpdext} we know that $\mathcal{GP}^{\perp}$ is exactly the class of modules of finite projective dimension. 
    \par 
    Suppose $M \in \mathcal{GP}_{AC}^{\perp}$. By what we have said, we must show that $M$ has finite projective dimension. Since $G$ is of type $\Phi$ it suffices to show that $M{\downarrow}_F^G$ has finite projective dimension for every finite subgroup $F \leq G$. Equivalently, again by \Cref{fpdext}, we may show that $M{\downarrow}_F^G \in \mathcal{GP}^{\perp}$. 
    \par The key point now is that, since $k$ is noetherian, the group ring $kF$ is noetherian for any finite group $F$. Hence, the level modules coincide with the flat modules. Furthermore, flat modules have finite projective dimension (see \cite[Corollary~4.3]{em22}). We conclude from this that $\mathcal{GP} = \mathcal{GP}_{AC}$ over $kF$. 
    \par 
    From this discussion, we see that we need to show that $M{\downarrow}_F^G \in \mathcal{GP}_{AC}^{\perp}$ for all finite subgroups $F \leq G$. This is proved as in \Cref{gpeqgpac} ahead.
\end{proof}
We remark that it is not known if every group with $\textup{glGPD}_{AC}(kG)$ must be of type $\Phi_k$. However, this does provide us with many natural examples of groups with $\textup{glGPD}_{AC}(kG)$. 
For example, groups of finite virtual cohomological dimension over $k$, and groups with a finite dimensional model for the classifying space for proper actions will have $\textup{glGPD}_{AC}(kG) < \infty$, see \cite[Corollary~2.6]{MSstabcat}.
\begin{definition}
    Following Gillespie \cite{gillespie_2019}, we say that a ring $R$ is AC-Gorenstein if for some $d > 0$, every right or left level $R$ module has absolutely clean coresolution of length at most $d$. This means for any level module $L$ there exists an exact sequence $0 \to L \to A_0 \to \dots \to A_d \to 0$ with each $A_i$ absolutely clean for $0 \leq i \leq d$.
\end{definition}
Since we are working over group rings, we can just focus on left $kG$-modules. 
\begin{lemma}\label{gpequalgacp}
Suppose $G$ is a group with $\textup{glGPD}_{AC}(kG) < \infty$. Then any level module has finite injective dimension and so $kG$ is an AC-Gorenstein ring. Furthermore, the Gorenstein projective $kG$-modules coincide with the Gorenstein AC-projective $kG$-modules. Dually, the Gorenstein injective modules coincide with the Gorenstein AC-injective modules.
\end{lemma}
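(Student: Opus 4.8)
The plan is to reduce the statement to an analogue of \Cref{fpdext} for the Gorenstein AC-projective cotorsion pair, supplemented by a character-duality argument for the injective side. First I would record that, since $\mathcal{GP}_{AC}\subseteq\mathcal{GP}$, the hypothesis forces $\textup{glGPD}(kG)<\infty$, so by \Cref{ETThe} we have $\textup{Gcd}_k(G)<\infty$ and $\textup{silp}(kG)=\textup{spli}(kG)=:s<\infty$, while by \cite[Corollary~4.3]{em22} a $kG$-module has finite projective dimension iff finite flat dimension iff finite injective dimension. Next I would note, straight from the definitions, that every level module $L$ lies in $\mathcal{GP}_{AC}^{\perp}$: if $N\in\mathcal{GP}_{AC}$ is a kernel in a complex $C_*$ of projectives with $\textup{Hom}_{kG}(C_*,L)$ acyclic, then $C_{\geq 0}\to N$ is a projective resolution of $N$ and the acyclicity of $\textup{Hom}_{kG}(C_*,L)$ forces $\textup{Ext}^{\geq 1}_{kG}(N,L)=0$. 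Dually, every absolutely clean module lies in ${}^{\perp}\mathcal{GI}_{AC}$.

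The technical core is the identification $\mathcal{GP}_{AC}^{\perp}=\{M:\textup{pd}_{kG}(M)<\infty\}$, proved exactly as \Cref{fpdext}: the inclusion $\supseteq$ holds because a module of finite projective dimension lies in $\mathcal{GP}^{\perp}\subseteq\mathcal{GP}_{AC}^{\perp}$ by \cite[Proposition~2.3]{HOLM2004167}; for $\subseteq$, given $M\in\mathcal{GP}_{AC}^{\perp}$ and $n=\textup{glGPD}_{AC}(kG)$, the $n$-th syzygy $\Omega^{n}M$ in a projective resolution is Gorenstein AC-projective (the cotorsion pair $(\mathcal{GP}_{AC},\mathcal{GP}_{AC}^{\perp})$ of \cite{Bravo2014TheSM} is complete and hereditary, from which a standard argument shows the $n$-th syzygy of every module is Gorenstein AC-projective), and a dimension shift using $\textup{Ext}^{\geq 1}_{kG}(\mathcal{GP}_{AC},\text{projectives})=0$ together with hereditariness shows $\Omega^{n}M\in\mathcal{GP}_{AC}^{\perp}$ as well; since a module in $\mathcal{GP}_{AC}\cap\mathcal{GP}_{AC}^{\perp}$ is a direct summand of a projective (its defining short exact sequence $0\to\Omega^{n}M\to C\to N'\to 0$, with $C$ projective and $N'\in\mathcal{GP}_{AC}$, must split), we conclude $\textup{pd}_{kG}(M)\leq n$. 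In particular every level module has projective dimension $\leq n$, hence injective dimension $\leq s$, and since injectives are absolutely clean this is an absolutely clean coresolution of bounded length, so $kG$ is AC-Gorenstein (only left modules need be treated, as $kG$ is a group ring). The same input gives $\mathcal{GP}=\mathcal{GP}_{AC}$: for $M\in\mathcal{GP}$ with totally acyclic complex of projectives $C_*$ and any level $L$, a dimension shift from the projective case --- valid since $\textup{pd}_{kG}(L)<\infty$ --- shows $\textup{Hom}_{kG}(C_*,L)$ is acyclic, so $M\in\mathcal{GP}_{AC}$.

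For $\mathcal{GI}=\mathcal{GI}_{AC}$ the containment $\mathcal{GI}_{AC}\subseteq\mathcal{GI}$ is immediate since injectives are absolutely clean, and for the converse it suffices to prove that every absolutely clean $A$ has finite injective dimension: then for $M\in\mathcal{GI}$ with totally acyclic complex of injectives $C_*$, the dual dimension shift makes $\textup{Hom}_{kG}(A,C_*)$ acyclic, so $M\in\mathcal{GI}_{AC}$. The step I expect to need the most care is obtaining this finite injective dimension \emph{without} presupposing $\textup{glGID}_{AC}(kG)<\infty$, which would make the argument literally dual to the above. Here I would use character duality: the dual $A^{+}=\textup{Hom}_{\mathbb{Z}}(A,\mathbb{Q}/\mathbb{Z})$ of an absolutely clean module is level \cite{Bravo2014TheSM}, so $\textup{id}_{kG}(A^{+})\leq s$ by the first part; the natural isomorphism $\textup{Ext}^{i}_{kG}(-,A^{+})\cong\textup{Tor}_{i}^{kG}(-,A)^{+}$ (from the tensor--hom adjunction and injectivity of $\mathbb{Q}/\mathbb{Z}$ over $\mathbb{Z}$) then forces $\textup{fd}_{kG}(A)\leq s$, whence $\textup{pd}_{kG}(A)<\infty$ and finally $\textup{id}_{kG}(A)<\infty$ by the equivalences recorded at the outset.
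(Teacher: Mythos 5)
Your proof is correct, and it takes a genuinely different route from the paper for the closing steps. You and the paper agree on the essential first observation (a module of finite Gorenstein AC-projective dimension paired against a level module via dimension shifting gives $\mathrm{id}(L)<\infty$, then \cite[Corollary~4.3]{em22} provides the uniform bound), but from there the paths diverge. The paper moves quickly by quoting Gillespie's model structures \cite[Theorems~6.2, 7.1]{gillespie_2019}: the trivial objects in the Gorenstein AC-projective model structure are the modules of finite level dimension, which you identify with the modules of finite projective/injective dimension, and so both $\mathcal{GP}$ and $\mathcal{GP}_{AC}$ arise as ${}^{\perp}\mathcal{W}$ for the same thick class $\mathcal{W}$; the dual model structure handles $\mathcal{GI}=\mathcal{GI}_{AC}$. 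You instead prove the cotorsion-pair identity $\mathcal{GP}_{AC}^{\perp}=\{\text{finite pd}\}$ by hand (an exact analogue of \Cref{fpdext}), deduce $\mathcal{GP}\subseteq\mathcal{GP}_{AC}$ by dimension-shifting $\textup{Hom}_{kG}(C_*,L)$ down to the projective case, and on the injective side use character duality ($A^{+}$ level $\Rightarrow$ $\textup{fd}(A)<\infty$ $\Rightarrow$ $\textup{id}(A)<\infty$) in place of the dual model structure. Your version is more elementary and self-contained; the paper's is shorter because it leans on \cite{gillespie_2019}. Two small points worth making explicit if you were to write this up: the claim that the $n$-th syzygy in \emph{any} projective resolution lands in $\mathcal{GP}_{AC}$ when $\textup{glGPD}_{AC}\le n$ does need a Schanuel-type justification (it is the $\mathcal{GP}_{AC}$-analogue of \cite[Theorem~2.20]{HOLM2004167}, valid because the cotorsion pair is complete and hereditary, but "standard argument" glosses over it); and the character-duality step silently passes to right $kG$-modules, so it uses the anti-involution of the group ring to transport the hypothesis $\textup{glGPD}_{AC}(kG)<\infty$ and the conclusion $\textup{id}(L)\le s$ to the other side --- the paper makes the same implicit move, but it's worth a sentence.
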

\begin{proof}
    We first prove the claim that all level modules have finite injective dimension. Let $L$ be a level module and $M$ be arbitrary. There exists a resolution $0 \to G_n \to \dots G_0 \to M \to 0$ where each $G_i$ is Gorenstein AC-projective. By definition, we know that $\textup{Ext}^1_{kG}(G_i,L) = 0$ for each $0 \leq i \leq n$. By dimension shifting, we can now see that $\textup{Ext}^n_{kG}(M,L) \cong \textup{Ext}^1_{kG}(G_n,L) \cong 0$. It follows that $L$ has finite injective dimension. 
    \par Since injective modules are absolutely clean, it follows that $G$ is AC-Gorenstein once we know that these injective dimensions must be bounded. This follows from \cite[Corollary~4.3]{em22}.
    \par 
   Gillespie then constructs in \cite[Theorem~6.2]{gillespie_2019} a model structure with (one equivalent characterisation of) the trivial modules being those of finite level dimension. By what we have just proved, this is equivalent to a module being of finite injective dimension which in turn is equivalent, as noted in the proof of \Cref{perpequal}, to a module being of finite projective dimension. Therefore, we see from \Cref{fpdext} that these correspond to those modules in $\mathcal{GP}^{\perp}$. It follows that Gorenstein projective modules and Gorenstein AC-projective modules coincide, since these can both be described as ${}^{\perp}\mathcal{W}$, where $\mathcal{W}$ is the collection of modules of finite projective dimension.
   \par 
   The statement about Gorenstein injectives follows similarly, using Gillespie's dual model structure in \cite[Theorem~7.1]{gillespie_2019}.
\end{proof}
Having shown this, the following is then exactly the statement of \cite[Theorem~6.2]{gillespie_2019} interpreted in our context.
\begin{proposition}\label{prop:fpinfcompgen}
Suppose $k$ is a commutative ring of finite global dimension and $G$ is a group with $\textup{glGPD}_{AC}(kG) < \infty$. Then the cotorsion pair $(\mathcal{GP},\mathcal{GP}^{\perp})$ is cogenerated by the $kG$-modules of the form $\Omega^d(F)$ where $d = \textup{glGPD}_{AC}(kG)$ and $F$ runs over isomorphism classes of all $kG$-modules of type $\textup{FP}_{\infty}$. 
\end{proposition}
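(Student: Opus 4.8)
The plan is to derive this from \Cref{gpequalgacp} together with Gillespie's description of the Gorenstein AC-projective cotorsion pair in \cite{gillespie_2019}. By \Cref{gpequalgacp}, since $\textup{glGPD}_{AC}(kG)<\infty$ we have $\mathcal{GP}=\mathcal{GP}_{AC}$ and $kG$ is AC-Gorenstein, with $d=\textup{glGPD}_{AC}(kG)$ bounding the length of an absolutely clean coresolution of any level module. Write $\mathcal{S}=\{\,\Omega^d(F)\ :\ F \text{ of type } \textup{FP}_{\infty}\,\}$; this is a set, since the $\textup{FP}_{\infty}$ modules form a set up to isomorphism and $\Omega^d(F)$ is determined by $F$ up to projective summands, which do not affect $\mathcal{S}^{\perp}$. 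As $(\mathcal{GP},\mathcal{GP}^{\perp})$ is already known to be a cotorsion pair by \Cref{perpequal}, it is enough to prove the single equality $\mathcal{S}^{\perp}=\mathcal{GP}^{\perp}$.

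For $\mathcal{GP}^{\perp}\subseteq\mathcal{S}^{\perp}$ I would check that each $\Omega^d(F)$ lies in $\mathcal{GP}=\mathcal{GP}_{AC}$. Since $d=\textup{glGPD}_{AC}(kG)$, every $kG$-module has Gorenstein AC-projective dimension at most $d$; as the Gorenstein AC-projectives form the left half of a hereditary complete cotorsion pair, passing $d$ times to syzygies in an ordinary projective resolution lands in $\mathcal{GP}_{AC}$ (the usual syzygy computation of the Gorenstein dimension, cf.\ \cite[Theorem~2.20]{HOLM2004167} for the Gorenstein projective case). Hence $\mathcal{S}\subseteq\mathcal{GP}$, and therefore $\mathcal{GP}^{\perp}\subseteq\mathcal{S}^{\perp}$.

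The reverse inclusion $\mathcal{S}^{\perp}\subseteq\mathcal{GP}^{\perp}$ is the substantive part. Given $M\in\mathcal{S}^{\perp}$, dimension shifting gives $\textup{Ext}^{1}_{kG}(\Omega^d(F),M)\cong\textup{Ext}^{d+1}_{kG}(F,M)$, and since a syzygy of an $\textup{FP}_{\infty}$ module is again $\textup{FP}_{\infty}$ this yields $\textup{Ext}^{i}_{kG}(F,M)=0$ for all $i\ge d+1$ and all $\textup{FP}_{\infty}$ modules $F$. Writing $\mathcal{AC}$ for the class of absolutely clean $kG$-modules, the cotorsion pair $({}^{\perp}\mathcal{AC},\mathcal{AC})$ is cogenerated by a set and hence complete by \Cref{EkTrl}; iterating its special $\mathcal{AC}$-preenvelopes $d$ times produces an exact sequence $0\to M\to A^0\to\cdots\to A^{d-1}\to M_d\to 0$ with each $A^j\in\mathcal{AC}$. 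Since each absolutely clean module $A$ satisfies $\textup{Ext}^{i}_{kG}(F,A)=0$ for all $i\ge 1$ and all $\textup{FP}_{\infty}$ modules $F$ (induction on $i$ via syzygies of $F$), dimension shifting along this sequence gives $\textup{Ext}^{1}_{kG}(F,M_d)\cong\textup{Ext}^{d+1}_{kG}(F,M)=0$, so $M_d$ is absolutely clean and $M$ has absolutely clean dimension at most $d$. By the AC-Gorenstein symmetry this coincides with having finite level dimension, hence with finite injective, hence finite projective dimension (as in the proof of \Cref{gpequalgacp} and \cite{em22}), so $M\in\mathcal{GP}^{\perp}$ by \Cref{fpdext}. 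The main obstacle is exactly this last chain of identifications, relating vanishing of $\textup{Ext}^{\ge d+1}_{kG}(-,M)$ against $\textup{FP}_{\infty}$ modules to genuine finiteness of projective dimension; all of the needed inputs are already available from \cite{gillespie_2019} and \Cref{gpequalgacp}, so in the end the assertion is just \cite[Theorem~6.2]{gillespie_2019} transported along the equality $\mathcal{GP}=\mathcal{GP}_{AC}$.
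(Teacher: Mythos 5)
Your proof is correct and takes essentially the same approach as the paper: the paper's proof is the single observation that, having established $\mathcal{GP}=\mathcal{GP}_{AC}$ and $kG$ AC-Gorenstein in \Cref{gpequalgacp}, the statement is exactly Gillespie's \cite[Theorem~6.2]{gillespie_2019} read in this context. You simply unpack that citation — verifying $\Omega^d(F)\in\mathcal{GP}$, and deriving finite absolutely clean coresolution dimension from $\mathcal{S}^\perp$-membership before invoking the AC-Gorenstein identifications — but as you yourself note, the substantive steps are precisely the content of Gillespie's theorem, so the route is the same.
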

In particular, we will see in \Cref{compasecti} that this implies that the stable category for $\textup{LH}\mathfrak{G}_{AC}$ groups is compactly generated. 
\subsection{Constructing the stable category}
Throughout this section, we let $k$ be a commutative noetherian ring of finite global dimension. We will construct the stable category for $\textup{LH}\mathfrak{G}$ groups using Hovey's \Cref{HOVEYCORRESPONDONCE}. We work our way up to showing that we have two cotorsion pairs of the required form; we begin with a lemma. 
\begin{lemma}\label{thicle}
    The class $\mathcal{GP}^{\perp}$ is thick.
\end{lemma}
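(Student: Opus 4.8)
The plan is to check the two conditions in the definition of thickness directly: closure under direct summands, and the two-out-of-three property for short exact sequences. Closure under summands is immediate, since $\textup{Ext}^1_{kG}(N,-)$ is additive: if $M_1\oplus M_2\in\mathcal{GP}^{\perp}$ then each $\textup{Ext}^1_{kG}(N,M_i)$ is a direct summand of $\textup{Ext}^1_{kG}(N,M_1\oplus M_2)=0$ for every Gorenstein projective $N$. So fix a short exact sequence $0\to X\to Y\to Z\to 0$ and, in each case, an arbitrary $N\in\mathcal{GP}$, and examine the long exact sequence obtained by applying $\textup{Hom}_{kG}(N,-)$. The case $X,Z\in\mathcal{GP}^{\perp}$ is the trivial one: the long exact sequence sandwiches $\textup{Ext}^1_{kG}(N,Y)$ between $\textup{Ext}^1_{kG}(N,X)=0$ and $\textup{Ext}^1_{kG}(N,Z)=0$, so $Y\in\mathcal{GP}^{\perp}$ — this is just the general fact that any class of the form $\mathcal{S}^{\perp}$ is closed under extensions.

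The remaining two cases use the totally acyclic complex of projectives of which $N$ is a kernel. From this complex one extracts two short exact sequences: a syzygy sequence $0\to\Omega N\to P\to N\to 0$ with $P$ projective and $\Omega N$ again a kernel in the complex, hence Gorenstein projective; and a ``cosyzygy'' sequence $0\to N\to Q\to N'\to 0$ with $Q$ projective and $N'$ a kernel in the complex, hence in $\mathcal{GP}$. In other words I will use the standard fact that $\mathcal{GP}$ is closed under syzygies and under these cosyzygies; spelling out why the relevant kernels remain totally acyclic is the only bookkeeping point here.

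Suppose $X,Y\in\mathcal{GP}^{\perp}$ and we want $Z\in\mathcal{GP}^{\perp}$. Dimension-shifting along the syzygy sequence of $N$, using that $P$ is projective so $\textup{Ext}^{\geq 1}_{kG}(P,X)=0$, gives an isomorphism $\textup{Ext}^2_{kG}(N,X)\cong\textup{Ext}^1_{kG}(\Omega N,X)$, and the right-hand side vanishes because $\Omega N\in\mathcal{GP}$ and $X\in\mathcal{GP}^{\perp}$. Now the long exact sequence $\textup{Ext}^1_{kG}(N,Y)\to\textup{Ext}^1_{kG}(N,Z)\to\textup{Ext}^2_{kG}(N,X)$ has both outer terms zero, so $\textup{Ext}^1_{kG}(N,Z)=0$; as $N$ was arbitrary, $Z\in\mathcal{GP}^{\perp}$.

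The case $Y,Z\in\mathcal{GP}^{\perp}$, where we want $X\in\mathcal{GP}^{\perp}$, is the one I expect to be the main obstacle, since $\mathcal{S}^{\perp}$ is not in general closed under kernels of epimorphisms. Because $\textup{Ext}^1_{kG}(N,Y)=0$, the long exact sequence identifies $\textup{Ext}^1_{kG}(N,X)$ with the cokernel of $\textup{Hom}_{kG}(N,Y)\to\textup{Hom}_{kG}(N,Z)$, so it is enough to lift every $f\colon N\to Z$ along $Y\twoheadrightarrow Z$. Here the cosyzygy sequence $0\to N\to Q\to N'\to 0$ does the work: since $N'\in\mathcal{GP}$ and $Z\in\mathcal{GP}^{\perp}$ we have $\textup{Ext}^1_{kG}(N',Z)=0$, so $f$ extends to a map $Q\to Z$; as $Q$ is projective and $Y\to Z$ is surjective, this extends further to $Q\to Y$; and restricting to $N$ produces a lift of $f$. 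Hence $\textup{Ext}^1_{kG}(N,X)=0$ for all $N\in\mathcal{GP}$, so $X\in\mathcal{GP}^{\perp}$, completing the verification of thickness. (Note that none of this uses the standing hypotheses on $k$; only the definition of Gorenstein projectivity is needed.)
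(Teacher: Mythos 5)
Your proof is correct and takes essentially the same route as the paper: closure under extensions and cokernels of monomorphisms comes from the long exact sequence together with dimension shifting along a syzygy sequence of $N$, and the hard case (kernels of epimorphisms) is handled via the cosyzygy sequence $0\to N\to Q\to N'\to 0$ extracted from the totally acyclic complex. The only cosmetic difference is in that last case: the paper shifts to $\textup{Ext}^2_{kG}(N',X)$ and quotes the already-established vanishing of higher $\textup{Ext}$ against $\mathcal{GP}^{\perp}$, whereas you make the equivalent lifting argument explicit.
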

\begin{proof}
    First we claim that if $M \in \mathcal{GP}^{\perp}$ then $\textup{Ext}^i_{kG}(X,M) = 0$ for all $X \in \mathcal{GP}$. So, fix some Gorenstein projective $X$ and consider the short exact sequence $0 \to Y \to P \to X \to 0$ such that $P$ is projective and $Y$ is Gorenstein projective. Then $\textup{Ext}^2_{kG}(X,M) \cong \textup{Ext}^1_{kG}(Y,M) = 0$. An obvious iteration shows the claim. 
    \par 
    Now, the long exact sequence in $\textup{Ext}$ shows that $\mathcal{GP}^{\perp}$ is closed under cokernels of monomorphisms and extensions. Suppose that $0 \to M' \to M \to M''\to 0$ is a short exact sequence with $M,M'' \in \mathcal{GP}^{\perp}$ and let $X$ be Gorenstein projective. 
    \par 
    Using the above, the long exact sequence in $\textup{Ext}$ shows that $\textup{Ext}^i_{kG}(X,M') = 0$ for all $i \geq 2$; note that this is true for every Gorenstein projective $X$. Consider now the short exact sequence $0 \to X \to P' \to Y' \to 0$ with $P'$ projective and $Y'$ Gorenstein projective. Then $\textup{Ext}^1_{kG}(X,M') \cong \textup{Ext}^2_{kG}(Y',M')$, and the lattter is $0$ as we have just shown.
    \par 
    Finally, the fact that $\mathcal{GP}^{\perp}$ is closed under summands follows immediately.
\end{proof}
We also need the Gorenstein projectives to sit in the left hand side of a cotorsion pair; this has been shown over any ring by Cortés-Izurdiaga and \v{S}aroch, as follows. 
\begin{theorem}\cite[Corollary~3.4]{cortésizurdiaga2023cotorsion} 
    For any ring $(\mathcal{GP},\mathcal{GP}^{\perp})$ is a cotorsion pair.
\end{theorem}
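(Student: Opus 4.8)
The plan is as follows. The equality $\mathcal{GP}^{\perp}=\mathcal{GP}^{\perp}$ is immediate from the definition, so proving that $(\mathcal{GP},\mathcal{GP}^{\perp})$ is a cotorsion pair amounts to the single inclusion
\[
{}^{\perp}(\mathcal{GP}^{\perp})\subseteq\mathcal{GP},
\]
the reverse inclusion being automatic. Two preliminary observations reduce the problem further. First, $\mathcal{GP}$ is closed under direct summands over any ring (a standard Eilenberg swindle on totally acyclic complexes of projectives), so it suffices to realise an arbitrary $M\in{}^{\perp}(\mathcal{GP}^{\perp})$ as a direct summand of a Gorenstein projective module. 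Second, the pair is automatically hereditary --- the argument of \Cref{thicle} gives $\textup{Ext}^{i}_{kG}(X,N)=0$ for all $i\geq 1$, $X\in\mathcal{GP}$, $N\in\mathcal{GP}^{\perp}$ --- and hence ${}^{\perp}(\mathcal{GP}^{\perp})$ is closed under syzygies, so that $\textup{Ext}^{i}_{kG}(M,N)=0$ for all $i\geq 1$ and all $N\in\mathcal{GP}^{\perp}$, in particular for every $N$ of finite projective dimension by the theorem of Holm already invoked in \Cref{fpdext}.

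Granting this, I would aim to produce a totally acyclic complex of projectives having $M$ among its kernels. Its negative half is a deleted projective resolution of $M$, which by the previous paragraph is already $\textup{Hom}_{kG}(-,Q)$-exact in the relevant degrees for every projective $Q$; the remaining task is to build an exact coresolution $0\to M\to P^{0}\to P^{1}\to\cdots$ by projective modules that stays $\textup{Hom}_{kG}(-,Q)$-exact for all projective $Q$ and whose cokernels again lie in ${}^{\perp}(\mathcal{GP}^{\perp})$. The natural tool is the theory of projectively coresolved Gorenstein flat modules of \v{S}aroch and \v{S}t'ovíček \cite{stovsarmodel}: the pair $(\mathcal{PGF},\mathcal{PGF}^{\perp})$ is a hereditary complete cotorsion pair cogenerated by a set and $\mathcal{PGF}\subseteq\mathcal{GP}$. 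One would push $M$ and its successive cosyzygies through $\mathcal{PGF}$-special precovers and splice the resulting projective complexes together, using closure of $\mathcal{GP}$ under ordinary and transfinite extensions to keep the ambient complexes totally acyclic; the output would be a short exact sequence $0\to M\to L\to M'\to 0$ with $L$ projective and $M'\in{}^{\perp}(\mathcal{GP}^{\perp})$, which on iteration yields the full totally acyclic coresolution and therefore $M\in\mathcal{GP}$.

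The hard part is precisely the step that cannot be carried out formally: assembling these approximations into an honest totally acyclic complex while controlling $\textup{Hom}_{kG}(-,Q)$-acyclicity against \emph{all} projective modules $Q$ at every (possibly transfinite) stage. This must be argued by hand, since $\mathcal{GP}$ is not cogenerated by a set --- were it so, \Cref{EkTrl} would force the cotorsion pair to be complete, whereas completeness of $(\mathcal{GP},\mathcal{GP}^{\perp})$ is exactly the statement known to require an additional large-cardinal hypothesis. The compatibilities one needs between the successive $\mathcal{PGF}$-approximations are supplied by \v{S}aroch's relative Mittag-Leffler and cotorsion-theoretic machinery; establishing them is the technical heart of \cite{cortésizurdiaga2023cotorsion}, and I would expect it --- rather than any of the formal reductions above --- to absorb essentially all of the work.
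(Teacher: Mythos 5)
The paper does not prove this statement: it is imported wholesale from Cortés-Izurdiaga and Šaroch \cite[Corollary~3.4]{cortésizurdiaga2023cotorsion}, so there is no internal argument to compare your sketch against. Your formal reductions are nonetheless correct: the identity $\mathcal{GP}^{\perp}=\mathcal{GP}^{\perp}$ makes one half of the definition vacuous, the inclusion $\mathcal{GP}\subseteq{}^{\perp}(\mathcal{GP}^{\perp})$ is automatic, $\mathcal{GP}$ is closed under summands, and the dimension-shift showing $\textup{Ext}^{i}_{kG}(M,N)=0$ for all $i\geq1$ when $M\in{}^{\perp}(\mathcal{GP}^{\perp})$ and $N\in\mathcal{GP}^{\perp}$ is valid (though the "hence" should be justified via the thickness of $\mathcal{GP}^{\perp}$ --- specifically its closure under cokernels of monomorphisms and the fact that injectives lie in $\mathcal{GP}^{\perp}$ --- rather than by appeal to heredity of a pair whose existence is what is being proved).

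The gap is in the construction of the right half of the totally acyclic complex, and it is more than just "technical bookkeeping." The $\mathcal{PGF}$ cotorsion pair does not hand you what you need. A special $\mathcal{PGF}$-precover of $M$ is a sequence $0\to K\to X\to M\to 0$ with $X\in\mathcal{PGF}$ and $K\in\mathcal{PGF}^{\perp}$; because $\mathcal{PGF}\subseteq\mathcal{GP}$ gives $\mathcal{GP}^{\perp}\subseteq\mathcal{PGF}^{\perp}$ and not the reverse, one cannot conclude $K\in\mathcal{GP}^{\perp}$, so the sequence need not split and you cannot read $M$ off as a summand of $X$. Dually, a special $\mathcal{PGF}$-preenvelope embeds $M$ into a module of $\mathcal{PGF}^{\perp}$, not into a projective, so it does not directly produce the desired coresolution step $0\to M\to L\to M'\to 0$ with $L$ projective. "Splicing the resulting projective complexes together" is therefore not a formal operation here; some genuinely new input is required to produce even a single projective embedding of $M$ with cokernel back in ${}^{\perp}(\mathcal{GP}^{\perp})$. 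You are candid that this is where the proof would have to live, and I agree, but as written the proposal stops exactly at the point where the content of Cortés-Izurdiaga and Šaroch's argument begins, and the $\mathcal{PGF}$ route you indicate does not obviously lead there.
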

Therefore, the difficulty lies in showing that the Gorenstein projective cotorsion pair is complete. We do this by showing that it is cogenerated by a set; before this, we need some preparation.
\begin{lemma}\label{use3}
    Let $H \leq G$. Suppose for every Gorenstein projective $kG$-module $M$ we have that $M{\downarrow}_H^G$ has finite Gorenstein projective dimension. Then $M{\downarrow}_H^G$ is in fact Gorenstein projective for every Gorenstein projective $kG$-module $M$.
\end{lemma}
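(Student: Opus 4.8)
The plan is to realise $M$ as a cycle of a totally acyclic complex of projective $kG$-modules, restrict that complex to $kH$, and then upgrade the hypothesis from ``each restricted cycle has finite Gorenstein projective dimension'' to a single uniform bound by applying the hypothesis to the direct sum of all the cycles at once.

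So fix a Gorenstein projective $kG$-module $M$ and write $M = Z_0$, where $Z_j := \ker d_j$ for a totally acyclic complex
\[P_\bullet = \left(\cdots \to P_1 \xrightarrow{d_1} P_0 \xrightarrow{d_0} P_{-1} \to \cdots\right)\]
of projective $kG$-modules. Each $Z_j$ is again Gorenstein projective over $kG$, being a cycle of $P_\bullet$, and since arbitrary direct sums of Gorenstein projective modules are Gorenstein projective, the module $N := \bigoplus_{j \in \mathbb{Z}} Z_j$ is Gorenstein projective over $kG$. By hypothesis, $N{\downarrow}_H^G$ has finite Gorenstein projective dimension over $kH$, say at most $D$. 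Since restriction commutes with direct sums, each $Z_j{\downarrow}_H^G$ is a direct summand of $N{\downarrow}_H^G$, and therefore $\textup{GPD}_{kH}(Z_j{\downarrow}_H^G) \leq D$ for every $j \in \mathbb{Z}$.

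Next I would use that restriction along $kH \hookrightarrow kG$ is exact and sends projectives to projectives (since $kG$ is free as a $kH$-module). Restricting the short exact sequences $0 \to Z_j \to P_j \to Z_{j-1} \to 0$ for $j = 0, -1, \dots, -D+1$ and splicing them yields an exact sequence of $kH$-modules
\[0 \to M{\downarrow}_H^G \to P_0{\downarrow}_H^G \to P_{-1}{\downarrow}_H^G \to \cdots \to P_{-D+1}{\downarrow}_H^G \to Z_{-D}{\downarrow}_H^G \to 0\]
whose $D$ middle terms are projective, exhibiting $M{\downarrow}_H^G$ as a $D$-th syzygy of $Z_{-D}{\downarrow}_H^G$. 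As $\textup{GPD}_{kH}(Z_{-D}{\downarrow}_H^G) \leq D$, the syzygy characterisation of Gorenstein projective dimension in \cite[Theorem~2.20]{HOLM2004167} forces $M{\downarrow}_H^G$ to be Gorenstein projective over $kH$, which is what we want.

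The reason this is not immediate — and the reason one cannot simply invoke \Cref{allsnf} — is that an acyclic complex of $kH$-projectives all of whose cycles merely have \emph{finite} Gorenstein projective dimension need not be totally acyclic: Gorenstein projective modules are in general not closed under the relevant cosyzygy operation, and a direct dimension-shifting argument is circular since passing to a cosyzygy can raise the Gorenstein projective dimension by one at each step. The direct-sum device is precisely what breaks the circularity, trading infinitely many unrelated finiteness statements for the single bound $D$; after that, moving $D$ steps to the left of $M$ inside the restricted complex and quoting Holm's criterion finishes the argument. I expect the remaining bookkeeping — restriction preserving exactness, projectivity, and coproducts — to be entirely routine, so the direct-sum step is the crux.
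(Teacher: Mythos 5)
Your proof is correct and uses essentially the same two ideas as the paper: the direct-sum trick to extract a uniform bound on the Gorenstein projective dimensions of the restricted (co)syzygies, followed by a dimension shift inside the restricted totally acyclic complex and Holm's syzygy criterion. The only cosmetic difference is that the paper phrases the uniformity step as a proof by contradiction (using \cite[Proposition~2.19]{HOLM2004167} that $\textup{GPD}$ of a direct sum is the supremum of the $\textup{GPD}$s of the summands), whereas you form $N=\bigoplus_j Z_j$ once and read off the bound $D$ directly from each summand.
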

\begin{proof}
    We first show that there exists some $n \geq 0$ such that $\textup{Gpd}_{kH}(\Omega^k(M){\downarrow}_H^G) \leq n$ for each $k \in \mathbb{Z}$. We note that since $M$ is Gorenstein projective, so is each $\Omega^k(M)$ and so, by assumption, $\Omega^k(M){\downarrow}_H^G$ has finite Gorenstein projective dimension. 
    \par
    \sloppy Suppose no such $n \geq 0$ exists, and so for each $m \geq 0$ we have some $k_m \in \mathbb{Z}$ such that $\textup{Gpd}_{kH}(\Omega^{k_m}(M){\downarrow}_H^G) \geq m$. Then, using \cite[Proposition~2.19]{HOLM2004167}, we know that  $\textup{Gpd}_{kH}(\bigoplus\limits_{m \in \mathbb{N}}\Omega^{k_m}(M){\downarrow}_H^G) = \sup\limits_{m \in \mathbb{N}}\textup{Gpd}_{kH}(\Omega^{k_m}(M){\downarrow}_H^G)$ which is infinite; this is a contradiction since $\bigoplus\limits_{m \in \mathbb{N}}\Omega^{k_m}(M)$ is Gorenstein projective and so by assumption has finite Gorenstein projective dimension on restriction to $H$.
    \par 
    Choose $n$ as above and consider the exact sequence 
    \[ 0 \to M \to P_0 \to \dots \to P_{-n} \to \Omega^{-n}(M) \to 0\]
    where each $P_i$ is projective. Restricting this to $H$ shows that $M$ must Gorenstein projective, since $\textup{Gpd}_{kH}(\Omega^{-n}(M{\downarrow}_H^G)) \leq n$.
\end{proof}
\begin{proposition}\label{resgp1} 
    Suppose $M$ is a Gorenstein projective $kG$-module. Then $M{\downarrow}_H^G$ is Gorenstein projective for every $\lhg$ subgroup $H \leq G$.
\end{proposition}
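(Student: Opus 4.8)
The plan is to reduce first to $\textup{H}\mathfrak{G}$ subgroups by transfinite induction up Kropholler's hierarchy, and then to bootstrap to an arbitrary $\lhg$ subgroup by a local argument. The two tools are \Cref{allsnf}, which gives the claim when $\textup{Gcd}_k(H)<\infty$, and \Cref{use3}, which says that if the restriction of every Gorenstein projective $kG$-module to $H$ has finite Gorenstein projective dimension then each such restriction is in fact Gorenstein projective. Throughout one uses that $M$, being Gorenstein projective over the finite global dimension ring $k$, is projective (hence flat) over $k$, so that $M{\downarrow}_H^G\otimes_k-$ is exact, together with the standard fact that induction along a subgroup inclusion sends Gorenstein projectives to Gorenstein projectives (induction preserves projectives and acyclicity, and preserves total acyclicity via the adjunction $\textup{Hom}_{kH}(kH\otimes_{kH'}Q_*,Q')\cong\textup{Hom}_{kH'}(Q_*,Q'{\downarrow}_{H'}^H)$ and the fact that restriction of a projective is projective).

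For the first part, induct on the least ordinal $\alpha$ with $H\in\textup{H}_\alpha\mathfrak{G}$. For $\alpha=0$ this is \Cref{allsnf}, and for limit $\alpha$ there is nothing to do. For a successor $\alpha$, $H$ acts cellularly on a finite dimensional contractible complex $X$ of dimension $d$, with each cell stabiliser $H_\sigma$ in $\textup{H}_\gamma\mathfrak{G}$ for some $\gamma<\alpha$. The augmented cellular chain complex is an exact sequence of $kH$-modules $0\to C_d\to\cdots\to C_0\to k\to 0$ in which every $C_i$ is a direct sum of modules $V_\sigma{\uparrow}_{H_\sigma}^H$ with $V_\sigma$ free of rank one over $k$ (the orientation module). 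Applying $M{\downarrow}_H^G\otimes_k-$ and using the projection formula $M{\downarrow}_H^G\otimes_k(V_\sigma{\uparrow}_{H_\sigma}^H)\cong(M{\downarrow}_{H_\sigma}^G\otimes_k V_\sigma){\uparrow}_{H_\sigma}^H$ produces an exact resolution $0\to D_d\to\cdots\to D_0\to M{\downarrow}_H^G\to 0$ over $kH$ whose terms are direct sums of modules induced from the $H_\sigma$. By the inductive hypothesis each $M{\downarrow}_{H_\sigma}^G$ is Gorenstein projective over $kH_\sigma$; twisting by the invertible $k$-free module $V_\sigma$ preserves this ($-\otimes_k V_\sigma$ is an exact self-equivalence of $\textup{Mod}(kH_\sigma)$ fixing projectives, since $V_\sigma$ corresponds to a $\pm1$-valued character), and inducing up to $H$ preserves it as well. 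Hence each $D_i$ is Gorenstein projective over $kH$, so $\textup{Gpd}_{kH}(M{\downarrow}_H^G)\le d<\infty$; as this holds for every Gorenstein projective $kG$-module $M$, \Cref{use3} gives that $M{\downarrow}_H^G$ is Gorenstein projective over $kH$.

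For the second part, let $H\in\lhg$. Since $\mathfrak{G}$, hence $\textup{H}\mathfrak{G}$, is closed under subgroups, every finitely generated subgroup of $H$ already lies in $\textup{H}\mathfrak{G}$, so $H$ is the directed union of finitely generated subgroups $H_i\in\textup{H}\mathfrak{G}$, and by the first part $M{\downarrow}_{H_i}^G$ is Gorenstein projective over $kH_i$ for all $i$. Fixing a totally acyclic complex $P_*$ of projective $kG$-modules having $M$ as a cocycle, the complex $P_*{\downarrow}_H^G$ is an acyclic complex of projective $kH$-modules with cocycle $M{\downarrow}_H^G$ which is totally acyclic after restriction to each $H_i$, so it remains only to show that $P_*{\downarrow}_H^G$ is itself totally acyclic over $kH$. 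One reduces this to the vanishing of $\textup{Ext}^1_{kH}(Z,Q)$ for $Q$ projective and $Z$ running over the cocycles of $P_*{\downarrow}_H^G$ (each being the restriction of a Gorenstein projective $kG$-module), which is available after restriction to every $H_i$.

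The step I expect to be the main obstacle is exactly this last local-to-global passage: deducing total acyclicity of an acyclic complex of projective $kH$-modules from its total acyclicity on restriction to every finitely generated subgroup. This is where the hierarchical structure of $\lhg$ is genuinely needed, and it has the same flavour as the standard arguments that promote Kropholler-type statements from $\textup{H}\mathfrak{F}$ to $\lhf$; by contrast the successor step above and the change of rings stability of Gorenstein projectivity are routine once \Cref{allsnf} and \Cref{use3} are in hand.
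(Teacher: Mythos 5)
Your treatment of $\textup{H}\mathfrak{G}$ subgroups is essentially the paper's argument: transfinite induction up the hierarchy, tensoring the augmented cellular chain complex with $M{\downarrow}_H^G$ (exact because $M$ is $k$-projective), recognising the terms as modules induced from Gorenstein projectives over the stabilisers, concluding $\textup{Gpd}_{kH}(M{\downarrow}_H^G)<\infty$ for every Gorenstein projective $M$, and invoking \Cref{use3}. The extra care with orientation modules is fine and does not change anything.

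The second half, however, contains a genuine gap, and it is exactly the one you flag yourself. Knowing that the cocycles $Z$ of $P_*{\downarrow}_H^G$ satisfy $\textup{Ext}^1_{kH_i}(Z{\downarrow}_{H_i}^H,Q{\downarrow}_{H_i}^H)=0$ for every member $H_i$ of a directed union does \emph{not} yield $\textup{Ext}^1_{kH}(Z,Q)=0$: Ext over $kH$ is not computed from Ext over the $kH_i$ without a filtration argument (one must filter the kernel of $\varinjlim Z{\downarrow}_{H_i}^H{\uparrow}_{H_i}^H\to Z$ by successive quotients induced from the $H_i$ and apply the Eklof Lemma, as is done in \Cref{induc} for a different purpose). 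The paper closes this gap differently: it inducts on the cardinality of $H$, writing $H$ as a continuous ascending union of subgroups of strictly smaller cardinality (countable $\textup{LH}\mathfrak{G}$ groups being already in $\textup{H}\mathfrak{G}$), and then cites Emmanouil--Talelli \cite[Proposition~3.1]{ETgcd}, which states precisely that if $M{\downarrow}_{H_i}^H$ is Gorenstein projective for every member of such a chain then $\textup{Gpd}_{kH}(M{\downarrow}_H^G)\le 1$; since this holds for every Gorenstein projective $kG$-module, \Cref{use3} again upgrades finite Gorenstein projective dimension to Gorenstein projectivity. So the missing local-to-global step is not a routine verification but the substantive input, and your proof is incomplete without either reproducing the Eklof-type filtration argument or citing a result of this kind. (A smaller issue: your claim that every finitely generated subgroup of $H$ lies in $\textup{H}\mathfrak{G}$ presupposes that $\textup{H}\mathfrak{G}$ is subgroup closed; the paper's cardinality induction avoids having to address this.)
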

\begin{proof}
    First, suppose we have an $\textup{H}\mathfrak{G}$ subgroups $H \leq G$. We prove the statement by induction on the ordinal $\alpha$ such that $H$ is in $\textup{H}_{\alpha}\mathfrak{G}$. We have shown in \Cref{allsnf} that the statement is true for $\textup{H}_0\mathfrak{G} = \mathfrak{G}$ subgroups.
    \par 
    Suppose that it is true for all $\textup{H}_{\beta}\mathfrak{G}$ subgroups for some ordinal $\beta$, and let $H$ be an $\textup{H}_{\alpha}\mathfrak{G}$ subgroup of $G$ for some $\beta < \alpha$. Since $M$ is Gorenstein projective, and $k$ has finite global dimension, we know that $M$ is projective over $k$ and so $-\otimes M{\downarrow}_H^G$ is exact. Tensoring the cellular chain complex in the definition of $\textup{H}_{\alpha}\mathfrak{G}$ groups with $M{\downarrow}_H^G$ results in an exact complex of the following form 
    \[0 \to \bigoplus\limits_{i_k \in I_k}M{\downarrow}_{F_{i_k}}^G{\uparrow}_{F_{i_k}}^H \to \dots \to \bigoplus\limits_{i_0 \in I_0}M{\downarrow}_{F_{i_0}}^G{\uparrow}_{F_{i_0}}^H \to M{\downarrow}_H^G \to 0\]
    where each $F_{i_j} \leq H$ is an $\textup{H}_{\beta}\mathfrak{G}$ subgroup for $\beta < \alpha$. By the inductive hypothesis, each $M{\downarrow}_{F_{i_k}}^G{\uparrow}_{F_{i_k}}^H$ is Gorenstein projective and so $M{\downarrow}_H^G$ has finite Gorenstein projective dimension. This is true for any Gorenstein projective and so we conclude by \Cref{use3} that $M{\downarrow}_H^G$ is Gorenstein projective as desired.
    \par 
    We now let $H$ be an $\lhg$ subgroup, and we induct on the cardinality of $H$. If $H$ is countable, then it is an $\textup{H}\mathfrak{G}$ group and so we are done. Suppose, therefore, that $H$ is uncountable and write $H = \bigcup\limits_{i \in I}H_i$ as an ascending union such that each $H_i$ has cardinality strictly less than that of $H$. By induction, we know that $M{\downarrow}_{H_i}^G$ is Gorenstein projective for each $i \in I$. It is shown in \cite[Proposition~3.1]{ETgcd} that $\textup{Gpd}_{kH}(M{\downarrow}_H^G) \leq 1$. As this is true for every Gorenstein projective, the result now follows again by \Cref{use3}.
\end{proof}
We have the following variation on the above argument, which is a slight generalisation of \cite[Corollary~D]{demtalres}. 
\begin{proposition}\label{costabil}
    Let $G$ be an $\lhg$ group and consider an exact complex of projectives $P_*$. Then every kernel in $P_*$ is Gorenstein projective. In particular, acyclic complexes of projectives are totally acyclic.
\end{proposition}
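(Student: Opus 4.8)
The plan is to establish the equivalent assertion that $P_*$ is totally acyclic, i.e. that $\textup{Hom}_{kG}(P_*,Q)$ is acyclic for every projective $kG$-module $Q$: that every kernel of $P_*$ is then Gorenstein projective is immediate from the definition of a totally acyclic complex, while conversely, once the kernels $Z$ are known to be Gorenstein projective the vanishing $\textup{Ext}^1_{kG}(Z,Q)=0$ forces the $\textup{Hom}$-complex to be acyclic. Call a group $H$ \emph{good} if every acyclic complex of projective $kH$-modules is totally acyclic; I would prove that every $\textup{LH}\mathfrak{G}$ group is good, by the same two-stage induction used to prove \Cref{resgp1}.

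For the $\textup{H}\mathfrak{G}$-case I would induct on the least ordinal $\alpha$ with $H\in\textup{H}_\alpha\mathfrak{G}$. When $\alpha=0$, i.e. $H\in\mathfrak{G}$, one has $\textup{silp}(kH)<\infty$ by \Cref{ETThe}, so every projective $kH$-module has bounded finite injective dimension, and dimension shifting along an injective resolution of $Q$ (using that $\textup{Hom}_{kH}(P_*,I)$ is acyclic for $I$ injective) shows $\textup{Hom}_{kH}(P_*,Q)$ is acyclic — this is the argument in the proof of \Cref{allsnf}. Limit ordinals are immediate since $\textup{H}_\alpha\mathfrak{G}=\bigcup_{\beta<\alpha}\textup{H}_\beta\mathfrak{G}$. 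For the successor step, suppose $H$ acts cellularly on a finite-dimensional contractible complex all of whose cell stabilisers are good, and let $D_\bullet$ be the (non-augmented) cellular chain complex, so that each $D_j$ is a direct sum of permutation modules $k[H/H_\sigma]$ with $H_\sigma$ good and $D_\bullet\to k$ is exact. Since $P_i$ is projective over $kH$ it is flat over $k$, so each $P_i\otimes_k D_j$ with the diagonal action is a direct sum of modules $(P_i{\downarrow}_{H_\sigma}^H){\uparrow}_{H_\sigma}^H$ and hence a projective $kH$-module. I would then form the double complex $E^{i,j}:=\textup{Hom}_{kH}(P_i\otimes_k D_j,Q)$, which is concentrated in the horizontal strip $0\leq j\leq d$, so both of its spectral sequences converge to the cohomology of $\textup{Tot}(E)$. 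Computing cohomology first in the $i$-direction, the adjunction between induction and restriction gives $E^{\bullet,j}\cong\prod_{\sigma}\textup{Hom}_{kH_\sigma}(P_*{\downarrow}_{H_\sigma}^H,\,Q{\downarrow}_{H_\sigma}^H)$, and each factor is acyclic because $P_*{\downarrow}_{H_\sigma}^H$ is a totally acyclic complex of projective $kH_\sigma$-modules ($H_\sigma$ being good) and $Q{\downarrow}_{H_\sigma}^H$ is projective; as products of acyclic complexes are acyclic, $\textup{Tot}(E)$ is acyclic. Computing cohomology first in the $j$-direction, the bounded complex $P_i\otimes_k D_\bullet$ is a projective resolution of the \emph{projective} module $P_i$, hence chain homotopy equivalent to $P_i$ in degree $0$, so $E^{i,\bullet}$ is homotopy equivalent to $\textup{Hom}_{kH}(P_i,Q)$ concentrated in degree $0$ and the resulting page is the complex $\textup{Hom}_{kH}(P_*,Q)$ in a single row; thus the cohomology of $\textup{Tot}(E)$ equals that of $\textup{Hom}_{kH}(P_*,Q)$. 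Comparing the two computations shows $\textup{Hom}_{kH}(P_*,Q)$ is acyclic, so $H$ is good.

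For the $\textup{L}$-part I would induct on $|H|$ exactly as in \Cref{resgp1}: a countable $\textup{LH}\mathfrak{G}$ group is $\textup{H}\mathfrak{G}$, hence good by the previous step; and if $H$ is uncountable, write $H=\bigcup_i H_i$ as an ascending union of $\textup{LH}\mathfrak{G}$ subgroups of strictly smaller cardinality, each good by the inductive hypothesis. For an acyclic complex of projective $kH$-modules $P_*$, each restriction $P_*{\downarrow}_{H_i}^H$ is then totally acyclic, so every kernel $Z$ of $P_*$ restricts to a Gorenstein projective $kH_i$-module for all $i$, whence $\textup{Gpd}_{kH}(Z)\leq 1$ by \cite[Proposition~3.1]{ETgcd}. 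Since each kernel $Z_m$ of $P_*$ fits in a short exact sequence $0\to Z_m\to P_m\to Z_{m-1}\to 0$ with $P_m$ projective, it is a first syzygy of $Z_{m-1}$, and as $\textup{Gpd}_{kH}(Z_{m-1})\leq 1$ the module $Z_m$ is Gorenstein projective (the first syzygy of a module of Gorenstein projective dimension at most $1$ is Gorenstein projective; cf. \cite{HOLM2004167}). Thus every kernel of $P_*$ is Gorenstein projective.

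The main obstacle is the successor step. One cannot simply tensor the cellular chain complex with an arbitrary kernel $Z$ of $P_*$, as is done with Gorenstein projective modules in the proof of \Cref{resgp1}, since $Z$ need not be flat over $k$; tensoring instead with the $k$-flat projective terms $P_i$ and reading off the acyclicity of $\textup{Hom}_{kH}(P_*,Q)$ from the two spectral sequences of the resulting double complex is what circumvents this. The points requiring care are that $P_i\otimes_k D_j$ really is projective over $kH$, and that $P_i\otimes_k D_\bullet$ is genuinely contractible onto $P_i$ (being a projective resolution of a projective module), so that the two spectral sequences degenerate as claimed.
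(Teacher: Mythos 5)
Your argument is correct, but the successor step is handled by a genuinely different mechanism than the paper's. The paper's proof is a reduction: it observes that the proofs of \Cref{use3} and \Cref{resgp1} use only two properties of a Gorenstein projective $M$ --- the existence of an exact coresolution $0 \to M \to P_0 \to \dots \to P_{-n} \to \Omega^{-n}(M) \to 0$ by projectives, and projectivity of $M$ over $k$ --- and that every kernel $K$ of $P_*$ has both (the coresolution is read off from $P_*$ itself). It then reruns those proofs verbatim, in particular tensoring the cellular chain complex with $K{\downarrow}_H^G$ in the successor step. You instead stay at the level of the complex and compare the two spectral sequences of the double complex $\textup{Hom}_{kH}(P_i\otimes_k D_j,Q)$, which is a sound argument: the strip is bounded in $j$, so both filtrations of the total complex are finite in each degree and both spectral sequences converge; the row-by-row computation via Frobenius reciprocity and the inductive hypothesis kills the total complex, while the column-by-column computation (split exactness of the bounded projective resolution $P_i\otimes_k D_\bullet$ of the projective $P_i$) identifies its cohomology with that of $\textup{Hom}_{kH}(P_*,Q)$. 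One correction to your closing remark, though: the obstacle you cite --- that a kernel $Z$ of $P_*$ need not be flat over $k$ --- is not actually there. Since $k$ has finite global dimension $d$ and $P_*$ is a doubly infinite exact complex of $k$-projectives, every kernel is a $d$-th syzygy over $k$ of another kernel and is therefore $k$-projective; this is precisely the point the paper makes, and it is what lets the shorter reduction to \Cref{resgp1} go through. Your double-complex route buys a little extra robustness (it only uses $k$-flatness of the terms $P_i$, not of the kernels) at the cost of more machinery; the base case and the cardinality induction for the L-part are essentially identical to the paper's.
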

\begin{proof}
    Let $K$ be the kernel in degree zero of $P_*$. To begin, we note that if $H \leq G$ is a $\mathfrak{G}$-subgroup, then $K{\downarrow}_H^G$ is Gorenstein projective. This is because acyclic complexes of projectives are automatically totally acyclic over $\mathfrak{G}$ groups; see the proof of \Cref{allsnf}. 
    \par 
    Now, the argument follows just as in \Cref{use3} and \Cref{resgp1}. Indeed, we only needed two properties of Gorenstein projectives for these. The first was the existence of an exact sequence $0 \to K \to P_0 \to \dots \to P_{-n} \to \Omega^{-n}(K) \to 0$ with $P_i$ all projectives; this certainly exists for us. The second was that Gorenstein projectives are projective over $k$. However, this again will hold for us since $k$ has finite global dimension. 
\end{proof}
\begin{theorem}\label{induc}
Suppose $G$ is an $\textup{LH}\mathfrak{G}$ group. Furthermore, suppose that $(\mathcal{GP}_{H},\mathcal{GP}_{H}^{\perp})$ is a cogenerated by a set $\mathcal{X}_{H}$ for each $H \in \mathfrak{G}$. Then $(\mathcal{GP}_G,\mathcal{GP}_G^{\perp})$ is a cogenerated by a set and hence complete.
\end{theorem}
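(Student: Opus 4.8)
The plan is to mirror the transfinite induction in the proof of \Cref{resgp1}. Since $(\mathcal{GP}_G,\mathcal{GP}_G^{\perp})$ is already a cotorsion pair, by the result of Cort\'es-Izurdiaga and \v{S}aroch stated above, it suffices to produce a \emph{set} $\mathcal{Y}\subseteq\mathcal{GP}_G$ with $\mathcal{Y}^{\perp}=\mathcal{GP}_G^{\perp}$; the inclusion $\mathcal{GP}_G^{\perp}\subseteq\mathcal{Y}^{\perp}$ is automatic, and completeness then follows from \Cref{EkTrl}. I would induct first on the ordinal $\alpha$ with $G\in\textup{H}_{\alpha}\mathfrak{G}$, and then on $|G|$ to pass from $\textup{H}\mathfrak{G}$ to all of $\lhg$ (a countable $\lhg$ group being already in $\textup{H}\mathfrak{G}$). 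The base case $\alpha=0$ is exactly the hypothesis, and limit ordinals are immediate since $\textup{H}_{\alpha}\mathfrak{G}=\bigcup_{\beta<\alpha}\textup{H}_{\beta}\mathfrak{G}$, so the work is in the successor step and the ascending-union step.

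For the successor step, let $G\in\textup{H}_{\alpha}\mathfrak{G}$ act cellularly on a $d$-dimensional contractible complex $X$ with all cell stabilisers in $\bigcup_{\beta<\alpha}\textup{H}_{\beta}\mathfrak{G}$. Given $M\in\mathcal{GP}_G$, tensoring the augmented cellular chain complex of $X$ with $M$ over $k$ (legitimate since $M$ is projective over $k$, as $k$ has finite global dimension) produces, exactly as in the proof of \Cref{resgp1}, a finite exact sequence $0\to D_d\to\cdots\to D_0\to M\to 0$ with $D_i=\bigoplus_{\sigma}(M{\downarrow}_{G_{\sigma}}^G){\uparrow}_{G_{\sigma}}^G$, the sum over the (set-many) orbits of $i$-cells. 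By \Cref{resgp1} each $M{\downarrow}_{G_{\sigma}}^G$ is Gorenstein projective over $kG_{\sigma}$, and by the inductive hypothesis $(\mathcal{GP}_{G_{\sigma}},\mathcal{GP}_{G_{\sigma}}^{\perp})$ is cogenerated by a set $\mathcal{X}_{G_{\sigma}}$. I would take $\mathcal{Y}$ to be the set of all $Y{\uparrow}_{G_{\sigma}}^G$ for $Y\in\mathcal{X}_{G_{\sigma}}$ and $\sigma$ a cell-orbit. These lie in $\mathcal{GP}_G$: inducing a totally acyclic complex of projective $kG_{\sigma}$-modules up to $kG$ gives an exact complex of projective $kG$-modules that remains totally acyclic, since $\textup{Hom}_{kG}(kG\otimes_{kG_{\sigma}}(-),P)\cong\textup{Hom}_{kG_{\sigma}}((-),P{\downarrow}_{G_{\sigma}}^G)$ with $P{\downarrow}_{G_{\sigma}}^G$ projective (one may also invoke \Cref{costabil}).

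It remains to check $\mathcal{Y}^{\perp}\subseteq\mathcal{GP}_G^{\perp}$. Let $N\in\mathcal{Y}^{\perp}$; by Shapiro's lemma (using that $kG$ is free over $kG_{\sigma}$) one gets $N{\downarrow}_{G_{\sigma}}^G\in\mathcal{X}_{G_{\sigma}}^{\perp}=\mathcal{GP}_{G_{\sigma}}^{\perp}$ for every $\sigma$ — this is where the hypothesis is used. The argument proving \Cref{thicle} in fact shows $(\mathcal{GP},\mathcal{GP}^{\perp})$ is hereditary over any group ring, so $\textup{Ext}^{\geq 1}_{kG_{\sigma}}(A,N{\downarrow}_{G_{\sigma}}^G)=0$ for all $A\in\mathcal{GP}_{G_{\sigma}}$; taking $A=M'{\downarrow}_{G_{\sigma}}^G$ for any $M'\in\mathcal{GP}_G$ and applying Shapiro again yields $\textup{Ext}^{\geq 1}_{kG}(D_i',N)=0$ for the terms $D_i'$ of the cellular resolution of $M'$, using that $\textup{Ext}$ commutes with coproducts in the first variable. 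A dimension shift along the length-$d$ resolution then gives $\textup{Ext}^{j}_{kG}(M',N)=0$ for all $j>d$ and all $M'\in\mathcal{GP}_G$. Finally, since $M$ is Gorenstein projective its cosyzygies $\Omega^{-i}M$ again lie in $\mathcal{GP}_G$ and $\textup{Ext}^{1}_{kG}(M,N)\cong\textup{Ext}^{1+d}_{kG}(\Omega^{-d}M,N)$ (the modules occurring between being projective), and the right-hand side vanishes by the previous sentence; hence $N\in\mathcal{GP}_G^{\perp}$. The ascending-union step, $G=\bigcup_iG_i$ with each $G_i$ of strictly smaller cardinality, runs along the same lines, assembling $\mathcal{Y}$ from the cogenerating sets $\mathcal{X}_{G_i}$ and using \cite[Proposition~3.1]{ETgcd} to bound the relevant Gorenstein projective dimensions in place of the finite resolution; completeness then follows from \Cref{EkTrl}.

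I expect the crux to be the inclusion $\mathcal{Y}^{\perp}\subseteq\mathcal{GP}_G^{\perp}$: one must upgrade Ext-orthogonality over each subgroup $G_{\sigma}$ (where the hypothesis lives) to Ext-orthogonality over $G$, and this depends on the finiteness of $\dim X$ together with the hereditariness of the subgroup cotorsion pairs in order for the dimension shift to close up. The ascending-union step is the more delicate one, since there is no finite resolution and one must instead control $\textup{Ext}$ along a possibly uncountable filtered colimit; alternatively one could sidestep the subgroup structure here altogether and argue, using \Cref{costabil}, that every module in $\mathcal{GP}_G$ is a kernel in an acyclic complex of projectives and that such complexes contain small acyclic direct-summand subcomplexes, whence $\mathcal{GP}_G$ is $\kappa$-deconstructible for $\kappa=|kG|+\aleph_0$.
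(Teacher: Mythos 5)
Your successor-ordinal step is correct and is essentially the paper's argument: induce the cogenerating sets of the stabilisers, transfer Ext-orthogonality back and forth via Eckmann--Shapiro using \Cref{resgp1} and hereditariness, and close with a dimension shift along the length-$d$ cellular resolution followed by a cosyzygy shift. The gap is in the ascending-union step, which you rightly flag as the delicate one but do not carry out. The tool you cite there, \cite[Proposition~3.1]{ETgcd}, bounds the Gorenstein projective dimension of a module whose restrictions to the $G_\alpha$ are Gorenstein projective; it says nothing about $\textup{Ext}$-orthogonality against a fixed $N\in\mathcal{Y}^{\perp}$, which is what must be proved, and in the union step there is no finite resolution by induced modules to shift along. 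The actual obstruction is that $X=\varinjlim_\alpha X{\downarrow}_{G_\alpha}^G{\uparrow}_{G_\alpha}^G$ is a possibly uncountable filtered colimit and $\textup{Ext}^1_{kG}(-,N)$ does not commute with filtered colimits in the contravariant variable.

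The paper closes this as follows: writing $G$ as a continuous well-ordered union $\bigcup_{\alpha<\gamma}G_\alpha$ of subgroups of strictly smaller cardinality, the kernels $K_\alpha$ of the maps $X{\downarrow}_{G_1}^G{\uparrow}_{G_1}^G\to X{\downarrow}_{G_\alpha}^G{\uparrow}_{G_\alpha}^G$ form a continuous chain whose union is the kernel $K$ of $X{\downarrow}_{G_1}^G{\uparrow}_{G_1}^G\to X$; each subquotient $K_{\alpha+1}/K_\alpha$ sits in a short exact sequence between two induced modules that are $\textup{Ext}$-orthogonal to $N$ (inductive hypothesis plus hereditariness), so the Eklof Lemma \cite[Lemma~1]{EkTrl} gives $\textup{Ext}^1_{kG}(K,N)=0$, hence $\textup{Ext}^2_{kG}(X,N)=0$, and a final cosyzygy shift finishes. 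Your alternative route via \Cref{costabil} and deconstructibility of acyclic complexes of projectives is genuinely different and viable (and not circular, since \Cref{costabil} does not rely on this theorem), but as written it is only a gesture; one of these two arguments needs to be supplied for the union step before the proof is complete.
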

\begin{proof}
We only need to show that $(\mathcal{GP}_G,\mathcal{GP}_G^{\perp})$ is cogenerated by a set; completeness then follows by \Cref{EkTrl}.
\par 
Note that for $\mathfrak{G}$ groups we have shown in \Cref{perpequal} that $(\mathcal{GP},\mathcal{GP}^{\perp})$ is cogenerated by a set. We first prove the claim for $\textup{H}\mathfrak{G}$ groups by transfinite induction. The base case is clearly true and so suppose that for some ordinal $\alpha$ it is true for $H_{\beta}\mathfrak{G}$ groups for all $\beta < \alpha$. 
    \par 
    Now, let $G$ be an $\textup{H}_{\alpha}\mathfrak{G}$ group. 
    We claim that $(\mathcal{GP}_G,\mathcal{GP}_G^{\perp})$ is cogenerated by the set $\bigcup\limits_H\mathcal{X}_H{\uparrow}_H^G$, where the union is over all $H_{\beta}\mathfrak{G}$ subgroups $H$ for $\beta < \alpha$. The fact that this is actually a set follows by the inductive hypothesis. 
    \par 
    Firstly, note that $X{\uparrow}_H^G$ is a Gorenstein projective $kG$-module if $X$ is a Gorenstein projective $kH$-module. Therefore, the inclusion $\mathcal{GP}^{\perp} \subseteq (\bigcup\limits_H\mathcal{X}_H{\uparrow}_H^G)^{\perp}$ follows immediately.
    \par 
    Suppose that $M$ is a $kG$-module such that $\textup{Ext}^1_{kG}(X{\uparrow}_H^G,M) = 0$ for any $X{\uparrow}_H^G \in \bigcup\limits_H\mathcal{X}_H{\uparrow}_H^G$ and let $Y$ be any Gorenstein projective $kG$-module. Note that for any $X \in \mathcal{X}_H$ we have $\textup{Ext}^1_{kH}(X,M{\downarrow}_H^G) = 0$. 
    \par Consider the chain complex given by tensoring the cellular chain complex in the definition of $\textup{H}\mathfrak{G}$ groups with $Y$ as follows. 
    \[0 \to \bigoplus\limits_{i_k \in I_k}Y{\downarrow}_{H_{i_k}}^G{\uparrow}_{H_{i_k}}^G \to \dots \to \bigoplus\limits_{i_0 \in I_0}Y{\downarrow}_{H_{i_0}}^G{\uparrow}_{H_{i_0}}^G \to Y \to 0\]
    By definition, each $H_{i_j}$ is a $\textup{H}_{\beta}\mathfrak{G}$ subgroup for some $\beta < \alpha$. 
    \par
    We claim that $\textup{Ext}^{k+1}_{kG}(Y,M) = 0$. To prove this, firstly note that by \Cref{resgp1} we know that $Y{\downarrow}_{H_{i_j}}^G$ is Gorenstein projective for any $0 \leq j \leq k$ and since $(\mathcal{GP}_{H_{i_j}},\mathcal{GP}_{H_{i_j}}^{\perp})$ is cogenerated by $\mathcal{X}_{H_{i_j}}$, the inductive hypothesis ensures that $\textup{Ext}^1_{kH_{i_j}}(Y{\downarrow}_{H_{i_j}}^G,M{\downarrow}_{H_{i_j}}) = 0$. Using the Eckmann-Shapiro isomorphism then shows that $\textup{Ext}^1_{kG}(Y{\downarrow}_{H_{i_j}}^G{\uparrow}_{H_{i_j}}^G,M) = 0$. 
    \par 
    In fact, from \Cref{thicle} we have that $\textup{Ext}^n_{kH_{i_j}}(Y{\downarrow}_{H_{i_j}}^G,M{\downarrow}_{H_{i_j}}) = 0$ for all $n \geq 1$ and so $\textup{Ext}^n_{kG}(Y{\downarrow}_{H_{i_j}}^G{\uparrow}_{H_{i_j}}^G,M) = 0$ for all $n \geq 1$. A dimension shifting argument then shows that $\textup{Ext}^{k+1}_{kG}(Y,M) = 0$ as claimed. Recall that $Y$ was an arbitrary Gorenstein projective and so this holds for any Gorenstein projective. 
    \par 
    Let $Z \in \mathcal{GP}_G$ and recall that we wish to prove $\textup{Ext}^1_{kG}(Z,M) = 0$. Since $Z$ is Gorenstein projective, we have an exact sequence of the form 
    \[ 0 \to Z \to P_0 \to \dots \to P_k \to Z' \to 0\]
    such that $Z'$ is also Gorenstein projective and each $P_i$ is projective for $0 \leq i \leq k$. Another dimension shifting argument proves that $\textup{Ext}^1_{kG}(Z,M) \cong \textup{Ext}^{k+1}_{kG}(Z',M)$, which we have just shown vanishes. This completes the proof that $\mathcal{GP}_G^{\perp} = (\bigcup\limits_H\mathcal{X}_H{\uparrow}_H^G)^{\perp}$ in this case. 
    \par 
    We now let $G$ be an $\textup{LH}\mathfrak{G}$ group and prove the statement by induction on the cardinality of $G$. Let $G = \bigcup\limits_{\alpha < \gamma}G_{\alpha}$, for some ordinal $\gamma$, such that each $G_{\alpha}$ has cardinality strictly smaller than $G$. By induction, we have a set $\mathcal{X}_{\alpha}$ such that $\mathcal{GP}_{G_{\alpha}}^{\perp} = \mathcal{X}_{\alpha}^{\perp}$ for each $\alpha < \gamma$. We claim that $(\mathcal{GP}_G,\mathcal{GP}_G^{\perp})$ is cogenerated by the set $\bigcup\limits_{\alpha < \gamma}\mathcal{X}_{\alpha}{\uparrow}_{G_{\alpha}}^G$.
    \par 
    As before, the inclusion $\mathcal{GP}_G^{\perp} \subseteq (\bigcup\limits_{\alpha < \gamma}\mathcal{X}_{\alpha}{\uparrow}_{G_{\alpha}}^G)^{\perp}$ follows immediately. Suppose, therefore, that $M \in (\bigcup\limits_{\alpha < \gamma}\mathcal{X}_{\alpha}{\uparrow}_{G_{\alpha}}^G)^{\perp}$ and take a Gorenstein projective $kG$-module $X$.
    \par 
    Suppose $\alpha \leq \beta \leq \gamma$ and consider the following commutative diagram. 
    \[\begin{tikzcd}
        K_{\alpha} \arrow{r}\arrow{d} & X{\downarrow}_{G_1}^G{\uparrow}_{G_1}^G \arrow{r} \arrow{d} & X{\downarrow}_{G_{\alpha}}^G{\uparrow}_{G_{\alpha}}^G \arrow{d}\\
        K_{\beta} \arrow{r} & X{\downarrow}_{G_1}^G{\uparrow}_{G_1}^G \arrow{r} & X{\downarrow}_{G_{\beta}}^G{\uparrow}_{G_{\beta}}^G
    \end{tikzcd}\]
    The Snake Lemma shows that the right vertical map is surjective and the left vertical map is injective and that $\textup{Ker}(X{\downarrow}_{G_{\alpha}}^G{\uparrow}_{G_{\alpha}}^G \to X{\downarrow}_{G_{\beta}}^G{\uparrow}_{G_{\beta}}^G) = \textup{Coker}(K_{\alpha} \to K_{\beta})$.
    \par 
    For any limit ordinal $\kappa$ we know that $G = \bigcup\limits_{\mu < \kappa}G_{\mu}$ and so $X = \varinjlim\limits_{\mu < \kappa}X{\downarrow}_{G_{\mu}}^G{\uparrow}_{G_{\mu}}^G$. Taking direct limits of the above system of short exact sequences shows that $\varinjlim\limits_{\mu < \kappa} K_{\mu} \cong K_{\kappa}$. In particular, we see that $\varinjlim\limits_{\alpha < \gamma}K_{\alpha} \cong K$, where $K$ is the kernel of the surjective map $X{\downarrow}_{G_1}^G{\uparrow}_{G_1}^G \to X$; this means that $K$ has a transfinite filtration by the $K_{\alpha}$.
    \par 
    Consider $\alpha$ such that $\alpha + 1 < \gamma$ and consider the short exact sequence 
    \[ 0 \to K_{\alpha +1}/K_{\alpha} \to X{\downarrow}_{G_{\alpha+1}}^G{\uparrow}_{G_{\alpha+1}}^G \to X{\downarrow}_{G_{\alpha}}^G{\uparrow}_{G_{\alpha}}^G \to 0\]
    By \Cref{resgp1} we know that $X{\downarrow}_{G_{\alpha}}^G$ is Gorenstein projective and hence, by the inductive hypothesis, $\textup{Ext}^1_{kG_{\alpha}}(X{\downarrow}_{G_{\alpha}}^G, M{\downarrow}_{G_{\alpha}}^G) = 0$. The same is true for $G_{\alpha + 1}$ and hence we see $\textup{Ext}^1_{kG}(K_{\alpha + 1}/K_{\alpha}, M ) = 0$.
    \par 
    Applying the Eklof Lemma \cite[Lemma~1]{EkTrl} shows that $\textup{Ext}^1_{kG}(K,M) = 0$. Combining this with the short exact sequence $K \to X{\downarrow}_{G_1}^G{\uparrow}_{G_1}^G \to X$ shows that $\textup{Ext}^2_{kG}(X,M) = 0$. As before, this is true for any Gorenstein projective $X$. 
    \par 
    Finally, choose any $Z \in \mathcal{GP}$ and consider the short exact sequence $0 \to Z \to P \to Z' \to 0$ where $P$ is projective and $Z'$ is Gorenstein projective. Then $\textup{Ext}^1_{kG}(Z,M) = \textup{Ext}^2_{kG}(Z',M) = 0$ and we have finished.
\end{proof}
In fact, the above argument shows that $(\mathcal{GP}_G,\mathcal{GP}_G^{\perp})$ is cogenerated by the set $\bigcup\limits_{H \in \mathfrak{G}}\mathcal{X}_H$. We use this in the following corollary.
\begin{corollary}\label{corfsub}
    We have $X \in \mathcal{GP}_G^{\perp}$ if and only if $X{\downarrow}_H^G \in \mathcal{GP}_H^{\perp}$ for all $H \in \mathfrak{G}$.
\end{corollary}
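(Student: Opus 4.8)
The plan is to obtain this immediately from the explicit cogenerating set produced in the proof of \Cref{induc} together with the Eckmann--Shapiro isomorphism. First I would recall that, by \Cref{perpequal}, the cotorsion pair $(\mathcal{GP}_H,\mathcal{GP}_H^{\perp})$ is cogenerated by a set $\mathcal{X}_H$ for every $H \in \mathfrak{G}$; hence the hypotheses of \Cref{induc} are satisfied, and, as noted in the remark following its proof, $(\mathcal{GP}_G,\mathcal{GP}_G^{\perp})$ is cogenerated by $\mathcal{S} := \bigcup_{H \in \mathfrak{G}} \mathcal{X}_H{\uparrow}_H^G$, so that $\mathcal{GP}_G^{\perp} = \mathcal{S}^{\perp}$.

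For the forward direction, I would assume $X \in \mathcal{GP}_G^{\perp}$ and fix $H \in \mathfrak{G}$. For any Gorenstein projective $kH$-module $N$, the induced module $N{\uparrow}_H^G$ is Gorenstein projective over $kG$ (this is used in the proof of \Cref{induc}), so $\textup{Ext}^1_{kG}(N{\uparrow}_H^G, X) = 0$; the Eckmann--Shapiro isomorphism $\textup{Ext}^1_{kH}(N, X{\downarrow}_H^G) \cong \textup{Ext}^1_{kG}(N{\uparrow}_H^G, X)$ then forces $\textup{Ext}^1_{kH}(N, X{\downarrow}_H^G) = 0$, i.e. $X{\downarrow}_H^G \in \mathcal{GP}_H^{\perp}$.

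For the converse I would assume $X{\downarrow}_H^G \in \mathcal{GP}_H^{\perp}$ for all $H \in \mathfrak{G}$ and show $X \in \mathcal{S}^{\perp}$. It suffices to verify $\textup{Ext}^1_{kG}(Z{\uparrow}_H^G, X) = 0$ for every $H \in \mathfrak{G}$ and every $Z \in \mathcal{X}_H$; since $\mathcal{X}_H \subseteq \mathcal{GP}_H$, Eckmann--Shapiro gives $\textup{Ext}^1_{kG}(Z{\uparrow}_H^G, X) \cong \textup{Ext}^1_{kH}(Z, X{\downarrow}_H^G) = 0$. Thus $X \in \mathcal{S}^{\perp} = \mathcal{GP}_G^{\perp}$.

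I do not anticipate a real obstacle here: both implications amount to a single application of Eckmann--Shapiro. The only point that needs care is that the cogenerating set for $(\mathcal{GP}_G,\mathcal{GP}_G^{\perp})$ may be taken to consist of modules induced up from $\mathfrak{G}$-subgroups, which is precisely the strengthening of \Cref{induc} recorded in the preceding remark; without it one would only get the forward implication.
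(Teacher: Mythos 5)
Your proof is correct and is essentially the paper's argument: the forward direction is identical (induction preserves Gorenstein projectivity plus Eckmann--Shapiro), and the reverse direction rests on the same key input, namely that $(\mathcal{GP}_G,\mathcal{GP}_G^{\perp})$ is cogenerated by modules induced from $\mathfrak{G}$-subgroups, as recorded in the remark after \Cref{induc}. The only difference is organizational: the paper re-runs the transfinite induction over the hierarchy and over cardinality before invoking \Cref{induc}, whereas you apply the flattened cogenerating set $\bigcup_{H\in\mathfrak{G}}\mathcal{X}_H{\uparrow}_H^G$ directly, which is a legitimate streamlining.
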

\begin{proof}
    We begin by showing the forward implication, and so take $X \in \mathcal{GP}_G^{\perp}$. Take $H \in \mathfrak{G}$ and Gorenstein projective $kH$-module $M$. Then $\textup{Ext}^1_{kH}(M,X{\downarrow}_H^G) \cong \textup{Ext}^1_{kG}(M{\uparrow}_H^G,X) \cong 0$. 
    \par 
    For the reverse implication we proceed, as usual, by transfinite induction, with the statement being true for $\textup{H}_0\mathfrak{G}$ groups. Now, let $G$ be an $\textup{H}_{\alpha}\mathfrak{G}$ group and suppose $X{\downarrow}_H^G \in \mathcal{GP}_H^{\perp}$ for all $H \in \mathfrak{G}$. By the inductive hypothesis, we in fact know that $X{\downarrow}_{H'}^G \in \mathcal{GP}_{H'}^{\perp}$ for all $H' \leq G$ with $H'$ an $\textup{H}_{\beta}\mathfrak{G}$ subgroup for $\beta < \alpha$. 
    \par 
    Therefore, for such an $H' \leq G$ we see that $\textup{Ext}^1_{kG}(Y{\uparrow}_{H'}^G,X) = 0$ for each Gorenstein projective $kH'$-module $Y$. We conclude from \Cref{induc} that $X \in \mathcal{GP}_{G}^{\perp}$. 
    \par 
    If $G$ is an $\textup{LH}\mathfrak{G}$ group, then as before we induct on the cardinality of $G$. Since countable $\textup{LH}\mathfrak{G}$ groups are necessarily in $\textup{H}\mathfrak{G}$, we are done in this case. If $G$ is uncountable, we can write $G = \bigcup\limits_{i \in I}G_{i}$ where each $G_i$ has cardinality strictly smaller than $G$. The inductive hypothesis implies that $X{\downarrow}_{G_i}^G \in \mathcal{GP}_{G_i}^{\perp}$ for each $i \in I$. We conclude, again using \Cref{induc}, that $X \in \mathcal{GP}_G^{\perp}$. 
\end{proof}
We record the following for future reference; it shows the essential properties of $\mathfrak{G}$ groups that we have used during the proofs of the previous results. We will use it in particular for $\lhf$ groups.
\begin{theorem}\label{corfsubfin}
    Suppose $\mathfrak{X}$ is a class of groups such that for each $H \in \mathfrak{X}$ the Gorenstein projective cotorsion pair is cogenerated by a set $\mathcal{X}_H$. Furthermore, assume that if $M$ is Gorenstein projective for some group $G$, then $M{\downarrow}_H^G$ is Gorenstein projective for any $\mathfrak{X}$-subgroup $H \leq G$. Then for any $\textup{LH}\mathfrak{X}$ group $G$, the cotorsion pair $(\mathcal{GP},\mathcal{GP}^{\perp})$ is cogenerated by $\bigcup\limits_{H \in \mathfrak{X}}\mathcal{X}_H$. Furthermore, $X \in \mathcal{GP}^{\perp}_G$ if and only if $X{\downarrow}_H^G \in \mathcal{GP}^{\perp}_H$ for all $H \in \mathfrak{X}$.
\end{theorem}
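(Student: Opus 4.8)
The strategy is to observe that the proofs of \Cref{resgp1}, \Cref{induc}, and \Cref{corfsub} used nothing about the class $\mathfrak{G}$ beyond the two properties now assumed as hypotheses — that the Gorenstein projective cotorsion pair is cogenerated by a set over each member of the class, and that Gorenstein projectivity is stable under restriction to subgroups in the class. So the plan is to re-run those three transfinite inductions with $\mathfrak{G}$ replaced by $\mathfrak{X}$ throughout, and with the specific lemmas about $\mathfrak{G}$ groups (\Cref{allsnf} and \Cref{perpequal}) replaced by the assumptions in the statement. I would carry this out in three stages, in the same order as in the text.

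First I would prove the analogue of \Cref{resgp1}: if $M$ is a Gorenstein projective $kG$-module then $M{\downarrow}_H^G$ is Gorenstein projective for every $\textup{LH}\mathfrak{X}$-subgroup $H \leq G$. The base case $H \in \mathfrak{X}$ is exactly the second hypothesis. For an $\textup{H}_{\alpha}\mathfrak{X}$ subgroup one tensors the cellular chain complex of the finite-dimensional contractible complex on which $H$ acts with $M{\downarrow}_H^G$; this is exact because $M$, being Gorenstein projective over a ring of finite global dimension, is projective over $k$, and it exhibits $M{\downarrow}_H^G$ as having finite Gorenstein projective dimension over $kH$, the terms being inductions of restrictions of $M$ to $\textup{H}_{\beta}\mathfrak{X}$ subgroups with $\beta < \alpha$, which are Gorenstein projective by the inductive hypothesis. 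Then \Cref{use3}, whose proof is insensitive to the group and to the class $\mathfrak{X}$, upgrades finite dimension to Gorenstein projectivity. The passage to $\textup{LH}\mathfrak{X}$ is the cardinality induction of \Cref{resgp1}: write $H$ as an ascending union of smaller subgroups, apply \cite[Proposition~3.1]{ETgcd}, and invoke \Cref{use3} once more.

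With restriction-closure established, I would re-run \Cref{induc} verbatim: a transfinite induction on the $\textup{H}\mathfrak{X}$ hierarchy shows $(\mathcal{GP}_G,\mathcal{GP}_G^{\perp})$ is cogenerated by $\bigcup_H \mathcal{X}_H{\uparrow}_H^G$ over the lower-level subgroups, the ingredients being the Eckmann--Shapiro isomorphism to transport $\textup{Ext}$-vanishing between $\mathcal{X}_H$-modules and their inductions, dimension shifting along the tensored cellular complex, and \Cref{thicle} to supply the vanishing of all higher $\textup{Ext}$ groups that dimension shift requires; the cardinality induction for $\textup{LH}\mathfrak{X}$ is then the Eklof-lemma argument of \Cref{induc} word for word. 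As in the remark following \Cref{induc}, the inductions can be absorbed so that the cogenerating set is $\bigcup_{H \in \mathfrak{X}} \mathcal{X}_H$ itself, and completeness follows from \Cref{EkTrl}. Finally, the displayed equivalence is the analogue of \Cref{corfsub}: the forward direction is a single application of Eckmann--Shapiro, and the reverse direction is a transfinite-then-cardinality induction feeding $\textup{Ext}^1$-vanishing against inductions of Gorenstein projectives from lower-level subgroups into the abstracted \Cref{induc}.

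The main obstacle, such as it is, is purely bookkeeping: one must verify that every auxiliary result invoked en route — \Cref{use3}, \Cref{thicle}, \cite[Proposition~3.1]{ETgcd}, and the Eckmann--Shapiro and dimension-shifting steps — is a genuine statement about arbitrary $kG$-modules over a commutative noetherian ring $k$ of finite global dimension, with no concealed appeal to $\textup{Gcd}_k(G) < \infty$ or to membership of $G$ in $\mathfrak{G}$. Once this is confirmed, the three inductions transfer without change.
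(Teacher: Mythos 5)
Your proposal is correct and is exactly the route the paper takes: the paper's own proof of this theorem is the one-line remark that it "is proved just as \Cref{induc} and \Cref{corfsub}", i.e.\ re-running the transfinite and cardinality inductions with the two hypotheses of the statement standing in for \Cref{allsnf} and \Cref{perpequal}. Your explicit first stage (the analogue of \Cref{resgp1}, extending restriction-closure from $\mathfrak{X}$-subgroups to $\textup{LH}\mathfrak{X}$-subgroups via \Cref{use3}) is indeed the necessary preliminary that the paper leaves implicit, and your bookkeeping check that the auxiliary lemmas make no hidden appeal to $\textup{Gcd}_k(G)<\infty$ is the right thing to verify.
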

\begin{proof}
    This is proved just as \Cref{induc} and \Cref{corfsub}.
\end{proof}
Combining our work so far with \Cref{HOVEYCORRESPONDONCE} then gives us our desired model structure as follows. 
\begin{theorem}\label{ourbigmodel}
Suppose $G$ is an $\textup{LH}\mathfrak{G}$ group. Then there is a cofibrantly generated hereditary, abelian model structure on $\textup{Mod}(kG)$ such that every module is fibrant and the Gorenstein projectives are the cofibrants. The homotopy category is equivalent to the stable category of Gorenstein projectives $\gp(kG)$.
\end{theorem}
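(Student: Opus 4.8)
The idea is to feed Hovey's correspondence (\Cref{HOVEYCORRESPONDONCE}) the triple $\mathcal{C} = \mathcal{GP}$, $\mathcal{F} = \textup{Mod}(kG)$, and $\mathcal{W} = \mathcal{GP}^{\perp}$. With this choice $\mathcal{W} \cap \mathcal{F} = \mathcal{GP}^{\perp}$, so the first cotorsion pair demanded by Hovey's theorem is $(\mathcal{GP}, \mathcal{GP}^{\perp})$; this is a cotorsion pair by \cite{cortésizurdiaga2023cotorsion} (also recorded in \Cref{perpequal}), it is cogenerated by a set by \Cref{induc}---whose standing hypothesis, namely that $(\mathcal{GP}_H, \mathcal{GP}_H^{\perp})$ is cogenerated by a set for each $H \in \mathfrak{G}$, is exactly \Cref{perpequal}---and hence complete by \Cref{EkTrl}. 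It is hereditary because the first paragraph of the proof of \Cref{thicle} shows $\textup{Ext}^i_{kG}(X,Y) = 0$ for all $i>0$, $X \in \mathcal{GP}$, $Y \in \mathcal{GP}^{\perp}$, and $\mathcal{W} = \mathcal{GP}^{\perp}$ is thick by \Cref{thicle}.

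For the second cotorsion pair $(\mathcal{C} \cap \mathcal{W}, \mathcal{F})$ I would first identify $\mathcal{GP} \cap \mathcal{GP}^{\perp}$ with the class of projective $kG$-modules: if $M$ is Gorenstein projective and lies in $\mathcal{GP}^{\perp}$, then in the totally acyclic complex of projectives witnessing $M$ the short exact sequence $0 \to M \to P \to \shift(M) \to 0$ splits, since $\shift(M)$ is again Gorenstein projective and $\textup{Ext}^1_{kG}(\shift(M), M) = 0$, so $M$ is a summand of a projective (this is the fact used already in the proof of \Cref{fpdext}). Hence the required second pair is $(\textup{Proj}(kG), \textup{Mod}(kG))$, the trivial projective cotorsion pair, which is manifestly complete, hereditary, and cogenerated by the set $\{kG\}$. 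Hovey's theorem then yields a unique hereditary abelian model structure on $\textup{Mod}(kG)$ whose cofibrant objects are $\mathcal{GP}$, whose fibrant objects are all modules, and whose trivial objects are $\mathcal{GP}^{\perp}$; since both cotorsion pairs are cogenerated by sets, the standard argument (cf. \cite{hovmod}) shows the model structure is cofibrantly generated, the generating (trivial) cofibrations being assembled from those sets.

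It remains to compute the homotopy category. In an abelian model structure the homotopy category is equivalent to the category of bifibrant objects with morphisms taken modulo those that factor through an object which is simultaneously cofibrant, fibrant, and trivial. Here the bifibrant objects are $\mathcal{GP} \cap \textup{Mod}(kG) = \mathcal{GP}$, and the simultaneously cofibrant-fibrant-trivial objects are $\mathcal{GP} \cap \mathcal{GP}^{\perp} = \textup{Proj}(kG)$, so the homotopy category is $\mathcal{GP}$ with maps factoring through projectives quotiented out, which is precisely $\gp(kG)$ by definition.

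\textbf{Main obstacle.} Essentially all the substantive work---the completeness of $(\mathcal{GP}, \mathcal{GP}^{\perp})$ for $\textup{LH}\mathfrak{G}$ groups---has already been done in \Cref{induc}; what remains is bookkeeping. The only points requiring care are choosing $\mathcal{F}$ so that the second cotorsion pair is genuinely $(\textup{Proj}, \textup{Mod})$, verifying the identification $\mathcal{GP} \cap \mathcal{GP}^{\perp} = \textup{Proj}(kG)$, and quoting the correct statement from Hovey's framework both for cofibrant generation and for the description of the homotopy category as a stable category of bifibrant objects. I do not anticipate any genuine difficulty beyond these routine verifications.
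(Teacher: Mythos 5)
Your proposal is correct and follows essentially the same route as the paper: Hovey's correspondence applied to the triple $(\mathcal{GP}, \textup{Mod}(kG), \mathcal{GP}^{\perp})$, with completeness of $(\mathcal{GP},\mathcal{GP}^{\perp})$ supplied by \Cref{induc}, the identification $\mathcal{GP}\cap\mathcal{GP}^{\perp}=\textup{Proj}(kG)$ via the splitting of $0\to M\to P\to \shift(M)\to 0$, and the standard description of the homotopy category of an abelian model structure. The only cosmetic difference is that you spell out the hereditary and cofibrantly generated claims slightly more explicitly than the paper does.
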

\begin{proof}
    We have shown that $\mathcal{GP}^{\perp}$ is thick in \Cref{thicle}. In order to apply Hovey's \Cref{HOVEYCORRESPONDONCE} we need to show that $(\mathcal{GP}\cap \mathcal{GP}^{\perp},\textup{Mod}(kG))$ and $(\mathcal{GP},\mathcal{GP}^{\perp})$ are complete cotorsion pairs. We have shown that this is true for the latter pair in \Cref{induc}. 
    \par 
    To show that $(\mathcal{GP}\cap \mathcal{GP}^{\perp},\textup{Mod}(kG))$ is a complete cotorsion pair, we first claim that $\mathcal{GP} \cap \mathcal{GP}^{\perp} = \mathcal{P}$, where $\mathcal{P}$ denotes the class of projective modules. For any $X \in \mathcal{GP} \cap \mathcal{GP}^{\perp}$, consider the short exact sequence $X \to P \to Y$ with $P$ projective and $Y \in \mathcal{GP}$. This splits by assumption, showing the claim. Now, it is clear that $(\mathcal{P},\textup{Mod}(kG))$ is a complete cotorsion pair. 
    \par 
    The fact the resulting model structure is cofibrantly generated follows from \cite[Lemma~6.7, Corollary~6.8]{hovmod}, since we have shown it is cogenerated by a set. 
    \par 
    Finally, the statement about the homotopy category is a fundamental property of model categories, see \cite[Theorem~2.6]{gilhermodcat} for a reference.
\end{proof}
We will use $\tacstab(kG)$ to denote the stable category when we do not wish to specify which model. Similarly, morphisms will be denoted by $\tachom_{kG}(-,-)$.
\begin{remark}\label{mapsrmk}
    This gives us the following explicit description of the maps. The (trivial) cofibrations are the injective homomorphisms with Gorenstein projective cokernel (with projective cokernel). The (trivial) fibrations are the surjective homomorphisms (with kernel in $\mathcal{GP}^{\perp}$). The weak equivalences are those which factor as a injective map with trivial cokernel followed by a surjective map with trivial kernel; this follows from \cite[Proposition~2.3]{gilhermodcat}.
\end{remark}
Recall that a cotorsion pair is said to be of finite (countable) type if it is cogenerated by a set of $\textup{FP}_{\infty}$ modules (countably presented modules). 
\begin{corollary}\label{fintypec}
     If $G$ is an $\textup{LH}\mathfrak{G}_{AC}$ group then the cotorsion pair $(\mathcal{GP},\mathcal{GP}^{\perp})$ is of finite type.
\end{corollary}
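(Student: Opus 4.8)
The plan is to deduce this from \Cref{corfsubfin} applied with $\mathfrak{X} = \mathfrak{G}_{AC}$, feeding in the explicit cogenerating set of \Cref{prop:fpinfcompgen}. First I would check that $\mathfrak{G}_{AC}$ satisfies the hypotheses of \Cref{corfsubfin}. Since $\mathcal{GP}_{AC} \subseteq \mathcal{GP}$, a group of finite global Gorenstein AC-projective dimension has finite global Gorenstein projective dimension, so $\mathfrak{G}_{AC} \subseteq \mathfrak{G}$; in particular $\textup{Gcd}_k(H) \leq \textup{glGPD}(kH) < \infty$ for every $H \in \mathfrak{G}_{AC}$, so by \Cref{allsnf} restriction along any $\mathfrak{G}_{AC}$-subgroup $H \leq G$ (for $G$ arbitrary) preserves Gorenstein projectivity. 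This is exactly the subgroup-closure hypothesis required.

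Next I would record the cogenerating sets at the base of the hierarchy. For $H \in \mathfrak{G}_{AC}$, \Cref{prop:fpinfcompgen} gives that $(\mathcal{GP}_H,\mathcal{GP}_H^{\perp})$ is cogenerated by the set $\mathcal{X}_H$ of $kH$-modules $\Omega^{d_H}(F)$, where $d_H = \textup{glGPD}_{AC}(kH)$ and $F$ ranges over isomorphism classes of $\textup{FP}_{\infty}$ $kH$-modules. The point to verify is that every member of $\mathcal{X}_H$ is again of type $\textup{FP}_{\infty}$: this is the standard fact that in a short exact sequence $0 \to \Omega(F) \to P \to F \to 0$ with $P$ finitely generated projective, $\Omega(F)$ is $\textup{FP}_{\infty}$ as soon as $F$ is, applied $d_H$ times.

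Now \Cref{corfsubfin} applies directly and shows that for an $\textup{LH}\mathfrak{G}_{AC}$ group $G$ the cotorsion pair $(\mathcal{GP}_G,\mathcal{GP}_G^{\perp})$ is cogenerated by $\bigcup_{H \in \mathfrak{G}_{AC}}\mathcal{X}_H{\uparrow}_H^G$. To finish I would observe that induction preserves type $\textup{FP}_{\infty}$: because $kG$ is free as a right $kH$-module, $kG \otimes_{kH} -$ is exact and sends $kH^{n}$ to the finitely generated projective $kG$-module $kG^{n}$, hence carries a resolution of an $\textup{FP}_{\infty}$ $kH$-module by finitely generated projectives to such a resolution over $kG$. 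Thus every element of the cogenerating set is an $\textup{FP}_{\infty}$ $kG$-module, which is precisely the assertion that $(\mathcal{GP}_G,\mathcal{GP}_G^{\perp})$ is of finite type.

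Most of the work is already contained in \Cref{induc}, \Cref{prop:fpinfcompgen} and \Cref{corfsubfin}, so there is no genuine obstacle here; the only non-formal inputs are the two closure statements — that syzygies and induction both preserve type $\textup{FP}_{\infty}$ — and, for the latter, the mild subtlety that one should invoke freeness of $kG$ over $kH$ rather than mere flatness, so that induced finitely generated projectives stay finitely generated even when the index $[G:H]$ is infinite.
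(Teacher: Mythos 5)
Your proof is correct and follows essentially the same route as the paper: cogenerate $(\mathcal{GP}_G,\mathcal{GP}_G^{\perp})$ by $\bigcup_{H}\mathcal{X}_H{\uparrow}_H^G$ via \Cref{induc}/\Cref{corfsubfin}, and use \Cref{prop:fpinfcompgen} for the $\mathfrak{G}_{AC}$-subgroups; the paper merely leaves implicit the two closure facts you spell out (syzygies and induction preserve type $\textup{FP}_{\infty}$). One small quibble with your closing remark: finite generation of $M{\uparrow}_H^G$ over $kG$ requires nothing beyond $M$ being finitely generated over $kH$, so flatness of $kG$ over $kH$ would already suffice for the whole argument — freeness is of course true and makes the bookkeeping transparent, but it is not what saves finite generation in the infinite-index case.
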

\begin{proof}
    During the proof of \Cref{induc} we have shown that the Gorenstein projective cotorsion pair is cogenerated by $\bigcup\limits_{H \in \mathfrak{G}_{AC}}\mathcal{X}_H{\uparrow}_H^G$ where $H$ runs over the $\mathfrak{G}_{AC}$-subgroups of $G$, and $\mathcal{X}_H$ is the cogenerating set for the cotorsion pair over $H$. 
    \par 
    We have shown in \Cref{prop:fpinfcompgen} that the Gorenstein projective cotorsion pair is of finite type for groups with $\textup{glGPD}_{AC}(kG) < \infty$ and the statement follows.
\end{proof} 
It is not clear if the Gorenstein projective cotorsion pair is of countable type for every group, but with an additional assumption we can show it holds for $\mathcal{PGF}$ modules.
\begin{proposition}
    Assume that being projectively coresolved Gorenstein flat is a subgroup closed property. Then $(\mathcal{PGF},\mathcal{PGF}^{\perp})$ is of countable type for every countable ring $k$ and group $G$. 
\end{proposition}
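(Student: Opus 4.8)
The strategy is to run the induction of \Cref{induc} with $\mathcal{GP}$ replaced by $\mathcal{PGF}$, using the subgroup-closedness hypothesis exactly where \Cref{resgp1} was used and replacing the ``$\textup{H}\mathfrak{G}$-layer'' of that argument by a plain induction on $|G|$. Recall from \cite{stovsarmodel} that $(\mathcal{PGF},\mathcal{PGF}^{\perp})$ is a hereditary complete cotorsion pair over any ring, so the only task is to produce a cogenerating set of countably presented modules. This is clear when $kG$ is countable, i.e.\ when $G$ is countable: \cite{stovsarmodel} shows $\mathcal{PGF}$ is deconstructible, and over a countable ring the relevant deconstructibility cardinal is $\aleph_1$, so every module in $\mathcal{PGF}$ is filtered by countably generated members of $\mathcal{PGF}$; a countably generated module over a countable ring is countably presented, since a submodule of a free module on countably many generators is a countable set and hence countably generated. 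So in the countable case $(\mathcal{PGF}_G,\mathcal{PGF}_G^{\perp})$ is cogenerated by a set $\mathcal{X}_G$ of countably presented members of $\mathcal{PGF}_G$.

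Two observations power the inductive step. First, ${\uparrow}_H^G$ sends $\mathcal{PGF}_H$ into $\mathcal{PGF}_G$: if $P_{\bullet}$ is a totally acyclic complex of projective $kH$-modules with a kernel $M$, then $kG\otimes_{kH}P_{\bullet}$ is an acyclic complex of projective $kG$-modules with kernel $M{\uparrow}_H^G$, and for every injective right $kG$-module $I$ one has $I\otimes_{kG}(kG\otimes_{kH}P_{\bullet})\cong I{\downarrow}_H^G\otimes_{kH}P_{\bullet}$, which is acyclic because $M$ is Gorenstein flat over $kH$ and $I{\downarrow}_H^G$ is an injective right $kH$-module (restriction is right adjoint to the exact functor ${\uparrow}_H^G$, hence preserves injectives). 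Second, ${\uparrow}_H^G$ preserves countable presentability: applying the exact functor $kG\otimes_{kH}-$ to a presentation $kH^{(\omega)}\to kH^{(\omega)}\to X\to 0$ gives a presentation $kG^{(\omega)}\to kG^{(\omega)}\to X{\uparrow}_H^G\to 0$.

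Now suppose $G$ is uncountable and write $G=\bigcup_{\alpha<\gamma}G_{\alpha}$ as a continuous ascending union of subgroups with $|G_{\alpha}|<|G|$ (obtained by well-ordering $G$ and taking the subgroups generated by the initial segments). By induction $(\mathcal{PGF}_{G_{\alpha}},\mathcal{PGF}_{G_{\alpha}}^{\perp})$ is cogenerated by a set $\mathcal{X}_{\alpha}$ of countably presented members of $\mathcal{PGF}_{G_{\alpha}}$, and I claim $(\mathcal{PGF}_G,\mathcal{PGF}_G^{\perp})$ is cogenerated by $\mathcal{X}_G:=\bigcup_{\alpha<\gamma}\mathcal{X}_{\alpha}{\uparrow}_{G_{\alpha}}^G$, which is a set of countably presented modules by the second observation. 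The inclusion $\mathcal{PGF}_G^{\perp}\subseteq\mathcal{X}_G^{\perp}$ is immediate from the first observation. Conversely, let $M\in\mathcal{X}_G^{\perp}$ and $X\in\mathcal{PGF}_G$. By Eckmann--Shapiro, $M{\downarrow}_{G_{\alpha}}^G\in\mathcal{X}_{\alpha}^{\perp}=\mathcal{PGF}_{G_{\alpha}}^{\perp}$ for each $\alpha$, and by hypothesis $X{\downarrow}_{G_{\alpha}}^G\in\mathcal{PGF}_{G_{\alpha}}$; since the pair over $kG_{\alpha}$ is hereditary this yields $\textup{Ext}^n_{kG_{\alpha}}(X{\downarrow}_{G_{\alpha}}^G,M{\downarrow}_{G_{\alpha}}^G)=0$ for all $n\geq1$, hence $\textup{Ext}^n_{kG}(X{\downarrow}_{G_{\alpha}}^G{\uparrow}_{G_{\alpha}}^G,M)=0$ for all $n\geq1$ by Eckmann--Shapiro again. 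From here one argues verbatim as in the last paragraph of the proof of \Cref{induc}: writing $X=\varinjlim_{\alpha}X{\downarrow}_{G_{\alpha}}^G{\uparrow}_{G_{\alpha}}^G$, the kernel $K$ of the natural surjection $X{\downarrow}_{G_0}^G{\uparrow}_{G_0}^G\to X$ is filtered by submodules whose consecutive quotients embed into such induced modules, so the Eklof Lemma gives $\textup{Ext}^1_{kG}(K,M)=0$, hence $\textup{Ext}^2_{kG}(X,M)=0$, and a dimension shift along a short exact sequence $0\to X\to P\to X'\to0$ with $P$ projective and $X'\in\mathcal{PGF}_G$ gives $\textup{Ext}^1_{kG}(X,M)=0$. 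Thus $\mathcal{X}_G^{\perp}\subseteq\mathcal{PGF}_G^{\perp}$, completing the induction.

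The step I expect to demand the most care is the base case: one must genuinely extract from \cite{stovsarmodel} that, over a countable ring, $\mathcal{PGF}$ is deconstructed by countably presented modules (equivalently, that the deconstructibility cardinal is $\aleph_1$), rather than merely by a set. Everything afterwards is a faithful copy of \Cref{induc}, with the two observations above standing in for \Cref{resgp1} and with the Eckmann--Shapiro and heredity bookkeeping replacing the appeals to \Cref{thicle}; one should also verify the routine points that the chain $G=\bigcup_{\alpha<\gamma}G_{\alpha}$ exists with the stated properties and that $\mathcal{X}_G$ is genuinely a set.
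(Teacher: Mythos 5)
Your proof is correct and follows essentially the same route as the paper: the paper likewise runs the argument of \Cref{induc} for $\mathcal{PGF}$, using the subgroup-closedness hypothesis exactly where \Cref{resgp1} was used, after reducing to finitely generated subgroups, and then settles the base case by citing \cite[Theorem~3.9]{stovsarmodel}, since $kH$ is countable, hence $\aleph_0$-coherent, when $H$ is finitely generated and $k$ is countable. Your only deviations are organisational --- a transfinite induction on $|G|$ down to countable subgroups rather than a single reduction to finitely generated ones, and a deconstructibility-cardinal argument for the base case where the paper simply invokes the $\aleph_0$-coherent case of \cite[Theorem~3.9]{stovsarmodel}, which also resolves the step you flagged as needing the most care.
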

\begin{proof}
    We write $G$ as a filtered union of all its finitely generated subgroups $G = \bigcup\limits_{\alpha < \gamma}G_{\alpha}$. The argument in the proof of \Cref{induc} also works in this context to show that $\mathcal{PGF}^{\perp} = (\bigcup\limits_{\alpha < \gamma}\mathcal{X}_{\alpha}{\uparrow}_{G_{\alpha}}^G)^{\perp}$, where $\mathcal{X}_{\alpha}$ is the set such that $\mathcal{X}_{\alpha}^{\perp} = \mathcal{PGF}_{G_{\alpha}}^{\perp}$ for each $\alpha < \gamma$. It is important here that restriction preserves the property of being projectively coresolved Gorenstein flat.
    \par 
    We note that $kG_{\alpha}$ is $\aleph_0$-coherent for each finitely generated group $G_{\alpha}$ and countable ring $k$. Therefore, it is shown in \cite[Theorem~3.9]{stovsarmodel} that $\mathcal{X}_{\alpha}$ can be chosen to consist of countably presented modules. The result then follows. 
\end{proof}
As commented after \cite[Theorem~3.9]{stovsarmodel}, this identifies the class $\mathcal{PGF}$ as the subclass of Gorenstein projectives consisting of those which have a filtration by countably presented Gorenstein projectives. 
\section{Comparison with other stable categories}\label{compsection}
There are various other possibilities to take for the stable category which have been considered in the literature. We now investigate when these will be equivalent to the stable category we have just constructed. 
\subsection{Benson's model structure}
In \cite[Section~10]{bensoninf}, Benson constructs a model category for any group $G$ and $k$ a commutative noetherian ring. To state this, let $B$ denote the set of bounded functions from $G$ to $k$. This is made into a $kG$-module via the diagonal action. 
\begin{theorem}\label{benstmodsthm}\cite[Theorem~10.6]{bensoninf} 
    Let $\textup{Mod}_f(kG)$ be the category of all modules $M$ such that $B \otimes_k M$ has finite projective dimension. Then there is a model category structure on $\textup{Mod}_f(kG)$ such that each module in $\textup{Mod}_f(kG)$ is fibrant and the cofibrant modules are those $N$ such that $B \otimes_k N$ is projective. 
\end{theorem}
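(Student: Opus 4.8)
The approach I would take is to realise this as an application of Hovey's \Cref{HOVEYCORRESPONDONCE}, in the same spirit as the construction of the Gorenstein projective model structure in \Cref{ourbigmodel}. The first task is to check that $\textup{Mod}_f(kG)$ is a suitable abelian category: using that $B$ is flat over $k$, the functor $B\otimes_k(-)$ is exact, so the long exact sequence in $\textup{Ext}$ (applied after tensoring with $B$) shows that the class of $M$ with $B\otimes_k M$ of finite projective dimension is closed under kernels, cokernels and extensions inside $\textup{Mod}(kG)$; one also checks $kG\in\textup{Mod}_f(kG)$, so that $\textup{Mod}_f(kG)$ is an abelian category with enough projectives, its projective objects being the summands of free $kG$-modules lying in it.

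With $\mathcal{A}=\textup{Mod}_f(kG)$ fixed, I would put $\mathcal{F}=\textup{Mod}_f(kG)$ (so every object is to be fibrant), $\mathcal{C}=\{N\in\textup{Mod}_f(kG):B\otimes_k N\text{ is projective}\}$, and $\mathcal{W}=\mathcal{C}^{\perp}$. Since $\mathcal{F}$ is the whole category, the second cotorsion pair demanded by Hovey, namely $(\mathcal{C}\cap\mathcal{W},\mathcal{F})$, is forced to be $(\mathcal{P},\textup{Mod}_f(kG))$ with $\mathcal{P}$ the class of projectives, and this is automatically a complete cotorsion pair because there are enough projectives. The identification $\mathcal{C}\cap\mathcal{W}=\mathcal{P}$ is the by-now-familiar splitting argument: one has $\mathcal{P}\subseteq\mathcal{C}$ (tensoring a free module by $B$ and untwisting gives a projective, or at worst finite-projective-dimension, $kG$-module), and conversely any $X\in\mathcal{C}\cap\mathcal{C}^{\perp}$ splits off the special precover $0\to X\to P\to Y\to 0$ with $P$ projective and $Y\in\mathcal{C}$, hence is projective. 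Thickness of $\mathcal{W}=\mathcal{C}^{\perp}$ then follows formally from the long exact sequence in $\textup{Ext}$ once the cotorsion pair $(\mathcal{C},\mathcal{C}^{\perp})$ is known to be hereditary, which reduces to $\mathcal{C}$ being resolving; and $\mathcal{C}$ is resolving because $N\in\mathcal{C}$ if and only if every syzygy of $N$ lies in $\mathcal{C}$, which is immediate from exactness of $B\otimes_k(-)$ together with the description of $\mathcal{C}$.

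The substantive step, and the one I expect to be the main obstacle, is to show that $(\mathcal{C},\mathcal{C}^{\perp})$ is a \emph{complete} cotorsion pair with $\mathcal{C}={}^{\perp}(\mathcal{C}^{\perp})$. One cannot import this from a generic Gorenstein statement; instead one must analyse $B\otimes_k(-)$ directly and exhibit $\mathcal{C}$ as a deconstructible class, i.e.\ show that every $N$ with $B\otimes_k N$ projective admits a transfinite filtration by modules of bounded cardinality that themselves lie in $\mathcal{C}$. A Hill-lemma / Eklof-Trlifaj cardinality argument then provides a cogenerating set $\mathcal{S}\subseteq\mathcal{C}$ with $\mathcal{C}^{\perp}=\mathcal{S}^{\perp}$ and ${}^{\perp}(\mathcal{S}^{\perp})=\mathcal{C}$, and \Cref{EkTrl} upgrades this to completeness. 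Once both complete cotorsion pairs $(\mathcal{C},\mathcal{W})$ and $(\mathcal{P},\textup{Mod}_f(kG))$ and the thickness of $\mathcal{W}$ are in place, \Cref{HOVEYCORRESPONDONCE} produces the unique model structure on $\textup{Mod}_f(kG)$ whose cofibrant, fibrant and trivial objects are $\mathcal{C}$, $\textup{Mod}_f(kG)$ and $\mathcal{W}$ respectively, which is exactly the asserted model structure.
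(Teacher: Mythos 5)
The paper does not give a proof of this statement: it is imported verbatim from Benson (\cite[Theorem~10.6]{bensoninf}), whose construction predates Hovey's cotorsion-pair correspondence and proceeds by verifying the model-category axioms directly rather than by exhibiting two complete cotorsion pairs. Your proposal is therefore a genuinely different route from the source, and comparison with ``the paper's proof'' is moot; what remains is to assess the proposal on its own terms, where there are two real gaps.

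First, the claim that $\textup{Mod}_f(kG)$ is an abelian category does not follow from the argument you give. Exactness of $B\otimes_k(-)$ and the long exact sequence in $\textup{Ext}$ give closure under extensions and a two-out-of-three property along short exact sequences, but \emph{not} closure under kernels and cokernels of arbitrary morphisms. If $f\colon M\to N$ with $M,N\in\textup{Mod}_f(kG)$, you would first need $B\otimes_k\mathrm{im}(f)$ to have finite projective dimension before the two short exact sequences through $\mathrm{im}(f)$ tell you anything about $B\otimes_k\ker(f)$ or $B\otimes_k\mathrm{coker}(f)$; and submodules or quotients of modules of finite projective dimension need not have finite projective dimension. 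So either $\textup{Mod}_f(kG)$ needs an independent argument to be abelian, or one must pass to an exact-category version of \Cref{HOVEYCORRESPONDONCE} and check those hypotheses instead -- none of which is done here.

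Second, you correctly single out deconstructibility of $\mathcal{C}=\{N:B\otimes_k N\text{ projective}\}$ as the crux, and correctly note that it cannot be imported from a generic Gorenstein statement; but you then defer it entirely. Producing a cogenerating set, verifying $\mathcal{C}={}^{\perp}(\mathcal{C}^{\perp})$, and obtaining completeness via \Cref{EkTrl} is precisely where the content of Benson's theorem lives. The surrounding scaffolding (identifying $\mathcal{C}\cap\mathcal{C}^{\perp}$ with the projectives, forcing $\mathcal{F}$ to be everything, thickness of $\mathcal{W}$ from heredity) is fine, but as written the proposal is an outline whose two essential steps are missing.
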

\begin{proposition}\label{benstmodeq}
    Suppose $k$ is a commutative noetherian ring of finite global dimension and $G$ is an $\lhf$ group. Then the homotopy category of Benson's model structure is equivalent to the stable category $\tacstab(kG)$.
\end{proposition}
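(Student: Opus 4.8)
The plan is to show that Benson's model structure on $\textup{Mod}_f(kG)$ and our Gorenstein projective model structure on $\textup{Mod}(kG)$ have equivalent homotopy categories, for $G$ an $\lhf$ group, by producing a Quillen equivalence between them. The natural candidate is the inclusion functor $\iota\colon \textup{Mod}_f(kG) \hookrightarrow \textup{Mod}(kG)$ together with the fact that $\lhf \subseteq \lhg$ (every $\lhf$ group is $\lhg$), so both model structures are available. The first thing I would do is identify the three distinguished classes in Benson's structure in our terms: since $k$ is noetherian, $B \otimes_k M$ having finite projective dimension should be equivalent to $M$ having finite projective dimension, and $B \otimes_k N$ being projective should be equivalent to $N$ being Gorenstein projective (this is essentially the content underlying Benson's construction — compare \cite{bensoninf} — and uses that over an $\lhf$ group every module of finite projective dimension is in $\mathcal{GP}^\perp$, which we have via \Cref{fpdext} and \Cref{ETThe}). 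So Benson's cofibrant objects are the Gorenstein projectives, his weak equivalences are detected by $B \otimes_k -$ landing in finite projective dimension, and every object is fibrant — exactly as in \Cref{ourbigmodel}.

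**Next I would** compare the two structures directly rather than through an abstract Quillen equivalence. The cleanest route: both homotopy categories admit a description via \cite[Theorem~2.6]{gilhermodcat} as the stable category obtained from the cofibrant-fibrant objects modulo the appropriate homotopy relation. In our structure the cofibrant-fibrant objects are all Gorenstein projectives and the homotopy relation is the stable equivalence $\gphom_{kG}$ (maps factoring through a projective), by \Cref{mapsrmk} and \Cref{ourbigmodel}. I would argue that Benson's cofibrant-fibrant objects are also exactly the Gorenstein projective $kG$-modules — the containment $\mathcal{GP} \subseteq \textup{Mod}_f(kG)$ holds because a Gorenstein projective $M$ is $k$-projective (as $k$ has finite global dimension), so $B \otimes_k M$ is a direct summand of a free thing and in particular... more precisely one checks $B \otimes_k M$ is Gorenstein projective, hence of finite (in fact zero) relevant dimension in Benson's sense. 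Then I would identify Benson's homotopy relation on $\mathcal{GP}$ with the same stable equivalence, so that the identity on $\mathcal{GP}$ induces an equivalence of homotopy categories.

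**The main obstacle** I anticipate is matching up the weak equivalences — equivalently, checking that the inclusion $\iota$ is exact and reflects/preserves weak equivalences in the right way, and that Benson's trivial objects $\{M : B\otimes_k M \text{ has finite projective dimension}\}$ coincide with our trivial objects $\mathcal{W} = \{M : \textup{pd}_{kG}(M) < \infty\} = \mathcal{GP}^\perp$. One direction is easy: if $M$ has finite projective dimension over $kG$ then $B \otimes_k M$ does too (tensoring a finite projective $kG$-resolution with the $k$-flat module $B$). The reverse — that $B \otimes_k M$ of finite projective dimension forces $M$ of finite projective dimension — is where the $\lhf$ hypothesis must genuinely enter, presumably via the Benson cofibrant replacement (his cofibrant replacement of $M$ has the form of a map from something built out of $B$), reducing to the fact that for $\lhf$ groups the trivial module $k$ embeds in $B$ with projective cokernel-type behaviour (this is the standard "$B$ is a Gorenstein projective resolution of $k$" phenomenon for $\lhf$ groups). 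I would handle this by citing the relevant structural facts about $B$ from \cite{bensoninf} rather than reproving them, and then the equivalence of homotopy categories follows formally: both are $\gp(kG)$, and the functors are mutually inverse up to natural isomorphism on the nose since the underlying category map is an inclusion that is the identity on the common class $\mathcal{GP}$ of cofibrant-fibrant objects.
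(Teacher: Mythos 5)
Your overall strategy --- identify Benson's cofibrant objects with the Gorenstein projectives and then observe that both homotopy categories are the cofibrant objects modulo maps factoring through projectives, i.e.\ $\gp(kG)$ --- is exactly the paper's. The problem is that you never actually establish the key identification, and you misattribute it. Only one direction is elementary or ``the content underlying Benson's construction'': a Benson-cofibrant module (one with $B\otimes_k N$ projective) is Gorenstein projective (cf.\ \cite[Proposition~4.1]{ADTgor}). The direction you need --- that for an $\lhf$ group every Gorenstein projective $M$ has $B\otimes_k M$ projective --- is a genuinely nontrivial theorem of Dembegioti and Talelli, \cite[Corollary~C]{demtalres}, not something available in \cite{bensoninf}; the paper's proof consists precisely of citing this (and noting the argument works over any commutative noetherian ring of finite global dimension), and the paper later reproves it as \Cref{ourcorC} via a compactness argument. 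As written, your proof has a hole exactly where the $\lhf$ hypothesis does its work.

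Separately, your discussion of weak equivalences rests on a false claim. The class $\{M : B\otimes_k M \text{ has finite projective dimension}\}$ is by definition the entire underlying category $\textup{Mod}_f(kG)$ of Benson's model structure, so it cannot be the class of trivial objects (the homotopy category would vanish); and the asserted equivalence ``$B\otimes_k M$ has finite projective dimension iff $M$ does'' fails for any Gorenstein projective $M$ of infinite projective dimension, since for such $M$ the module $B\otimes_k M$ is projective. Fortunately this thread is dispensable: once the cofibrants are identified with $\mathcal{GP}$ (and one notes that in both structures the trivially cofibrant objects are the projectives), both homotopy categories are $\gp(kG)$ and you are done --- which is the first half of your argument and is the paper's proof.
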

\begin{proof}
    It is shown in \cite[Corollary~C]{demtalres} that for an $\lhf$ group over $\mathbb{Z}$ the cofibrant modules in Benson's model structure coincide with the Gorenstein projectives. The same proof works for any commutative ring of finite global dimension and so the statement follows. 
\end{proof}
\begin{remark}
    It would be interesting to know if there were a way to prove the above without \cite[Corollary~C]{demtalres}, and more in line with the method of \Cref{gpeqgpac}. However, it is not clear if induction preserves the cofibrant modules in Benson's model structure. We will however give an alternative proof in \Cref{ourcorC}.
\end{remark}
\subsection{Mazza-Symonds stable category}
Let $k$ be a commutative, noetherian ring of finite global dimension. Mazza and Symonds \cite{MSstabcat} define a stable category as follows. 
\begin{definition}\label{firstcomcodef}
    Let $\textup{Stab}(kG)$ be the category with objects given by all $kG$-modules and morphisms $\widehat{\textup{Hom}}_{kG}(M,N)$ given by complete cohomology, i.e. 
    \[\widehat{\textup{Hom}}_{kG}(M,N) = \varinjlim\limits_n\underline{\textup{Hom}}_{kG}(\Omega^n(M),\Omega^n(N))\]
\end{definition}
For groups of type $\Phi_k$, Mazza and Symonds show that this category is equivalent to the stable category of Gorenstein projectives, and hence the stable category $\tacstab(kG)$.
\begin{theorem}
    Let $G$ be a group. Then the following categories are equivalent. 
    \begin{enumerate}[label=(\roman*)]
        \item $\textup{Stab}(kG)$
        \item $D^b(\textup{Mod}(kG))/K^b(\textup{Proj}(kG))$
        \end{enumerate}
        If $G$ is a group of type $\Phi_k$ then these are also equivalent to 
        \begin{enumerate}[resume*]
        \item $\gp(kG)$
        \item The homotopy category of totally acyclic complexes of projectives
        \item $\tacstab(kG)$
    \end{enumerate}
\end{theorem}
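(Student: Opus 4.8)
\emph{Strategy.} The plan is to prove $(i)\simeq(ii)$ for an arbitrary group, and then, assuming $G$ is of type $\Phi_k$, to add $(iii)$, $(iv)$ and $(v)$ one step at a time. For the unconditional equivalence I would use the canonical functor $\textup{Mod}(kG)\to D^b(\textup{Mod}(kG))/K^b(\textup{Proj}(kG))$ sending a module to the complex it determines in degree $0$. A single projective module lies in $K^b(\textup{Proj}(kG))$, so this functor annihilates every morphism factoring through a projective; moreover the triangle $\Omega M\to P\to M\to\Omega M[1]$ coming from a projective presentation forces $\Omega M\cong M[-1]$ in the quotient, so the syzygy functor is carried to the invertible shift $[-1]$. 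Since $\textup{Stab}(kG)$ is, by its construction in \Cref{firstcomcodef}, the localisation of the additive quotient of $\textup{Mod}(kG)$ by maps through projectives at the syzygy functor --- the ``formal inversion of $\Omega$'' recalled in the introduction, see also \cite{MSstabcat,BCtate} --- the canonical functor factors through this localisation to give $F\colon\textup{Stab}(kG)\to D^b(\textup{Mod}(kG))/K^b(\textup{Proj}(kG))$, and the task is to show $F$ is an equivalence. I would get full faithfulness by computing the Hom-groups of the Verdier quotient via the calculus of left fractions: a morphism $M\to N$ is a roof $M\xleftarrow{s}X\xrightarrow{f}N$ with $\textup{cone}(s)\in K^b(\textup{Proj}(kG))$, and using truncated projective resolutions one may replace $X$ by $\Omega^n M$, which exhibits the quotient Hom-group as $\varinjlim_n\underline{\textup{Hom}}_{kG}(\Omega^n M,\Omega^n N)=\widehat{\textup{Hom}}_{kG}(M,N)$ compatibly with composition; and I would get essential surjectivity by folding an arbitrary bounded complex down to a module, peeling off its cohomology one group at a time and using that bounded complexes of projectives vanish in the quotient while $[-1]$ restricts to $\Omega$ on the image of $\textup{Mod}(kG)$.

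\emph{Reductions when $G\in\Phi_k$.} Here \cite[Theorem~3.9]{MSstabcat} shows every $kG$-module has a complete resolution, so a syzygy of $k$ is Gorenstein projective, i.e.\ $\textup{Gcd}_k(G)<\infty$, and hence $\textup{glGPD}(kG)<\infty$ by \Cref{ETThe}. Then $(iii)\simeq(v)$, that is $\gp(kG)\simeq\tacstab(kG)$, is \Cref{ourbigmodel}, a $\Phi_k$ group being in particular an $\lhg$ group. For $(iii)\simeq(iv)$ I would use the functor sending a totally acyclic complex of projectives to its degree-zero cocycle: it is essentially surjective by the definition of Gorenstein projective module and fully faithful by the standard comparison of (totally acyclic) projective resolutions, needing no finiteness hypothesis --- and one may note in passing that over $\lhg$ groups all acyclic complexes of projectives are already totally acyclic by \Cref{costabil}. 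Finally, $(ii)\simeq(iii)$ is Buchweitz's theorem in the Gorenstein case: as $\textup{glGPD}(kG)<\infty$, completeness of the Gorenstein projective cotorsion pair (\Cref{perpequal}) together with \Cref{fpdext} supplies every $kG$-module $M$ with a short exact sequence $0\to W\to X\to M\to 0$ in which $X\in\mathcal{GP}$ and $W$ has finite projective dimension, hence is perfect, so that $M\cong X$ in $D^b(\textup{Mod}(kG))/K^b(\textup{Proj}(kG))$. Thus the exact functor $\mathcal{GP}\hookrightarrow\textup{Mod}(kG)\to D^b(\textup{Mod}(kG))/K^b(\textup{Proj}(kG))$ kills projectives, descends to a triangulated functor on $\gp(kG)$, is essentially surjective by this together with the folding above, and is fully faithful by the Hom-computation of the previous paragraph restricted to $\mathcal{GP}$, where $\widehat{\textup{Hom}}_{kG}$ coincides with $\underline{\textup{Hom}}_{kG}$ since $\Omega$ is already invertible on $\gp(kG)$. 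Chaining $(i)\simeq(ii)\simeq(iii)\simeq(iv)$ and $(iii)\simeq(v)$ completes the argument.

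\emph{Main obstacle.} I expect the real work to sit in the unconditional step $(i)\simeq(ii)$ --- matching the Verdier-quotient morphisms with complete cohomology and, above all, the essential surjectivity of $F$, where bounded complexes whose cohomology lies in negative cohomological degrees must be handled with care. A convenient shortcut in the $\Phi_k$ case is that $(i)\simeq(iii)$ is already a theorem of Mazza and Symonds, so combining it with $(ii)\simeq(iii)$ above re-derives $(i)\simeq(ii)$ for such groups, though not for a general group.
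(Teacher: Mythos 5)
Your proposal is correct, and it reaches the same conclusion by unpacking what the paper simply cites. The paper's proof is essentially a concatenation of references: $(i)\simeq(ii)$ is quoted from Beligiannis, $(iii)\simeq(iv)$ from Krause, the equivalence of $(i)$--$(iv)$ for type $\Phi_k$ groups from Mazza--Symonds, and $(v)$ is attached via \Cref{ourbigmodel}. You instead sketch direct proofs: your treatment of $(i)\simeq(ii)$ via roofs, truncated projective resolutions and the identification of the Verdier-quotient Hom-groups with $\varinjlim_n\underline{\textup{Hom}}_{kG}(\Omega^nM,\Omega^nN)$ is precisely Beligiannis's argument, and your $(iii)\simeq(iv)$ is the standard cocycle-in-degree-zero equivalence. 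The one genuinely different linking step is $(ii)\simeq(iii)$: you run a Buchweitz-type argument using $\textup{glGPD}(kG)<\infty$, \Cref{perpequal} and \Cref{fpdext} to replace any module by a Gorenstein projective up to $K^b(\textup{Proj}(kG))$, whereas the paper routes through \cite[Theorem~3.10]{MSstabcat}. Your route has the merit of making clear that the type $\Phi_k$ hypothesis enters only through finiteness of the global Gorenstein projective dimension (and membership in $\lhg$ for \Cref{ourbigmodel}); the paper's route is shorter but opaque about where each hypothesis is used. Two small points: the word ``perfect'' for the kernel $W$ of the special precover is a misnomer --- what you need, and what you have, is that $W$ of finite projective dimension lies in $K^b(\textup{Proj}(kG))$ since the quotient is by bounded complexes of arbitrary (not finitely generated) projectives; and the full-faithfulness computation in $(i)\simeq(ii)$, which you correctly flag as the main obstacle, is a real piece of work (compatibility of the colimit with composition of roofs, and essential surjectivity for complexes with cohomology spread over several degrees) --- it is exactly the content of the cited \cite[Corollary~3.9]{belstcat}, so as written your argument is a faithful reconstruction of that proof rather than a shortcut around it.
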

\begin{proof}
    The equivalence of $(i)$ and $(ii)$ is shown by Beligiannis \cite[Corollary~3.9]{belstcat}. It is also well known that $(iii)$ and $(iv)$ are equivalent over any ring, for example this follows from \cite[Proposition~4.4.18]{krause_2021}. 
    \par 
    If $G$ is a group of type $\Phi$, then the equivalences not including $\tacstab(kG)$ are shown in \cite[Theorem~3.10]{MSstabcat}. The fact that all these categories are equivalent to $\tacstab(kG)$ for a group of type $\Phi$ now follows from \Cref{ourbigmodel}.
\end{proof}
We now show that for groups which are not of type $\Phi$, the equivalences of the above theorem do not have to hold. In particular, the Mazza-Symonds stable category will not be equivalent to the stable category $\tacstab(kG)$ we have constructed; recall from \Cref{ourbigmodel} that this is equivalent to $\gp(kG)$.
\begin{proposition}
    Suppose $G$ is an $\lhf$ group which is not of type $\Phi$. Then the Mazza-Symonds stable category $\textup{Stab}(kG)$ is not equivalent to $\tacstab(kG)$.
\end{proposition}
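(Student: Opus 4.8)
The plan is to exhibit a categorical property preserved by equivalences that one of the two categories enjoys but the other does not; the natural choice is the existence of all small coproducts (equivalently, here, compact generation).

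The first step is to translate the hypothesis homologically: for an $\lhf$ group, being of type $\Phi$ is equivalent to $\textup{Gcd}_k(G)<\infty$. That type $\Phi$ forces $\textup{Gcd}_k(G)<\infty$ is essentially \cite[Theorem~3.9]{MSstabcat}, since for such groups every module acquires a complete resolution and so has finite Gorenstein projective dimension. Conversely, if $\textup{Gcd}_k(G)<\infty$ then \Cref{fpdext} identifies $\mathcal{GP}^{\perp}$ with the class of modules of finite projective dimension, both over $kG$ and over each finite subgroup ring $kF$; and \Cref{corfsubfin}, applied to the class $\mathfrak{F}$ of finite groups --- which is legitimate since finite groups have Gorenstein projective cotorsion pairs cogenerated by sets (\Cref{perpequal}) and Gorenstein projectivity restricts to finite subgroups (\Cref{allsnf}) --- shows $M\in\mathcal{GP}^{\perp}_G$ if and only if $M{\downarrow}_F^G\in\mathcal{GP}^{\perp}_F$ for every finite $F\leq G$, which is exactly the defining condition of type $\Phi$ read through \Cref{fpdext}. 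Hence a non-type-$\Phi$ $\lhf$ group has $\textup{Gcd}_k(G)=\infty$, so by \Cref{ETThe} some $kG$-module $M$ has infinite Gorenstein projective dimension, and all of its shifts $M[n]$ are nonzero in $\textup{Stab}(kG)\simeq D^b(\textup{Mod}(kG))/K^b(\textup{Proj}(kG))$.

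On the other side, every $\lhf$ group is an $\textup{LH}\mathfrak{G}_{AC}$ group, since $\mathfrak{F}\subseteq\mathfrak{G}_{AC}$ (a finite group ring over a commutative noetherian ring of finite global dimension has finite global Gorenstein AC-projective dimension). So by \Cref{somecge} the category $\tacstab(kG)\simeq\gp(kG)$ is compactly generated, and in particular has all small coproducts; for the module $M$ above, the coproduct $\coprod_{n\geq 0}M[n]$ therefore exists there. Alternatively, the existence of coproducts is immediate from \Cref{ourbigmodel}, as $\tacstab(kG)$ is the homotopy category of a model structure on the cocomplete category $\textup{Mod}(kG)$.

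It then remains to show that $\textup{Stab}(kG)$ admits \emph{no} coproduct of the family $\{M[n]\}_{n\geq 0}$ once $\textup{Gcd}_k(G)=\infty$, and this is where I expect the real work to lie. In the model $D^b(\textup{Mod}(kG))/K^b(\textup{Proj}(kG))$, every object is --- up to the perfect-complex calculus of the Verdier quotient --- represented by a \emph{bounded} complex, whereas a coproduct of $\{M[n]\}_{n\geq 0}$ for an $M$ of infinite Gorenstein projective dimension must interact nontrivially with test objects in infinitely many degrees; this should be detected by mapping into shifts of a fixed injective, or more generally absolutely clean, module $E$, which has infinite projective dimension precisely because $k$ has finite global dimension while $kG$ does not when $\textup{Gcd}_k(G)=\infty$, so that $\textup{Hom}_{\textup{Stab}(kG)}(M[n],E[-d])$ is nonzero for infinitely many pairs $(n,d)$ --- contributions that no single bounded complex can accommodate. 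The main obstacle is precisely this verification: a Verdier quotient of a category lacking all coproducts can perfectly well acquire them --- indeed $D^b(\textup{Mod}(kG))/K^b(\textup{Proj}(kG))$ does so exactly in the Gorenstein case, where it already coincides with $\gp(kG)$ --- so no purely formal argument suffices, and one must genuinely use that $\textup{Gcd}_k(G)=\infty$ prevents every module from having a finite-length Gorenstein approximation. Should that computation prove awkward, a cleaner route would be to show directly that $\textup{Stab}(kG)$ fails to be well-generated when $\textup{Gcd}_k(G)=\infty$.
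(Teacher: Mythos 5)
Your reduction of ``not of type $\Phi$'' to $\textup{Gcd}_k(G)=\infty$ is sound (the combination of \Cref{fpdext}, \Cref{allsnf}, \Cref{perpequal} and \Cref{corfsubfin} does show that an $\lhf$ group with $\textup{Gcd}_k(G)<\infty$ is of type $\Phi_k$), and the $\tacstab(kG)$ side of your comparison is fine. The genuine gap is the step you yourself flag as ``where the real work lies'': you never prove that $\textup{Stab}(kG)$ fails to admit the coproduct $\coprod_n M[n]$ (or fails to be well-generated) when $\textup{Gcd}_k(G)=\infty$. The heuristic you offer --- that a bounded complex cannot ``accommodate'' contributions in infinitely many degrees --- is not an argument: objects of $D^b(\textup{Mod}(kG))/K^b(\textup{Proj}(kG))$ are bounded complexes, but morphism groups out of them are colimits over perfect-complex roofs and can be highly nontrivial in infinitely many degrees, so nothing formal rules out a bounded complex serving as a coproduct. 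Indeed the introduction of the paper explicitly states that for general $\lhg$ groups ``it is not even clear if the resulting category even has infinite coproducts''; your proof strategy hinges on resolving exactly this open-looking point, so as written the proposal does not establish the proposition.

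The paper's own proof sidesteps all of this by comparing the classes of objects isomorphic to zero, which is an invariant one can actually compute on both sides. By Kropholler's theorem, $M\cong 0$ in $\textup{Stab}(kG)$ if and only if $\widehat{\textup{Hom}}_{kG}(M,M)=0$ if and only if $M$ has finite projective dimension; by \Cref{corfsubfin}, $M\cong 0$ in $\tacstab(kG)$ if and only if $M{\downarrow}_F^G\cong 0$ in $\tacstab(kF)$ for every finite $F\leq G$, i.e.\ if and only if $M{\downarrow}_F^G$ has finite projective dimension for all such $F$. Failure of type $\Phi$ hands you a module $N$ with $N{\downarrow}_F^G$ of finite projective dimension for all finite $F$ but $\textup{pd}_{kG}N=\infty$, so $N$ is zero in $\tacstab(kG)$ but not in $\textup{Stab}(kG)$. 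If you want to salvage your write-up, replace the coproduct argument with this comparison of trivial objects; the homological translation you carried out in your first paragraph already contains all the ingredients.
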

\begin{proof}
     \sloppy Kropholler shows in \cite[Theorem~4.2]{krop} that for a $kG$-module $M$ we have that $\widehat{\textup{Hom}}_{kG}(M,M) = 0$ if and only if $M$ has finite projective dimension. In particular, $M \cong 0$ in $\textup{Stab}(kG)$ if and only if $M$ has finite projective dimension. 
    \par 
    However, we have shown in \Cref{corfsubfin} that $M \cong 0$ in $\tacstab(kG)$ if and only if $M{\downarrow}_H^G \cong 0$ in $\tacstab(kH)$ for each finite subgroup $H \leq G$. Since $G$ is not of type $\Phi$, there exists a $kG$-module $N$ such that $N{\downarrow}_H^G$ has finite projective dimension for every finite subgroup $H \leq G$, but $N$ has infinite projective dimension. In particular, $N \cong 0$ in $\tacstab(kG)$ but $N$ is not isomorphic to zero in $\textup{Stab}(kG)$. 
\end{proof}
\subsection{Gorenstein injective model structure}
Throughout this section we continue to assume that $k$ is a commutative ring of finite global dimension. 
\par 
Suppose now that $G$ is an $\textup{LH}\mathfrak{G}$ group. We show that the homotopy category of the model structure of \Cref{ourbigmodel} is equivalent to the homotopy category of the Gorenstein injective model structure which is constructed by Šaroch and Št'ovíček over any ring in \cite[Section~4]{stovsarmodel}. The following is proved in \cite{stovsarmodel} over any ring $R$, but we only state it for group rings.
\begin{theorem}\label{injmodelstr} \cite[Theorem~4.6]{stovsarmodel} 
    Let $\mathcal{GI}$ be the class of Gorenstein injective $kG$-modules. Then there is an hereditary, perfect cotorsion pair $({}^{\perp}\mathcal{GI},\mathcal{GI})$ and hence a model category structure on $\textup{Mod}(kG)$ such that every module is cofibrant, the Gorenstein injectives are the fibrant objects and ${}^{\perp}\mathcal{GI}$ is the class of trivial objects.
\end{theorem}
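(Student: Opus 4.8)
The statement is \cite[Theorem~4.6]{stovsarmodel} specialised to the ring $R = kG$, so the plan is simply to import that result; what I would do is unwind how its proof is organised, both to check that nothing goes wrong for group rings and because we will want to refer back to the individual ingredients later. The two things to establish are: first, that $({}^{\perp}\mathcal{GI},\mathcal{GI})$ is a complete hereditary cotorsion pair; and second, that Hovey's \Cref{HOVEYCORRESPONDONCE} applies to the triple $\mathcal{C} = \textup{Mod}(kG)$, $\mathcal{F} = \mathcal{GI}$, $\mathcal{W} = {}^{\perp}\mathcal{GI}$.

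The cotorsion pair is the substantial part. Heredity is routine: $\mathcal{GI}$ is coresolving (it contains the injectives and is closed under extensions and under cokernels of monomorphisms between its members), so a dimension-shifting argument forces $\textup{Ext}^i_{kG}(X,Y) = 0$ for all $i > 0$ whenever $X \in {}^{\perp}\mathcal{GI}$ and $Y \in \mathcal{GI}$ — this is exactly dual to the computation in \Cref{thicle}. The main obstacle is \emph{completeness}: unlike the Gorenstein projective side, one cannot directly invoke \Cref{EkTrl}, since $\mathcal{GI}$ is not visibly cogenerated by a set. Here I would follow \cite{stovsarmodel} verbatim: their argument runs through the theory of (the flat analogues of) projectively coresolved Gorenstein flat modules together with a deconstructibility/cardinality bound, producing for every module a special $\mathcal{GI}$-preenvelope and, dually, a special ${}^{\perp}\mathcal{GI}$-precover. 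That the pair is moreover \emph{perfect} then follows from the classical criterion that a complete cotorsion pair whose left class is closed under direct limits admits covers and envelopes; closure of ${}^{\perp}\mathcal{GI}$ under direct limits is also established in loc. cit. (For $\textup{LH}\mathfrak{G}$ groups one could alternatively try to leverage the identity $\mathcal{GP}^{\perp} = {}^{\perp}\mathcal{GI}$ of \Cref{eqperpsd} to transport the Gorenstein projective cotorsion pair of \Cref{induc}, but this still requires knowing $\mathcal{GI} = ({}^{\perp}\mathcal{GI})^{\perp}$, so it does not obviously shortcut the general argument.)

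Granting the cotorsion pair, the model structure is immediate from \Cref{HOVEYCORRESPONDONCE}. I would check the three hypotheses in turn. First, $\mathcal{W} = {}^{\perp}\mathcal{GI}$ is thick: this is dual to \Cref{thicle}, using the Ext-vanishing established above together with the long exact sequence and a syzygy/cosyzygy shift. Second, $(\mathcal{C}\cap\mathcal{W},\mathcal{F}) = ({}^{\perp}\mathcal{GI},\mathcal{GI})$ is complete — just established. Third, $(\mathcal{C},\mathcal{W}\cap\mathcal{F})$ is complete: here $\mathcal{W}\cap\mathcal{F} = {}^{\perp}\mathcal{GI}\cap\mathcal{GI}$ is precisely the class of injective $kG$-modules (any such module fits in a short exact sequence with an injective and a Gorenstein injective which splits by the orthogonality, so it is a summand of an injective — dual to the identity $\mathcal{GP}\cap\mathcal{GP}^{\perp} = \mathcal{P}$ used in the proof of \Cref{ourbigmodel}), and $(\textup{Mod}(kG),\mathcal{I})$, with $\mathcal{I}$ the class of injectives, is trivially a complete cotorsion pair since the category has enough injectives. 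Hovey's theorem then returns a unique model structure with every module cofibrant, fibrant objects exactly $\mathcal{GI}$, and trivial objects exactly ${}^{\perp}\mathcal{GI}$, as asserted.
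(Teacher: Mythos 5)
Your proposal is correct and follows the same route as the paper, which simply imports \cite[Theorem~4.6]{stovsarmodel} for the complete hereditary (perfect) cotorsion pair $({}^{\perp}\mathcal{GI},\mathcal{GI})$ and notes that the model structure then follows from the Hovey--Gillespie correspondence; the paper gives no further proof beyond this citation. Your additional checks (heredity, thickness of ${}^{\perp}\mathcal{GI}$, the identification ${}^{\perp}\mathcal{GI}\cap\mathcal{GI}=\mathcal{I}$, and perfectness via closure of ${}^{\perp}\mathcal{GI}$ under direct limits) are all consistent with facts the paper itself cites from \cite{stovsarmodel} elsewhere.
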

Note that the existence of the model category structure in the above follows from \cite[Theorem~2.2]{gilhermodcat} and is not stated explicitly in the statement of \cite[Theorem~4.6]{stovsarmodel}, although it is mentioned afterwards. 
\par
In order to show that this is equivalent to the Gorenstein projective model category we constructed earlier, we first identify the trivial modules of the injective model structure. 
\begin{proposition}\label{dugi}
    Suppose $G$ is a group and $M$ is a Gorenstein injective $kG$-module. Then $M{\downarrow}_H^G$ is a Gorenstein injective $kH$-module for every $\textup{H}\mathfrak{G}$ subgroup $H \leq G$.
\end{proposition}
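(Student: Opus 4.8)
The plan is to dualise the argument of \Cref{resgp1}, by transfinite induction on the ordinal $\alpha$ for which $H \in \textup{H}_{\alpha}\mathfrak{G}$. The base case $\alpha = 0$, i.e.\ $H \in \mathfrak{G}$, is precisely the second assertion of \Cref{allsnf}, and the limit case is immediate since $\textup{H}_{\alpha}\mathfrak{G} = \bigcup_{\beta < \alpha}\textup{H}_{\beta}\mathfrak{G}$. So suppose the statement holds for all $\textup{H}_{\beta}\mathfrak{G}$ subgroups with $\beta < \alpha$; let $H \leq G$ lie in $\textup{H}_{\alpha}\mathfrak{G}$, acting cellularly on a finite dimensional contractible complex with cell stabilisers $F_{i_j} \leq H$, each in $\textup{H}_{\beta}\mathfrak{G}$ for some $\beta < \alpha$, and let $M$ be a Gorenstein injective $kG$-module.

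For the successor step the key observation is that the augmented cellular chain complex
\[0 \to \bigoplus_{i_d \in I_d}k[H/F_{i_d}] \to \dots \to \bigoplus_{i_0 \in I_0}k[H/F_{i_0}] \to k \to 0\]
is a bounded exact complex of free $k$-modules, hence split exact over $k$. Applying the additive functor $\textup{Hom}_k(-, M{\downarrow}_H^G)$ therefore leaves it exact, and since $\textup{Hom}_k$ carries the direct sums of permutation modules to products, together with the identity $\textup{Hom}_k(k[H/F], M{\downarrow}_H^G) \cong (M{\downarrow}_F^G){\Uparrow}_F^H$, this produces an exact sequence of $kH$-modules
\[0 \to M{\downarrow}_H^G \to \prod_{i_0 \in I_0}(M{\downarrow}_{F_{i_0}}^G){\Uparrow}_{F_{i_0}}^H \to \dots \to \prod_{i_d \in I_d}(M{\downarrow}_{F_{i_d}}^G){\Uparrow}_{F_{i_d}}^H \to 0.\]
By the inductive hypothesis each $M{\downarrow}_{F_{i_j}}^G$ is Gorenstein injective over $kF_{i_j}$; coinduction sends Gorenstein injectives to Gorenstein injectives (dual to the fact, used in \Cref{resgp1}, that induction preserves Gorenstein projectives: $\textup{Hom}_{kF}(kH, -)$ is exact, preserves injectives, and preserves total acyclicity because restriction preserves injectives); and $\mathcal{GI}$ is closed under products. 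Hence every term of the displayed sequence other than $M{\downarrow}_H^G$ is Gorenstein injective over $kH$, so $M{\downarrow}_H^G$ has finite Gorenstein injective dimension (at most $d$) over $kH$.

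It remains to upgrade this to genuine Gorenstein injectivity, which is the dual of \Cref{use3} and proved in the same way. Since $M$ is Gorenstein injective, so is every cosyzygy of its complete resolution, and the previous paragraph bounds the Gorenstein injective dimension over $kH$ of the restriction of each such cosyzygy by $d$; if these bounds were not uniform one would form a countable product of appropriately chosen cosyzygies — again Gorenstein injective over $kG$, as $\mathcal{GI}$ is closed under products — whose restriction to $H$ is the product of the individual restrictions, since restriction commutes with products, and hence has infinite Gorenstein injective dimension by the dual of \cite[Proposition~2.19]{HOLM2004167}, a contradiction. Fixing a uniform bound, a finite segment $0 \to N \to I_d \to \dots \to I_1 \to M \to 0$ of the complete resolution, with the $I_j$ injective and $N$ a cosyzygy, restricts over $kH$ to an exact sequence with all $I_j{\downarrow}_H^G$ injective and $\textup{Gid}_{kH}(N{\downarrow}_H^G) \leq d$, which forces $M{\downarrow}_H^G$ to be Gorenstein injective.

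The crux, and the reason this is the dual of rather than a transcription of \Cref{resgp1}, is that the contractible complexes involved may have infinitely many cells in each dimension, so dualising their chain complex produces infinite \emph{products} of \emph{coinduced} modules where \Cref{resgp1} had coproducts of induced ones; one thus works with coinduction and relies on the closure of $\mathcal{GI}$ under products, which for a general ring does not extend to coproducts. The points needing verification are the identity $\textup{Hom}_k(k[H/F], N) \cong (N{\downarrow}_F^H){\Uparrow}_F^H$ for $kH$-modules $N$, the stability of $\mathcal{GI}$ under coinduction, and the dual of \Cref{use3} together with the product formula for Gorenstein injective dimension.
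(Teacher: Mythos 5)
Your proof is correct and is exactly the argument the paper intends: its proof of this proposition is the single line ``This is dual to \Cref{resgp1},'' and you have carried out that dualisation faithfully, correctly locating the only non-formal points (applying $\textup{Hom}_k(-,M{\downarrow}_H^G)$ to the split-exact-over-$k$ cellular complex so that coproducts of induced modules become products of coinduced ones, coinduction preserving $\mathcal{GI}$, and the dual of \Cref{use3} via the product formula for Gorenstein injective dimension). The same mechanism appears explicitly in the paper's \Cref{dugi2}, which confirms your reading of what the dualisation is supposed to be.
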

\begin{proof}
    This is dual to \Cref{resgp1}.
\end{proof}
It's worthwhile noting that we do not claim the above is true for $\textup{LH}\mathfrak{X}$ subgroups; this is because the result of Emmanouil and Talelli \cite[Proposition~3.1]{ETgcd} on the Gorenstein projective dimension over filtered unions of groups is not clearly dualisable. However, it is sufficient for our purposes to only hold for $\textup{H}\mathfrak{G}$ subgroups. 
\par 
 The following is a partial dual to \cite[Theorem~5.7]{bensoninf1}.
\begin{lemma}\label{dugi2}
    Suppose $G$ is an $\textup{H}\mathfrak{G}$ group and $M$ is a Gorenstein injective $kG$-module. If $M{\downarrow}_{H}^G$ is injective for each $H \leq G$ such that $H \in \mathfrak{G}$, then $M$ is injective.
\end{lemma}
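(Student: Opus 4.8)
The plan is to run a transfinite induction on the ordinal $\alpha$ such that $G \in \textup{H}_\alpha\mathfrak{G}$, dualising the strategy of \Cref{use3} and \Cref{resgp1} but now in the Gorenstein injective setting. The base case $\alpha = 0$ is trivial: if $G \in \mathfrak{G}$, then $M{\downarrow}_G^G = M$ is injective by hypothesis. For the successor case, suppose $G \in \textup{H}_\alpha\mathfrak{G}$, so $G$ acts cellularly on a finite dimensional contractible cell complex with all cell stabilisers in $\textup{H}_\beta\mathfrak{G}$ for various $\beta < \alpha$. Applying $\textup{Hom}_k(-, M)$ (equivalently, tensoring with $M$ over $k$, since $k$ has finite global dimension and $M$ is $k$-injective, hence $k$-flat after a shift — more precisely one uses that $M$ is cotorsion over $k$ and the chain complex is $k$-split in each degree) to the augmented cellular chain complex produces an exact complex
\[ 0 \to M \to \prod_{i_0 \in I_0} M{\downarrow}_{F_{i_0}}^G{\Uparrow}_{F_{i_0}}^G \to \dots \to \prod_{i_n \in I_n} M{\downarrow}_{F_{i_n}}^G{\Uparrow}_{F_{i_n}}^G \to 0 \]
where each $F_{i_j}$ is an $\textup{H}_\beta\mathfrak{G}$ subgroup for some $\beta < \alpha$. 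By \Cref{dugi}, each $M{\downarrow}_{F_{i_j}}^G$ is Gorenstein injective over $kF_{i_j}$; moreover restricting further to any $\mathfrak{G}$-subgroup of $F_{i_j}$ keeps it injective (since such a subgroup is also a $\mathfrak{G}$-subgroup of $G$), so the inductive hypothesis applies to show each $M{\downarrow}_{F_{i_j}}^G$ is injective over $kF_{i_j}$, and hence each coinduced module $M{\downarrow}_{F_{i_j}}^G{\Uparrow}_{F_{i_j}}^G$ is injective over $kG$. Thus $M$ admits a finite injective coresolution, i.e.\ has finite injective dimension; but $M$ is also Gorenstein injective, and a Gorenstein injective module of finite injective dimension is injective (this is the dual of the fact, used in \Cref{induc}, that $\mathcal{GP} \cap \{\text{fin.\ proj.\ dim.}\} = \mathcal{P}$). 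This closes the induction.

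The main obstacle I anticipate is making rigorous the claim that applying coinduction/$\textup{Hom}_k(-,M)$ to the contractible cell complex preserves exactness and lands in injectives. In the Gorenstein projective proof (\Cref{resgp1}) one uses that a Gorenstein projective module is flat over $k$, so $-\otimes_k M{\downarrow}$ is exact; dually here one wants $\textup{Hom}_k(-, M{\downarrow})$ to be exact on the cell complex, which holds because the cellular chain complex of a contractible complex is $k$-split in each degree (it is a bounded complex of free $k$-modules with a contracting homotopy), so applying any additive functor keeps it exact — this sidesteps needing $M$ to be $k$-injective globally. The identification of the resulting terms as coinduced modules $M{\downarrow}_{F}^G{\Uparrow}_F^G$ is the standard Bieri–Eckmann style computation. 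One should also double-check that the $\textup{H}_\beta\mathfrak{G}$-subgroups appearing, when restricted to their own $\mathfrak{G}$-subgroups, satisfy the injectivity hypothesis needed for the inductive step — this is automatic since a $\mathfrak{G}$-subgroup of $F_{i_j} \le G$ is a $\mathfrak{G}$-subgroup of $G$, and $M{\downarrow}$ restricted to it is $M{\downarrow}$ of the ambient restriction.

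Finally, one packages the successor and limit-ordinal cases: limit ordinals are immediate since $\textup{H}_\lambda\mathfrak{G} = \bigcup_{\beta<\lambda}\textup{H}_\beta\mathfrak{G}$, so any $G \in \textup{H}_\lambda\mathfrak{G}$ already lies in some $\textup{H}_\beta\mathfrak{G}$ with $\beta < \lambda$ and the hypothesis is inherited verbatim. I would state the dimension-shifting fact (Gorenstein injective $+$ finite injective dimension $\Rightarrow$ injective) explicitly, citing the dual of \cite[Proposition~2.3]{HOLM2004167} or proving it in one line: if $M$ is Gorenstein injective with $\textup{id}(M) = n < \infty$, take a totally acyclic complex of injectives with $M$ as a kernel; the cokernel $n$ steps out is injective and equals a syzygy-type module which forces $M$ itself to be a summand of an injective.
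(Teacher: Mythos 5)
Your proposal is correct and follows essentially the same route as the paper: transfinite induction on the hierarchy, applying $\textup{Hom}_k(-,M)$ to the cellular chain complex to obtain a finite coresolution of $M$ by products of injective coinduced modules, and concluding via the fact that a Gorenstein injective of finite injective dimension is injective. The only (harmless) divergence is in justifying exactness of the $\textup{Hom}$'d complex: the paper uses that $M$ is injective over $k$, while your $k$-splitness argument (a bounded exact complex of $k$-projectives is contractible) also works and lets you discard the muddled parenthetical about $M$ being ``$k$-flat after a shift.''
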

\begin{proof}
    We prove this by transfinite induction; it is true by assumption if $G$ is an $\textup{H}_0\mathfrak{G}$ group. Suppose that it is true for all $\textup{H}_{\beta}\mathfrak{G}$ subgroups for all $\beta < \alpha$, for some ordinal $\alpha$. Let $G$ be an $\textup{H}_{\alpha}\mathfrak{G}$ group. 
    \par 
    Since $M$ is Gorenstein injective, it is injective over $k$ and hence $\textup{Hom}_k(-,M)$ is exact. Therefore, we have an exact sequence of the form $0 \to M \to C_0 \to \dots C_n \to 0$ such that each $C_i$ is a product of modules of the form $M{\downarrow}_H^G{\Uparrow}_H^G$ where $H \leq G$ is an $\textup{H}_{\beta}\mathfrak{G}$ subgroup for some $\beta < \alpha$. By induction, each such $M{\downarrow}_H^G{\Uparrow}_H^G$ is injective and therefore $M$ has finite injective dimension. It follows that $M$ is injective, since each Gorenstein injective of finite injective dimension must be injective. 
\end{proof}
\begin{proposition}\label{corfsubinj}
    Suppose $G$ is an $\lhg$ group. Then $M \in {}^{\perp}\mathcal{GI}_G$ if and only if $M{\downarrow}_H^G \in {}^{\perp}\mathcal{GI}_H$ for every $H \leq G$ with $H \in \mathfrak{G}$.
\end{proposition}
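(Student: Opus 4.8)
The plan is to prove the two implications separately. The ``only if'' direction is the easy one and goes through the Eckmann-Shapiro isomorphism for coinduction. Given $M\in{}^{\perp}\mathcal{GI}_G$, $H\in\mathfrak{G}$ and a Gorenstein injective $kH$-module $N$, I would first check that $N{\Uparrow}_H^G$ is Gorenstein injective over $kG$; this is dual to the fact (used in the proof of \Cref{induc}) that induction preserves Gorenstein projectives, and follows because $\textup{Coind}_H^G=\textup{Hom}_{kH}(kG,-)$ is exact, sends injectives to injectives (as $kG$ is $kH$-free), and interacts correctly with the acyclicity test since $I{\downarrow}_H^G$ stays injective for $I$ injective. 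Then $\textup{Ext}^1_{kH}(M{\downarrow}_H^G,N)\cong\textup{Ext}^1_{kG}(M,N{\Uparrow}_H^G)=0$, so $M{\downarrow}_H^G\in{}^{\perp}\mathcal{GI}_H$.

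For the converse I would first observe that for $H\in\mathfrak{G}$ we have ${}^{\perp}\mathcal{GI}_H=\mathcal{GP}_H^{\perp}$ by \Cref{perpequal}, so that, combining this with \Cref{corfsub}, the hypothesis ``$M{\downarrow}_H^G\in{}^{\perp}\mathcal{GI}_H$ for all $H\in\mathfrak{G}$'' is equivalent to ``$M\in\mathcal{GP}_G^{\perp}$''. Thus it suffices to prove $\mathcal{GP}_G^{\perp}\subseteq{}^{\perp}\mathcal{GI}_G$, which I would do by transfinite induction along the $\textup{H}\mathfrak{G}$ hierarchy, the base case being exactly \Cref{perpequal}. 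For the successor step let $G\in\textup{H}_{\alpha}\mathfrak{G}$ act on a $d$-dimensional contractible complex with stabilisers in $\textup{H}_{\beta}\mathfrak{G}$, $\beta<\alpha$, fix $M\in\mathcal{GP}_G^{\perp}$ and $N\in\mathcal{GI}_G$, and tensor the augmented cellular chain complex with $M$ over $k$. Since that complex is a bounded exact complex of free $k$-modules it is contractible, so --- with no $k$-flatness assumption on $M$ --- this produces an exact sequence $0\to D_d\to\dots\to D_0\to M\to 0$ with $D_j=\bigoplus_{i_j}(M{\downarrow}_{F_{i_j}}^G){\uparrow}_{F_{i_j}}^G$. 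By \Cref{corfsub}, $M{\downarrow}_{F_{i_j}}^G\in\mathcal{GP}_{F_{i_j}}^{\perp}$, which equals ${}^{\perp}\mathcal{GI}_{F_{i_j}}$ by the inductive hypothesis, while $N{\downarrow}_{F_{i_j}}^G\in\mathcal{GI}_{F_{i_j}}$ by \Cref{dugi}; hereditariness of the Gorenstein injective cotorsion pair (\Cref{injmodelstr}) then forces $\textup{Ext}^n_{kF_{i_j}}(M{\downarrow}_{F_{i_j}}^G,N{\downarrow}_{F_{i_j}}^G)=0$ for all $n\geq 1$, and Eckmann-Shapiro for induction upgrades this to $\textup{Ext}^n_{kG}(D_j,N)=0$ for all $n\geq 1$ and all $j$. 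Dimension shifting through the resolution of $M$ then gives $\textup{Ext}^{d+1}_{kG}(M,N)=0$. To return to degree $1$, note that a Gorenstein injective $N$ is a cocycle in a totally acyclic complex of injectives, hence is the $d$-th cosyzygy, through injectives, of another Gorenstein injective $N'$; dimension shifting through those injectives yields $\textup{Ext}^1_{kG}(M,N)\cong\textup{Ext}^{d+1}_{kG}(M,N')=0$, so $M\in{}^{\perp}\mathcal{GI}_G$ as desired.

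The step I expect to be the main obstacle is the passage from $\textup{H}\mathfrak{G}$ to general $\textup{LH}\mathfrak{G}$ groups. One cannot simply copy the cardinality induction of \Cref{induc} and \Cref{corfsub}: that argument restricts a Gorenstein projective module to subgroups and invokes \Cref{resgp1}, but the dual statement --- restriction of a Gorenstein injective to an $\textup{LH}\mathfrak{G}$ subgroup --- is not available, since \Cref{dugi} is proved only for $\textup{H}\mathfrak{G}$ subgroups. Working with the equivalent formulation $\mathcal{GP}_G^{\perp}={}^{\perp}\mathcal{GI}_G$, if $G=\bigcup_i G_i$ is written as a filtered union with $|G_i|<|G|$ then $M\in\mathcal{GP}_G^{\perp}$ gives, via \Cref{corfsub} and the cardinality induction, $M{\downarrow}_{G_i}^G\in\mathcal{GP}_{G_i}^{\perp}={}^{\perp}\mathcal{GI}_{G_i}$ for each $i$; the remaining delicate point is to reassemble these local statements into $\textup{Ext}^1_{kG}(M,N)=0$ for $N\in\mathcal{GI}_G$, which I expect to handle by tracking $\textup{Ext}^{\ast}_{kG}(M,N)$ along the filtration and, as in the $\textup{H}\mathfrak{G}$ case, reducing to the vanishing of $\textup{Ext}^n_{kG}(M,N)$ in large degrees.
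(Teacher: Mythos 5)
Your reduction of the converse to the inclusion $\mathcal{GP}_G^{\perp}\subseteq{}^{\perp}\mathcal{GI}_G$ via \Cref{perpequal} and \Cref{corfsub} is valid, and your transfinite induction for $\textup{H}\mathfrak{G}$ groups is a correct argument that is genuinely different from the paper's: where you dualise the resolution-and-dimension-shifting scheme of \Cref{induc} (tensoring the split exact cellular complex with $M$, killing $\textup{Ext}^{d+1}$, and walking back to $\textup{Ext}^1$ by writing $N$ as a $d$-th cosyzygy of another Gorenstein injective), the paper instead takes the special preenvelope $0\to M\to X\to Y\to 0$ from \Cref{injmodelstr}, shows $X{\downarrow}_H^G$ is injective for all $\mathfrak{G}$-subgroups, deduces from \Cref{dugi2} that $X$ itself is injective, and concludes by thickness of ${}^{\perp}\mathcal{GI}$. (One small point: you invoke \Cref{corfsub} to get $M{\downarrow}_{F_{i_j}}^G\in\mathcal{GP}_{F_{i_j}}^{\perp}$ for $F_{i_j}\in\textup{H}_{\beta}\mathfrak{G}$, whereas that corollary is stated for $\mathfrak{G}$-subgroups; the easy direction holds for arbitrary subgroups since induction always preserves Gorenstein projectivity, so this is harmless.)

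The genuine gap is the passage to general $\lhg$ groups, which you correctly identify as the obstacle but do not close. Your proposed cardinality induction stalls at exactly the point you anticipate: writing $G=\bigcup_iG_i$ with $|G_i|<|G|$, the natural way to ``reassemble'' is via $\textup{Ext}^1_{kG}(M{\downarrow}_{G_i}^G{\uparrow}_{G_i}^G,N)\cong\textup{Ext}^1_{kG_i}(M{\downarrow}_{G_i}^G,N{\downarrow}_{G_i}^G)$, which requires $N{\downarrow}_{G_i}^G$ to be Gorenstein injective; since the $G_i$ in a cardinality filtration are only $\lhg$ and \Cref{dugi} is available only for $\textup{H}\mathfrak{G}$ subgroups, this step is not justified. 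Moreover $M$ is a direct limit, not a transfinite extension, of the modules $M{\downarrow}_{G_i}^G{\uparrow}_{G_i}^G$, so no Eklof-type argument or dimension shift in the first variable (as in the $\textup{H}\mathfrak{G}$ case) is available to control $\textup{Ext}^1_{kG}(M,N)$ in large degrees. The paper's resolution is to abandon the cardinality induction here: write $G$ as the filtered union of its finitely generated subgroups $G_{\alpha}$, which are countable and hence $\textup{H}\mathfrak{G}$, so that the case already proved applies and \Cref{dugi} gives $N{\downarrow}_{G_{\alpha}}^G\in\mathcal{GI}_{G_{\alpha}}$; then each $M{\downarrow}_{G_{\alpha}}^G{\uparrow}_{G_{\alpha}}^G$ lies in ${}^{\perp}\mathcal{GI}_G$, and one concludes because ${}^{\perp}\mathcal{GI}$ is closed under direct limits by \cite[Lemma~4.1]{stovsarmodel}. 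That closure property is the missing ingredient your sketch needs.
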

\begin{proof}
Let $M \in {}^{\perp}\mathcal{GI}_G$ and take any $H \leq G$ such that $H \in \mathfrak{G}$. Let $N$ be any Gorenstein injective $kH$-module. Then $\textup{Ext}^1_{kH}(M{\downarrow}_H^G,N) \cong \textup{Ext}^1_{kG}(M,N{\Uparrow}_H^G) = 0$, since coinduction preserves Gorenstein injectivity. This shows one implication. 
\par 
For the reverse implication, we first suppose that $G$ is an $\textup{H}\mathfrak{G}$ group. 
    Suppose $M{\downarrow}_H^G \in {}^{\perp}\mathcal{GI}_H$ for every $H \leq G$ with $H \in \mathfrak{G}$. Consider the short exact sequence $0 \to M \to X \to Y \to 0$ with $X \in \mathcal{GI}_G$ and $Y \in {}^{\perp}\mathcal{GI}_G$. We have shown that $Y{\downarrow}_H^G \in {}^{\perp}\mathcal{GI}$. Since ${}^{\perp}\mathcal{GI}$ is thick, our assumption on $M$ then implies that $X{\downarrow}_H^G \in {}^{\perp}\mathcal{GI}_H$ for every $H \leq G$ with $H \in \mathfrak{X}$. 
    \par 
    From \Cref{dugi} we know that $X{\downarrow}_H^G$ is Gorenstein injective. It is shown in the proof of \cite[Theorem~4.6]{stovsarmodel} that $\mathcal{GI}_H \cap {}^{\perp}\mathcal{GI}_H$ coincides with the class of injectives. Therefore, $X{\downarrow}_H^G$ is injective. 
    \par 
    Applying \Cref{dugi2} shows that $X$ is itself injective. It is shown in \cite[Lemma~4.4]{stovsarmodel} that ${}^{\perp}\mathcal{GI}_G$ is thick and hence $M \in {}^{\perp}\mathcal{GI}_G$ as desired. 
    \par 
    Now, suppose $G$ is an $\lhg$ group and write it as a direct limit of its finitely generated subgroups, i.e. $G = \varinjlim G_{\alpha}$. Then we can write $M$ as a direct limit $\varinjlim M{\downarrow}_{G_{\alpha}}^G{\uparrow}_{G_{\alpha}}^G$. Note that each $G_{\alpha}$ is necessarily an $\textup{H}\mathfrak{G}$ group and we have just shown that $M{\downarrow}_{G_{\alpha}}^G \in {}^{\perp}\mathcal{GI}_{G_{\alpha}}$.
    \par We claim that $M{\downarrow}_{G_{\alpha}}^G{\uparrow}_{G_{\alpha}}^G \in {}^{\perp}\mathcal{GI}_{G}$. Let $N$ be a Gorenstein injective $kG$-module and so $N{\downarrow}_{G_{\alpha}}^G$ is Gorenstein injective by \Cref{dugi}. Therefore, $\textup{Ext}^1_{kG}(M{\downarrow}_{G_{\alpha}}^G{\uparrow}_{G_{\alpha}}^G,N) \cong \textup{Ext}^1_{kG_{\alpha}}(M{\downarrow}_{G_{\alpha}}^G,N{\downarrow}_{G_{\alpha}}^G) = 0$ as desired.
    \par 
    Finally, by \cite[Lemma~4.1]{stovsarmodel} we know that ${}^{\perp}\mathcal{GI}$ is closed under direct limits and so $M \in {}^{\perp}\mathcal{GI}$.  
\end{proof}
\begin{corollary}\label{eqperpsd}
    We have $\mathcal{GP}^{\perp} = {}^{\perp}\mathcal{GI}$ for every $\lhg$ group $G$. Furthermore, there is a triangulated equivalence $\gp(kG) \cong \gi(kG)$ for $\lhg$ groups.

\end{corollary}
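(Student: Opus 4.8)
The plan is to deduce the equality $\mathcal{GP}^{\perp} = {}^{\perp}\mathcal{GI}$ by reducing both sides to the behaviour on $\mathfrak{G}$-subgroups, where the identity is already known, and then to promote this to an equivalence of homotopy categories by noting that the Gorenstein projective and Gorenstein injective model structures have the same class of trivial objects.

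First I would establish the equality of Ext-orthogonal classes. Fix an $\lhg$ group $G$ and a $kG$-module $M$. By \Cref{corfsub}, $M \in \mathcal{GP}_G^{\perp}$ if and only if $M{\downarrow}_H^G \in \mathcal{GP}_H^{\perp}$ for every $H \in \mathfrak{G}$. Each such subgroup has $\textup{Gcd}_k(H) < \infty$, so \Cref{perpequal} gives $\mathcal{GP}_H^{\perp} = {}^{\perp}\mathcal{GI}_H$; hence the previous condition is equivalent to $M{\downarrow}_H^G \in {}^{\perp}\mathcal{GI}_H$ for every $H \in \mathfrak{G}$. Finally, \Cref{corfsubinj} asserts precisely that this last condition holds if and only if $M \in {}^{\perp}\mathcal{GI}_G$. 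Concatenating these equivalences yields $\mathcal{GP}^{\perp} = {}^{\perp}\mathcal{GI}$.

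Next I would compare two model structures on $\textup{Mod}(kG)$: the Gorenstein projective model structure of \Cref{ourbigmodel}, i.e.\ the Hovey triple with cofibrant objects $\mathcal{GP}$, all objects fibrant, and trivial objects $\mathcal{GP}^{\perp}$; and the Gorenstein injective model structure of \Cref{injmodelstr}, i.e.\ the Hovey triple with all objects cofibrant, fibrant objects $\mathcal{GI}$, and trivial objects ${}^{\perp}\mathcal{GI}$. By the first part these two model structures have the same class of trivial objects $\mathcal{W} := \mathcal{GP}^{\perp} = {}^{\perp}\mathcal{GI}$. Now, as recalled in \Cref{mapsrmk} (see \cite[Proposition~2.3]{gilhermodcat}), in an hereditary abelian model structure the weak equivalences are exactly the morphisms that factor as a monomorphism with cokernel in $\mathcal{W}$ followed by an epimorphism with kernel in $\mathcal{W}$, a description depending only on $\mathcal{W}$. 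Hence the two model structures have literally the same weak equivalences, and therefore the same homotopy category (the localisation at the common class of weak equivalences), with the identity functor on $\textup{Mod}(kG)$ inducing the comparison. By \Cref{ourbigmodel} the homotopy category of the first structure is $\gp(kG)$, and dually (or by \cite[Section~4]{stovsarmodel} together with \cite{gilhermodcat}) that of the second is $\gi(kG)$; both are the stable categories of the Frobenius categories $\mathcal{GP}$ and $\mathcal{GI}$, hence triangulated, and the equivalence, being induced by the identity, respects the exact structure, the suspension, and the triangles. This gives the triangulated equivalence $\gp(kG) \simeq \gi(kG)$.

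The routine points — identifying the homotopy category of the Gorenstein injective model structure as $\gi(kG)$, and checking that an equivalence induced by the identity between two hereditary abelian model structures is triangulated — are standard and I would only indicate them. The step that genuinely requires the hierarchical machinery, and hence is the crux, is the first paragraph: the reduction to $\mathfrak{G}$-subgroups rests entirely on \Cref{corfsub} and \Cref{corfsubinj}, whose proofs invoke \Cref{induc} and the restriction results \Cref{resgp1} and \Cref{dugi}. Beyond correctly assembling these, I do not anticipate any further obstacle.
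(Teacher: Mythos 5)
Your proposal is correct and follows essentially the same route as the paper: the equality $\mathcal{GP}^{\perp}={}^{\perp}\mathcal{GI}$ is obtained by the same three-step chain through \Cref{corfsub}, \Cref{perpequal} for $\mathfrak{G}$-subgroups, and \Cref{corfsubinj}, and the equivalence of stable categories is deduced from the two model structures sharing the same trivial objects (the paper phrases this as the identity functor being a Quillen equivalence, citing a lemma of Estrada--Gillespie, while you observe directly that the weak equivalences coincide; these are interchangeable). The concluding remarks on triangulated structure match the paper's as well.
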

\begin{proof}
    The first statement follows from \Cref{corfsub} and \Cref{corfsubinj}, since these show that $M \in \mathcal{GP}_{G}^{\perp}$ if and only if $M{\downarrow}_H^G \in \mathcal{GP}_H^{\perp}$ for every $\mathfrak{G}$-subgroup $H \leq G$ if and only if $M{\downarrow}_H^G \in {}^{\perp}\mathcal{GI}_H$ for every $\mathfrak{G}$ subgroup $H \leq G$ if and only if $M \in {}^{\perp}\mathcal{GI}_G$. 
    \par 
 This means that the trivial modules in the two model structures are the same. It is not too difficult to see that the identity functor must therefore be a Quillen equivalence, see \cite[Lemma~5.4]{Estrada_Gillespie_2019} for a proof. 
    \par
    This shows that there is an equivalence between the homotopy categories of the two model structures. Since the homotopy category of the Gorenstein projective model structure is equivalent to $\gp(kG)$ via taking Gorenstein projective special precovers (and similarly for the injective case) we can see that the equivalence in the statement is given by taking such approximations.
    \par 
    The fact that it is a triangulated equivalences follows since the identity functor is exact and sends trivial objects to trivial objects. By the construction of the exact triangles in the homotopy category it is then clear that the identity functor sends standard exact triangles to standard exact triangles.
\end{proof}
An Artin algebra over which $\mathcal{GP}^{\perp} = {}^{\perp}\mathcal{GI}$ is called virtually Gorenstein. As in \cite[Example~3.10]{stovsarmodel}, an Artin algebra is virtually Gorenstein if and only if $(\mathcal{GP},\mathcal{GP}^{\perp})$ is of finite type. In view of \Cref{eqperpsd}, it would be interesting to know if there were a similar characterisation in general; we note that we have the following.
\begin{proposition}\label{gpandpgf}
    Suppose $\mathcal{GP}^{\perp} = {}^{\perp}\mathcal{GI}$. Then $(\mathcal{GP},\mathcal{GP}^{\perp})$ is of countable type and $\mathcal{GP} = \mathcal{PGF}$.
\end{proposition}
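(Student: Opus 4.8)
The plan is to extract both conclusions from a single consequence of the hypothesis: that the right-hand class $\mathcal{GP}^{\perp}$ of the cotorsion pair $(\mathcal{GP},\mathcal{GP}^{\perp})$ is closed under direct limits. Indeed, by assumption $\mathcal{GP}^{\perp} = {}^{\perp}\mathcal{GI}$, and by \cite[Lemma~4.1]{stovsarmodel} the class ${}^{\perp}\mathcal{GI}$ is closed under direct limits, so the same holds for $\mathcal{GP}^{\perp}$. Recall also that $(\mathcal{GP},\mathcal{GP}^{\perp})$ and $(\mathcal{PGF},\mathcal{PGF}^{\perp})$ are genuine cotorsion pairs, by \cite[Corollary~3.4]{cortésizurdiaga2023cotorsion} and \cite{stovsarmodel} respectively, and that by definition $\mathcal{PGF}\subseteq\mathcal{GP}$, so that $\mathcal{GP}^{\perp}\subseteq\mathcal{PGF}^{\perp}$ for free; the point will be to upgrade this to equality.

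Granting the direct-limit-closure of $\mathcal{GP}^{\perp}$, I would first argue that $(\mathcal{GP},\mathcal{GP}^{\perp})$ is of countable type, i.e.\ cogenerated by a set $\mathcal{S}$ of countably presented modules: this is the general principle that a cotorsion pair whose right-hand class is closed under direct limits is of countable type, applied via the deconstructibility machinery behind \cite{cortésizurdiaga2023cotorsion} and \cite{stovsarmodel} (and note that in the cases the paper actually uses, namely $\textup{LH}\mathfrak{G}$ groups, the pair $(\mathcal{GP},\mathcal{GP}^{\perp})$ is already cogenerated by a set by \Cref{induc}, so only the countable bound has to be produced). Once this is known, observe that $\mathcal{S}\subseteq{}^{\perp}(\mathcal{GP}^{\perp})=\mathcal{GP}$, so each member of $\mathcal{S}$ is a countably presented Gorenstein projective; by the description of $\mathcal{PGF}$ in \cite{stovsarmodel} as the class of Gorenstein projectives admitting a filtration by countably presented Gorenstein projectives (a countably presented one being trivially so filtered), we get $\mathcal{S}\subseteq\mathcal{PGF}$. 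Since $\mathcal{PGF}$ is the left-hand class of a cotorsion pair it is closed under transfinite extensions and direct summands, and hence the Eklof--Trlifaj description of ${}^{\perp}(\mathcal{S}^{\perp})$ yields $\mathcal{GP} = {}^{\perp}(\mathcal{GP}^{\perp}) = {}^{\perp}(\mathcal{S}^{\perp}) \subseteq \mathcal{PGF}$, whence $\mathcal{GP}=\mathcal{PGF}$.

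The main obstacle is the step "right-hand class closed under direct limits $\Rightarrow$ countable type'': this is the Gorenstein-theoretic input, not formal nonsense, and it is also where the analogy with the Artin algebra situation is only partial — over an Artin algebra one in fact obtains \emph{finite} type (this is the content of \cite[Example~3.10]{stovsarmodel}, equating virtual Gorensteinness with finite type), whereas in general I only expect countable type. Everything else is bookkeeping with cotorsion pairs, filtrations, and the established facts $\mathcal{GP}\cap\mathcal{GP}^{\perp}=\mathcal{P}$ and the characterisations already recorded in the paper. Finally, the hypothesis $\mathcal{GP}^{\perp}={}^{\perp}\mathcal{GI}$ genuinely cannot be removed: it is exactly what forces $\mathcal{GP}^{\perp}$ to be closed under direct limits, and whether $\mathcal{GP}=\mathcal{PGF}$ holds over an arbitrary ring is open.
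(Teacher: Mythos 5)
Your first conclusion is obtained essentially as in the paper: the hypothesis transfers the closure of ${}^{\perp}\mathcal{GI}$ under direct limits (from \cite[Lemma~4.1]{stovsarmodel}) to $\mathcal{GP}^{\perp}$, and the ``general principle'' you then invoke is precisely \cite[Theorem~6.1]{aroch2016ApproximationsAM}: a cotorsion pair whose right-hand class is closed under direct limits is of countable type, and moreover that right-hand class is \emph{definable}. You are right that this is the one serious input. Note, however, that your proposal silently discards the second output of that theorem (definability), and that is exactly what the paper uses for the other half of the statement.

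For $\mathcal{GP}=\mathcal{PGF}$ your route diverges, and the step carrying all the weight --- that every countably presented Gorenstein projective is in $\mathcal{PGF}$ --- is not something you can simply cite. The ``description of $\mathcal{PGF}$ as the Gorenstein projectives admitting a filtration by countably presented Gorenstein projectives'' appears in this paper only as a \emph{conditional} consequence (of the proposition assuming $\mathcal{PGF}$ is subgroup closed, for countable $k$), not as an unconditional fact from \cite{stovsarmodel}; over an arbitrary ring, placing countably presented Gorenstein projectives inside $\mathcal{PGF}$ is exactly the kind of statement one has to work for. The paper sidesteps this via definability: since $\mathcal{GP}^{\perp}$ is definable and contains $kG$, it contains the entire definable closure of $kG$, so every Gorenstein projective $M$ satisfies $\textup{Ext}^1_{kG}(M,D)=0$ for all $D$ in that closure and is a syzygy of an acyclic complex of projectives; the characterisation of $\mathcal{PGF}$ in \cite[Corollary~3.5]{stovsarmodel} then puts $M$ in $\mathcal{PGF}$ directly, with no deconstruction needed. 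If you insist on the filtration argument, you must first prove the countably presented case, and the natural proof of that again passes through \cite[Corollary~3.5]{stovsarmodel} and definability --- at which point you may as well apply it to all of $\mathcal{GP}$ at once. The surrounding bookkeeping in your proposal (closure of $\mathcal{PGF}$ under transfinite extensions and summands, the Eklof--Trlifaj description of ${}^{\perp}(\mathcal{S}^{\perp})$, necessity of the hypothesis) is fine.
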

\begin{proof}
    First, we recall that every projectively coresolved Gorenstein flat module is Gorenstein projective by \cite[Theorem~3.4]{stovsarmodel}. 
    \par By \cite[Lemma~4.1]{stovsarmodel} we know that ${}^{\perp}\mathcal{GI}$ is closed under direct limits and therefore so must be $\mathcal{GP}^{\perp}$. Now, it is shown in \cite[Theorem~6.1]{aroch2016ApproximationsAM} that this implies that $(\mathcal{GP},\mathcal{GP}^{\perp})$ is of countable type and $\mathcal{GP}^{\perp}$ is definable. Hence, every module in the definable closure of $kG$ is in $\mathcal{GP}^{\perp}$. It follows from the characterisation of projectively coresolved Gorenstein flat modules in \cite[Corollary~3.5]{stovsarmodel} that Gorenstein projectives are projectively coresolved Gorenstein flat. 
\end{proof}
\begin{corollary}\label{th2d}
    Let $G$ be an $\textup{LH}\mathfrak{G}$ group, where $k$ is a commutative ring of finite global dimension. Then the class of Gorenstein projective modules and the class of projectively coresolved Gorenstein flat modules coincide. Hence, every Gorenstein projective module is Gorenstein flat.
\end{corollary}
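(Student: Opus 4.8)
The plan is to obtain this as an immediate formal consequence of the two preceding results, so the ``proof'' I would write is essentially a short citation chain. First I would observe that since $G$ is an $\lhg$ group, \Cref{eqperpsd} applies and gives the equality $\mathcal{GP}^{\perp} = {}^{\perp}\mathcal{GI}$ of subcategories of $\textup{Mod}(kG)$. This is precisely the hypothesis required to invoke \Cref{gpandpgf}.

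Applying \Cref{gpandpgf} then yields both that the Gorenstein projective cotorsion pair $(\mathcal{GP},\mathcal{GP}^{\perp})$ is of countable type and, more to the point, that $\mathcal{GP} = \mathcal{PGF}$. For the last sentence of the statement I would simply recall that, by \Cref{gorensteindef}$(vi)$, a projectively coresolved Gorenstein flat module is in particular Gorenstein flat; combining this with $\mathcal{GP} = \mathcal{PGF}$ shows that every Gorenstein projective $kG$-module is Gorenstein flat.

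There is no real obstacle here, since the substantive work was already carried out in \Cref{eqperpsd} (which in turn rests on the restriction results \Cref{resgp1}, \Cref{corfsub}, \Cref{dugi}, and \Cref{corfsubinj} for $\lhg$ groups) and in \Cref{gpandpgf}. The only point I would take care to flag is where the standing assumption on $k$ is used: the hypothesis that $k$ has finite global dimension is needed for \Cref{eqperpsd}, whereas \Cref{gpandpgf} requires no assumption on the ring beyond $\mathcal{GP}^{\perp} = {}^{\perp}\mathcal{GI}$. Hence the corollary is correctly stated for $\lhg$ groups over a commutative ring of finite global dimension, and its proof is a two-line deduction.
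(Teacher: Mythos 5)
Your proof is correct and is exactly the paper's argument: apply \Cref{eqperpsd} to get $\mathcal{GP}^{\perp} = {}^{\perp}\mathcal{GI}$, invoke \Cref{gpandpgf} to conclude $\mathcal{GP} = \mathcal{PGF}$, and note that projectively coresolved Gorenstein flat modules are Gorenstein flat by definition. Your remark about where the finite global dimension hypothesis on $k$ enters is accurate but not needed beyond what you say.
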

\begin{proof}
    This follows from \Cref{eqperpsd} and \Cref{gpandpgf}. Furthermore, by definition every projectively coresolved Gorenstein flat module is Gorenstein flat.
\end{proof}
In particular, the homotopy categories of the two model structures in \cite[Section~3]{stovsarmodel} - explicitly these are $\mathcal{PGF}$ modulo the projectives and $\mathcal{GF}$-cotorsion modulo the flat-cotorsion modules - are equivalent to $\gp(kG)$.

\subsection{Gorenstein AC-model structures}\label{ACsec}
For this section, we consider $\textup{LH}\mathfrak{G}_{AC}$ groups. This is because we have shown in \Cref{gpequalgacp} that $\mathcal{GP} = \mathcal{GP}_{AC}$ for $\mathfrak{G}_{AC}$ groups, whereas we don't know if this is true for groups of finite Gorenstein cohomological dimension. 
\par 
In \cite{Bravo2014TheSM} the authors construct an assortment of model structures in order to generalise the stable module category of a quasi-Frobenius ring to arbitrary rings. 
\par 
For the following, recall the definitions of Gorenstein AC-projective and Gorenstein AC-injective modules from \Cref{gorensteindef}.
\begin{theorem}\cite[Theorem~5.5, Theorem~8.5]{Bravo2014TheSM} 
    There is a model structure on $\textup{Mod}(kG)$ such that every module is fibrant, the cofibrant modules are Gorenstein AC-projectives and the trivial modules are those contained in $\mathcal{GP}_{AC}^{\perp}$.
    \par 
    Similarly, there is a model structure such that every module is cofibrant, the fibrant objects are the Gorenstein AC-injectives and the trivial modules are those contained in ${}^{\perp}\mathcal{GI}_{AC}$.
\end{theorem}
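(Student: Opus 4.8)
The statement is the transcription to the group ring $kG$ of \cite[Theorem~5.5]{Bravo2014TheSM} (projective side) and \cite[Theorem~8.5]{Bravo2014TheSM} (injective side), so the plan is to record why those results apply and to indicate the shape of the argument, which runs through Hovey's correspondence (\Cref{HOVEYCORRESPONDONCE}) exactly as in \Cref{ourbigmodel}. For the first model structure I would set $\mathcal{C}=\mathcal{GP}_{AC}$, declare every module fibrant so that $\mathcal{F}=\textup{Mod}(kG)$, and take $\mathcal{W}=\mathcal{GP}_{AC}^{\perp}$. Then $\mathcal{W}\cap\mathcal{F}=\mathcal{GP}_{AC}^{\perp}$, and since any module in $\mathcal{GP}_{AC}\cap\mathcal{GP}_{AC}^{\perp}$ is a direct summand of a projective by the splitting argument of \Cref{ourbigmodel}, the intersection $\mathcal{C}\cap\mathcal{W}$ is precisely the class $\mathcal{P}$ of projectives. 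Hence the two cotorsion pairs required by \Cref{HOVEYCORRESPONDONCE} are $(\mathcal{GP}_{AC},\mathcal{GP}_{AC}^{\perp})$ and $(\mathcal{P},\textup{Mod}(kG))$, the latter trivially complete, so everything reduces to the Gorenstein AC-projective cotorsion pair being complete and to $\mathcal{GP}_{AC}^{\perp}$ being thick.

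Thickness of $\mathcal{GP}_{AC}^{\perp}$ I would establish verbatim as in \Cref{thicle}: first check that the cotorsion pair is hereditary, using that the syzygies and cosyzygies occurring in a totally acyclic complex of projectives are again Gorenstein AC-projective, so that $\textup{Ext}^{i}_{kG}(X,M)=0$ for all $i>0$, $X\in\mathcal{GP}_{AC}$ and $M\in\mathcal{GP}_{AC}^{\perp}$; the long exact sequence in $\textup{Ext}$ then forces closure under cokernels of monomorphisms, kernels of epimorphisms, extensions, and summands.

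The real content — and the step I expect to be the main obstacle — is completeness of $(\mathcal{GP}_{AC},\mathcal{GP}_{AC}^{\perp})$, the heart of \cite[Theorem~5.5]{Bravo2014TheSM}. By \Cref{EkTrl} it suffices to see this pair is cogenerated by a set, and here one genuinely needs the Bravo--Gillespie--Hovey analysis: the class of level modules is definable, being cut out by the vanishing of $\textup{Tor}_1^{kG}$ against a set of representatives of the $\textup{FP}_{\infty}$ modules, and from this one manufactures a set $\mathcal{S}$ of Gorenstein AC-projectives — assembled from syzygies of $\textup{FP}_{\infty}$ modules and from generators for the totally acyclic complexes detecting level modules — with $\mathcal{S}^{\perp}=\mathcal{GP}_{AC}^{\perp}$, whence ${}^{\perp}(\mathcal{S}^{\perp})=\mathcal{GP}_{AC}$. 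In the setting of this section, where $G$ is $\textup{LH}\mathfrak{G}_{AC}$, one can alternatively short-circuit this: \Cref{gpequalgacp} identifies $\mathcal{GP}_{AC}$ with $\mathcal{GP}$ over the relevant $\mathfrak{G}_{AC}$ subgroups, and \Cref{prop:fpinfcompgen} already exhibits the Gorenstein (AC-)projective cotorsion pair as cogenerated by the $kG$-modules $\Omega^{d}(F)$ with $F$ of type $\textup{FP}_{\infty}$ and $d=\textup{glGPD}_{AC}(kG)$, giving completeness directly.

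The second model structure is proved by the formally dual argument: take $\mathcal{F}=\mathcal{GI}_{AC}$, every module cofibrant, $\mathcal{W}={}^{\perp}\mathcal{GI}_{AC}$, use $(\textup{Mod}(kG),\mathcal{I})$ (with $\mathcal{I}$ the injectives) as the trivially complete second cotorsion pair, invoke \cite[Theorem~8.5]{Bravo2014TheSM} for completeness of $({}^{\perp}\mathcal{GI}_{AC},\mathcal{GI}_{AC})$ via definability of the absolutely clean modules, and obtain thickness of ${}^{\perp}\mathcal{GI}_{AC}$ exactly as before, so no new difficulty arises on the injective side.
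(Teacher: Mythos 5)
Your proposal is correct and follows essentially the route the paper takes: the statement is quoted verbatim from \cite[Theorems~5.5 and 8.5]{Bravo2014TheSM} with no independent proof given, and your outline (Hovey's correspondence with $(\mathcal{GP}_{AC},\mathcal{GP}_{AC}^{\perp})$ and $(\mathcal{P},\textup{Mod}(kG))$, thickness as in \Cref{thicle}, and completeness via cogeneration by a set coming from the definability of the level modules by a set of $\textup{FP}_{\infty}$ representatives) is exactly the Bravo--Gillespie--Hovey argument being cited. The only caution is that your proposed ``short-circuit'' through \Cref{gpequalgacp} and \Cref{prop:fpinfcompgen} requires $\textup{glGPD}_{AC}(kG)<\infty$ and so does not recover the theorem in the generality stated (an arbitrary ring), but since you offer it only as an alternative this does not affect the main line of your argument.
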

We refer to the model structures of the above theorem as the Gorenstein AC-projective model structure and Gorenstein AC-injective model structure respectively.
\par
For the following, we need to recall that for any (left) module $M$ we define its character module as $M^+ = \textup{Hom}_{\mathbb{Z}}(M,\mathbb{Q}/\mathbb{Z})$, with the obvious (right) module structure.
\begin{lemma}\label{cutelem}
    Suppose $A$ is an absolutely clean $kG$-module. Then $A{\downarrow}_F^G$ is absolutely clean for any subgroup $F \leq G$. Similarly, if $L$ is a level $kG$-module then $L{\downarrow}_F^G$ is level for any subgroup $F \leq G$.
\end{lemma}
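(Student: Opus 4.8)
The plan is to reduce the statement to the characterisation of absolutely clean (resp. level) modules in terms of vanishing of $\textup{Ext}^1$ (resp. $\textup{Tor}_1$) against modules of type $\textup{FP}_\infty$, and then to transport a given $\textup{FP}_\infty$-module over $kF$ back up to $kG$ along induction. First I would recall that if $X$ is a $kF$-module of type $\textup{FP}_\infty$, then $X{\uparrow}_F^G$ is again a $kG$-module of type $\textup{FP}_\infty$: this is because $kG$ is free, hence flat, as a right $kF$-module, so induction is exact and preserves finitely generated projective resolutions, and it takes a resolution of $X$ by finitely generated projectives to a resolution of $X{\uparrow}_F^G$ by finitely generated projectives. (Here $F \leq G$ need not have finite index, but that is irrelevant; what matters is that $kG$ is free over $kF$.)

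Now let $A$ be absolutely clean over $kG$ and let $X$ be any $kF$-module of type $\textup{FP}_\infty$. By the Eckmann--Shapiro isomorphism for cohomology, together with the fact that induction along $F \leq G$ agrees with coinduction up to the relevant finiteness (or simply using that $\textup{Hom}_{kF}(-, A{\downarrow}_F^G) \cong \textup{Hom}_{kG}(-{\uparrow}_F^G, A)$ via the tensor--hom adjunction for the $(kG,kF)$-bimodule $kG$), we obtain
\[
\textup{Ext}^1_{kF}(X, A{\downarrow}_F^G) \cong \textup{Ext}^1_{kG}(X{\uparrow}_F^G, A) = 0,
\]
the last equality because $X{\uparrow}_F^G$ is of type $\textup{FP}_\infty$ over $kG$ and $A$ is absolutely clean. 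Since $X$ was an arbitrary $\textup{FP}_\infty$ module over $kF$, this shows $A{\downarrow}_F^G$ is absolutely clean. For the level statement one argues dually with $\textup{Tor}$: using the isomorphism $X \otimes_{kF} L{\downarrow}_F^G \cong X{\uparrow}_F^G \otimes_{kG} L$ (valid since, as a left $kF$-module, $kG$ is free) one gets $\textup{Tor}_1^{kF}(X, L{\downarrow}_F^G) \cong \textup{Tor}_1^{kG}(X{\uparrow}_F^G, L) = 0$ for every $\textup{FP}_\infty$ module $X$ over $kF$, whence $L{\downarrow}_F^G$ is level.

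The only genuine point to be careful about is the preservation of the $\textup{FP}_\infty$ condition under induction; everything else is a formal adjunction/Eckmann--Shapiro manipulation. I do not expect this to be a real obstacle, since freeness of $kG$ over $kF$ is immediate and induction is an exact functor sending finitely generated projectives to finitely generated projectives, so it carries a length-$n$ resolution of $X$ by finitely generated projectives to one of $X{\uparrow}_F^G$ for every $n$. (One may alternatively invoke that $\textup{FP}_\infty$-ness is detected by commutation of $\textup{Hom}$ with direct limits in each fixed degree, which is again preserved.) Note the lemma is stated for arbitrary subgroups $F \leq G$, with no finiteness or cohomological hypothesis, and indeed none is needed for this argument.
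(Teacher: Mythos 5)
Your argument for the absolutely clean half is exactly the paper's: induction preserves type $\textup{FP}_{\infty}$ (since $kG$ is free over $kF$, induction is exact and sends finitely generated projectives to finitely generated projectives), and then Eckmann--Shapiro gives $\textup{Ext}^1_{kF}(X,A{\downarrow}_F^G) \cong \textup{Ext}^1_{kG}(X{\uparrow}_F^G,A) = 0$. For the level half you diverge: you run the dual Eckmann--Shapiro argument directly on $\textup{Tor}$, via $X \otimes_{kF} L{\downarrow}_F^G \cong X{\uparrow}_F^G \otimes_{kG} L$ for a right $kF$-module $X$ of type $\textup{FP}_{\infty}$, whereas the paper instead reduces to the absolutely clean case using the character-module duality of Bravo--Gillespie--Hovey (\cite[Theorem~2.12]{Bravo2014TheSM}: $L$ is level if and only if $L^+$ is absolutely clean, together with $(L{\downarrow}_F^G)^+ \cong L^+{\downarrow}_F^G$). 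Both routes are correct; yours is self-contained and symmetric but requires keeping track of right-module structures and the flat base change for $\textup{Tor}$, while the paper's citation of the duality lets it recycle the first half verbatim. One cosmetic point: your aside that induction ``agrees with coinduction up to the relevant finiteness'' is unnecessary and false for infinite-index subgroups; the tensor--hom adjunction you state immediately afterwards is the correct and sufficient justification.
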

\begin{proof}
    By definition, $A$ is a module such that $\textup{Ext}^1_{kG}(M,A) = 0$ for every $kG$-module $M$ of type $\textup{FP}_{\infty}$. Let $F \leq G$ be a subgroup and $N$ a $kF$-module of type $\textup{FP}_{\infty}$. It follows that $N{\uparrow}_F^G$ is $\textup{FP}_{\infty}$ over $kG$ and hence $\textup{Ext}^1_{kF}(N,A{\downarrow}_F^G) \cong \textup{Ext}^1_{kG}(N{\uparrow}_F^G,A) = 0$, which exactly means that $A{\downarrow}_F^G$ is absolutely clean.
    \par
    Now let $L$ be a level $kG$-module. It is shown in \cite[Theorem~2.12]{Bravo2014TheSM} that $L$ is level if and only if $L^+$ is absolutely clean. Therefore, $L^+{\downarrow}_F^G$ is absolutely clean. It is easy to see that $L^+{\downarrow}_F^G \cong (L{\downarrow}_F^G)^+$ and so again by \cite[Theorem~2.12]{Bravo2014TheSM} we find that $L{\downarrow}_F^G$ is level.
\end{proof}
We also require the following lemma; it is unknown in general if the global Gorenstein projective dimension can differ from the global Gorenstein AC-projective dimension. The following shows that they must be equal, as long as we restrict ourselves to $\textup{LH}\mathfrak{G}_{AC}$ groups. 
\begin{lemma}\label{ddff}
    Suppose $G$ is an $\textup{LH}\mathfrak{G}_{AC}$ group with $\textup{Gcd}_k(G) < \infty$. Then $\textup{glGPD}_{AC}(kG) < \infty$.
\end{lemma}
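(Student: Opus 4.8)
The plan is to exploit the equivalence, established in \Cref{corfsub} and its AC-analogue, between finiteness conditions over $G$ and the same conditions over the $\mathfrak{G}_{AC}$-subgroups of $G$, together with the fact that $\mathcal{GP} = \mathcal{GP}_{AC}$ over such subgroups (this is \Cref{gpequalgacp}). The key observation is that since $G$ is an $\textup{LH}\mathfrak{G}$ group and $\textup{Gcd}_k(G) < \infty$, we already know from \Cref{ETThe} that $\textup{glGPD}(kG) < \infty$; so what must be shown is that the \emph{AC}-version of this dimension is also finite, i.e.\ that $\mathcal{GP} = \mathcal{GP}_{AC}$ over $kG$ (from which $\textup{glGPD}_{AC}(kG) = \textup{glGPD}(kG) < \infty$ follows), or at least that the global AC-projective dimension is bounded.

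First I would reduce to the subgroup level. By \Cref{fpdext} (or its proof) and the description of $\mathcal{GP}^{\perp}$ as the modules of finite projective dimension, it suffices to prove $\mathcal{GP}_{AC}^{\perp} \subseteq \mathcal{GP}^{\perp}$ over $kG$; the reverse inclusion is automatic from $\mathcal{GP}_{AC} \subseteq \mathcal{GP}$. As in the proof of the $\Phi_k$ case earlier in the paper, this inclusion is equivalent (by taking ${}^{\perp}$ of both sides and using that both classes are left-hand sides of cotorsion pairs) to $\mathcal{GP} \subseteq \mathcal{GP}_{AC}$. So take a Gorenstein projective $kG$-module $M$; I want to show it is Gorenstein AC-projective, i.e.\ that for any totally acyclic-of-projectives complex $P_*$ with $M$ as a kernel (which exists by \Cref{costabil}), the complex $\textup{Hom}_{kG}(P_*, L)$ is acyclic for every level module $L$.

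Next I would test this acyclicity via restriction to the $\mathfrak{G}_{AC}$-subgroups. Using that $G$ is $\textup{LH}\mathfrak{G}_{AC}$, write $G$ as a suitable union/filtration of $\textup{H}\mathfrak{G}_{AC}$ subgroups, and those in turn built from $\mathfrak{G}_{AC}$-subgroups. By \Cref{resgp1}, $M{\downarrow}_H^G$ is Gorenstein projective for every $\lhg$ subgroup $H$; by \Cref{gpequalgacp}, over a $\mathfrak{G}_{AC}$-subgroup $H$ this means $M{\downarrow}_H^G$ is Gorenstein AC-projective. By \Cref{cutelem}, restriction sends level modules to level modules, so $\textup{Hom}_{kH}(P_*{\downarrow}_H^G, L{\downarrow}_H^G)$ is acyclic for every level $kG$-module $L$ and every $\mathfrak{G}_{AC}$-subgroup $H$. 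The technical heart is then to promote this ``locally over $\mathfrak{G}_{AC}$-subgroups'' acyclicity back to acyclicity over $kG$; I expect to do this exactly as in \Cref{induc} and \Cref{corfsub}, running the transfinite induction over the $\textup{H}_\alpha\mathfrak{G}_{AC}$ hierarchy and then over cardinality, using the Eckmann--Shapiro isomorphism to pass between $\textup{Ext}$ over $H$ and over $G$, and the Eklof Lemma for the colimit steps. Concretely, one shows $\mathcal{GP}_{AC,G}^{\perp} = (\bigcup_{H}\mathcal{X}_H{\uparrow}_H^G)^{\perp}$ where $\mathcal{X}_H$ cogenerates $\mathcal{GP}_{AC,H}^{\perp} = \mathcal{GP}_H^{\perp}$; since this same set cogenerates $\mathcal{GP}_G^{\perp}$ by \Cref{induc}, the two cotorsion pairs coincide, whence $\mathcal{GP}_G = \mathcal{GP}_{AC,G}$ and the two global dimensions agree.

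The main obstacle is the bookkeeping in the transfinite induction to show that the AC-version of $\mathcal{GP}^{\perp}$ behaves well under induction and filtered unions---i.e.\ verifying that the arguments of \Cref{induc}, \Cref{costabil}, and \Cref{corfsub} genuinely go through with ``level'' in place of ``projective'' as the test class. The crucial inputs that make this work are that $\mathcal{GP}_{AC}^{\perp}$ is thick (the AC-analogue of \Cref{thicle}, which follows from $(\mathcal{GP}_{AC},\mathcal{GP}_{AC}^{\perp})$ being hereditary), that restriction preserves levelness (\Cref{cutelem}), and that over $\mathfrak{G}_{AC}$-subgroups the two notions of Gorenstein projectivity already coincide (\Cref{gpequalgacp}); once these are in hand the inductive machinery is essentially a transcription of the proofs already given. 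A cleaner alternative, if one is willing to invoke \Cref{eqperpsd} and \Cref{gpandpgf}, is to note that $\mathcal{GP}^{\perp} = {}^{\perp}\mathcal{GI}$ forces $\mathcal{GP} = \mathcal{PGF}$, and then compare $\mathcal{PGF}$ with $\mathcal{GP}_{AC}$; but the subgroup-reduction argument above is more self-contained.
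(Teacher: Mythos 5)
Your proposal is correct, but it proves the lemma by a genuinely different mechanism than the paper. The paper's proof also starts from \Cref{fintypec}, but uses it to conclude that absolutely clean modules lie in $\mathcal{GP}^{\perp}$ (being right-orthogonal to the $\textup{FP}_{\infty}$ cogenerators), hence by \Cref{fpdext} and \Cref{perpequal} have uniformly bounded injective dimension; a character-module duality argument ($L$ level $\iff$ $L^+$ absolutely clean, and $M$ flat $\iff$ $M^+$ injective) then shows every level module has finite flat, hence finite projective, dimension, so lies in $\mathcal{GP}^{\perp}$, giving $\mathcal{GP}\subseteq\mathcal{GP}_{AC}$ directly. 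You instead use \Cref{fintypec} through the \emph{shape} of the cogenerating set: its elements are induced from $\mathfrak{G}_{AC}$-subgroups, where $\mathcal{GP}=\mathcal{GP}_{AC}$ by \Cref{gpequalgacp}, and induction preserves Gorenstein AC-projectivity by the \Cref{cutelem} adjunction argument; hence the cogenerators lie in $\mathcal{GP}_{AC,G}$, forcing $\mathcal{GP}_{AC,G}^{\perp}=\mathcal{GP}_G^{\perp}$ and so $\mathcal{GP}_{AC,G}=\mathcal{GP}_G$ by taking left orthogonals of the two cotorsion pairs. This is sound and non-circular (you cite \Cref{gpequalgacp}, not \Cref{gpeqgpac}, which in the paper depends on this lemma), and it has a real payoff: your argument never uses $\textup{Gcd}_k(G)<\infty$ until the final step, so it actually proves \Cref{gpeqgpac} for all $\textup{LH}\mathfrak{G}_{AC}$ groups outright and would let that later proof be shortened; the paper's route, by contrast, yields the intermediate fact that level modules have finite projective dimension, which is of independent use. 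Two small caveats: the framing in your middle paragraph about ``promoting acyclicity of $\textup{Hom}_{kH}(P_*{\downarrow}_H^G,L{\downarrow}_H^G)$ back to $\textup{Hom}_{kG}(P_*,L)$'' by restriction does not work as literally stated (acyclicity of the larger complex says nothing about the $G$-equivariant subcomplex), but the concrete cotorsion-pair comparison you then give is the correct mechanism and makes that step unnecessary; and you do not need to rerun the transfinite induction with ``level'' as the test class --- the already-established \Cref{fintypec} plus the observation that its cogenerators are Gorenstein AC-projective suffices.
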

\begin{proof}
    We have seen in \Cref{fintypec} that for such a group the cotorsion pair $(\mathcal{GP},\mathcal{GP}^{\perp})$ is cogenerated by a set of $\textup{FP}_{\infty}$ modules. This implies that any absolutely clean module is contained in $\mathcal{GP}^{\perp}$. We see from \Cref{fpdext} and \Cref{perpequal} that this means that any absolutely clean module has finite injective dimension. Note that these injective dimensions are uniformly bounded, i.e. there exists some $n > 0$ such that each absolutely clean module has injective dimension $\leq n$.
    \par 
    Let $L$ be a level $kG$-module and consider the following exact sequence, where the integer $n$ is as above, and $P_*$ is a projective resolution of $L$.
    \[0 \to \Omega^n(L) \to P_n \to \dots \to P_0 \to L \to 0\]
    Taking the character modules of everything gives the following exact sequence. 
    \[0 \to L^+ \to P_0^+ \to \dots P_n^+ \to \Omega^n(L)^+ \to 0\]
    We know from \cite[Theorem~2.12]{Bravo2014TheSM} that $L^+$ is absolutely clean. Furthermore, it is well known a module is flat if and only if its character module is injective. Since $L^+$ has finite injective dimension we know that $\Omega^n(L)^+$ is injective, and hence $\Omega^n(L)$ is flat. This means that $L$ has finite flat dimension. However, since $\textup{glGPD}(kG) < \infty$, we know from \cite[Corollary~4.3]{em22} that $L$ must have finite projective dimension. In particular, $L \in \mathcal{GP}^{\perp}$ and so every Gorenstein projective is Gorenstein AC-projective. This completes the proof. 
\end{proof}
\begin{proposition}\label{gpeqgpac} Let $k$ be a commutative ring of finite global dimension. Then $\mathcal{GP} = \mathcal{GP}_{AC}$ for each $\textup{LH}\mathfrak{G}_{AC}$ group $G$.
\end{proposition}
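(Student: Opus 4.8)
The plan is to establish the reverse inclusion $\mathcal{GP}_{AC}^{\perp}\subseteq\mathcal{GP}^{\perp}$ and then dualise. Since $\mathcal{GP}_{AC}\subseteq\mathcal{GP}$ over any ring, and since both $(\mathcal{GP},\mathcal{GP}^{\perp})$ and $(\mathcal{GP}_{AC},\mathcal{GP}_{AC}^{\perp})$ are hereditary cotorsion pairs over any ring --- the former by \cite{cortésizurdiaga2023cotorsion}, the latter by \cite{Bravo2014TheSM} --- once we know $\mathcal{GP}_{AC}^{\perp}\subseteq\mathcal{GP}^{\perp}$ we may apply ${}^{\perp}(-)$ to get $\mathcal{GP}={}^{\perp}(\mathcal{GP}^{\perp})\subseteq{}^{\perp}(\mathcal{GP}_{AC}^{\perp})=\mathcal{GP}_{AC}$, hence equality. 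To prove $\mathcal{GP}_{AC}^{\perp}\subseteq\mathcal{GP}^{\perp}$ for $G$ an $\textup{LH}\mathfrak{G}_{AC}$ group I would run a transfinite induction along the $\mathfrak{G}_{AC}$-hierarchy, following the proof of \Cref{induc} essentially line for line.

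Before the induction I would record two facts. First, induction along an inclusion $H\leq G$ preserves Gorenstein AC-projectivity: if $N$ sits as a kernel in a complex of projective $kH$-modules $C_*$ with $\textup{Hom}_{kH}(C_*,L')$ acyclic for every level $kH$-module $L'$, then $C_*{\uparrow}_H^G$ is an acyclic complex of projective $kG$-modules with $N{\uparrow}_H^G$ a kernel, and for any level $kG$-module $L$ the adjunction isomorphism $\textup{Hom}_{kG}(C_*{\uparrow}_H^G,L)\cong\textup{Hom}_{kH}(C_*,L{\downarrow}_H^G)$ together with \Cref{cutelem} (so that $L{\downarrow}_H^G$ is level) shows this complex is acyclic. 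By Eckmann--Shapiro it follows that $M\in\mathcal{GP}_{AC,G}^{\perp}$ implies $M{\downarrow}_H^G\in\mathcal{GP}_{AC,H}^{\perp}$ for every subgroup $H\leq G$. Secondly, when $H\in\mathfrak{G}_{AC}$ we already have $\mathcal{GP}_H=\mathcal{GP}_{AC,H}$ by \Cref{gpequalgacp}, so in that case $M{\downarrow}_H^G\in\mathcal{GP}_H^{\perp}$.

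Now fix $M\in\mathcal{GP}_{AC,G}^{\perp}$ and prove $M\in\mathcal{GP}_G^{\perp}$ by induction. The base case $G\in\textup{H}_0\mathfrak{G}_{AC}=\mathfrak{G}_{AC}$ is the second observation. For the successor step, let $G$ act cellularly on a finite-dimensional contractible complex with every stabiliser in $\textup{H}_\beta\mathfrak{G}_{AC}$ for $\beta$ below the current ordinal, and let $Y$ be an arbitrary Gorenstein projective $kG$-module; tensoring the cellular chain complex of that action over $k$ with $Y$ yields a finite exact complex $0\to T_d\to\cdots\to T_0\to Y\to 0$ whose terms $T_j$ are direct sums of modules induced from stabilisers $F$. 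By \Cref{resgp1} each such $Y{\downarrow}_F^G$ is Gorenstein projective over $kF$, while the inductive hypothesis together with the two observations gives $M{\downarrow}_F^G\in\mathcal{GP}_F^{\perp}$; since $\textup{Ext}^i_{kF}(X,M{\downarrow}_F^G)=0$ for all $i\geq 1$ and $X\in\mathcal{GP}_F$ (the first assertion in the proof of \Cref{thicle}), Eckmann--Shapiro gives $\textup{Ext}^i_{kG}(T_j,M)=0$ for all $i\geq 1$ and all $j$, and dimension shifting along the length-$d$ complex forces $\textup{Ext}^{d+1}_{kG}(Y,M)=0$. Applying this to the $d$-th cosyzygy of an arbitrary $Z\in\mathcal{GP}_G$ (which is again a kernel in the totally acyclic complex of $Z$, hence Gorenstein projective) and dimension shifting back yields $\textup{Ext}^1_{kG}(Z,M)=0$, so $M\in\mathcal{GP}_G^{\perp}$. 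Limit ordinals are immediate. For an $\textup{LH}\mathfrak{G}_{AC}$ group one then inducts on cardinality: countable such groups lie in $\textup{H}\mathfrak{G}_{AC}$, and for $G=\bigcup_iG_i$ with each $|G_i|<|G|$ one repeats verbatim the Eklof-lemma filtration argument from the proof of \Cref{induc}, using $M{\downarrow}_{G_i}^G\in\mathcal{GP}_{G_i}^{\perp}$ to get $\textup{Ext}^2_{kG}(X,M)=0$ for every Gorenstein projective $X$ and hence $\textup{Ext}^1_{kG}(Z,M)=0$ for all $Z\in\mathcal{GP}_G$. The one point genuinely new compared to \Cref{induc}, and hence the main obstacle, is the first observation: unlike restriction of objects of $\mathcal{GP}^{\perp}$, which is handled by the Eckmann--Shapiro/induction formalism alone, keeping an object of $\mathcal{GP}_{AC}^{\perp}$ inside $\mathcal{GP}_{AC}^{\perp}$ under restriction requires the non-formal input \Cref{cutelem} that restriction preserves level modules; everything else is a transcription of \Cref{induc}.
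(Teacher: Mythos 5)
Your proof is correct, and it rests on exactly the same non-formal inputs as the paper's: the observation that induction along $H\leq G$ sends Gorenstein AC-projectives to Gorenstein AC-projectives (via \Cref{cutelem} and the adjunction isomorphism), so that restriction preserves $\mathcal{GP}_{AC}^{\perp}$, together with the base-case identification $\mathcal{GP}_H=\mathcal{GP}_{AC,H}$ for $H\in\mathfrak{G}_{AC}$ from \Cref{gpequalgacp}, and the cotorsion-pair formalism to convert $\mathcal{GP}_{AC}^{\perp}\subseteq\mathcal{GP}^{\perp}$ into $\mathcal{GP}\subseteq\mathcal{GP}_{AC}$. The organization differs in one genuine respect. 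The paper does not rerun the transfinite induction: it invokes \Cref{corfsub} to reduce membership in $\mathcal{GP}_G^{\perp}$ to membership in $\mathcal{GP}_H^{\perp}$ for the $\mathfrak{G}$-subgroups $H$, and must then bridge the gap between $\mathfrak{G}$ and $\mathfrak{G}_{AC}$ by appealing to \Cref{ddff} (an $\textup{LH}\mathfrak{G}_{AC}$ group of finite Gorenstein cohomological dimension has $\textup{glGPD}_{AC}<\infty$) before \Cref{gpequalgacp} can be applied to $H$; the paper explicitly flags this as a subtlety. You instead redo the induction of \Cref{induc} along the $\mathfrak{G}_{AC}$-hierarchy itself, so your base case is already a $\mathfrak{G}_{AC}$ group and \Cref{ddff} is never needed. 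Your route is longer on the page but self-contained at that point of the argument; the paper's is shorter but leans on \Cref{ddff}, which in turn uses \Cref{fintypec}. Both are valid, and your successor-step and cardinality-induction details (Eckmann--Shapiro, dimension shifting along the length-$d$ cellular complex, the cosyzygy trick, and the Eklof filtration) transcribe correctly from \Cref{induc}.
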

\begin{proof}
    We have shown in \Cref{gpequalgacp} that $\mathcal{GP} = \mathcal{GP}_{AC}$ for each $\mathfrak{G}_{AC}$ group. 
    \par 
    Now, we let $G$ be a $\textup{LH}\mathfrak{G}_{AC}$ group. 
    By definition, we have that $\mathcal{GP}_{AC} \subseteq \mathcal{GP}$. We claim that $\mathcal{GP}_{AC}^{\perp} \subseteq \mathcal{GP}^{\perp}$. This implies that $\mathcal{GP} \subseteq \mathcal{GP}_{AC}$ by using the two cotorsion pairs $(\mathcal{GP},\mathcal{GP}^{\perp})$ and $(\mathcal{GP}_{AC},\mathcal{GP}_{AC}^{\perp})$. 
    \par 
    Suppose $M \in \mathcal{GP}_{AC}^{\perp}$. We argue that $M{\downarrow}_H^G \in \mathcal{GP}^{\perp}$ for each $\mathfrak{G}$ subgroup $H \leq G$. It then follows from \Cref{corfsub} that $M \in \mathcal{GP}^{\perp}$ as desired. There is a subtlety here, since we don't know that the statement of the proposition holds for $\mathfrak{G}$ groups in general. However, we use \Cref{ddff} to see that for each $\mathfrak{G}$ subgroup $H \leq G$, we in fact have that $\textup{glGPD}_{AC}(kH) < \infty$. Hence, there is an equality $\mathcal{GP} = \mathcal{GP}_{AC}$ over $H$. 
    \par
    We see that it therefore suffices to show that $M{\downarrow}_H^G \in \mathcal{GP}_{AC}^{\perp}$. 
    \par 
    \sloppy Let $X$ be a Gorenstein AC-projective $kH$-module. We know that $\textup{Ext}^1_{kH}(X,M{\downarrow}_H^G) \cong \textup{Ext}^1_{kG}(X{\uparrow}_H^G,M)$; to show that this vanishes it is sufficient to show that $X{\uparrow}_H^G$ is Gorenstein AC-projective over $kG$, since we have assumed $M \in \mathcal{GP}_{AC}^{\perp}$. 
    \par 
    Suppose that $X$ is the kernel in an acyclic complex of $kH$-projectives $P_*$ which is $\textup{Hom}_{kH}(-,L)$ acyclic for each level $kH$-module $L$. Then $X{\uparrow}_H^G$ is the kernel in an exact complex of projectives $P_*{\uparrow}_H^G$. For any level $kG$-module $L'$, we have that $\textup{Hom}_{kG}(P_*{\uparrow}_H^G,L') \cong \textup{Hom}_{kH}(P_*,L'{\downarrow}_H^G)$. From \Cref{cutelem} we know that $L'{\downarrow}_H^G$ is level and hence this is exact, i.e. $X{\uparrow}_H^G$ is Gorenstein AC-projective as desired. 
\end{proof}
We note that since every Gorenstein AC-projective is projectively coresolved Gorenstein flat by definition, this provides another proof that $\mathcal{GP} = \mathcal{PGF}$ for $\textup{LH}\mathfrak{G}_{AC}$ groups. 
\par 
A dual argument now gives the following.
\begin{proposition}\label{giegiac}
Suppose $k$ is a commutative ring of finite global dimension. We have that $\mathcal{GI} = \mathcal{GI}_{AC}$ for each $\textup{LH}\mathfrak{G}_{AC}$ group $G$.
\end{proposition}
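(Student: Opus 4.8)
The plan is to dualise the proof of \Cref{gpeqgpac} throughout, replacing projectives by injectives, induction by coinduction, and level modules by absolutely clean modules. To begin, \Cref{gpequalgacp} already supplies $\mathcal{GI} = \mathcal{GI}_{AC}$ for every $\mathfrak{G}_{AC}$ group, and the inclusion $\mathcal{GI}_{AC} \subseteq \mathcal{GI}$ holds for an arbitrary group since injective modules are absolutely clean. Working with the two cotorsion pairs $({}^{\perp}\mathcal{GI},\mathcal{GI})$ of \Cref{injmodelstr} and $({}^{\perp}\mathcal{GI}_{AC},\mathcal{GI}_{AC})$, it therefore suffices to prove the inclusion ${}^{\perp}\mathcal{GI}_{AC} \subseteq {}^{\perp}\mathcal{GI}$; taking right orthogonals then gives $\mathcal{GI} = ({}^{\perp}\mathcal{GI})^{\perp} \subseteq ({}^{\perp}\mathcal{GI}_{AC})^{\perp} = \mathcal{GI}_{AC}$, which is the desired equality.

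To prove this inclusion, fix $M \in {}^{\perp}\mathcal{GI}_{AC}$. Since $\textup{LH}\mathfrak{G}_{AC}$ groups are $\lhg$ groups, \Cref{corfsubinj} reduces us to showing $M{\downarrow}_H^G \in {}^{\perp}\mathcal{GI}_H$ for every subgroup $H \leq G$ with $H \in \mathfrak{G}$. Exactly as in the proof of \Cref{gpeqgpac}, each such $H$ is again an $\textup{LH}\mathfrak{G}_{AC}$ group with $\textup{Gcd}_k(H) < \infty$, so \Cref{ddff} yields $\textup{glGPD}_{AC}(kH) < \infty$ and hence $\mathcal{GI}_H = \mathcal{GI}_{AC,H}$ by \Cref{gpequalgacp}; in particular ${}^{\perp}\mathcal{GI}_H = {}^{\perp}\mathcal{GI}_{AC,H}$, so it is enough to show $M{\downarrow}_H^G \in {}^{\perp}\mathcal{GI}_{AC,H}$.

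Now let $X$ be a Gorenstein AC-injective $kH$-module. Coinduction $\Uparrow_H^G$ is right adjoint to the exact functor $\downarrow_H^G$, so it is itself exact and the Eckmann-Shapiro isomorphism gives $\textup{Ext}^1_{kH}(M{\downarrow}_H^G,X) \cong \textup{Ext}^1_{kG}(M, X{\Uparrow}_H^G)$; since $M \in {}^{\perp}\mathcal{GI}_{AC}$, it therefore suffices to check that $X{\Uparrow}_H^G$ is Gorenstein AC-injective over $kG$. Writing $X$ as a kernel in a complex of injectives $C_*$ which is $\textup{Hom}_{kH}(A,-)$-acyclic for every absolutely clean $kH$-module $A$, the complex $C_*{\Uparrow}_H^G$ consists of injective $kG$-modules (a right adjoint of an exact functor preserves injectives) and has $X{\Uparrow}_H^G$ as a kernel. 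For an absolutely clean $kG$-module $A'$, Frobenius reciprocity for coinduction gives $\textup{Hom}_{kG}(A', C_*{\Uparrow}_H^G) \cong \textup{Hom}_{kH}(A'{\downarrow}_H^G, C_*)$, and $A'{\downarrow}_H^G$ is absolutely clean by \Cref{cutelem}, so the right-hand complex is exact. Hence $X{\Uparrow}_H^G$ is Gorenstein AC-injective, finishing the argument.

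The main thing to verify carefully is that coinduction $\Uparrow_H^G$ plays the role in this dual argument that induction $\uparrow_H^G$ played in \Cref{gpeqgpac}: one needs that it is exact, preserves injective modules, and satisfies Frobenius reciprocity with restriction in the form $\textup{Hom}_{kG}(-, X{\Uparrow}_H^G) \cong \textup{Hom}_{kH}((-){\downarrow}_H^G, X)$, all of which are standard. The only genuinely delicate point, precisely as in \Cref{gpeqgpac}, is the passage to $\mathfrak{G}$-subgroups, where we must invoke \Cref{ddff} to promote finiteness of $\textup{Gcd}_k(H)$ to finiteness of $\textup{glGPD}_{AC}(kH)$ and thereby obtain $\mathcal{GI}_H = \mathcal{GI}_{AC,H}$ before \Cref{corfsubinj} can be applied.
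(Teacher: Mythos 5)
Your proof is correct and is exactly the argument the paper intends: its own ``proof'' of \Cref{giegiac} consists of the single sentence that a dual argument to \Cref{gpeqgpac} applies, and you have carried out precisely that dualisation, including the two genuinely non-formal ingredients (the reduction to $\mathfrak{G}$-subgroups via \Cref{corfsubinj} and \Cref{ddff}, and the fact that coinduction preserves Gorenstein AC-injectives via \Cref{cutelem}). The only cosmetic quibble is that exactness of $\Uparrow_H^G$ follows from $kG$ being free as a $kH$-module rather than from its being a right adjoint of an exact functor, but the fact itself is standard and the argument is unaffected.
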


\section{Monoidal model category}\label{monoidalsection}
One of the main motivations for this work was in constructing a suitable tensor-triangulated stable category for $\lhf$ groups. We have already shown that the Gorenstein projective model structure is monoidal for groups of finite Gorenstein cohomological dimension, and hence the homotopy category will be tensor-triangulated. We now extend this to $\lhg$ groups. 
\begin{lemma}\label{l2}
    If $G$ is an $\lhg$ group and $M,N$ are Gorenstein projective $kG$-modules, then $M \otimes N$ is Gorenstein projective.
\end{lemma}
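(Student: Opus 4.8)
The plan is to adapt the argument used for item $(ii)$ in the proof of \Cref{basemono}, replacing the total acyclicity input used there (which relied on $\textup{Gcd}_k(G) < \infty$) by \Cref{costabil}, which says precisely that over an $\lhg$ group every acyclic complex of projectives is automatically totally acyclic.

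Concretely, first I would write $M$ as the degree-zero kernel of a totally acyclic complex $P_*$ of projective $kG$-modules. Since $k$ has finite global dimension and $N$ is Gorenstein projective, $N$ is projective, hence flat, as a $k$-module, so the functor $-\otimes_k N$ is exact on $\textup{Mod}(kG)$; applying it to the exact complex $P_*$ yields an exact complex $P_* \otimes N$ whose degree-zero kernel is $M \otimes N$. Next I would observe that each term $P_i \otimes N$ is a projective $kG$-module: this is \cite[Lemma~4.2]{MSstabcat}, already invoked as item $(iii)$ in the proof of \Cref{basemono}, since $P_i$ is projective and $N$ is Gorenstein projective. Thus $P_* \otimes N$ is an exact complex of projective $kG$-modules, and \Cref{costabil} immediately gives that each of its kernels, in particular $M \otimes N$, is Gorenstein projective.

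There is essentially no serious obstacle left once \Cref{costabil} is available; the only points requiring care are bookkeeping ones, namely that the monoidal structure on $\textup{Mod}(kG)$ is $\otimes_k$ with the diagonal action (so $-\otimes_k N$ genuinely is exact when $N$ is $k$-flat), and that the kernel of the tensored complex is computed as $M \otimes N$ precisely because $-\otimes_k N$ is exact. It is worth emphasising that for a general ring one cannot conclude that an acyclic complex of projectives is totally acyclic, so this hypothesis on $G$ being $\lhg$ is exactly what enters the proof, via \Cref{costabil}.
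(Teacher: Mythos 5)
Your proof is correct, but it is genuinely different from (and much shorter than) the argument the paper actually gives for \Cref{l2}. The paper proves the lemma by transfinite induction up the hierarchy: it uses that $(\mathcal{GP}_G,\mathcal{GP}_G^{\perp})$ is cogenerated by the set $\bigcup_H \mathcal{X}_H{\uparrow}_H^G$, that every Gorenstein projective is a retract of a transfinite extension of modules from this set, the projection formula $X{\uparrow}_H^G \otimes N \cong (X \otimes N{\downarrow}_H^G){\uparrow}_H^G$ together with \Cref{resgp1}, closure of $\mathcal{GP}$ under transfinite extensions, and, for the $\lhg$ step, the Emmanouil--Talelli bound on Gorenstein projective dimension over ascending unions. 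Your route instead reduces everything to \Cref{costabil}: tensor a totally acyclic complex of projectives presenting $M$ with the $k$-projective module $N$, note the terms stay projective by \cite[Lemma~4.2]{MSstabcat}, and invoke \Cref{costabil} to conclude all kernels of the resulting acyclic complex of projectives are Gorenstein projective. This is exactly the argument the author sketches in the remark immediately following the lemma, but only for $\lhf$ groups and citing \cite[Corollary~D]{demtalres}; you have correctly observed that the paper's own \Cref{costabil} supplies the needed total acyclicity for all $\lhg$ groups, and since \Cref{costabil} is proved well before this point and does not depend on \Cref{l2}, there is no circularity. What the paper's longer argument buys is independence from the "acyclic complexes of projectives are totally acyclic" phenomenon and a template (retracts of transfinite extensions of induced modules) that is reused elsewhere, e.g.\ in \Cref{ourcorC}; what your argument buys is brevity and transparency about where the $\lhg$ hypothesis enters.
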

\begin{proof}
    First, let $H$ be an $\mathfrak{G}$ group and let $X$ and $Y$ be two Gorenstein projectives, i.e. cofibrants in the model structure. The pushout product of the two maps $0 \to X$ and $0 \to Y$, which are cofibrations, shows that $X \otimes Y$ is also cofibrant (and hence Gorenstein projective). 
    \par
    We prove this first for $\textup{H}\mathfrak{G}$ groups, and proceed by transfinite induction; the base case is true by \Cref{basemono}. Suppose that $G$ is an $\textup{H}_{\alpha}\mathfrak{G}$ group and that the statement holds for all $\textup{H}_{\beta}\mathfrak{G}$ groups for $\beta < \alpha$. 
    \par 
    We claim that $M \otimes N$ is a transfinite extension of Gorenstein projectives. We recall from \Cref{induc}, and the remark after, that $(\mathcal{GP}_G,\mathcal{GP}^{\perp}_G)$ is cogenerated by the set $\bigcup\limits_{H \in \mathfrak{G}}\mathcal{X}_H{\uparrow}_H^G$. We may assume that this contains $kG$ (which is a generator of $\textup{Mod}(kG))$ and so it is shown in \cite[Corollary~2.15]{Saorn2010OnEC} that every object in $\mathcal{GP}_G$ is a retract of a transfinite extension of modules in $\bigcup\limits_{H \in \mathfrak{G}}\mathcal{X}_H{\uparrow}_H^G$.
    \par 
    In particular, $M$ is a retract of $M'$, which is such a transfinite extension. But then $M \otimes N$ is a retract of $M' \otimes N$. We know that $N$ is projective over $k$, since $k$ has finite global dimension, and so tensoring the transfinite extension of $M'$ with $N$ shows that $M' \otimes N$ has a transfinite extension by modules of the form $X{\uparrow}_H^G \otimes N$, where $X \in \mathcal{X}_H$ for some $H$. 
    \par 
    There is an isomorphism $X{\uparrow}_H^G \otimes N \cong (X \otimes N{\downarrow}_H^G){\uparrow}_H^G$. From \Cref{resgp1} we know that $N{\downarrow}_H^G$ is Gorenstein projective and so by the inductive hypothesis it follows that $X \otimes N{\downarrow}_H^G$ is Gorenstein projective. Hence, so is $(X \otimes N{\downarrow}_H^G){\uparrow}_H^G$. The tensor product commutes with direct limits and so $M' \otimes N$ is a transfinite extension of Gorenstein projectives as desired. It follows that $M' \otimes N$ is actually Gorenstein projective by \cite[Corollary~2.5]{Eprecov}. Finally, $M \otimes N$ is a summand of a Gorenstein projective and hence is itself Gorenstein projective.
    \par 
    Now let $G$ be an $\lhg$ group and we induct on the cardinality; it is true for countable $\lhg$ groups since these lie in $\textup{H}\mathfrak{G}$. Suppose that $G$ is uncountable and write $G = \bigcup\limits_{\alpha<\gamma}G_{\alpha}$ as a union of groups of strictly smaller cardinality. From \Cref{resgp1} we know that $M{\downarrow}_{G_{\alpha}}^G$ and $N{\downarrow}_{G_{\alpha}}^G$ are both Gorenstein projective. Hence, by the inductive hypothesis we have that $(M \otimes N){\downarrow}_{G_{\alpha}}^G$ is Gorenstein projective. 
    \par 
    It is shown in \cite[Proposition~3.1]{ETgcd} that $M \otimes N$ then has Gorenstein projective dimension $\leq 1$. The same argument applies to $\Omega^{-1}(M) \otimes N$. The short exact sequence $0 \to M \otimes N \to P \otimes N \to \Omega^{-1}(M) \otimes N \to 0$ implies now that $M \otimes N$ is Gorenstein projective.
\end{proof}
\begin{remark}
    For $\lhf$ groups, there is a different argument to show that Gorenstein projectives are closed under tensoring. Indeed, we know from \cite[Corollary~D]{demtalres} that for such groups, any acyclic complex of projectives is automatically totally acyclic. Then if $M$ and $N$ are Gorenstein projective, with $M$ a kernel in an acyclic complex of projectives $P_*$, we see that $P_* \otimes N$ is an acyclic (and hence totally acyclic) complex of projectives. This shows that $M \otimes N$ is Gorenstein projective. It is conjectured in \cite{demtalres} that acyclic complexes of projectives are totally acyclic over any group.
\end{remark}
\begin{lemma}\label{closymmon}
   Let $G$ be an $\lhg$ group. Then the following statements hold.
    \begin{enumerate}[label=(\roman*)]
        \item Every Gorenstein projective module is flat over $k$ and for every $kG$-module $M$ there is a surjective map $X \to M$ with $X$ Gorenstein projective
        \item If $X,Y$ are Gorenstein projective, so is $X \otimes Y$
        \item If $X,Y$ are Gorenstein projective, and at least one of them is projective then $X \otimes Y$ is projective 
        \item For any Gorenstein projective $X$ the natural map $\mathcal{G}(k)\otimes X \to k \otimes X$ is a weak equivalence, where $\mathcal{G}(k) \to k$ is the Gorenstein projective special precover
    \end{enumerate}
\end{lemma}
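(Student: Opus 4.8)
The plan is to obtain $(i)$, $(ii)$, and $(iii)$ from results already in hand, and then to spend the real effort on the unit axiom $(iv)$. For $(i)$: since $k$ has finite global dimension, every Gorenstein projective $kG$-module is projective — hence flat — on restriction to $k$, exactly as noted in the proof of \Cref{basemono}; and the existence of a surjection $X \to M$ with $X$ Gorenstein projective is immediate from the completeness of $(\mathcal{GP},\mathcal{GP}^{\perp})$, which is \Cref{induc} together with \Cref{EkTrl}. Item $(ii)$ is precisely \Cref{l2}, just proved. Item $(iii)$ is \cite[Lemma~4.2]{MSstabcat}: its proof only uses that the non-projective tensor factor is projective over $k$, which holds here by $(i)$.

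The substance of the lemma is the unit axiom $(iv)$. I would fix a Gorenstein projective special precover $0 \to K \to \mathcal{G}(k) \to k \to 0$ with $K \in \mathcal{GP}^{\perp}$ (which exists by \Cref{induc}), tensor it with $X$ — an exact operation by $(i)$ — to get $0 \to K \otimes X \to \mathcal{G}(k) \otimes X \to k \otimes X \to 0$, and then argue that $K \otimes X \in \mathcal{GP}^{\perp}$. If this holds, then $\mathcal{G}(k) \otimes X \to k \otimes X$ is a surjection with kernel in $\mathcal{GP}^{\perp}$, i.e.\ a trivial fibration, hence a weak equivalence, and we are done.

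The main obstacle is therefore exactly the claim $K \otimes X \in \mathcal{GP}^{\perp}$. For a group of finite Gorenstein cohomological dimension one simply invokes \Cref{fpdext} to say $K$ has finite projective dimension and then runs the argument of \Cref{basemono}; but for a general $\lhg$ group we do not know $\textup{glGPD}(kG) < \infty$, so $\mathcal{GP}^{\perp}$ need not be the class of modules of finite projective dimension, and this route is unavailable. Instead I would descend to the finite-Gcd subgroups via the restriction criterion \Cref{corfsub}: it suffices to show $(K \otimes X){\downarrow}_H^G = K{\downarrow}_H^G \otimes X{\downarrow}_H^G \in \mathcal{GP}_H^{\perp}$ for every $H \in \mathfrak{G}$. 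Here $K{\downarrow}_H^G \in \mathcal{GP}_H^{\perp}$ by the easy forward direction of \Cref{corfsub}, so it has finite projective dimension over $kH$ by \Cref{fpdext}; and $X{\downarrow}_H^G$ is Gorenstein projective over $kH$ by \Cref{resgp1}, hence projective over $k$. Tensoring a finite $kH$-projective resolution of $K{\downarrow}_H^G$ with $X{\downarrow}_H^G$ over $k$ and applying $(iii)$ in each degree exhibits $K{\downarrow}_H^G \otimes X{\downarrow}_H^G$ as a $kH$-module of finite projective dimension, which by \Cref{fpdext} again lies in $\mathcal{GP}_H^{\perp}$. This closes the argument.
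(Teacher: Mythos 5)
Your proposal is correct and follows essentially the same route as the paper: items $(i)$--$(iii)$ are dispatched identically, and for the unit axiom both arguments reduce, via \Cref{corfsub}, to showing $(K\otimes X){\downarrow}_H^G \in \mathcal{GP}_H^{\perp}$ for every $H \in \mathfrak{G}$. The only cosmetic difference is in that last step: the paper notes that $\mathcal{G}(k){\downarrow}_H^G \otimes X{\downarrow}_H^G \to k \otimes X{\downarrow}_H^G$ is a trivial fibration by the already-established unit axiom over $H$ (\Cref{basemono}), whereas you unpack the same underlying computation directly by tensoring a finite $kH$-projective resolution of $K{\downarrow}_H^G$ with $X{\downarrow}_H^G$; both are valid.
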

\begin{proof}
\begin{enumerate}[label=(\roman*)]
    \item Since $k$ has finite global dimension, every Gorenstein projective is projective (and hence flat) over $k$. The existence of Gorenstein projective precovers shows the existence of the required surjective maps.
    \item This is \Cref{l2}.
    \item This is shown in \cite[Lemma~4.2]{MSstabcat}.
    \item Let $K$ be the kernel of the map $\mathcal{G}(k) \to k$. We claim that $K \otimes X \in \mathcal{GP}^{\perp}$; this shows that the map $\mathcal{G}(k) \otimes X \to k \otimes X$ is a trivial fibration and in particular is a weak equivalence as desired. 
    \par 
    In order to show that $K \otimes X \in \mathcal{GP}^{\perp}$ we show that $(K \otimes X){\downarrow}_H^G \in \mathcal{GP}^{\perp}_H$ for all $H \in \mathfrak{G}$ and then apply \Cref{corfsub}. Now, we can see that $\mathcal{G}(k){\downarrow}_H^G \to k{\downarrow}_H^G$ acts as a cofibrant replacement for $k{\downarrow}_H^G \cong k$. Therefore, by \Cref{basemono} we have that $\mathcal{G}(k){\downarrow}_H^G \otimes X{\downarrow}_H^G \to k \otimes X{\downarrow}_H^G$ is a weak equivalence. Since it is also a fibration, it follows that it is a trivial fibration and so its kernel is in $\mathcal{GP}^{\perp}$. However, the kernel is simply $(K\otimes X){\downarrow}_H^G$.
\end{enumerate}
\end{proof}
\begin{theorem}\label{ttcatth}
    The Gorenstein projective model structure is a monoidal model category for every $\lhg$ group $G$.
\end{theorem}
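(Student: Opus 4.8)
The plan is to deduce this directly from Hovey's criterion \cite[Theorem~7.2]{hovmod}, exactly as in the proof of \Cref{basemono}, now that the necessary input has been upgraded from $\mathfrak{G}$ groups to arbitrary $\lhg$ groups. Recall that \cite[Theorem~7.2]{hovmod}, with hypothesis (a) replaced by the statement following \cite[Lemma~7.3]{hovmod}, reduces both the pushout-product axiom and the unit axiom to four conditions: (i) every Gorenstein projective is flat over $k$ and every $kG$-module is a quotient of a Gorenstein projective; (ii) $\mathcal{GP}$ is closed under $\otimes_k$; (iii) $\mathcal{GP}\otimes_k\mathcal{P}\subseteq\mathcal{P}$; and (iv) $\mathcal{G}(k)\otimes X\to k\otimes X$ is a weak equivalence for every Gorenstein projective $X$. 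These are precisely the four assertions of \Cref{closymmon}, so the theorem follows at once.

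First I would recall from \Cref{ourbigmodel} that in the Gorenstein projective model structure the cofibrant objects are the Gorenstein projectives and the trivially cofibrant objects are the projectives, and that the cotorsion pair $(\mathcal{GP},\mathcal{GP}^{\perp})$ is hereditary and complete (\Cref{induc}); this is what is needed to even apply \cite[Theorem~7.2]{hovmod}. Then conditions (i)--(iii) of \Cref{closymmon} give the pushout-product axiom, making $\otimes$ a left Quillen bifunctor. For the unit axiom, condition (iv) of \Cref{closymmon} is exactly Hovey's unit hypothesis --- here I use the remark preceding \Cref{basemono} that the unit axiom for one (functorial) cofibrant replacement of $k$ implies it for all of them. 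Invoking \cite[Theorem~4.3.2]{hovey2007model} then shows the derived tensor product makes the homotopy category a closed symmetric monoidal category, which together with its triangulated structure is the tensor-triangulated structure claimed in the introduction.

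The substantive work has all been carried out beforehand, so there is no remaining obstacle here; the hard part was \Cref{l2}, the closure of $\mathcal{GP}$ under $\otimes_k$ for $\lhg$ groups, which needed the transfinite induction over Kropholler's hierarchy together with \Cref{resgp1}, the isomorphism $X{\uparrow}_H^G\otimes N\cong(X\otimes N{\downarrow}_H^G){\uparrow}_H^G$, and the description (via \cite[Corollary~2.15]{Saorn2010OnEC} and \cite[Corollary~2.5]{Eprecov}) of objects of $\mathcal{GP}_G$ as retracts of transfinite extensions of induced cogenerators. If I were assembling this from scratch, the one point I would double-check is condition (i): that finite global dimension of $k$ really forces Gorenstein projectives to be flat (indeed projective) over $k$, and that the Gorenstein projective precovers supplied by \Cref{EkTrl} and \Cref{induc} give the required surjections --- but both are recorded in \Cref{closymmon}(i).
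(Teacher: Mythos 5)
Your proposal matches the paper's proof exactly: both reduce the pushout-product and unit axioms to the four conditions of \Cref{closymmon} and apply \cite[Theorem~7.2]{hovmod} just as in \Cref{basemono}, with all the substantive work (notably \Cref{l2}) already done beforehand. No gaps.
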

\begin{proof}
    We have shown the unit axiom in \Cref{closymmon}$(iv)$. To show the pushout-product axiom we wish to apply \cite[Theorem~7.2]{hovmod}, in the same manner as we did in \Cref{basemono}. In fact, in \Cref{closymmon} we have shown that all the required conditions hold.
\end{proof}
In particular, the stable category $\gp(kG)$ is a tensor-triangulated category, where the tensor unit is given by the Gorenstein projective special precover of $k$. 
\section{Compact objects}\label{compasecti}
We have now constructed a well-behaved stable category for a large class of infinite groups, namely for $\textup{LH}\mathfrak{G}$ groups where $\mathfrak{G}$ is the class of groups of finite Gorenstein cohomological dimension. In doing so, we have seen that much of the structure of this category is controlled by the $\mathfrak{G}$-subgroups. 
\par 
We now explore the stable category further and highlight some key differences between the stable category for infinite groups and that of finite groups.
\par
We start off by determining the compact objects; first, we recall the notion of compactness in triangulated categories.
\begin{definition}
    Let $\mathcal{T}$ be a triangulated category with infinite coproducts. An object $M \in \mathcal{T}$ is called compact if for any $\bigoplus N_i$ the natural map $\bigoplus \textup{Hom}_{\mathcal{T}}(M,N_i) \to  \textup{Hom}_{\mathcal{T}}(M,\bigoplus N_i)$ is an isomorphism. 
\end{definition}
\begin{definition}\label{tldef}
A full, triangulated subcategory $\mathcal{S}$ is called thick if it is closed under taking direct summands. If $\mathcal{T}$ has coproducts, then $\mathcal{S}$ is called localising if it is thick and it is closed under taking coproducts. 
\end{definition}
The following is well known and may be taken as a definition for what it means for a triangulated category to be compactly generated. 
\begin{lemma}(e.g. \cite[Corollary~3.4.8]{krause_2021})\label{lem:SSlocgen}
Suppose $\mathcal{C}$ is a set of compact objects in a triangulated category $\mathcal{T}$, with suspension $\Sigma$. Then the following are equivalent:
\begin{enumerate}
\item The smallest localising subcategory containing $\mathcal{C}$ is $\mathcal{T}$.
\item For any object $X$ in $\mathcal{T}$ such that $\textup{Hom}_{\mathcal{T}}(\Sigma^nC,X) = 0$ for all $n\in\mathbb{Z}$ and all $C \in \mathcal{C}$ we have $X = 0$.
\end{enumerate}
\end{lemma}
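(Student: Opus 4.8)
The implication $(1) \Rightarrow (2)$ is formal: given $X$ with $\textup{Hom}_{\mathcal{T}}(\Sigma^n C, X) = 0$ for all $n \in \mathbb{Z}$ and $C \in \mathcal{C}$, I would consider the full subcategory $\mathcal{X}$ consisting of all $Y \in \mathcal{T}$ with $\textup{Hom}_{\mathcal{T}}(\Sigma^n Y, X) = 0$ for every $n$. Applying $\textup{Hom}_{\mathcal{T}}(-,X)$ to a triangle gives a long exact sequence, which together with $\textup{Hom}_{\mathcal{T}}(\bigoplus_i Y_i, X) \cong \prod_i \textup{Hom}_{\mathcal{T}}(Y_i, X)$ shows that $\mathcal{X}$ is localising. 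By hypothesis $\mathcal{C} \subseteq \mathcal{X}$, so $(1)$ forces $\mathcal{X} = \mathcal{T}$; taking $Y = X$ yields $\textup{id}_X = 0$, hence $X = 0$.

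For $(2) \Rightarrow (1)$, write $\mathcal{L}$ for the smallest localising subcategory containing $\mathcal{C}$, and let $X \in \mathcal{T}$ be arbitrary. The plan is a cellular tower (Adams resolution) argument. Set $X^{(0)} = X$ and, inductively, given $X^{(j)}$, let $L^{(j)} = \bigoplus \Sigma^n C$, the coproduct indexed over all triples $(C, n, \varphi)$ with $\varphi \colon \Sigma^n C \to X^{(j)}$ (a set-indexed coproduct since $\mathcal{C}$ is a set), equipped with the tautological morphism $L^{(j)} \to X^{(j)}$; complete this to a triangle $L^{(j)} \to X^{(j)} \to X^{(j+1)} \to \Sigma L^{(j)}$. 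By construction $\textup{Hom}_{\mathcal{T}}(\Sigma^n C, L^{(j)}) \to \textup{Hom}_{\mathcal{T}}(\Sigma^n C, X^{(j)})$ is surjective, so exactness forces the transition map $\textup{Hom}_{\mathcal{T}}(\Sigma^n C, X^{(j)}) \to \textup{Hom}_{\mathcal{T}}(\Sigma^n C, X^{(j+1)})$ to vanish for every $n$ and $C$. Let $Y = \mathrm{hocolim}_j X^{(j)}$, the cofibre of $1 - \mathrm{shift}$ on $\bigoplus_j X^{(j)}$. Since each $C$ is compact, $\textup{Hom}_{\mathcal{T}}(\Sigma^n C, Y) \cong \varinjlim_j \textup{Hom}_{\mathcal{T}}(\Sigma^n C, X^{(j)})$, which is zero because all transition maps are zero; by $(2)$, $Y = 0$.

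It remains to place $X$ inside $\mathcal{L}$. Put $F^{(j)} = \mathrm{fib}(X^{(0)} \to X^{(j)})$. An octahedron on the composite $X^{(0)} \to X^{(j)} \to X^{(j+1)}$, together with $\mathrm{fib}(X^{(j)} \to X^{(j+1)}) \cong L^{(j)}$, yields a triangle $F^{(j)} \to F^{(j+1)} \to L^{(j)} \to \Sigma F^{(j)}$. As $F^{(0)} = 0$ and each $L^{(j)}$ is a coproduct of suspensions of objects of $\mathcal{C}$, induction gives $F^{(j)} \in \mathcal{L}$ for all $j$. Applying the (exact) homotopy colimit functor to the triangles $F^{(j)} \to X \to X^{(j)} \to \Sigma F^{(j)}$ and using that the homotopy colimit of the constant tower on $X$ is $X$, we obtain a triangle $\mathrm{hocolim}_j F^{(j)} \to X \to Y \to \Sigma\,\mathrm{hocolim}_j F^{(j)}$. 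Since $Y = 0$ this gives $X \cong \mathrm{hocolim}_j F^{(j)}$, which lies in $\mathcal{L}$ because $\mathcal{L}$, being closed under coproducts and triangles, is closed under homotopy colimits. Thus $X \in \mathcal{L}$; as $X$ was arbitrary, $\mathcal{L} = \mathcal{T}$, which is $(1)$.

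The substantial part is the $(2) \Rightarrow (1)$ direction, and within it the two standard facts about homotopy colimits over $\mathbb{N}$: that $\mathrm{hocolim}$ is an exact functor and that $\textup{Hom}_{\mathcal{T}}(C,-)$ commutes with it when $C$ is compact. Both follow from the Milnor-type exact sequence attached to the telescope triangle, and both are recorded in \cite[Chapter~3]{krause_2021}; accordingly, in the paper I would simply invoke \cite[Corollary~3.4.8]{krause_2021} rather than rehearse this construction.
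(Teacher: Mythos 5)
Your proof is correct and is precisely the standard cellular-tower argument underlying the result the paper cites; the paper itself offers no proof beyond the reference to \cite[Corollary~3.4.8]{krause_2021}, so your argument coincides with the cited source rather than with any argument in the paper. Your closing remark --- that one would simply invoke the reference rather than rehearse the construction --- is exactly what the paper does.
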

To be able to consider the compact objects in the stable category, we need to first know that coproducts exist. 
\begin{lemma}
    Let $G$ be an $\lhg$ group. Then coproducts and products exist in the stable category.
\end{lemma}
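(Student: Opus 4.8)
The plan is to reduce this to the general fact that the homotopy category of any model category inherits all small products and coproducts, and then to exhibit explicit representatives using the two special features of the Gorenstein projective model structure recorded in \Cref{ourbigmodel}: every $kG$-module is fibrant, and the cofibrant objects form the left-hand class $\mathcal{GP}$ of a cotorsion pair. So first I would simply invoke the model-categorical machinery (for which one may cite \cite{hovey2007model}): a coproduct of cofibrant objects, formed in the underlying category, is again cofibrant (being a transfinite composition of pushouts of the cofibrations $\emptyset \to X_i$), and it represents the coproduct in the homotopy category once one replaces it fibrantly; dually for products.

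For coproducts, given a family $\{X_i\}_{i \in I}$, I would replace each $X_i$ by a Gorenstein projective special precover, so we may assume $X_i \in \mathcal{GP}$. The point is that $\bigoplus_{i \in I} X_i$ formed in $\textup{Mod}(kG)$ stays in $\mathcal{GP}$: since $(\mathcal{GP},\mathcal{GP}^{\perp})$ is a cotorsion pair we have $\mathcal{GP} = {}^{\perp}(\mathcal{GP}^{\perp})$, and $\textup{Ext}^1_{kG}(\bigoplus_i X_i, Y) \cong \prod_i \textup{Ext}^1_{kG}(X_i, Y) = 0$ for every $Y \in \mathcal{GP}^{\perp}$. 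Because every module is fibrant, $\bigoplus_i X_i$ is bifibrant, and the standard cylinder-object argument (a coproduct of cylinder objects is a cylinder object, using that coproducts of cofibrations are cofibrations and that the coproduct functor is left Quillen, hence preserves weak equivalences between cofibrant objects by Ken Brown's lemma) gives $\tachom_{kG}(\bigoplus_i X_i, Z) \cong \prod_i \tachom_{kG}(X_i, Z)$ for all $Z$, so $\bigoplus_i X_i$ is the coproduct in $\tacstab(kG)$.

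For products the situation is dual, and one half is immediate here: the product $\prod_{i \in I} X_i$ formed in $\textup{Mod}(kG)$ is automatically fibrant since every module is, so a cofibrant replacement is given by a Gorenstein projective special precover $\mathcal{G}(\prod_i X_i) \to \prod_i X_i$, which exists by \Cref{induc} and \Cref{EkTrl}. The dual computation then identifies this bifibrant object as the product in $\tacstab(kG)$. I do not anticipate a genuine obstacle: the content is entirely formal once the general theory is quoted, and the only input specific to our setting — closure of $\mathcal{GP}$ under coproducts — is immediate from the cotorsion pair, so the proof amounts to assembling these observations. If one wants to avoid even the cylinder-object bookkeeping, it suffices to cite the general statement that the homotopy category of a model category has all small products and coproducts and then name the explicit representatives above.
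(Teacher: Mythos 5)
Your proof is correct, but it takes a more formal route than the paper's. The paper handles coproducts by observing that $\bigoplus \mathcal{G}(X_i)$ is itself a Gorenstein projective special precover of $\bigoplus X_i$ — this needs $\mathcal{GP}^{\perp}$ to be closed under direct sums, which it extracts from the identification $\mathcal{GP}^{\perp} = {}^{\perp}\mathcal{GI}$ of \Cref{eqperpsd} — so that coproducts of the original modules are literally computed in $\textup{Mod}(kG)$; and it handles products by passing through the triangulated equivalence $\gp(kG)\simeq\gi(kG)$ and dualising. You instead invoke the general fact that $\textup{Ho}(\mathcal{M})$ of any model category has all small coproducts and products, represented by $\bigoplus_i QX_i$ and $\prod_i RX_i$; the only non-formal inputs you need are that $\mathcal{GP}$ is closed under coproducts (immediate from $\mathcal{GP}={}^{\perp}(\mathcal{GP}^{\perp})$) and that every module is fibrant. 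This buys you something: your argument never uses $\mathcal{GP}^{\perp} = {}^{\perp}\mathcal{GI}$ and never leaves the projective side, so it would apply verbatim to any hereditary abelian model structure with all objects fibrant. What it gives up is the sharper statement implicit in the paper's proof, namely that the coproduct in $\tacstab(kG)$ of the $X_i$ themselves is represented by the module-theoretic $\bigoplus X_i$ without first passing to cofibrant replacements — a fact the paper wants anyway (it is how one sees that filtered colimits and coproducts interact well in \Cref{agreementofcolim} and the compactness discussion). For the lemma as stated, your argument is complete.
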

\begin{proof}
    It is straightforward to check that $\gp(kG)$ has coproducts, see for example \cite[Proposition~4.5]{MSstabcat}, the proof of which works in this situation without change. 
    \par 
    In fact, consider a collection of modules $X_i$ for $i \in I$, with Gorenstein projective special precovers given by $\mathcal{G}(X_i)$. We know from \Cref{eqperpsd} that $\mathcal{GP}^{\perp} = {}^{\perp}\mathcal{GI}$, from which we infer that $\bigoplus \mathcal{G}(X_i)$ is a Gorenstein projective special precover of $\bigoplus X_i$. Hence, these are isomorphic in the stable category, and so we may calculate the coproducts as usual in the module category. 
    \par 
    The statement about products follows similarly, using that the stable category is equivalent to $\gi(kG)$. 
\end{proof}
We will characterise the compact objects in the homotopy category in terms of certain cohomology functors commuting with filtered colimits of modules. In order to do this, we use the fact that given an hereditary abelian model structure, it is possible to define a generalisation of Tate cohomology using the completeness of the cotorsion pairs. This is explained in detail in \cite{Gillespie2020CanonicalRI}; we briefly recall the main idea in the context of the Gorenstein projective model structure. 
\par 
\sloppy Suppose $M$ is Gorenstein projective. Using the completeness of the cotorsion pair $(\textup{Proj}(kG),\textup{Mod}(kG))$ we form a projective resolution $P_* \to M$. Since Gorenstein projectives are closed under kernels of epimorphisms, e.g. \cite[Theorem~2.5]{HOLM2004167}, we see that each kernel in $P_*$ is also Gorenstein projective. 
\par 
Using the completeness of the cotorsion pair $(\mathcal{GP},\mathcal{GP}^{\perp})$, we can construct a coresolution $M \to Q_*$ such that $Q_i \in \mathcal{GP}^{\perp}$ for each $i$ and each kernel is Gorenstein projective. In fact, since the Gorenstein projectives are closed under extensions, each $Q_i$ is also Gorenstein projective and is hence projective. Therefore, splicing together $P_*$ and $Q_*$ gives us a totally acyclic complex of projectives, with $M$ being the kernel in degree zero. We call this $W_M$.
\par 
For any other $kG$-modules $N$ and $X$, we can then define 
\[\textup{Ext}^n_{\mathcal{GP}}(N,X) = H^n(\textup{Hom}_{kG}(W_{\mathcal{G}(N)},X))\]
where $\mathcal{G}(N)$ denotes the Gorenstein projective special precover of $N$, see \cite[Definition~6.4, Theorem~6.5(3)]{Gillespie2020CanonicalRI}.
\par 
We note that by \cite[Corollary~6.6]{Gillespie2020CanonicalRI}, we have that $\textup{Ext}^0_{\mathcal{GP}}(N,X) = \widetilde{\textup{Hom}}_{kG}(N,X)$. 
\par 
For a group with $\textup{Gcd}_k(G) < \infty$ this has a different description: it is simply the complete cohomology, defined as follows.
\begin{definition}\label{defcompco}
    Let $M$ be a $kG$-module of finite Gorenstein projective dimension. A complete resolution is a totally acyclic complex of projectives $P_*$ which agrees in high dimensions with a projective resolution of $M$. We define the complete cohomology 
    as 
    \[\widehat{\textup{Ext}}_{kG}^n(M,-) := H^n(\textup{Hom}_{kG}(P_*,-))\]
\end{definition}
Note that this definition is only suitable when complete resolutions exist, and it agrees with the definition of complete cohomology we gave in \Cref{firstcomcodef} in this case, as is shown in \cite[Theorem~1.2]{CKcompres}. \par
Since every module has finite Gorenstein projective dimension over a group of finite Gorenstein cohomological dimension, see \Cref{ETThe}, it is easy to see that complete cohomology can be defined for any module over such groups. 
\begin{lemma}\label{thetwoext}
    Suppose $M$ is a module of finite Gorenstein projective dimension. Then 
    \[\textup{Ext}^n_{\mathcal{GP}}(M,-) \cong \widehat{\textup{Ext}}_{kG}^n(M,-)\]
\end{lemma}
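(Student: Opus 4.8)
The plan is to identify the totally acyclic complex $W_{\mathcal{G}(M)}$ appearing in the definition of $\textup{Ext}^*_{\mathcal{GP}}(M,-)$ with a complete resolution of $M$ in the sense of \Cref{defcompco}. First I would recall that, by its construction in \cite{Gillespie2020CanonicalRI}, $W_{\mathcal{G}(M)}$ is a totally acyclic complex of projectives whose non-negative part is a projective resolution of $\mathcal{G}(M)$, the Gorenstein projective special precover of $M$. In particular $W_{\mathcal{G}(M)}$ is itself a complete resolution of the Gorenstein projective module $\mathcal{G}(M)$, so directly from the definitions
\[\textup{Ext}^n_{\mathcal{GP}}(M,X) = H^n(\textup{Hom}_{kG}(W_{\mathcal{G}(M)},X)) = \widehat{\textup{Ext}}^n_{kG}(\mathcal{G}(M),X).\]

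It then remains to compare $\widehat{\textup{Ext}}^*_{kG}(\mathcal{G}(M),-)$ with $\widehat{\textup{Ext}}^*_{kG}(M,-)$. The special precover fits into a short exact sequence $0 \to K \to \mathcal{G}(M) \to M \to 0$ with $K \in \mathcal{GP}^{\perp}$. Since $\mathcal{G}(M)$ is Gorenstein projective and $M$ has finite Gorenstein projective dimension, $K$ also has finite Gorenstein projective dimension; as $\mathcal{GP}^{\perp}$ is thick by \Cref{thicle} and $\mathcal{GP}\cap\mathcal{GP}^{\perp}$ is exactly the class of projectives, a syzygy argument then shows $K$ has finite projective dimension (when $\textup{Gcd}_k(G)<\infty$ one may instead simply quote \Cref{fpdext}). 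Complete cohomology is a cohomological functor in its first variable and vanishes on modules of finite projective dimension, so the long exact sequence associated to $0 \to K \to \mathcal{G}(M) \to M \to 0$ yields a natural isomorphism $\widehat{\textup{Ext}}^n_{kG}(M,X) \cong \widehat{\textup{Ext}}^n_{kG}(\mathcal{G}(M),X)$ for every $n$ and every $X$. Combining the two identifications finishes the argument.

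The point requiring care is the passage between $M$ and $\mathcal{G}(M)$: one needs that $W_{\mathcal{G}(M)}$, built to compute $\textup{Ext}^*_{\mathcal{GP}}$, genuinely functions as a complete resolution (automatic from the construction) and that complete cohomology is insensitive to altering the module by one of finite projective dimension. Both are standard facts about complete resolutions, the second following from $\widehat{\textup{Ext}}^*_{kG}(L,-)=0$ for $\textup{pd}_{kG}(L)<\infty$ together with the long exact sequence, see \cite{CKcompres}; they are the structural inputs that make the identification go through. Alternatively, one could splice a finite projective resolution of $K$ with one of $\mathcal{G}(M)$ via the horseshoe lemma to see that $W_{\mathcal{G}(M)}$ agrees in high degrees with a projective resolution of $M$, hence is literally a complete resolution of $M$, and then invoke uniqueness of complete resolutions up to homotopy — this route trades the long exact sequence for the comparison theorem for complete resolutions.
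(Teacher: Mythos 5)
Your argument is correct, and it runs in the opposite direction to the paper's. The paper starts from a complete resolution of $M$: it splices a totally acyclic coresolution of $\Omega^d(M)$ onto the projective resolution of $M$ in degrees $\geq d$ to get $Q_*$, lifts $Q_{\geq d}\to P_{\geq d}$ to a surjection $Q_*\to P_*$, and observes that the degree-zero kernel $X$ is a special Gorenstein projective precover of $M$ (its kernel having finite projective dimension); thus the single complex $Q_*$ simultaneously computes $\textup{Ext}^n_{\mathcal{GP}}(M,-)$ and, being a complete resolution of $M$, computes $\widehat{\textup{Ext}}^n_{kG}(M,-)$. You instead start from the cofibrant replacement $\mathcal{G}(M)$, note that $W_{\mathcal{G}(M)}$ is by construction a complete resolution of $\mathcal{G}(M)$, and then transfer from $\mathcal{G}(M)$ to $M$ using that $K=\ker(\mathcal{G}(M)\to M)$ has finite projective dimension (your syzygy argument for this, using thickness of $\mathcal{GP}^{\perp}$ and $\mathcal{GP}\cap\mathcal{GP}^{\perp}=\mathcal{P}$, is sound and matches what the paper itself needs) together with the long exact sequence and vanishing of $\widehat{\textup{Ext}}$ on modules of finite projective dimension. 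The trade-off: the paper's construction is self-contained modulo the comparison of complete resolutions, whereas yours leans on the first-variable long exact sequence for complete cohomology, which for the complete-resolution definition of \Cref{defcompco} is most cleanly justified via the agreement with the Benson--Carlson definition in \cite[Theorem~1.2]{CKcompres}, exactly as you indicate. Your second alternative, splicing a finite projective resolution of $K$ into one of $\mathcal{G}(M)$ to exhibit $W_{\mathcal{G}(M)}$ as a complete resolution of $M$ itself, is essentially the paper's construction read backwards, so the two proofs are genuinely equivalent in content even though the bookkeeping differs.
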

\begin{proof}
    Suppose $M$ has projective resolution $P_*$, and so $\Omega^d(M)$ is Gorenstein projective, where $d = \textup{Gpd}_{kG}(M)$. As above, we can build a projective coresolution for $\Omega^d(M)$ which is totally acyclic and we splice this together with the projective resolution of $M$ in degrees $\geq d$. We call this complex $Q_*$. 
\par 
Using total acyclicity of $Q_*$ we can lift the map $Q_{\geq d} \to P_{\geq d}$ to a map $Q_* \to P_*$, which we may assume to be surjective. Taking the kernel in degree zero then gives a surjective map $X \to M$ where $X$ is Gorenstein projective. The kernel of this map has finite projective dimension and hence this is a special Gorenstein projective precover of $M$; that is, we have constructed an explicit cofibrant replacement in the model structure. We then see that $\textup{Ext}^n_{\mathcal{GP}}(M,X)$ is, by definition, the cohomology of the cochain complex $\textup{Hom}_{kG}(Q_*,X)$, i.e. the complete cohomology.
\end{proof}
In particular, if $M$ is Gorenstein projective then we have the following isomorphisms. 
\begin{equation}\label{gpla1}
    \textup{Ext}^n_{\mathcal{GP}}(M,-) \cong \textup{Ext}^n_{kG}(M,-) \textup{ for } n \geq 1 
\end{equation}
and 
\begin{equation}\label{gpla2}
    \textup{Ext}^0_{\mathcal{GP}}(M,-) \cong \underline{\textup{Hom}}_{kG}(M,-)
\end{equation}
\begin{definition}
    Let $M$ be a $kG$-module. We say $M$ is $\mathcal{GP}$-finitary if, for any filtered colimit of modules $\varinjlim N_i$, the natural map 
    \[\varinjlim \textup{Ext}^n_{\mathcal{GP}}(M,N_i) \to \textup{Ext}^n_{\mathcal{GP}}(M,\varinjlim N_i)\]
    is an isomorphism.
    \par 
    Following \cite{Hamfin}, we say that $M$ is completely finitary if the natural map 
    \[\varinjlim \widehat{\textup{Ext}}^n_{kG}(M,N_i) \to \widehat{\textup{Ext}}^n_{kG}(M,\varinjlim N_i)\]
    is an isomorphism.
\end{definition}
Note that we mean filtered colimits of modules in the usual module category. If $M$ has finite Gorenstein projective dimension, then being $\mathcal{GP}$-finitary is the same as being completely finitary.
\par
The above definition is a generalisation of the notion of being $\textup{FP}_{\infty}$, as illustrated by the following result of Brown. 
\begin{proposition}\label{extcrit}\cite[Corollary]{Brown1975}
The following are equivalent for a $kG$-module $M$. 
\begin{enumerate}
\item $M$ is of type $\textup{FP}_{\infty}$
\item The functors $\textup{Ext}^n_{kG}(M,-)$ commute with directed colimits for all $n \geq 0$
\end{enumerate}
\end{proposition}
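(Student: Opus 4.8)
The plan is to prove the two implications separately; $(1)\Rightarrow(2)$ is formal, while $(2)\Rightarrow(1)$ requires building a resolution by finitely generated projectives by induction.

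For $(1)\Rightarrow(2)$ I would take a projective resolution $P_*\to M$ with every $P_i$ finitely generated, use that $\textup{Hom}_{kG}(P,-)$ commutes with filtered colimits whenever $P$ is a finitely generated projective (reducing, via a splitting $P\oplus P'\cong(kG)^n$, to the forgetful functor $\textup{Hom}_{kG}(kG,-)$ and to finite direct sums), and then invoke exactness of filtered colimits in $\textup{Mod}(kG)$ to pass to cohomology: $\textup{Ext}^n_{kG}(M,-)=H^n(\textup{Hom}_{kG}(P_*,-))$ then commutes with filtered colimits in each degree.

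For $(2)\Rightarrow(1)$, the case $n=0$ already says $\textup{Hom}_{kG}(M,-)$ commutes with filtered colimits, which is the standard characterisation of $M$ being finitely presented; in particular $M$ is finitely generated. I would then construct the resolution inductively: starting from $K_0=M$, and assuming $K_i$ finitely presented, choose a finitely generated free module $P_i$ with a surjection $P_i\twoheadrightarrow K_i$ and set $K_{i+1}=\ker(P_i\to K_i)$, which is finitely generated since $K_i$ is finitely presented and $P_i$ is finitely generated. The long exact sequence applied to $0\to K_{i+1}\to P_i\to K_i\to 0$ gives the exact sequence
\[\textup{Hom}_{kG}(P_i,N)\to\textup{Hom}_{kG}(K_{i+1},N)\to\textup{Ext}^1_{kG}(K_i,N)\to 0,\]
natural in $N$, and iterated dimension shifting along the part of the resolution already built yields a natural isomorphism $\textup{Ext}^1_{kG}(K_i,-)\cong\textup{Ext}^{i+1}_{kG}(M,-)$ (legitimate because each $P_j$ is projective). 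Since the right-hand functor commutes with filtered colimits by hypothesis and $\textup{Hom}_{kG}(P_i,-)$ does by the previous step, exactness of filtered colimits forces $\textup{Hom}_{kG}(K_{i+1},-)$ to commute with filtered colimits, i.e. $K_{i+1}$ is finitely presented. This closes the induction and produces a resolution of $M$ by finitely generated free modules, so $M$ is of type $\textup{FP}_\infty$.

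The main obstacle is the bookkeeping in $(2)\Rightarrow(1)$: one must arrange the dimension-shifting isomorphisms to be natural in the second variable so that the property ``commutes with filtered colimits'' really transfers through them, and track that each syzygy stays finitely presented at every stage. Everything else is formal, given that finitely generated projectives are finitely presented objects of $\textup{Mod}(kG)$ and that filtered colimits are exact there; I would also remark that ``filtered'' and ``directed'' colimits may be used interchangeably, since a functor on $\textup{Mod}(kG)$ preserves one class precisely when it preserves the other.
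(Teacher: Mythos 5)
Your argument is correct. The paper offers no proof of this statement -- it is quoted directly from Brown's \emph{Homological criteria for finiteness} -- so there is nothing to compare against except the cited source, and what you give is precisely the standard dimension-shifting proof of Brown's criterion. One small bookkeeping point in $(2)\Rightarrow(1)$: the three-term exact sequence $\textup{Hom}_{kG}(P_i,N)\to\textup{Hom}_{kG}(K_{i+1},N)\to\textup{Ext}^1_{kG}(K_i,N)\to 0$ only yields \emph{surjectivity} of the comparison map $\varinjlim\textup{Hom}_{kG}(K_{i+1},N_j)\to\textup{Hom}_{kG}(K_{i+1},\varinjlim N_j)$; for injectivity you should either extend the sequence one more term to the left to $\textup{Hom}_{kG}(K_i,N)$ (available, since $K_i$ is finitely presented by the inductive hypothesis) and apply the five lemma, or simply observe that the comparison map is injective for any finitely generated module -- and you have already noted that $K_{i+1}$ is finitely generated. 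With that remark made explicit the proof is complete.
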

By \cite[Corollary~1.5]{ARcats}, a functor commutes with directed colimits if and only if it commutes with filtered colimits, and so we will tend to refer to filtered colimits from now on. 
\par 
We have the following important examples of $\mathcal{GP}$-finitary modules. 
\begin{lemma}\label{fhsao}
    Suppose $M$ is an $\textup{FP}_{\infty}$ Gorenstein projective module. Then $M$ is $\mathcal{GP}$-finitary.
\end{lemma}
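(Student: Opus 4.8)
The plan is to compute $\textup{Ext}^n_{\mathcal{GP}}(M,-)$ from a complete resolution of $M$ that is finitely generated in every degree, and then exploit the fact that a complex of functors of the form $\textup{Hom}_{kG}(P,-)$ with $P$ finitely generated projective commutes with filtered colimits, as does the formation of cohomology.

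The key step is the following claim: if $N$ is a finitely generated Gorenstein projective $kG$-module, then $N$ embeds into a finitely generated free module $F$ with $F/N$ Gorenstein projective (and automatically finitely generated). To see this, realise $N$ as the degree-zero kernel of a totally acyclic complex of projectives, so $N\subseteq C_0$ with $C_0/N$ Gorenstein projective; writing $C_0$ as a direct summand of a free module $L=\bigoplus_{i\in I}kG$ gives $L/N\cong (C_0/N)\oplus C_0'$, which is Gorenstein projective. Since $N$ is finitely generated it lies in a finite free direct summand $F$ of $L$, and the short exact sequence $0\to F/N\to L/N\to L/F\to 0$ splits because $L/F$ is free; hence $F/N$ is a summand of $L/N$, so Gorenstein projective. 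Applying this repeatedly to $M$ (which is finitely generated, being $\textup{FP}_\infty$) produces a coresolution $0\to M\to F^0\to F^1\to\cdots$ with each $F^j$ finitely generated free and each cosyzygy $\Omega^{-j}(M)$ Gorenstein projective; moreover each $\Omega^{-j}(M)$ is again $\textup{FP}_\infty$, since it occurs in a short exact sequence $0\to\Omega^{-j+1}(M)\to F^j\to\Omega^{-j}(M)\to 0$ whose first two terms are $\textup{FP}_\infty$.

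Next, since $M$ is $\textup{FP}_\infty$ it admits a projective resolution $\cdots\to P_1\to P_0\to M\to 0$ by finitely generated projectives, and all its syzygies are Gorenstein projective (Gorenstein projectives being closed under kernels of epimorphisms). Splicing this resolution with the coresolution above yields a totally acyclic complex $W_M$ of finitely generated projectives with $M$ the kernel in degree zero, which by homotopy-invariance of complete resolutions computes $\textup{Ext}^n_{\mathcal{GP}}(M,X)=H^n(\textup{Hom}_{kG}(W_M,X))$ for all $n\in\mathbb{Z}$ (cf. \Cref{thetwoext} and the surrounding discussion). Now for a filtered colimit $\varinjlim N_i$ each functor $\textup{Hom}_{kG}(-,-)$ applied to a term of $W_M$ commutes with it, because that term is a finite direct sum of copies of $kG$; and filtered colimits in $\textup{Mod}(kG)$ are exact, hence commute with the cohomology of a complex of abelian groups. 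Combining these, $\varinjlim\textup{Ext}^n_{\mathcal{GP}}(M,N_i)\to\textup{Ext}^n_{\mathcal{GP}}(M,\varinjlim N_i)$ is an isomorphism for every $n$, which is exactly the assertion that $M$ is $\mathcal{GP}$-finitary.

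The main obstacle is the finitely generated coresolution step: unlike the projective resolution, which comes for free from the $\textup{FP}_\infty$ hypothesis, one must build finitely generated free cosyzygies by hand, and the crucial point is the splitting trick above, which relies on a finitely generated submodule of a free module lying inside a finite free direct summand. One could alternatively avoid complete resolutions entirely: by \eqref{gpla1} one has $\textup{Ext}^n_{\mathcal{GP}}(M,-)\cong\textup{Ext}^n_{kG}(M,-)$ for $n\geq 1$, which commutes with filtered colimits by Brown's criterion \Cref{extcrit}; and dimension shifting along the cosyzygies, $\textup{Ext}^n_{\mathcal{GP}}(M,-)\cong\textup{Ext}^{n+1}_{\mathcal{GP}}(\Omega^{-1}(M),-)$ (from the long exact sequence attached to $0\to M\to F^0\to\Omega^{-1}(M)\to 0$, using that $\textup{Ext}^*_{\mathcal{GP}}$ vanishes on projectives), reduces each degree $n\leq 0$ to degree $1$ applied to the $\textup{FP}_\infty$ Gorenstein projective module $\Omega^{-(1-n)}(M)$ — the same cosyzygy input being what makes both arguments go through.
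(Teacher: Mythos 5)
Your proposal is correct, and your primary argument takes a genuinely different route from the paper's. The paper treats the three ranges of degrees separately and formally: for $n\geq 1$ it combines the identification $\textup{Ext}^n_{\mathcal{GP}}(M,-)\cong\textup{Ext}^n_{kG}(M,-)$ of \eqref{gpla1} with Brown's criterion (\Cref{extcrit}); for $n=0$ it quotes the fact that $\underline{\textup{Hom}}_{kG}(M,-)$ commutes with filtered colimits for $M$ of type $\textup{FP}_{\infty}$; and for $n<0$ it dimension-shifts along an embedding of $M$ into a finitely generated projective with $\textup{FP}_{\infty}$ Gorenstein projective cokernel, citing the literature for the existence of that embedding. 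You instead build, by hand, a complete resolution of $M$ that is finitely generated in every degree and compute all the functors $\textup{Ext}^n_{\mathcal{GP}}(M,-)$ at once from it, using that $\textup{Hom}_{kG}(P,-)$ commutes with filtered colimits for $P$ finitely generated projective and that filtered colimits are exact. Your splitting trick is a correct re-proof of the cited embedding result, and the spliced complex is indeed totally acyclic since every kernel in it is Gorenstein projective, so each constituent short exact sequence remains exact under $\textup{Hom}_{kG}(-,P)$ for $P$ projective; the only point you pass over lightly is the uniqueness of complete resolutions up to homotopy, which is standard and is what licenses computing $\textup{Ext}^n_{\mathcal{GP}}$ from your particular $W_M$. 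What your route buys is exactly the degreewise finitely generated complete resolution, a byproduct the paper only records later in \Cref{cute} (again by citation); what the paper's route buys is brevity. Your closing ``alternative'' is essentially the paper's own proof, with the small improvement that dimension shifting along the cosyzygies also disposes of the $n=0$ case without the external reference.
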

\begin{proof}
    Using \eqref{gpla1} along with \Cref{extcrit} shows that $\textup{Ext}^n_{\mathcal{GP}}(M,-)$ commutes with filtered colimits for all $n \geq 1$. It is shown in \cite[Lemma~1.2]{em22} that for any $\textup{FP}_{\infty}$ module $M$, $\underline{\textup{Hom}}_{kG}(M,-)$ commutes with filtered colimits; from \eqref{gpla2} we see that $\textup{Ext}^0_{\mathcal{GP}}(M,-) $ then also commutes with filtered colimits. 
    \par 
    We are left to show that $\textup{Ext}^n_{\mathcal{GP}}(M,-)$ commutes with filtered colimits for $n < 0$. However, since $M$ is $\textup{FP}_{\infty}$ and Gorenstein projective we may embed $M$ in a finitely generated projective with Gorenstein projective cokernel cf. \cite[Theorem~4.2(iv)]{ADTgor}. That is, we have a short exact sequence $0 \to M \to P \to M' \to 0$ with $M'$ also an $\textup{FP}_{\infty}$ Gorenstein projective. There is a long exact sequence in cohomology for our generalised Tate cohomology functors as shown in \cite[Theorem~6.5]{Gillespie2020CanonicalRI}. Furthermore, these functors vanish on trivial objects. It follows that $\textup{Ext}^{-1}_{\mathcal{GP}}(M,-) \cong \textup{Ext}^0_{\mathcal{GP}}(M',-)$, which we have shown commutes with filtered colimits. A similar argument shows that in fact $\textup{Ext}^n_{\mathcal{GP}}(M,-)$ commutes with filtered colimits for all $n < 0$. 
\end{proof}
In order to identify the compact objects we will use a deep theorem of Saor\'in, Št'ovíček, and Virili \cite[Corollary~6.10]{SAORINSTOV}. For $\textup{LH}\mathfrak{G}_{AC}$ groups we could instead proceed by showing that the stable category is compactly generated by $\textup{FP}_{\infty}$ modules and then using that $\mathcal{GP}$-finitary modules form a thick subcategory. However, we do not know if the stable category will be compactly generated for $\textup{LH}\mathfrak{G}$ groups in general. 
\par 
In \cite{SAORINSTOV}, the authors work in the context of derivators. We refer to \cite[Section~2]{SAORINSTOV} for the necessary background, but we will briefly recall what is necessary for our purposes. Let $\textup{Cat}^{op}$ be the 2-category of small categories with the direction of functors reversed (but not the direct of natural transformations). Ignoring set theoretic issues, we also let $\textup{CAT}$ be the 2-category of all categories. A derivator is a strict 2-functor $\mathbb{D}: \textup{Cat}^{op} \to \textup{CAT}$ satisfying some further axioms. 
\par 
For what we are interested in, the most important example of a derivator is as follows, see \cite[Example~2.15]{SAORINSTOV}. Suppose we have a model category structure on a category $\mathcal{M}$ with weak equivalences $\mathcal{W}$. For any small category $I$ we let $\mathcal{M}^I$ be the category of functors $I \to \mathcal{M}$, and let $\mathcal{W}_I$ be the class of morphisms in $\mathcal{M}^I$ which are pointwise in $\mathcal{W}$. Cisinski \cite{Cisinski2003} shows that we can construct the universal localisation $\mathcal{M}^I[\mathcal{W}_I^{-1}]$ and that $I \mapsto \mathcal{M}^I[\mathcal{W}_I^{-1}]$ in fact defines a derivator. This is always a strong derivator, and will be stable if the model category is stable. 
\par 
In particular, if we let $\mathbbm{1}$ be the unique category with one element and no non-identity morphisms, then $\mathcal{M}^{\mathbbm{1}}[\mathcal{W}_{\mathbbm{1}}^{-1}]$ will be the homotopy category of the model category. 
\par 
Note that if $\mathcal{M}^I$ has, for example, the projective model structure where weak equivalences and fibrations are determined pointwise, then $\mathcal{M}^I[\mathcal{W}_I^{-1}] = \textup{Ho}(\mathcal{M}^I)$.
\par 
We also recall the definition of homotopy colimits. Given a small category $I$, we write $\textup{pt}_I: I \to \mathbbm{1}$ to be the unique functor. The axioms of a derivator ensure that the induced functor $\mathbb{D}(\textup{pt}_I): \mathbb{D}(\mathbbm{1}) \to \mathbb{D}(I)$ has a left adjoint, which is called the homotopy colimit $\textup{hocolim}_I: \mathbb{D}(I) \to \mathbb{D}(\mathbbm{1})$. 
\par 
This differs from the usual colimit, which is left adjoint to the constant diagram functor.
\begin{lemma}\label{agreementofcolim}
    Suppose $\mathcal{GP}^{\perp} = {}^{\perp}\mathcal{GI}$. Let $I$ be a directed set and $X \in \mathbb{D}(I)$. Then in the stable category there is an isomorphism $\textup{hocolim}_IX \simeq \varinjlim X$.
\end{lemma}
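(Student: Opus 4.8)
The strategy is to present $\textup{hocolim}_I$ as the total left derived functor of the ordinary colimit and then to observe that over a directed set the ordinary colimit is already homotopical; the decisive ingredient is that the class $\mathcal{W}:=\mathcal{GP}^{\perp}$ of trivial objects of the Gorenstein projective model structure is closed under direct limits, which is exactly what the hypothesis $\mathcal{GP}^{\perp}={}^{\perp}\mathcal{GI}$ provides via \cite[Lemma~4.1]{stovsarmodel}.

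First I would unwind the homotopy colimit. Since the Gorenstein projective model structure is cofibrantly generated (\Cref{ourbigmodel}), the projective model structure on $\textup{Mod}(kG)^I$, with weak equivalences and fibrations detected pointwise, exists and presents the derivator value $\mathbb{D}(I)=\textup{Ho}(\textup{Mod}(kG)^I)$. Under this identification the functor $\mathbb{D}(\textup{pt}_I)$ is induced by the constant-diagram functor $\textup{Mod}(kG)\to\textup{Mod}(kG)^I$, which, being pointwise, preserves weak equivalences and hence descends directly to homotopy categories; its left adjoint $\varinjlim$ is therefore left Quillen for the projective model structure, so $\textup{hocolim}_I$ is the total left derived functor of $\varinjlim$, computed as $\textup{hocolim}_I X\simeq\varinjlim(QX)$ for a cofibrant replacement $QX\xrightarrow{\sim}X$ in $\textup{Mod}(kG)^I$. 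As $QX\to X$ is in particular a pointwise weak equivalence, it now suffices to show that, for $I$ directed, $\varinjlim\colon\textup{Mod}(kG)^I\to\textup{Mod}(kG)$ sends pointwise weak equivalences to weak equivalences.

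To see this, recall that by hypothesis $\mathcal{W}={}^{\perp}\mathcal{GI}$, which is closed under direct limits by \cite[Lemma~4.1]{stovsarmodel}. Let $f\colon X\to Y$ be a pointwise weak equivalence; applying the functorial factorisation of the model structure objectwise writes $f$ as $X\xrightarrow{j}Z\xrightarrow{p}Y$ with each $j_i$ a trivial cofibration and each $p_i$ a fibration, whence each $p_i$ is a trivial fibration by two-out-of-three. Thus $\textup{Coker}(j_i)$ is projective and $\textup{Ker}(p_i)\in\mathcal{GP}^{\perp}$ by \Cref{mapsrmk}, so both lie in $\mathcal{W}$; and since $I$ is directed the colimit functor is exact, so $\textup{Coker}(\varinjlim j)=\varinjlim\textup{Coker}(j_i)\in\mathcal{W}$ and $\textup{Ker}(\varinjlim p)=\varinjlim\textup{Ker}(p_i)\in\mathcal{W}$. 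By the description of weak equivalences in \Cref{mapsrmk} (a monomorphism with cokernel in $\mathcal{W}$ and an epimorphism with kernel in $\mathcal{W}$ are weak equivalences), both $\varinjlim j$ and $\varinjlim p$, and hence $\varinjlim f=(\varinjlim p)(\varinjlim j)$, are weak equivalences. Consequently $\varinjlim$ descends to $\textup{Ho}(\textup{Mod}(kG)^I)\to\tacstab(kG)$ and agrees with $\textup{hocolim}_I=\mathrm{L}\varinjlim$, which gives $\textup{hocolim}_I X\simeq\varinjlim X$ in $\tacstab(kG)$. The one genuine obstacle is this last step — homotopy-invariance of $\varinjlim$ along directed systems — and within it the only non-formal input is the closure of $\mathcal{GP}^{\perp}$ under direct limits; the rest is bookkeeping with a cofibrantly generated abelian model structure in which every object is fibrant.
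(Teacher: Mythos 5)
Your proposal is correct and follows essentially the same route as the paper: both pass to the projective model structure on $\textup{Mod}(kG)^I$, observe that $\varinjlim_I$ is left Quillen there, and use the hypothesis $\mathcal{GP}^{\perp}={}^{\perp}\mathcal{GI}$ together with \cite[Lemma~4.1]{stovsarmodel} and exactness of directed colimits to see that $\varinjlim_I$ is already homotopical. The only cosmetic difference is the last step: you conclude directly from $QX\to X$ being a pointwise weak equivalence, whereas the paper phrases the conclusion via uniqueness of adjoints on homotopy categories.
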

\begin{proof}
Let $\mathcal{M}$ be the model category we constructed in \Cref{ourbigmodel}. By definition, $\varinjlim_I $ is left adjoint to the constant diagram functor $\mathcal{M}^{\mathbbm{1}} \to \mathcal{M}^I$.
\par 
    Since $I$ is directed, we know from \cite[Theorem~5.1.3]{hovey2007model} that $\mathcal{M}^I$ may be endowed with the projective model structure, where weak equivalences and fibrations are determined levelwise. Furthermore, by \cite[Corollary~5.1.5]{hovey2007model} we know that $\varinjlim_I$ is a left Quillen functor, and hence preserves (trivial) cofibrations. 
    \par 
    We claim that $\varinjlim_I$ also preserves trivial fibrations. Recall from \Cref{mapsrmk} that in our model category, these are surjective maps with kernel in $\mathcal{GP}^{\perp}$. Since $\mathcal{GP}^{\perp} = {}^{\perp}\mathcal{GI}$ we infer from \cite[Lemma~4.1]{stovsarmodel} that $\mathcal{GP}^{\perp}$ is closed under direct limits. Furthermore, we know that taking direct limits is exact. From this, using that trivial fibrations in $\mathcal{M}^I$ are determined levelwise, we conclude that $\varinjlim_I$ preserves trivial fibrations as claimed. 
    \par
    Altogether, this implies that $\varinjlim_I$ preserves weak equivalences, and hence induces a functor $\textup{Ho}(\mathcal{M}^I) \to \textup{Ho}(\mathcal{M})$. The constant diagram functor $\mathcal{M} \to \mathcal{M}^I$ is right Quillen and hence preserves trivial fibrations; Ken Brown's Lemma \cite[Lemma~1.1.12]{hovey2007model} then implies that it preserves weak equivalences as well (since every object in $\mathcal{M}$ is fibrant). 
    \par
    Finally, we see that the induced functor $\varinjlim_I: \textup{Ho}(\mathcal{M}^I) \to \textup{Ho}(\mathcal{M})$ is still left adjoint to the $\textup{Ho}(\mathcal{M}) \to \textup{Ho}(\mathcal{M}^I)$; this follows just as \cite[Lemma~1.3.10]{hovey2007model} since we have shown that our functors are their own derived functors. However, this left adjoint is by definition the homotopy colimit; uniqueness of adjoints now proves the statement.
\end{proof}
Before we state the key theorem, we recall some terminology from \cite{SAORINSTOV}. 
\begin{definition}
    We say an object $X \in \mathbb{D}(\mathbbm{1})$ is intrinsically homotopically finitely presented if for every directed set $I$, the natural map 
    \[\varinjlim_I \mathbb{D}(I)(X,\mathscr{X}_i) \to \mathbb{D}(\mathbbm{1})(X, \textup{hocolim}_I\mathscr{X})\]
    is an isomorphism for each $\mathscr{X} \in \mathbb{D}(I)$.
\end{definition}
\begin{theorem}\label{idcomps}
    Suppose $\mathcal{GP}^{\perp} = {}^{\perp}\mathcal{GI}$. Then the compact objects in $\tacstab(kG)$ are the $\mathcal{GP}$-finitary modules. 
\end{theorem}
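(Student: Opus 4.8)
The plan is to deduce the statement by feeding \Cref{agreementofcolim} into the deep theorem of Saor\'in, \v{S}t'ov\'i\v{c}ek, and Virili \cite[Corollary~6.10]{SAORINSTOV}. First I would record the derivator setup: by \Cref{ourbigmodel} the stable category $\tacstab(kG)$ is the homotopy category of the model structure $\mathcal{M}$ on $\textup{Mod}(kG)$, and this model structure is stable since its homotopy category $\gp(kG)$ is triangulated. Hence, by Cisinski's construction recalled above, $\tacstab(kG) = \mathbb{D}(\mathbbm{1})$ is the underlying category of a strong, stable derivator $\mathbb{D}$, and for every directed set $I$ we have $\mathbb{D}(I) = \textup{Ho}(\mathcal{M}^I)$, using the projective model structure on $\mathcal{M}^I$ (which exists by \cite[Theorem~5.1.3]{hovey2007model}). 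The result \cite[Corollary~6.10]{SAORINSTOV} then applies with no generation hypothesis on $\mathbb{D}$ — which is precisely why it is usable for all $\lhg$ groups — and says that $X \in \tacstab(kG)$ is compact if and only if it is intrinsically homotopically finitely presented.

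The second step is to unwind intrinsic homotopy finite presentation in our situation. Since $\mathbb{D}(I) = \textup{Ho}(\mathcal{M}^I)$, any object $\mathscr{X} \in \mathbb{D}(I)$ is (represented by) an honest $I$-shaped diagram of $kG$-modules $(N_i)_{i \in I}$, its value $\mathscr{X}_i$ is the module $N_i$ viewed in the stable category, and by \Cref{agreementofcolim} its homotopy colimit is the ordinary module colimit $\varinjlim_I N_i$. Therefore $X$ is intrinsically homotopically finitely presented exactly when, for every filtered colimit of modules $\varinjlim N_i$, the canonical comparison map $\varinjlim_I \tachom_{kG}(X,N_i) \to \tachom_{kG}(X,\varinjlim_I N_i)$ is an isomorphism; here one also uses that directed and filtered colimits agree \cite[Corollary~1.5]{ARcats}. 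Since $\tachom_{kG}(X,-) = \textup{Ext}^0_{\mathcal{GP}}(X,-)$ by \cite[Corollary~6.6]{Gillespie2020CanonicalRI}, this says precisely that $\textup{Ext}^0_{\mathcal{GP}}(X,-)$ commutes with filtered colimits of modules.

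To pass from $\textup{Ext}^0_{\mathcal{GP}}$ to all $\textup{Ext}^n_{\mathcal{GP}}$, $n \in \mathbb{Z}$, I would use that the compact objects form a triangulated subcategory, so $X$ is compact if and only if $\Omega^{-n}X$ is compact for every $n$. Applying the first two steps to each $\Omega^{-n}X$ and invoking the dimension-shift isomorphism $\textup{Ext}^n_{\mathcal{GP}}(X,-) \cong \tachom_{kG}(\Omega^{n}X,-)$, which follows from the long exact sequence of \cite[Theorem~6.5]{Gillespie2020CanonicalRI} together with the $n=0$ identification, one concludes that $X$ is compact if and only if $\textup{Ext}^n_{\mathcal{GP}}(X,-)$ commutes with filtered colimits of modules for every $n \in \mathbb{Z}$, i.e.\ if and only if $X$ is $\mathcal{GP}$-finitary. (One may harmlessly replace $X$ by its Gorenstein projective special precover $\mathcal{G}(X)$ throughout, since $\textup{Ext}^n_{\mathcal{GP}}(X,-)$ depends only on $\mathcal{G}(X)$.)

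Almost all of the work is imported: \Cref{agreementofcolim} is what makes homotopy colimits in $\tacstab(kG)$ computable as module-level colimits, and \cite[Corollary~6.10]{SAORINSTOV} is what identifies compact objects with homotopically finitely presented ones without compact generation. The one genuinely delicate point — the main obstacle, such as it is — is the translation in the second paragraph: one must check that the comparison map appearing in the definition of intrinsic homotopy finite presentation is carried, under the identification $\textup{hocolim}_I \simeq \varinjlim_I$ of \Cref{agreementofcolim}, to the naive map $\varinjlim_I \tachom_{kG}(X,N_i) \to \tachom_{kG}(X,\varinjlim_I N_i)$, and that every coherent diagram over a directed set is strict. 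Both hold here precisely because we are working with the projective model structure on $\mathcal{M}^I$, so they amount to a careful but routine compatibility argument rather than a new idea.
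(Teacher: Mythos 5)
Your proposal is correct and follows essentially the same route as the paper: apply \cite[Corollary~6.10]{SAORINSTOV} to identify the compact objects with the intrinsically homotopically finitely presented ones, then use \Cref{agreementofcolim} (together with the agreement of directed and filtered colimits from \cite[Corollary~1.5]{ARcats}) to translate that condition into $\mathcal{GP}$-finitariness. The paper states this translation in one line, whereas you spell out the details — representing coherent diagrams by strict ones, matching the comparison maps, and dimension-shifting via $\Omega^{n}$ to cover all $\textup{Ext}^n_{\mathcal{GP}}$ — but there is no difference in substance.
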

\begin{proof}
    It is shown in \cite[Corollary~6.10]{SAORINSTOV} that the compact objects in $\mathbb{D}(\mathbbm{1})$ are the intrinsically homotopically finitely presented objects. Using \Cref{agreementofcolim} we infer that these are simply the $\mathcal{GP}$-finitary objects. 
    \par Actually, we have technically only shown that the cohomology functors commute with directed colimits; however, as we mentioned after \Cref{extcrit}, it is shown in \cite[Corollary~1.5]{ARcats} that a functor which commutes with directed colimits must also commute with filtered colimits, which allows us to conclude that the statement of the theorem holds.
\end{proof}
\begin{corollary}\label{somecge}
    Suppose that $(\mathcal{GP},\mathcal{GP}^{\perp})$ is of finite type and that $\mathcal{GP}^{\perp} = {}^{\perp}\mathcal{GI}$. Then the stable category is compactly generated. This is true, in particular, for $\textup{LH}\mathfrak{G}_{AC}$ groups. 
\end{corollary}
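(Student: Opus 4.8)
The plan is to exhibit a set of compact objects generating $\tacstab(kG)$ and then invoke \Cref{lem:SSlocgen}; the natural candidate is a cogenerating set for the Gorenstein projective cotorsion pair. The first observation is that any set $\mathcal{S}$ with $\mathcal{S}^{\perp} = \mathcal{GP}^{\perp}$ automatically satisfies $\mathcal{S} \subseteq {}^{\perp}(\mathcal{S}^{\perp}) = {}^{\perp}(\mathcal{GP}^{\perp}) = \mathcal{GP}$, so the hypothesis that $(\mathcal{GP},\mathcal{GP}^{\perp})$ is of finite type produces a \emph{set} $\mathcal{S}$ whose members are simultaneously $\textup{FP}_{\infty}$ and Gorenstein projective. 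By \Cref{fhsao} each such module is $\mathcal{GP}$-finitary, and since $\mathcal{GP}^{\perp} = {}^{\perp}\mathcal{GI}$, \Cref{idcomps} shows these are compact objects of $\tacstab(kG)$.

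Next I would verify criterion (2) of \Cref{lem:SSlocgen} for $\mathcal{C} = \mathcal{S}$. Let $X$ be a $kG$-module with $\tachom_{kG}(\Sigma^{n}S,X) = 0$ for all $n \in \mathbb{Z}$ and all $S \in \mathcal{S}$; the goal is to show $X \cong 0$ in $\tacstab(kG)$, equivalently $X \in \mathcal{GP}^{\perp}$. The instance $n = -1$ already suffices. Using $\tacstab(kG) \simeq \gp(kG)$, under which $\Sigma = \Omega^{-1}$ and $X$ corresponds to its Gorenstein projective special precover $\mathcal{G}(X)$, this instance reads $\underline{\textup{Hom}}_{kG}(\Omega S, \mathcal{G}(X)) = 0$. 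From the sequence $0 \to \Omega S \to P \to S \to 0$ with $P$ finitely generated projective, together with $\textup{Ext}^1_{kG}(S,Q) = 0$ for projective $Q$ (as $S$ is Gorenstein projective), one sees that a map $\Omega S \to \mathcal{G}(X)$ factors through a projective precisely when it extends over $P$; hence $\underline{\textup{Hom}}_{kG}(\Omega S, \mathcal{G}(X)) \cong \textup{Ext}^1_{kG}(S, \mathcal{G}(X))$, and the sequence $0 \to K \to \mathcal{G}(X) \to X \to 0$ with $K \in \mathcal{GP}^{\perp}$ identifies the latter with $\textup{Ext}^1_{kG}(S,X)$ by heredity of the pair. (Alternatively, one can route this through \eqref{gpla1} and the generalised Tate cohomology $\textup{Ext}^{1}_{\mathcal{GP}}$.) Thus the hypothesis forces $\textup{Ext}^1_{kG}(S,X) = 0$ for all $S \in \mathcal{S}$, so $X \in \mathcal{S}^{\perp} = \mathcal{GP}^{\perp}$ is a trivial object. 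By \Cref{lem:SSlocgen}, $\tacstab(kG)$ is compactly generated.

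Finally, for an $\textup{LH}\mathfrak{G}_{AC}$ group the cotorsion pair $(\mathcal{GP},\mathcal{GP}^{\perp})$ is of finite type by \Cref{fintypec} and satisfies $\mathcal{GP}^{\perp} = {}^{\perp}\mathcal{GI}$ by \Cref{eqperpsd}, so the hypotheses are met. The conceptual content is already packaged in \Cref{idcomps} and \Cref{fhsao}; the only point requiring care is the bookkeeping in the second step that identifies morphisms in the stable category with ordinary $\textup{Ext}^{1}$ — namely passing to the Gorenstein projective precover $\mathcal{G}(X)$ and checking that ``factoring through a projective'' coincides with ``extending over $P$''.
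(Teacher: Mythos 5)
Your proposal is correct and follows essentially the same route as the paper: the cogenerating set of $\textup{FP}_{\infty}$ Gorenstein projectives is compact by \Cref{fhsao} and \Cref{idcomps}, orthogonality to it forces a module into $\mathcal{GP}^{\perp}$ by finite type, and \Cref{lem:SSlocgen} concludes. The extra bookkeeping you supply identifying $\tachom_{kG}(\Omega S,X)$ with $\textup{Ext}^1_{kG}(S,X)$ is the step the paper leaves implicit, and it is carried out correctly.
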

\begin{proof}
    Take $M$ such that $\tachom_{kG}(C,M) = 0$ for every $\mathcal{GP}$-finitary module $C$. Let $X$ be an $\textup{FP}_{\infty}$ Gorenstein projective; every syzygy $\Omega^n(X)$ may be taken to be $\textup{FP}_{\infty}$, where we use \cite[Theorem~4.2(vi)]{ADTgor} for negative syzygies. We know that every $\textup{FP}_{\infty}$ Gorenstein projective is $\mathcal{GP}$-finitary by \Cref{fhsao}. Therefore, we know that $\textup{Ext}^1_{kG}(X,M) = 0$.
    \par 
    This is true for any $\textup{FP}_{\infty}$ Gorenstein projective. We now use that the cotorsion pair is of finite type to conclude that $M \in \mathcal{GP}^{\perp}$, and hence $M \cong 0$ in $\tacstab(kG)$. This implies compact generation by \Cref{lem:SSlocgen}. 
    \par 
    The fact that this is true for $\textup{LH}\mathfrak{G}_{AC}$ groups follows from \Cref{fintypec}.
\end{proof}
The following two corollaries of \Cref{idcomps} hold under the assumption that $\mathcal{GP}^{\perp} = {}^{\perp}\mathcal{GI}$.
\begin{corollary}\label{cor:fpgpfpinf}
Suppose $M$ is a finitely presented, Gorenstein projective $kG$-module. Then $M$ is $\textup{FP}_{\infty}$.
\end{corollary}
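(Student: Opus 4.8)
The plan is to show that $M$ is a \emph{compact} object of $\tacstab(kG)$ and then extract the $\textup{FP}_{\infty}$ property from \Cref{idcomps} and Brown's criterion \Cref{extcrit}. Since $M$ is Gorenstein projective, the general identity $\textup{Ext}^0_{\mathcal{GP}}(M,X) = \tachom_{kG}(M,X)$ together with \eqref{gpla2} gives $\tachom_{kG}(M,X) \cong \underline{\textup{Hom}}_{kG}(M,X)$ for every $kG$-module $X$. So the first step is to verify that, for $M$ finitely generated, $\underline{\textup{Hom}}_{kG}(M,-)$ commutes with arbitrary coproducts. Here I would use that for any surjection $q\colon Q \twoheadrightarrow N$ with $Q$ projective, a homomorphism $M \to N$ factors through a projective if and only if it lifts along $q$ (given $M \to P \to N$ with $P$ projective, lift $P \to N$ along $q$ using projectivity of $P$), so that $\underline{\textup{Hom}}_{kG}(M,N) = \textup{coker}\bigl(q_{*}\colon \textup{Hom}_{kG}(M,Q) \to \textup{Hom}_{kG}(M,N)\bigr)$; choosing the surjection onto $\bigoplus_i N_i$ to be a coproduct of such surjections, and using that $\textup{Hom}_{kG}(M,-)$ commutes with coproducts (as $M$ is finitely generated), then identifies $\underline{\textup{Hom}}_{kG}(M,\bigoplus_i N_i)$ with $\bigoplus_i \underline{\textup{Hom}}_{kG}(M,N_i)$.

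Next, because $\mathcal{GP}^{\perp} = {}^{\perp}\mathcal{GI}$, the class $\mathcal{GP}^{\perp}$ is closed under direct limits by \cite[Lemma~4.1]{stovsarmodel}, so coproducts in $\tacstab(kG)$ are computed as in $\textup{Mod}(kG)$. Combining this with the previous step shows that $\tachom_{kG}(M,-)$ sends coproducts in $\tacstab(kG)$ to direct sums of abelian groups, i.e. $M$ is compact. Invoking \Cref{idcomps} — this is where the hypothesis $\mathcal{GP}^{\perp} = {}^{\perp}\mathcal{GI}$ is genuinely needed — we conclude that $M$ is $\mathcal{GP}$-finitary, hence $\textup{Ext}^n_{\mathcal{GP}}(M,-)$ commutes with filtered colimits for every $n$. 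Restricting to $n \geq 1$ and applying \eqref{gpla1} (valid since $M \in \mathcal{GP}$) turns this into the statement that $\textup{Ext}^n_{kG}(M,-)$ commutes with filtered colimits for all $n \geq 1$. For $n = 0$ we use instead that $M$ is finitely presented, so $\textup{Hom}_{kG}(M,-)$ commutes with filtered colimits. Thus $\textup{Ext}^n_{kG}(M,-)$ commutes with filtered colimits for all $n \geq 0$, and \Cref{extcrit} yields that $M$ is of type $\textup{FP}_{\infty}$.

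I expect the only mildly delicate point to be the commutation of $\underline{\textup{Hom}}_{kG}(M,-)$ with coproducts: since $\underline{\textup{Hom}}$ is not exact this cannot be argued purely formally, but presenting it as the cokernel of an ordinary $\textup{Hom}$-map reduces it to the finitely generated case of the corresponding statement for $\textup{Hom}$. Everything else is an assembly of results already available: \Cref{idcomps}, the relations \eqref{gpla1}--\eqref{gpla2}, closure of $\mathcal{GP}^{\perp}$ under coproducts, and \Cref{extcrit}. Note that finite presentation (rather than mere finite generation) is used only in the very last step, for $n = 0$; this is precisely why the stronger statement of \Cref{cute} requires additional input beyond this corollary.
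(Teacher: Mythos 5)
Your proof is correct and follows essentially the same route as the paper's: establish compactness of $M$ in the stable category, apply \Cref{idcomps} to get that $M$ is $\mathcal{GP}$-finitary, use \eqref{gpla1} for $n\geq 1$ and finite presentation for $n=0$, and conclude via \Cref{extcrit}. The only difference is that where the paper cites \cite[Lemma~1.2]{Eprecov} for the commutation of $\underline{\textup{Hom}}_{kG}(M,-)$ with filtered colimits, you give a correct self-contained argument (via the cokernel presentation of $\underline{\textup{Hom}}$) that it commutes with coproducts, which is all compactness requires.
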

\begin{proof}
    It is shown in \cite[Lemma~1.2]{Eprecov} that $\underline{\textup{Hom}}_{kG}(M,-)$ commutes with filtered colimits for such an $M$. In particular, $M$ is compact in $\gp(kG)$. 
    \par 
    Hence, from \Cref{idcomps} we know that $M$ is $\mathcal{GP}$-finitary. Since $M$ is Gorenstein projective, we can see that $\textup{Ext}^i_{kG}(M,-)$ commutes with filtered colimits for all $i \geq 1$. 
    \par 
    Using the finite presentation of $M$ it is not hard to see that $\textup{Ext}^0_{kG}(M,-)$ also commutes with filtered colimits. We conclude from \Cref{extcrit} that $M$ is $\textup{FP}_{\infty}$. 
\end{proof}
\begin{corollary}\label{cute}
    Suppose $M$ is a finitely generated Gorenstein projective module. Then $M$ is $\textup{FP}_{\infty}$ and has a complete resolution which is finitely generated in each degree.
\end{corollary}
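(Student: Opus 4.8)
The plan is to first establish that $M$ is of type $\textup{FP}_{\infty}$; granting this, the complete resolution which is finitely generated in each degree can be assembled by hand from results already in the paper.

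To prove $M$ is $\textup{FP}_{\infty}$, I would first show that $M$ is a compact object of $\gp(kG)$. Since $M$ is finitely generated, every homomorphism out of $M$ into a coproduct $\bigoplus_{i} N_i$ has finitely generated image and hence factors through a finite sub-coproduct; the same applies to the ideal of homomorphisms factoring through a projective, as these too have finitely generated image. Consequently both $\textup{Hom}_{kG}(M,-)$ and this ideal commute with coproducts, and therefore so does $\underline{\textup{Hom}}_{kG}(M,-)$. Because coproducts in $\gp(kG)$ are computed as in $\textup{Mod}(kG)$ — which relies on $\mathcal{GP}^{\perp} = {}^{\perp}\mathcal{GI}$, valid for $\lhg$ groups by \Cref{eqperpsd} — this is exactly the assertion that $M$ is compact. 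By \Cref{idcomps}, $M$ is then $\mathcal{GP}$-finitary, so using \eqref{gpla1} and \eqref{gpla2} we deduce that $\textup{Ext}^{n}_{kG}(M,-)$ commutes with filtered colimits for all $n \geq 1$ and that $\underline{\textup{Hom}}_{kG}(M,-)$ commutes with filtered colimits.

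Next I would upgrade this to the statement that $M$ is finitely presented. Fix a short exact sequence $0 \to \Omega M \to kG^{m} \to M \to 0$ with $kG^{m}$ finitely generated free; then $\Omega M$ is Gorenstein projective, since $\mathcal{GP}$ is closed under kernels of epimorphisms \cite[Theorem~2.5]{HOLM2004167}. Applying $\textup{Hom}_{kG}(-,N)$ and chasing the right-exact sequence $\textup{Hom}_{kG}(kG^{m},N) \to \textup{Hom}_{kG}(\Omega M, N) \to \textup{Ext}^{1}_{kG}(M,N) \to 0$, using that $\textup{Hom}_{kG}(kG^{m},-)$ commutes with all colimits while $\textup{Ext}^{1}_{kG}(M,-)$ commutes with filtered colimits, a straightforward diagram chase shows that $\varinjlim_{i} \textup{Hom}_{kG}(\Omega M, N_i) \to \textup{Hom}_{kG}(\Omega M, \varinjlim_{i} N_i)$ is surjective for every directed system $(N_i)$. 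Taking $(N_i)$ to be the directed system of finitely generated submodules of $\Omega M$ forces $\Omega M$ to be finitely generated, whence $M$ is finitely presented. Since $M$ is also Gorenstein projective, \Cref{cor:fpgpfpinf} gives that $M$ is $\textup{FP}_{\infty}$; alternatively one may iterate the previous argument over the syzygies $\Omega^{j}M$ and conclude directly via \Cref{extcrit}.

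Finally, to build the complete resolution: being $\textup{FP}_{\infty}$, $M$ has a projective resolution $\cdots \to Q_1 \to Q_0 \to M \to 0$ with each $Q_i$ finitely generated. For the coresolution, recall from the proof of \Cref{fhsao} that by \cite[Theorem~4.2]{ADTgor} an $\textup{FP}_{\infty}$ Gorenstein projective embeds into a finitely generated projective with cokernel again $\textup{FP}_{\infty}$ and Gorenstein projective; iterating yields $0 \to M \to P^{0} \to P^{1} \to \cdots$ with each $P^{i}$ finitely generated projective. Splicing the resolution and coresolution produces an exact complex of finitely generated projectives with $M$ as the kernel in degree zero, and by \Cref{costabil} every exact complex of projectives over an $\lhg$ group is totally acyclic; this is the desired complete resolution. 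The one genuinely delicate point is the passage from ``finitely generated'' to ``finitely presented'': \Cref{idcomps} readily controls the positive-degree (Tate) cohomology and the stable $\textup{Hom}$ functor, but extracting the ordinary $\textup{Ext}^{0} = \textup{Hom}$ information requires invoking finite generation through the syzygy $\Omega M$ as above, after which the remaining steps are routine.
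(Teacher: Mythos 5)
Your proof is correct and its skeleton matches the paper's: finite generation gives compactness in $\gp(kG)$, \Cref{idcomps} gives $\mathcal{GP}$-finitariness, one then upgrades to finite presentation, invokes \Cref{cor:fpgpfpinf} for $\textup{FP}_{\infty}$, and finally produces the complete resolution. The one step where you genuinely diverge is the passage from finitely generated to finitely presented. The paper handles this by citing \cite[Lemma~1.2]{Eprecov}, which exhibits $M$ as a summand of $N \oplus P$ with $N$ finitely presented and $P$ (finitely generated) projective. You instead argue directly: since $M$ is Gorenstein projective and $\mathcal{GP}$-finitary, \eqref{gpla1} gives that $\textup{Ext}^1_{kG}(M,-)$ commutes with filtered colimits, and a four-lemma chase on $\textup{Hom}(kG^m,-) \to \textup{Hom}(\Omega M,-) \to \textup{Ext}^1(M,-) \to 0$ shows the comparison map for $\textup{Hom}(\Omega M,-)$ is surjective on filtered colimits; applied to the system of finitely generated submodules of $\Omega M$, this forces $\Omega M$ to be finitely generated. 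This is valid and has the advantage of being self-contained, trading an external structural lemma for an elementary diagram chase. Your by-hand assembly of the complete resolution is also fine and is essentially the content of the paper's citation of \cite[Theorem~4.2(vi)]{ADTgor}; one small remark is that you do not actually need \Cref{costabil} (whose hypothesis is $G \in \lhg$, slightly stronger than the section's standing assumption $\mathcal{GP}^{\perp} = {}^{\perp}\mathcal{GI}$): every kernel in your spliced complex is Gorenstein projective by construction, and Gorenstein projectives satisfy $\textup{Ext}^{\geq 1}(-,Q) = 0$ for $Q$ projective, so total acyclicity is automatic.
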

\begin{proof}
    Since $M$ is finitely generated we can see that $M$ is compact in $\gp(kG)$; this follows since any map from $M$ to a direct sum must factor through a finite sub-sum by virtue of the finite generation. Hence, by \Cref{idcomps} we know that $M$ is $\mathcal{GP}$-finitary. 
    \par 
    From \cite[Lemma~1.2]{Eprecov} we know that $M$ is a direct summand of $N \oplus P$ where $N$ is finitely presented and $P$ is projective. Since $M$ is finitely generated we may take $P$ to be finitely generated. It follows that $M$ is a summand of a finitely presented module and so is itself finitely presented. From \Cref{cor:fpgpfpinf} we see that $M$ is $\textup{FP}_{\infty}$.
    \par 
    It is then shown in \cite[Theorem~4.2(vi)]{ADTgor} how to construct a complete resolution which is finitely generated in each degree.
\end{proof}
This shows that Gorenstein projective modules have particularly nice properties, which are not shared by all modules as the following shows. 
\begin{example}\label{houghtonex}
    Let $n \geq 1$ and we write $H_n$ to denote Houghton's group. Briefly, this is the group of permutations of $\mathbb{N} \times \{0,\dots, n\}$ which are eventually translations, see \cite[Section~5]{BROWNHOUGHTON} for the precise definition. Brown \cite{BROWNHOUGHTON} has shown that this is of type $\textup{FP}_{n-1}$ but not $\textup{FP}_n$. In particular, the trivial module $\mathbb{Z}$ is finitely generated but not $\textup{FP}_{\infty}$. Note that $H_n$ is in $\textup{H}_1\mathfrak{F}$ and in fact has a finite dimensional model for the classifying space for proper actions, see for example the proof of \cite[Lemma~2.3]{kropfin} for the construction of such a model. 
\end{example}
\Cref{cute} has an interesting consequence for groups with $\textup{glGPD}(kG) < \infty$. Firstly, recall from \cite{BRAVOdcoh} that a ring $R$ is called $n$-coherent if every module $M$ which has a partial projective resolution $P_n \to P_{n-1} \to \dots \to P_0 \to M \to 0$ such that $P_i$ is a finitely generated projective for $0 \leq i \leq n$ is in fact $\textup{FP}_{\infty}$. This generalises the usual notions of noetherian rings, which are $0$-coherent in this language, and coherent rings, which are the $1$-coherent rings. 
\begin{corollary}\label{dcohco}
    Let $G$ be a group with $\textup{glGPD}(kG) < \infty$. Then $kG$ is an $n$-coherent ring, for some $n \geq 0$.
\end{corollary}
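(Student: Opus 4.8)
The plan is to show that $kG$ is $d$-coherent for $d := \textup{glGPD}(kG)$. In outline: a module admitting a length-$d$ partial projective resolution by finitely generated projectives has a finitely generated $d$-th syzygy; finiteness of the global Gorenstein projective dimension forces that syzygy to be Gorenstein projective; \Cref{cute} then upgrades it to $\textup{FP}_\infty$; and a downward induction along the resolution transfers the $\textup{FP}_\infty$ property to the module itself.

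First I would record that $\textup{glGPD}(kG) < \infty$ puts $G$ in $\mathfrak{G} = \textup{H}_0\mathfrak{G} \subseteq \textup{LH}\mathfrak{G}$, so --- together with the standing assumption that $k$ is commutative of finite global dimension --- both \Cref{ETThe} and \Cref{cute} are available, and by \Cref{ETThe} every $kG$-module has Gorenstein projective dimension at most $d$.

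Now take a $kG$-module $M$ with a partial projective resolution $P_d \to P_{d-1} \to \dots \to P_0 \to M \to 0$ in which each $P_i$ is finitely generated projective. Let $N$ be the $d$-th syzygy occurring in this sequence, that is, the image of $P_d \to P_{d-1}$ (or $M$ itself if $d = 0$); being a quotient of a finitely generated module, $N$ is finitely generated. Since $\textup{GPD}(M) \le d$, the characterisation of Gorenstein projective dimension in \cite[Theorem~2.20]{HOLM2004167}, applied to the exact sequence $0 \to N \to P_{d-1} \to \dots \to P_0 \to M \to 0$ whose terms are in particular Gorenstein projective, shows that $N$ is Gorenstein projective. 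Thus $N$ is a finitely generated Gorenstein projective $kG$-module, and \Cref{cute} gives that $N$ is $\textup{FP}_\infty$.

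Finally, I would feed $N$ back into the resolution. The short exact sequences $0 \to \Omega^{i+1} \to P_i \to \Omega^i \to 0$ for $0 \le i \le d-1$, where $\Omega^0 = M$, $\Omega^d = N$, and each $P_i$ is finitely generated projective and hence $\textup{FP}_\infty$, allow one to show, by descending induction from $i = d$ to $i = 0$, that each $\Omega^i$ is $\textup{FP}_\infty$ --- here using only the standard fact that the class of $\textup{FP}_\infty$ modules is closed under the two-out-of-three property for short exact sequences. In particular $M = \Omega^0$ is $\textup{FP}_\infty$, so $kG$ is $d$-coherent. I do not expect a genuine obstacle here; the one point needing care is to invoke \cite[Theorem~2.20]{HOLM2004167} in the form guaranteeing that an \emph{arbitrary} length-$d$ partial projective resolution --- not merely a conveniently constructed one --- has a Gorenstein projective $d$-th syzygy, which is exactly what that result provides.
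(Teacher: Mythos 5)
Your proposal is correct and matches the paper's own argument: pass to the $d$-th syzygy, observe it is finitely generated and (by finiteness of $\textup{glGPD}$) Gorenstein projective, apply \Cref{cute} to get $\textup{FP}_\infty$, and induct back down the resolution. The only difference is that you obtain $d$-coherence where the paper settles for $(d+1)$-coherence, which is immaterial for the statement as claimed.
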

\begin{proof}
    Let $d = \textup{glGPD}(kG)$ and take $n = d + 1$. Take $M$ to be a module with a partial projective resolution $P_n \to P_{n-1} \to \dots \to P_0 \to M \to 0$ such that $P_i$ is a finitely generated projective for $0 \leq i \leq n$. This implies that $\Omega^d(M)$ is a finitely generated Gorenstein projective module. We conclude from \Cref{cute} that there exists a projective resolution of $\Omega^d(M)$ consisting of finitely generated projectives. Altogether, this shows that $M$ is itself $\textup{FP}_{\infty}$.
\end{proof}
Using this characterisation of compact modules we can give an alternative proof of \cite[Corollary~C]{demtalres}. For the following, recall Benson's model structure in \Cref{benstmodsthm}. The cofibrant modules in this model structure are those $M$ such that $M \otimes B$ is projective, where $B$ is the module of bounded functions $G \to k$. 
\begin{proposition}\label{ourcorC}
    Suppose that $G$ is an $\lhf$ group. Then Benson's cofibrant modules coincide with the Gorenstein projective modules. 
\end{proposition}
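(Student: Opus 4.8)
The plan is to prove the two inclusions separately, writing $\mathcal{C}$ for the class of Benson-cofibrant modules, i.e.\ those $M$ with $B\otimes_k M$ projective. Two properties of $B$ from \cite{bensoninf} are used as black boxes: that $B$ is free as a $k$-module (so that $B\otimes_k(-)$ carries projective $kG$-modules to projective $kG$-modules, via the untwisting isomorphism $B\otimes_k kG\cong kG\otimes_k B$ with $G$ acting on the left-hand factor, and passage to summands), and that $B{\downarrow}_{F}^{G}$ has finite projective dimension over $kF$ for every finite subgroup $F\le G$; I also use that $B$ is a commutative $kG$-algebra whose unit $k\to B$ splits as a map of $k$-modules. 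The real engine, though, is \Cref{costabil}: an $\lhf$ group is $\lhg$, so every kernel in an acyclic complex of projective $kG$-modules is Gorenstein projective. This is precisely what lets us bypass \cite[Corollary~C]{demtalres}.

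For $\mathcal{GP}\subseteq\mathcal{C}$: let $M$ be Gorenstein projective, a kernel in a totally acyclic complex of projectives $P_{\ast}$. Since $B$ is $k$-flat, $B\otimes_k P_{\ast}$ is an acyclic complex with all terms projective and $B\otimes_k M$ a kernel in it, so $B\otimes_k M\in\mathcal{GP}$ by \Cref{costabil}. Moreover $M$ is $k$-projective (as $k$ has finite global dimension), so for each finite $F\le G$ the module $(B\otimes_k M){\downarrow}_{F}^{G}\cong B{\downarrow}_{F}^{G}\otimes_k M{\downarrow}_{F}^{G}$ has finite projective dimension over $kF$, that is, lies in $\mathcal{GP}_{F}^{\perp}$; \Cref{corfsubfin} applied with the class $\mathfrak{X}=\mathfrak{F}$ of finite groups (whose hypotheses hold by \Cref{perpequal} and \Cref{allsnf}) then yields $B\otimes_k M\in\mathcal{GP}_{G}^{\perp}$. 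As $\mathcal{GP}\cap\mathcal{GP}^{\perp}$ is the class of projectives (proof of \Cref{ourbigmodel}), $B\otimes_k M$ is projective and $M\in\mathcal{C}$.

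For $\mathcal{C}\subseteq\mathcal{GP}$: let $M\in\mathcal{C}$. Tensoring the $k$-split sequence $0\to k\to B\to\overline{B}\to 0$ and its iterates over $k$ with $M$ and splicing yields the exact complex
\[0\to M\to B\otimes_k M\to B\otimes_k\overline{B}\otimes_k M\to B\otimes_k\overline{B}^{\otimes 2}\otimes_k M\to\cdots,\]
in which $M$ is a kernel. The key point is that each term $B\otimes_k\overline{B}^{\otimes j}\otimes_k M$ is projective, which I would prove by induction on $j$, the case $j=0$ being the hypothesis. Applying $B\otimes_k(-)$ to the $k$-split sequence $0\to\overline{B}^{\otimes j}\otimes_k M\to B\otimes_k\overline{B}^{\otimes j}\otimes_k M\to\overline{B}^{\otimes(j+1)}\otimes_k M\to 0$ produces a monomorphism into $B\otimes_k B\otimes_k\overline{B}^{\otimes j}\otimes_k M$ which inserts the unit of $B$ in the second tensor factor; multiplying the first two $B$-factors is a $kG$-linear retraction of it (the multiplication $B\otimes_k B\to B$ is equivariant for the diagonal action), so the monomorphism splits. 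Hence $B\otimes_k\overline{B}^{\otimes(j+1)}\otimes_k M$ is a summand of $B\otimes_k\bigl(B\otimes_k\overline{B}^{\otimes j}\otimes_k M\bigr)$, which is $B\otimes_k(\text{projective})$ and so projective. Thus $M$ is a kernel in an acyclic complex of projectives, and $M\in\mathcal{GP}$ by \Cref{costabil}.

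The step needing the most care is the projectivity of the bar-complex terms: the sequences $0\to k\to B\to\overline{B}\to 0$ and their tensor powers are only $k$-split, not $kG$-split, so a naive "apply $\textup{Hom}_{kG}(-,L)$" argument would wrongly collapse $\mathcal{C}$ to the projectives; one genuinely needs the $kG$-algebra structure of $B$ for the retraction, together with the $k$-freeness of $B$ to pass "$B\otimes_k(\text{projective})$ is projective". If one prefers to avoid the finite-subgroup reduction in the first inclusion, an alternative is to invoke that $(\mathcal{GP},\mathcal{GP}^{\perp})$ is of finite type for $\lhf$ groups (\Cref{fintypec}, since every $\lhf$ group lies in $\textup{LH}\mathfrak{G}_{AC}$) together with the description of the compact objects and of the $\textup{FP}_{\infty}$ Gorenstein projectives in \Cref{compasecti}, which reduces the identity $\mathcal{C}=\mathcal{GP}$ to its restriction to $\textup{FP}_{\infty}$ modules.
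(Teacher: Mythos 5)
Your proof is correct, and it takes a genuinely different --- and in places more economical --- route than the paper's. For the inclusion of Benson's cofibrants into the Gorenstein projectives, the paper simply cites \cite[Proposition~4.1]{ADTgor}, whereas you re-derive it from the coresolution $0 \to M \to B\otimes_k M \to B\otimes_k\overline{B}\otimes_k M \to\cdots$ together with \Cref{costabil}; your retraction argument via the $kG$-linear multiplication $B\otimes_k B\to B$ is the right way to get projectivity of the terms, and the only cosmetic point is that to literally invoke \Cref{costabil} you should splice this coresolution with a projective resolution of $M$, so that $M$ really is a kernel in a doubly infinite acyclic complex of projectives. For the converse inclusion the difference is more substantial: the paper first treats $\mathcal{GP}$-finitary modules, using compact generation, the vanishing of $\underline{\textup{Hom}}_{kG}(M,M\otimes B)$ and \cite[Proposition~3.5]{bensoninf1} to conclude that $M\otimes B$ has finite projective dimension, and then extends to arbitrary Gorenstein projectives via retracts of transfinite extensions and the Eklof Lemma. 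You instead observe that $B\otimes_k M$ is itself a kernel in an acyclic complex of projectives (since $B$ is $k$-free), hence Gorenstein projective by \Cref{costabil}, while the finite-subgroup criterion \Cref{corfsubfin} --- which the paper also uses, with the same input that $B$ is $kF$-free for finite $F$ --- places it in $\mathcal{GP}^{\perp}$; lying in $\mathcal{GP}\cap\mathcal{GP}^{\perp}$, it is projective. This bypasses compact generation and the transfinite-extension step entirely, at the cost of leaning harder on \Cref{costabil}, which is available since $\lhf\subseteq\lhg$, so there is no circularity.
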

\begin{proof}
    It is known that every module which is cofibrant in Benson's model structure is Gorenstein projective, e.g. \cite[Proposition~4.1]{ADTgor}.
    \par On the other hand, suppose $M$ is a $\mathcal{GP}$-finitary Gorenstein projective. Since $B$ is free over each finite subgroup \cite[Lemma~3.4]{bensoninf1}, we know that $(M \otimes B){\downarrow}_F^G \in \mathcal{GP}_F^{\perp}$ for each finite subgroup $F \leq G$. Then we see from \Cref{corfsubfin} that $M \otimes B \in \mathcal{GP}_G^{\perp}$. 
    \par 
    This implies that $M \otimes B$ is isomorphic to zero in the homotopy category and since $M$ is compact, and the stable category is compactly generated, we know that $\underline{\textup{Hom}}_{kG}(M,M \otimes B) = 0$. Therefore, \cite[Proposition~3.5]{bensoninf1} shows that $M \otimes B$ must have finite projective dimension. Since this holds for any $\mathcal{GP}$-finitary Gorenstein projective, it is true for $\Omega^n(M)$ for all $n \in \mathbb{Z}$. We see that $M \otimes B$ must therefore be projective. 
    \par 
    Now, take an arbitrary Gorenstein projective $N$. We know from \Cref{induc} that the Gorenstein projective cotorsion pair is cogenerated by a collection of $\textup{FP}_{\infty}$ Gorenstein projectives (which are $\mathcal{GP}$-finitary by \Cref{fhsao}). It follows from \cite[Corollary~2.15]{Saorn2010OnEC} that every Gorenstein projective is a retract of a transfinite extension of modules in the cogenerating set. Since Benson's cofibrant modules are closed under retracts, we may assume that $N$ is a transfinite extension of $\mathcal{GP}$-finitary Gorenstein projectives. As $B$ is projective over $k$, we can tensor this transfinite extension to see that $N \otimes B$ has a transfinite extension by modules of the form $M \otimes B$ with $M$ $\mathcal{GP}$-finitary Gorenstein projective. Therefore, it is a transfinite extension of projective modules; the Eklof Lemma \cite[Lemma~1]{EkTrl}  implies that $N \otimes B$ must itself be projective. 
\end{proof}
We have shown that $\textup{FP}_{\infty}$ Gorenstein projectives are compact in the stable category, and so clearly any module with an $\textup{FP}_{\infty}$ Gorenstein projective approximation will also be compact. It is unclear, however, if a general $\textup{FP}_{\infty}$ module admits such an approximation. 
\par However, we can show that the $\textup{FP}_{\infty}$ modules, and more generally the completely finitary modules, are indeed compact in the stable category. Obviously, for groups of finite Gorenstein cohomological dimension, this follows immediately from \Cref{idcomps}, since we know from \Cref{thetwoext} that the completely finitary modules are the $\mathcal{GP}$-finitary modules in this case. 
\begin{proposition}
    Suppose $G$ is an $\textup{LH}\mathfrak{G}$ group and $M$ is a completely finitary $kG$-module. Then $M$ has finite Gorenstein projective dimension. In particular, $M$ is compact in the stable category. 
\end{proposition}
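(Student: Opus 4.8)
The plan is to reduce the statement to the assertion that $\textup{Gpd}_{kG}(M)<\infty$, since the concluding clause is then immediate: once $M$ has finite Gorenstein projective dimension, \Cref{thetwoext} identifies $\compext^{n}_{kG}(M,-)$ with $\textup{Ext}^{n}_{\mathcal{GP}}(M,-)$, so the completely finitary hypothesis says exactly that $M$ is $\mathcal{GP}$-finitary, and hence $M$ is compact in $\tacstab(kG)$ by \Cref{idcomps} (whose standing hypothesis $\mathcal{GP}^{\perp}={}^{\perp}\mathcal{GI}$ holds for $\lhg$ groups by \Cref{eqperpsd}).

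To prove $\textup{Gpd}_{kG}(M)<\infty$ I would run the two-layer induction underlying \Cref{induc}: first a transfinite induction up Kropholler's hierarchy, then an induction on the cardinality of $G$. For $H\in\mathfrak{G}$ there is nothing to prove, since every $kH$-module has finite Gorenstein projective dimension by \Cref{ETThe}; this is the base case. For the successor step, with $G\in\textup{H}_{\alpha}\mathfrak{G}$ acting cellularly on a finite-dimensional contractible complex $X$ whose stabilisers $G_{\sigma}$ lie in $\bigcup_{\beta<\alpha}\textup{H}_{\beta}\mathfrak{G}$, tensoring the cellular chain complex with $M$ (which is flat over $k$) exhibits $M$ as the final term of a resolution of length $\dim X$ by modules $(M{\downarrow}_{G_{\sigma}}^{G}){\uparrow}_{G_{\sigma}}^{G}$; the inductive hypothesis gives $\textup{Gpd}_{kG_{\sigma}}(M{\downarrow}_{G_{\sigma}}^{G})<\infty$, and provided these dimensions are \emph{uniformly} bounded one concludes $\textup{Gpd}_{kG}(M)<\infty$ exactly as in \Cref{resgp1}. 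The cardinality step is handled in the same way, writing $G$ as an ascending union and invoking \cite[Proposition~3.1]{ETgcd} to pass uniformly bounded Gorenstein projective dimensions through the union, precisely as in \Cref{corfsub}.

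The place where the completely finitary hypothesis must be spent — and the main obstacle — is in supplying the two ingredients that the bare induction does not give: that being completely finitary descends to the subgroups $G_{\sigma}$, and that the pointwise finiteness of $\textup{Gpd}_{kG_{\sigma}}(M{\downarrow}_{G_{\sigma}}^{G})$ can be upgraded to a uniform bound. Both are delicate: the Eckmann--Shapiro isomorphism for complete cohomology involves coinduction, which is a right adjoint and so need not commute with the filtered colimits that ``completely finitary'' refers to when the index $[G:G_{\sigma}]$ is infinite. I expect to circumvent this by testing the commutation of $\compext^{n}_{kG}(M,-)$ directly against the colimit $M\cong\varinjlim_{\sigma}(M{\downarrow}_{G_{\sigma}}^{G}){\uparrow}_{G_{\sigma}}^{G}$ and against coproducts $\bigoplus_{d}W_{d}$ with $W_{d}\in\mathcal{GP}^{\perp}$, using that $\mathcal{GP}^{\perp}$ is closed under direct limits and coproducts (\Cref{gpandpgf}, \Cref{thicle}): were $\textup{Gpd}_{kG}(M)$ infinite one could arrange $\textup{Ext}^{d+1}_{kG}(M,W_{d})\neq 0$ for every $d$, and the finitary behaviour of complete cohomology together with the comparison between $\textup{Ext}$ and $\compext$ in high degrees (an equality once $\textup{Gpd}$ is finite) should force the desired uniform vanishing. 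Modulo making this last point precise, the proof is the same bookkeeping as in \Cref{induc} and \Cref{corfsub}.
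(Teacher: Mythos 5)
Your reduction of the final clause to the claim $\textup{Gpd}_{kG}(M)<\infty$ is correct and matches the paper, but the argument you give for that claim has a genuine gap, and you flag it yourself: the two ``delicate'' points are exactly where the completely finitary hypothesis has to be spent, and your proposed mechanism for spending it does not work as described. First, the hierarchy induction needs $M{\downarrow}_{G_\sigma}^G$ to be completely finitary over $kG_\sigma$ before the inductive hypothesis can be applied, and this is not known: as you observe, the relevant adjunction $\compext^*_{kH}(M{\downarrow}_H^G,-)\cong\compext^*_{kG}(M,(-){\Uparrow}_H^G)$ goes through coinduction, which need not commute with the filtered colimits in question when $H$ has infinite index. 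Second, and more seriously, the contradiction you sketch is circular. It is true that if $\textup{Gpd}_{kG}(M)=\infty$ then for each $d$ there is $W_d\in\mathcal{GP}^{\perp}$ with $\textup{Ext}^{d+1}_{kG}(M,W_d)\neq 0$, since $(\mathcal{GP},\mathcal{GP}^{\perp})$ is a cotorsion pair. But the comparison between $\textup{Ext}^n_{kG}(M,-)$ and $\compext^n_{kG}(M,-)$ in high degrees is only available once $M$ is already known to have finite Gorenstein projective dimension, which is the conclusion; for a module of possibly infinite Gorenstein projective dimension the complete cohomology of \Cref{firstcomcodef} (the only sense in which ``completely finitary'' is defined) does not detect the nonvanishing of any individual ordinary $\textup{Ext}$ group, so testing $\compext^0_{kG}(M,-)$ against $\bigoplus_d W_d$ produces no contradiction. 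As written, the hard part of the proof is deferred rather than carried out.

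The paper's proof avoids both obstacles by spending the finitary hypothesis entirely on the coefficient variable rather than on restrictions of $M$. Take the Gorenstein projective special precover $0\to K\to X\to M\to 0$, which exists by \Cref{induc}. By \Cref{corfsub} and \Cref{fpdext}, $K{\downarrow}_H^G$ has finite projective dimension for every $\mathfrak{G}$-subgroup $H\leq G$. Kropholler's finitary lemma \cite[Lemma~3.3]{krop}, applied to the filtered-colimit-preserving functors $\compext^*_{kG}(M,-)$, then yields $\compext^*_{kG}(M,K)=0$; the long exact sequence of the precover gives $\compext^0_{kG}(K,K)\cong\compext^0_{kG}(X,K)$, and the latter vanishes because $X$ is Gorenstein projective and $K\in\mathcal{GP}^{\perp}$. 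By \cite[Theorem~4.2]{krop} this forces $K$ to have finite projective dimension, and splicing a projective resolution of $K$ onto $X\to M\to 0$ gives $\textup{Gpd}_{kG}(M)<\infty$. No descent of the finitary property to subgroups and no uniform bounds over the hierarchy are required. If you want to salvage your approach, the missing ingredient is precisely some substitute for this coefficient-side argument.
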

\begin{proof}
    Consider the Gorenstein projective special precover $0 \to K \to X \to M \to 0$, so that $K \in \mathcal{GP}^{\perp}$ and $X \in \mathcal{GP}$. Applying \Cref{corfsub} we know that $K{\downarrow}_H^G \in \mathcal{GP}^{\perp}$ for each $\mathfrak{G}$-subgroup $H \leq G$. Furthermore, this implies, by \Cref{fpdext}, that $K{\downarrow}_H^G$ has finite projective dimension. Because we have assumed that $M$ is completely finitary, we may apply \cite[Lemma~3.3]{krop} to the functors $\widehat{\textup{Ext}}^*_{kG}(M,-)$ to conclude that $\widehat{\textup{Ext}}^0_{kG}(M,K) = 0$. 
    \par 
    In fact, each positive syzygy of $M$ must also be completely finitary, and so a dimension shifting argument shows that $\widehat{\textup{Ext}}^i_{kG}(M,K) = 0$ for all $i \geq 0$. This implies that $\widehat{\textup{Ext}}^0_{kG}(X,K) \cong \widehat{\textup{Ext}}^0_{kG}(K,K)$. 
    \par 
    However, $X$ is Gorenstein projective and so there exists a short exact sequence $0 \to X \to P \to X' \to 0$ with $P$ projective and $X'$ Gorenstein projective. Therefore, we know that \[\widehat{\textup{Ext}}^0_{kG}(X,K) \cong \widehat{\textup{Ext}}^1_{kG}(X',K) \cong \textup{Ext}^1_{kG}(X',K) = 0\] Here the second isomorphism is using \eqref{gpla1} and the equality is by assumption on $K$. Altogether this shows that $\widehat{\textup{Ext}}^0_{kG}(K,K) = 0$. It is well known that this implies that $K$ has finite projective dimension, see \cite[Theorem~4.2]{krop} for a proof. 
    \par 
    We can splice together a projective resolution of $K$ with the Gorenstein projective special precover $X \to M \to 0$ to show that $M$ has finite Gorenstein projective dimension as claimed. 
    \par 
    The final claim now follows from \Cref{thetwoext} and \Cref{idcomps}. 
\end{proof}
This has the following corollary, which is a sort of weak form of Kropholler and Mislin's theorem \cite{kropmis} that every $\lhf$ group of type $\textup{FP}_{\infty}$ must have a finite dimensional model for the classifying space for proper actions. 
\begin{corollary}
    Let $G$ be an $\textup{LH}\mathfrak{G}$ group of type $\textup{FP}_{\infty}$. Then $G$ has finite Gorenstein cohomological dimension. 
\end{corollary}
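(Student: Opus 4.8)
The plan is to deduce this corollary from the proposition just proved by applying that proposition to the trivial module. If $G$ is of type $\textup{FP}_{\infty}$ then, by definition, the trivial $kG$-module $k$ is of type $\textup{FP}_{\infty}$, so the only thing that needs checking is that an $\textup{FP}_{\infty}$ module is automatically completely finitary. Granting this, the preceding proposition immediately gives that $k$ has finite Gorenstein projective dimension, which is precisely the assertion $\textup{Gcd}_k(G) < \infty$.

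To verify that an $\textup{FP}_{\infty}$ module $M$ is completely finitary, I would argue as follows. Each syzygy $\Omega^{j}(M)$ is again of type $\textup{FP}_{\infty}$ (resolve $M$ by finitely generated projectives), so \cite[Lemma~1.2]{em22} — the same input used in \Cref{fhsao} — shows that $\underline{\textup{Hom}}_{kG}(\Omega^{j}(M),-)$ commutes with filtered colimits for every $j \geq 0$. One then computes complete cohomology (in the Benson--Carlson/Mislin sense, valid for arbitrary modules) by a colimit formula of the shape
\[
\widehat{\textup{Ext}}^{n}_{kG}(M,N) \;\cong\; \varinjlim_{j}\ \underline{\textup{Hom}}_{kG}\!\bigl(\Omega^{n+j}(M),\,\Omega^{j}(N)\bigr),
\]
where the syzygies are taken with a levelwise syzygy functor (for instance, via free modules on underlying sets), so that $\Omega^{j}$ commutes with filtered colimits. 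Since filtered colimits commute with filtered colimits, it follows that $\widehat{\textup{Ext}}^{n}_{kG}(M,-)$ commutes with filtered colimits for every $n$, i.e.\ $M$ is completely finitary. Alternatively, the implication ``$\textup{FP}_{\infty}$ $\Rightarrow$ completely finitary'' can simply be quoted from \cite{Hamfin}.

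The only real obstacle is this last implication, and the subtlety is entirely a matter of which model of complete cohomology one uses: the generalised Tate cohomology of \Cref{defcompco} is defined only once $M$ already has finite Gorenstein projective dimension, which is exactly what is being proved, so one must work with the complete cohomology valid for arbitrary modules, just as in the proof of the preceding proposition. With the above in hand the corollary follows by applying that proposition to $M = k$ and unwinding the definition of $\textup{Gcd}_k(G)$.
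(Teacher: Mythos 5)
Your proposal is correct and is exactly the argument the paper intends: the corollary is stated without proof, and the implicit route is precisely to note that a module of type $\textup{FP}_{\infty}$ is completely finitary (the standard Kropholler/Hamilton fact, using the Benson--Carlson/Mislin form of complete cohomology valid for arbitrary modules) and then apply the preceding proposition to the trivial module $k$, whose finite Gorenstein projective dimension is by definition $\textup{Gcd}_k(G) < \infty$. Your attention to which model of complete cohomology is being used is the right subtlety to flag, and your resolution of it matches how the preceding proposition itself is proved.
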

The following examples show that not every compact object has to be completely finitary in general. 
\begin{example}\label{freabex}
  Let $G$ be free abelian group of countable rank. It is not hard to see using \Cref{corfsubfin} that the stable category is zero, since $G$ is torsion free. We see that trivially $\mathbb{Z}$ is a compact object. If $\mathbb{Z}$ were completely finitary, then it would have finite Gorenstein projective dimension. This is a contradiction, since $\mathbb{Z}$ doesn't have a complete resolution \cite[Example~5.3]{CKcompres}.
  \par 
  If we let $G' = G \ast C_2$ then the stable category is now non-zero. The short exact sequence $\mathbb{Z}{\uparrow}_1^{G'} \to \mathbb{Z}{\uparrow}_G^{G'} \oplus \mathbb{Z}{\uparrow}_{C_2}^{G'} \to \mathbb{Z}$ allows us to conclude that there is a stable isomorphism $\mathbb{Z} \cong \mathbb{Z}{\uparrow}_{C_2}^{G'}$, which is compact. However, as before we see that $\mathbb{Z}$ is not completely finitary.
\end{example}
\subsection{Compactness of the tensor unit}
Another difference between finite and infinite groups is that the tensor unit $k$ doesn't have to be a compact object in the stable category. The following gives an abundance of examples of groups without a compact tensor unit.
\begin{lemma}\label{boundlem}
    Suppose $G$ is an $\textup{H}_1\mathfrak{F}$ group such that $\mathbb{Z}$ is compact in the stable category. Then there is a bound on the orders of the finite subgroups of $G$.
\end{lemma}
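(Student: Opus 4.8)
The plan is to reduce the statement to the vanishing of the rationalised complete cohomology of $G$ in degree zero. Since $G$ is an $\textup{H}_1\mathfrak{F}$ group it acts on a finite dimensional contractible complex with finite stabilisers, and hence admits a finite dimensional model for the classifying space for proper actions; by \cite[Corollary~2.6]{MSstabcat} this forces $\textup{glGPD}_{AC}(\mathbb{Z}G)<\infty$, in particular $\textup{Gcd}_{\mathbb{Z}}(G)<\infty$, so every $\mathbb{Z}G$-module has finite Gorenstein projective dimension. As $G\in\lhg$ we also have $\mathcal{GP}^{\perp}={}^{\perp}\mathcal{GI}$ by \Cref{eqperpsd}. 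Hence \Cref{idcomps} applies, and combining it with \Cref{thetwoext} the hypothesis that $\mathbb{Z}$ is compact in $\tacstab(\mathbb{Z}G)$ becomes equivalent to saying that the complete cohomology functors $\widehat{\textup{Ext}}^{n}_{\mathbb{Z}G}(\mathbb{Z},-)$ commute with filtered colimits of modules for every $n$.

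Write $R:=\widehat{\textup{Ext}}^{0}_{\mathbb{Z}G}(\mathbb{Z},\mathbb{Z})$; this is the degree zero part of the complete cohomology ring of $G$, a ring with identity $1_R$, and for any finite subgroup $F\leq G$ restriction along $F\hookrightarrow G$ is a unital ring homomorphism $R\to\widehat{\textup{Ext}}^{0}_{\mathbb{Z}F}(\mathbb{Z},\mathbb{Z})\cong\mathbb{Z}/|F|\mathbb{Z}$, so that the composite $\mathbb{Z}\to R\to\mathbb{Z}/|F|\mathbb{Z}$ sending $1$ to $1_R$ is the canonical quotient map. Next I would show that $\widehat{\textup{Ext}}^{*}_{\mathbb{Z}G}(\mathbb{Z},\mathbb{Q})=0$: over a finite subgroup $F$ the trivial module $\mathbb{Q}$ is flat over $\mathbb{Z}$ and cohomologically trivial (the Tate cohomology of a finite group is torsion and $\mathbb{Q}$ is uniquely divisible), so if $0\to K\to P\to\mathbb{Q}\to 0$ is exact with $P$ a free $\mathbb{Z}F$-module then $K$ is free as an abelian group and cohomologically trivial, hence $\mathbb{Z}F$-projective; thus $\mathbb{Q}$ has finite projective dimension over $\mathbb{Z}F$, i.e. $\mathbb{Q}{\downarrow}_F^G\in\mathcal{GP}^{\perp}_F$. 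Applying \Cref{corfsubfin} with $\mathfrak{X}$ the class of finite groups gives $\mathbb{Q}\in\mathcal{GP}^{\perp}_G$, so $\mathbb{Q}$ is a trivial object of the Gorenstein projective model structure and therefore $\widehat{\textup{Ext}}^{n}_{\mathbb{Z}G}(\mathbb{Z},\mathbb{Q})=0$ for all $n$.

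For the main step, apply the finitariness of $\widehat{\textup{Ext}}^{0}_{\mathbb{Z}G}(\mathbb{Z},-)$ to the directed system $\mathbb{Z}\xrightarrow{\cdot 2}\mathbb{Z}\xrightarrow{\cdot 3}\mathbb{Z}\xrightarrow{\cdot 4}\cdots$, whose colimit is $\mathbb{Q}$; functoriality identifies the colimit of the induced system with $R\otimes_{\mathbb{Z}}\mathbb{Q}$, so $R\otimes_{\mathbb{Z}}\mathbb{Q}\cong\widehat{\textup{Ext}}^{0}_{\mathbb{Z}G}(\mathbb{Z},\mathbb{Q})=0$ and $R$ is a torsion abelian group. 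A torsion ring with identity is annihilated by the additive order $d$ of $1_R$, which is therefore finite; and for every finite subgroup $F\leq G$ the identity $d\cdot 1_R=0$ maps under the canonical quotient $\mathbb{Z}\twoheadrightarrow R\to\mathbb{Z}/|F|\mathbb{Z}$ to $d\cdot\bar 1=0$, forcing $|F|\mid d$. Hence $d$ bounds the orders of all finite subgroups of $G$. I expect the only delicate point to be the homological bookkeeping around complete cohomology — that it restricts to finite subgroups as a unital ring map identifying $\widehat{\textup{Ext}}^0_{\mathbb{Z}F}(\mathbb{Z},\mathbb{Z})$ with the usual Tate cohomology, and that the identity $\widehat{\textup{Ext}}^*_{\mathbb{Z}G}(\mathbb{Z},\mathbb{Q})=0$ is legitimate — although each individual ingredient is standard.
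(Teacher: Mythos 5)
Your reduction is exactly the paper's: you combine \Cref{thetwoext} and \Cref{idcomps} to translate compactness of $\mathbb{Z}$ in $\tacstab(\mathbb{Z}G)$ into continuity of $\widehat{\textup{Ext}}^*_{\mathbb{Z}G}(\mathbb{Z},-)$ under filtered colimits. At that point the paper simply cites an argument of Kropholler, whereas you reproduce it in full: the vanishing $\widehat{\textup{Ext}}^0_{\mathbb{Z}G}(\mathbb{Z},\mathbb{Q})=0$ (via $\mathbb{Q}{\downarrow}_F^G$ having finite projective dimension over each finite $F$ and \Cref{corfsubfin}), the identification of $\varinjlim(\mathbb{Z}\xrightarrow{\cdot 2}\mathbb{Z}\xrightarrow{\cdot 3}\cdots)$ with $\mathbb{Q}$ forcing $R=\widehat{\textup{Ext}}^0_{\mathbb{Z}G}(\mathbb{Z},\mathbb{Z})$ to be torsion, and the unital restriction maps $R\to\hat{H}^0(F,\mathbb{Z})\cong\mathbb{Z}/|F|\mathbb{Z}$ yielding $|F|\mid d$ where $d$ is the additive order of $1_R$. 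That portion is correct; it is precisely the argument the paper invokes as a black box.

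The one step you should repair is in your first paragraph, where you pass from ``$G$ acts on a finite-dimensional contractible complex with finite stabilisers'' to ``$G$ admits a finite-dimensional model for $\underline{E}G$''. This implication is not automatic: the standard results deducing a finite-dimensional $\underline{E}G$ from such an action assume a bound on the orders of the finite subgroups --- which is exactly what the lemma is trying to prove --- so as written the step is unjustified and dangerously close to circular. Fortunately you never need it, nor do you need $\textup{glGPD}_{AC}(\mathbb{Z}G)<\infty$. Theorem \ref{idcomps} only requires $\mathcal{GP}^{\perp}={}^{\perp}\mathcal{GI}$, which holds since $\textup{H}_1\mathfrak{F}\subseteq\lhf\subseteq\lhg$ and \Cref{eqperpsd} applies; and Lemma \ref{thetwoext} only requires $\mathbb{Z}$ to have finite Gorenstein projective dimension, which follows directly because the augmented cellular chain complex of the given $G$-complex is a finite-length resolution of $\mathbb{Z}$ by direct sums of modules induced from finite subgroups, each of which is Gorenstein projective. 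With that substitution your proof is complete and coincides with the paper's up to your (correct) unpacking of the cited Kropholler argument.
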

\begin{proof}
    We use \Cref{thetwoext} and \Cref{idcomps} to see that under our assumptions the complete cohomology $\widehat{\textup{Ext}}^0_{\mathbb{Z}G}(\mathbb{Z},-)$ commutes with filtered colimits. It is now an argument of Kropholler, see for example \cite[Theorem~2.7(vii)]{kropfin}, that this implies that $G$ has a bound on the order of the finite subgroups.
\end{proof}
\begin{example}
    Consider $\bigast\limits_{n \in \mathbb{N}} C_n$, which acts on a tree with finite stabilisers, and hence is in $\textup{H}_1\mathfrak{F}$, but has no bound on the orders of the finite subgroups. Therefore, $\mathbb{Z}$ is not compact in the stable category.
\end{example}
For groups $G$ such that $\textup{glGPD}_{AC}(kG) < \infty$, we now give a characterisation of when $k$ is compact in terms of complete cohomology; as discussed above this is the cohomology theory defined by the complete cotorsion pair $(\mathcal{GP},\mathcal{GP}^{\perp})$ in this case. 
\par 
In order to do this, we have the following description of compact Gorenstein projectives. This should be compared to the well known fact that for a finite group and a field, the compact objects in the stable module category are exactly the finitely generated modules.
\begin{proposition}\label{thicksubcateescom}
Suppose $k$ is a commutative noetherian ring of finite global dimension and let $\mathfrak{G}_{AC}$ be the class of all groups with $\textup{glGPD}_{AC}(kG) < \infty$. Then for any $\textup{LH}\mathfrak{G}_{AC}$ group $G$, the compact objects in $\gp(kG)$ are those which are summands in $\gp(kG)$ of a finitely generated Gorenstein projective. 
\end{proposition}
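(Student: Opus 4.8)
The plan is to show that the class $\mathcal{D}$ of $kG$-modules that are, as objects of $\gp(kG)$, direct summands of a finitely generated Gorenstein projective module is exactly the class $\gp(kG)^{c}$ of compact objects. This will be done in three steps: $\mathcal{D}\subseteq\gp(kG)^{c}$, then that $\mathcal{D}$ is a thick subcategory of $\gp(kG)$, and finally that $\mathcal{D}$ contains a set of compact generators.

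For the inclusion $\mathcal{D}\subseteq\gp(kG)^{c}$: if $N$ is a finitely generated Gorenstein projective then $N$ is $\textup{FP}_{\infty}$ by \Cref{cute}, hence $\mathcal{GP}$-finitary by \Cref{fhsao}, hence compact by \Cref{idcomps} (applicable since $\mathcal{GP}^{\perp}={}^{\perp}\mathcal{GI}$ for $\lhg$ groups by \Cref{eqperpsd}). As the compact objects form a thick subcategory, every summand of $N$ in $\gp(kG)$ is compact.

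To see that $\mathcal{D}$ is thick, closure under summands is immediate, and closure under the suspension $\Omega^{-1}$ and under $\Omega$ follows from \Cref{cute}: a complete resolution finitely generated in each degree realizes every syzygy of a finitely generated Gorenstein projective as again a finitely generated Gorenstein projective. The delicate point is closure under cones. Given $f\colon M\to N$ in $\gp(kG)$ with $M$ a summand of a finitely generated Gorenstein projective $\widetilde M\cong M\oplus M'$ and $N$ a summand of $\widetilde N\cong N\oplus N'$, form $g:=f\oplus(0\colon M'\to N')\colon\widetilde M\to\widetilde N$, so that $\textup{cone}(g)\cong\textup{cone}(f)\oplus N'\oplus\Omega^{-1}M'$. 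Lift $g$ to an honest module homomorphism and run the Frobenius-category construction of triangles in $\gp(kG)$: pushing $g$ out along a short exact sequence $0\to\widetilde M\to P\to\Omega^{-1}\widetilde M\to 0$ with $P$ a finitely generated projective (available by \Cref{cute}) exhibits $\textup{cone}(g)$ as an extension of $\Omega^{-1}\widetilde M$ by $\widetilde N$, hence as a finitely generated Gorenstein projective because $\mathcal{GP}$ is closed under extensions. Therefore $\textup{cone}(f)$ is a summand of a finitely generated Gorenstein projective, i.e. $\textup{cone}(f)\in\mathcal{D}$.

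Finally, $\mathcal{D}$ contains all $\textup{FP}_{\infty}$ Gorenstein projective modules, and by \Cref{fintypec} together with the argument proving \Cref{somecge} (and closing up under $\Omega^{\pm 1}$) these form a set of compact generators of $\gp(kG)$. By Neeman's theorem that the compact objects of a compactly generated triangulated category coincide with the thick subcategory generated by any set of compact generators (see e.g.\ \cite{krause_2021}), we get $\gp(kG)^{c}\subseteq\mathcal{D}$, which combined with the first step yields the claimed equality. The main obstacle is the cone-closure step: although it is essentially bookkeeping, it is precisely the place where one needs \Cref{cute} in order to keep every module involved finitely generated while performing the triangle construction in the Frobenius category $\mathcal{GP}$.
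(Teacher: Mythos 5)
Your proof is correct and follows essentially the same route as the paper's: both reduce to showing that the stable summands of finitely generated (equivalently, by \Cref{cute}, $\textup{FP}_{\infty}$) Gorenstein projectives form a thick subcategory containing the compact generators supplied by \Cref{fintypec} and \Cref{somecge}, with the delicate cone-closure step handled via the Frobenius-category triangle construction and the finitely generated syzygies provided by \Cref{cute}. The only cosmetic difference is that the paper computes the cone inside the stable category of $\textup{FP}_{\infty}$ Gorenstein projectives after checking that its embedding into $\gp(kG)$ is fully faithful and then cites \cite[Proposition~3.4.15]{krause_2021}, whereas you extend the map by zero to honest finitely generated representatives and compute the cone directly before invoking Neeman's characterisation of the compacts.
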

\begin{proof}
We know from \Cref{somecge} that in this situation the stable category is compactly generated by $\textup{FP}_{\infty}$ Gorenstein projective modules. 
    We claim that the compact objects are summands in $\gp(kG)$ of an $\textup{FP}_{\infty}$ Gorenstein projective. The statement then follows from \Cref{cute}. 
    \par 
    It is shown in \cite[Proposition~3.4.15]{krause_2021} that every compact object is a summand of an extension of objects in the generating set of compact objects. Therefore, it suffices to show that an extension of modules which are isomorphic in $\gp(kG)$ to an $\textup{FP}_{\infty}$ module is itself isomorphic in $\gp(kG)$ to an $\textup{FP}_{\infty}$ module. 
    \par 
    Suppose we have an exact triangle in $\gp(kG)$ of the form $M \to X \to N \to \Omega^{-1}(M)$ with $M$ and $N$ both isomorphic in $\gp(kG)$ to an $\textup{FP}_{\infty}$ Gorenstein projective. By replacing the triangle with an isomorphic triangle, we may assume that $M$ and $N$ are actually $\textup{FP}_{\infty}$. 
    \par
    We let $\mathcal{D}$ be the full subcategory of $\textup{GP}(kG)$ consisting of all $\textup{FP}_{\infty}$ Gorenstein projectives. In particular, this contains all finitely generated projectives and we let $\underline{\mathcal{D}}$ be the quotient by all finitely generated projectives. There is a natural functor $F: \underline{\mathcal{D}} \to \gp(kG)$. Now, we note that if a morphism between two $\textup{FP}_{\infty}$ modules factors through a projective, it must factor through a finitely generated projective. This implies that $F$ is fully faithful. 
    \par 
    Since $M$ is $\textup{FP}_{\infty}$, it follows from \Cref{cute} that we may take $\Omega^{-1}(M)$ to be $\textup{FP}_{\infty}$. This means that the morphism $N \to \Omega^{-1}(M)$ lifts to a morphism in $\underline{\mathcal{D}}$. We complete this, inside $\underline{\mathcal{D}}$, to a triangle $X' \to N \to \Omega^{-1}(M) \to \Omega^{-1}(X')$ and note that $X'$ is isomorphic in $\gp(kG)$ to $X$. Therefore, we are done once we show that $X'$ is $\textup{FP}_{\infty}$.
    \par 
    From the construction of triangles in $\underline{\mathcal{D}}$ (see \cite{happel_1988}), we have a short exact sequence $0 \to N \to \Omega^{-1}(M) \oplus P(N) \to \Omega^{-1}(X') \to 0$, where $P(N)$ is a projective into which $N$ embeds. We clearly may take $P(N)$ to be finitely generated. Using Brown's criterion for being $\textup{FP}_{\infty}$ in \Cref{extcrit}, it is not hard to see that $\Omega^{-1}(X')$ is also $\textup{FP}_{\infty}$. This implies that $X'$ is $\textup{FP}_{\infty}$ as desired.
\end{proof}
\begin{lemma}\label{oooo}
    Suppose $k$ is a commutative ring of finite global dimension and $G$ is a group with $\textup{glGPD}_{AC}(kG) < \infty$. Then every $\mathcal{GP}$-finitary $kG$-module is a summand of a module which has a free resolution which is finitely generated in all high enough degrees.
\end{lemma}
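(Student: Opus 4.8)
The plan is to reduce to the case of a Gorenstein projective module, feed in the classification of the compact objects of $\gp(kG)$, and then transport the conclusion back along a finite resolution. Since $\textup{glGPD}_{AC}(kG) < \infty$ forces $\textup{Gcd}_k(G) < \infty$ we have $\textup{glGPD}(kG) = d < \infty$ by \Cref{ETThe}, and $\mathcal{GP}^{\perp} = {}^{\perp}\mathcal{GI}$ by \Cref{eqperpsd} (as $G$ lies in $\mathfrak{G} \subseteq \textup{LH}\mathfrak{G}$, and also in $\mathfrak{G}_{AC} \subseteq \textup{LH}\mathfrak{G}_{AC}$). Given a $\mathcal{GP}$-finitary module $M$, fix a free resolution $\cdots \to P_1 \to P_0 \to M \to 0$ and let $N = \Omega^d(M)$ be its $d$-th kernel, so $N$ is Gorenstein projective and $0 \to N \to P_{d-1} \to \cdots \to P_0 \to M \to 0$ is exact with the $P_i$ free. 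First I would observe that $N$ is again $\mathcal{GP}$-finitary: the generalised Tate cohomology functors $\textup{Ext}^{\ast}_{\mathcal{GP}}(-,X)$ of \cite[Theorem~6.5]{Gillespie2020CanonicalRI} vanish on projectives in the first variable, so their long exact sequences yield natural isomorphisms $\textup{Ext}^{n}_{\mathcal{GP}}(N,X) \cong \textup{Ext}^{n+d}_{\mathcal{GP}}(M,X)$, and since filtered colimits are exact the $\mathcal{GP}$-finitary property descends from $M$ to $N$.

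Next I would realise $N$ as an honest module summand of a well-resolved Gorenstein projective. By \Cref{idcomps}, $N$ is compact in $\gp(kG)$; by \Cref{thicksubcateescom} together with \Cref{cute}, $N$ is a direct summand in $\gp(kG)$ of an $\textup{FP}_{\infty}$ Gorenstein projective module $N'$, and any $\textup{FP}_{\infty}$ module admits a resolution by finitely generated free modules (take finitely generated free covers inductively, each syzygy being again $\textup{FP}_{\infty}$, cf. \Cref{extcrit}). Unpacking the stable splitting, the composite $N \to N' \to N$ agrees with $\mathrm{id}_N$ up to a map through some projective $P$, whence $N$ is an honest $kG$-module summand of $N' \oplus P$; enlarging $P$ to a free module $\Phi$ having $P$ as a summand, $N$ is an honest summand of $V := N' \oplus \Phi$. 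Now $V$ is Gorenstein projective (a direct sum of a Gorenstein projective and a free module), so a complement $C$ with $V = N \oplus C$ is Gorenstein projective as well; and if $\cdots \to F_1 \to F_0 \to N' \to 0$ is a finitely generated free resolution then $\cdots \to F_1 \to F_0 \oplus \Phi \to V \to 0$, via $F_0 \twoheadrightarrow N'$ on the first summand and the identity on $\Phi$, is a free resolution of $V$ finitely generated in every degree $\geq 1$.

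Finally I would transport this to $M$. Because $C$ is Gorenstein projective it embeds in a free module with Gorenstein projective cokernel, so iterating $d$ times produces an exact sequence $0 \to C \to G^{0} \to \cdots \to G^{d-1} \to C' \to 0$ with each $G^{i}$ free and $C'$ Gorenstein projective; read from the right this is the start of a free resolution of $C'$ whose $d$-th kernel is $C$. Taking the direct sum of this with the chosen resolution of $M$ gives a free resolution of $M \oplus C'$ whose $d$-th kernel is $N \oplus C = V$; splicing in the free resolution of $V$ from the previous step, the result is a free resolution of $M \oplus C'$ with the free modules $P_{j} \oplus G^{d-1-j}$ in degrees $0 \le j \le d-1$, the free module $F_{0} \oplus \Phi$ in degree $d$, and the finitely generated free modules $F_{1}, F_{2}, \ldots$ in all degrees $\geq d+1$. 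Since $M$ is a direct summand of $M \oplus C'$, the lemma follows.

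Taking the deeper inputs (\Cref{idcomps}, \Cref{thicksubcateescom}, \Cref{cute}) as given, the main obstacle is the homological bookkeeping: those results only control $N$ up to stable isomorphism, so the work lies in turning ``summand in $\gp(kG)$'' into an honest module summand and in padding the low-degree part of the resolution so that every term becomes free rather than merely projective, while sacrificing finite generation only in a bounded range of small degrees.
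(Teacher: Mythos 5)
Your argument is correct and follows essentially the same route as the paper's proof: reduce to the Gorenstein projective, $\mathcal{GP}$-finitary syzygy $\Omega^d(M)$, use \Cref{thicksubcateescom} (via \Cref{idcomps}) to realise it as a genuine module summand of an $\textup{FP}_{\infty}$ Gorenstein projective plus a free module, and splice a finitely generated free resolution onto the truncated resolution of $M$ direct-summed with a partial free coresolution of the complement. The only differences are cosmetic: the paper absorbs the stray projectives by an Eilenberg swindle at the end and takes the complement's cosyzygies inside a totally acyclic complex, whereas you pad with free modules at the outset; both produce a resolution of $M \oplus (\text{a cosyzygy of the complement})$ that is finitely generated in all degrees beyond $d$.
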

\begin{proof}
    \sloppy Suppose $M$ is $\mathcal{GP}$-finitary and let $F_* \to M$ be any free resolution. Since $\textup{glGPD}_{AC}(G) < \infty$, for some $d \geq 0$ we know that $\Omega^d(M)$ is Gorenstein projective and it must also be $\mathcal{GP}$-finitary. It follows from \Cref{thicksubcateescom} that $\Omega^d(M)$ is isomorphic in $\gp(kG)$ to a summand of a Gorenstein projective module $X$ which is $\textup{FP}_{\infty}$. Therefore, for some Gorenstein projective $N$ and projectives $P$ and $Q$ there is a genuine $kG$-module isomorphism $\Omega^d(M) \oplus N \oplus Q \cong X \oplus P$ cf. \cite[Lemma~3.1]{MSstabcat}. We let $F'_*$ be a finitely generated free resolution of $X$ and let $F''_*$ be a totally acyclic complex such that $N$ is the kernel in degree $0$. 
    \par 
    We then have the following exact sequence.
    \par 
    \[\begin{tikzcd}
           \dots \arrow{r}& F'_1 \arrow{r} & F'_0 \oplus P \arrow{r}& F_d \oplus F''_0 \oplus Q  \arrow{r} & F_{d-1} \oplus F''_{-1} \arrow{r} & \dots \\ \dots \arrow{r}& F_0 \oplus F''_{-d}\arrow{r} & M \oplus \Omega^{-d}(N)\arrow{r} & 0 
    \end{tikzcd}\]
    Using Eilenberg's swindle we find a free module $F$ such that $F'_0 \oplus P \oplus F \cong F$ (and similarly for $F_d \oplus F''_0 \oplus Q$) and so we may replace these by free modules. Altogether, this gives us a free resolution of $M \oplus \Omega^{-d}(N)$ which is finitely generated in all high enough degrees.
\end{proof}
The following is then a variant of \cite[Theorem~A]{finconCEKT}, where we remove the hypothesis of the group being in a certain hierarchy of groups, although we require that $\textup{glGPD}_{AC}(kG) < \infty$ rather than $\textup{Gcd}_k(G) < \infty$. Note that if $\gp(kG)$ is compactly generated for groups with $\textup{Gcd}_k(G) < \infty$, then the following would hold for such groups. It gives a topological interpretation of the trivial representation being compact in the stable category. 
\begin{theorem}\label{compunth}
    Suppose $G$ is a group with $\textup{glGPD}_{AC}(\mathbb{Z}G) < \infty$. Then the following are equivalent. 
    \begin{enumerate}[label=(\roman*)]
    \item The trivial representation $\mathbb{Z}$ is compact in $\tacstab(\mathbb{Z}G)$
        \item The complete cohomology $\hat{H}^*(G,-)$ commutes with filtered colimits of modules
        \item The group $G$ has an Eilenberg-Maclane space $K(G,1)$ which is dominated by a CW-complex with finitely many $n$-cells for all large enough $n > 0$
        \item $\mathbb{Z}$ is a summand of a module which has a free resolution which is finitely generated in all large enough degrees
    \end{enumerate}
\end{theorem}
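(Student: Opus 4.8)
The plan is to route the whole equivalence through the purely algebraic condition (iv): I would prove (i)$\iff$(ii) and (i)$\iff$(iv) using the machinery of the previous sections, and then (iii)$\iff$(iv) by homotopy-theoretic means, the last being the part that needs real work.

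First I would record the consequences of the hypothesis. Since every Gorenstein AC-projective resolution is in particular a Gorenstein projective one, $\textup{glGPD}_{AC}(\mathbb{Z}G)<\infty$ forces $\textup{glGPD}(\mathbb{Z}G)<\infty$, hence $\textup{Gcd}_{\mathbb{Z}}(G)<\infty$ by \Cref{ETThe}; in particular $G$ is a $\mathfrak{G}$-group, hence an $\textup{LH}\mathfrak{G}$-group, so the model structure of \Cref{ourbigmodel} exists and $\mathcal{GP}^{\perp}={}^{\perp}\mathcal{GI}$ by \Cref{perpequal}, so that \Cref{idcomps} identifies the compact objects of $\tacstab(\mathbb{Z}G)$ with the $\mathcal{GP}$-finitary modules. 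Moreover every $\mathbb{Z}G$-module has finite Gorenstein projective dimension, so \Cref{thetwoext} identifies $\textup{Ext}^{n}_{\mathcal{GP}}(\mathbb{Z},-)$ with the complete cohomology $\hat{H}^{n}(G,-)=\widehat{\textup{Ext}}^{n}_{\mathbb{Z}G}(\mathbb{Z},-)$, so that ``$\mathcal{GP}$-finitary'' and ``completely finitary'' coincide for $\mathbb{Z}$. Chaining these remarks, $\mathbb{Z}$ is compact in $\tacstab(\mathbb{Z}G)$ iff $\mathbb{Z}$ is $\mathcal{GP}$-finitary iff $\hat{H}^{*}(G,-)$ commutes with filtered colimits of modules; this is (i)$\iff$(ii).

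For (i)$\Rightarrow$(iv): if $\mathbb{Z}$ is compact it is $\mathcal{GP}$-finitary, and \Cref{oooo} says precisely that such a module is a summand of one admitting a free resolution finitely generated in all large enough degrees. For (iv)$\Rightarrow$(i): write $\mathbb{Z}$ as a summand of $M$ with a free resolution $F_{*}\to M$ such that $F_{n}$ is finitely generated for $n\geq N$. Then the truncation $\cdots\to F_{N+1}\to F_{N}\to\Omega^{N}(M)\to 0$ is a resolution of $\Omega^{N}(M)$ by finitely generated free modules, so $\Omega^{N}(M)$ is of type $\textup{FP}_{\infty}$; since $\textup{FP}_{\infty}$ passes to direct summands and $\Omega^{N}(\mathbb{Z})$ is a summand of $\Omega^{N}(M)$, also $\Omega^{N}(\mathbb{Z})$ is $\textup{FP}_{\infty}$. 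Enlarging $N$ so that $N\geq\textup{Gcd}_{\mathbb{Z}}(G)$ makes $\Omega^{N}(\mathbb{Z})$ an $\textup{FP}_{\infty}$ Gorenstein projective, hence $\mathcal{GP}$-finitary by \Cref{fhsao} and therefore compact by \Cref{idcomps}. Because projective modules are zero in $\tacstab(\mathbb{Z}G)$, the short exact sequences $0\to\Omega^{j+1}(\mathbb{Z})\to F_{j}\to\Omega^{j}(\mathbb{Z})\to 0$ give an isomorphism between $\mathbb{Z}$ and a suspension of $\Omega^{N}(\mathbb{Z})$ in the triangulated category $\tacstab(\mathbb{Z}G)$, and since suspension is an autoequivalence, $\mathbb{Z}$ is compact as well.

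It remains to prove (iii)$\iff$(iv), which is the homotopy-theoretic core and which I expect to be the main obstacle. This equivalence does not involve the Gorenstein hypotheses at all: it is the translation between an eventually finitely generated free $\mathbb{Z}G$-resolution of a module containing the trivial module as a summand and a CW-model for $K(G,1)$ that is finitely dominated in all high dimensions, and it is proved as in \cite[Theorem~A]{finconCEKT}. In outline, for (iii)$\Rightarrow$(iv) one starts from a domination $K(G,1)\xrightarrow{i}L\xrightarrow{r}K(G,1)$ with $ri\simeq\textup{id}$ and $L$ having finitely many $n$-cells for $n\geq N$; then $\pi_{1}(r)$ is split surjective, and passing to the $G$-cover $\bar L\to L$ corresponding to $\ker\pi_{1}(r)$, together with $G$-equivariant lifts of $i$, $r$ and of the homotopy $ri\simeq\textup{id}$, exhibits the free resolution $C_{*}(\widetilde{K(G,1)})$ of $\mathbb{Z}$ as a chain-homotopy retract of the complex $C_{*}(\bar L)$ of free $\mathbb{Z}G$-modules, which is finitely generated in degrees $\geq N$; splitting this retract and extracting syzygies yields (iv). For (iv)$\Rightarrow$(iii) one runs the classical Wall-type cell-attachment construction from the eventually finitely generated free resolution of $M$ to build a CW-model for $K(G,1)$ that is, after accounting for the fact that $\mathbb{Z}$ is merely a summand of $M$, finitely dominated in each dimension $\geq N$. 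The delicate point throughout is exactly this bookkeeping of domination versus homotopy equivalence, i.e. keeping the retract/summand under control while realising the algebra geometrically.
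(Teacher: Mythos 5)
Your overall architecture is close to the paper's: (i)$\iff$(ii) via \Cref{idcomps} and \Cref{thetwoext}, (i)$\Rightarrow$(iv) via \Cref{oooo}, and the topological input outsourced to \cite[Theorem~3.9]{finconCEKT}. (The paper routes the topology slightly differently, proving (ii)$\Rightarrow$(iii) via \Cref{oooo} plus \cite{finconCEKT} and (iii)$\Rightarrow$(ii) directly from \cite[Proposition~2.24]{Hamfin}, whereas you propose (iii)$\iff$(iv); both decompositions are workable.) However, there is a genuine error in your proof of (iv)$\Rightarrow$(i).

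You assert that $\Omega^{N}(\mathbb{Z})$ is a direct summand of $\Omega^{N}(M)$ and conclude, since $\textup{FP}_{\infty}$ passes to summands, that $\Omega^{N}(\mathbb{Z})$ is $\textup{FP}_{\infty}$. Neither claim survives scrutiny. Writing $M=\mathbb{Z}\oplus M'$, the given resolution $F_{*}\to M$ need not split as a sum of resolutions, so Schanuel only gives $\Omega^{N}(\mathbb{Z})\oplus\Omega^{N}(M')\oplus Q\cong\Omega^{N}(M)\oplus Q'$ where $Q,Q'$ involve the terms $F_{0},\dots,F_{N-1}$, which condition (iv) does \emph{not} require to be finitely generated; thus $\Omega^{N}(\mathbb{Z})$ is only a summand of $\Omega^{N}(M)\oplus(\text{possibly infinitely generated free})$, and no $\textup{FP}_{\infty}$ conclusion follows. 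Indeed the conclusion is simply false in general: for $G=\mathbb{Q}^{+}\rtimes C_{2}$ (see \Cref{cdac}) all of (i)--(iv) hold, yet $G$ is not finitely generated, so $\mathbb{Z}$ is not even finitely presented over $\mathbb{Z}G$ and no syzygy $\Omega^{N}(\mathbb{Z})$ can be $\textup{FP}_{\infty}$. The repair is to stay in the triangulated category: $\Omega^{N}(M)$ \emph{is} $\textup{FP}_{\infty}$ Gorenstein projective (for $N$ at least the maximum of $\textup{glGPD}(\mathbb{Z}G)$ and the degree past which $F_{*}$ is finitely generated), hence compact by \Cref{fhsao} and \Cref{idcomps}; since suspension is an autoequivalence, $M\cong\Sigma^{N}\Omega^{N}(M)$ is compact; and since the compact objects form a thick subcategory, the stable summand $\mathbb{Z}$ of $M$ is compact. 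Your detour through $\Omega^{N}(\mathbb{Z})$ should be deleted. Separately, your sketch of (iii)$\Rightarrow$(iv) via chain-homotopy retracts of $C_{*}(\bar L)$ needs the caveat that $\bar L$ is not aspherical, so $C_{*}(\bar L)$ is not a resolution; this is exactly why the paper instead quotes \cite[Proposition~2.24]{Hamfin} to land on condition (ii) rather than (iv).
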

\begin{proof}
    The equivalence of $(i)$ and $(ii)$ follows from \Cref{idcomps}, using that $\textup{Ext}^n_{\mathcal{GP}}(\mathbb{Z},-)$ is isomorphic to the complete cohomology by \Cref{thetwoext}. 
    \par 
    Suppose that $(ii)$ holds and so $\mathbb{Z}$ is completely finitary. Using \Cref{oooo} the argument that $(iii)$ holds exactly as in \cite[Theorem~3.9]{finconCEKT}. 
   \par
    The implication $(iii) \implies (ii)$ holds for any group, as shown in \cite[Proposition~2.24]{Hamfin}.
    \par 
    Finally, the fact that $(i)$ and $(iv)$ are equivalent follows from \Cref{oooo}.
\end{proof}
From \cite[Corollary~2.6]{MSstabcat} we know that any group with a finite dimensional model for the classifying space for proper actions is of type $\Phi$. In the following, by cocompact we mean finite dimensional and of finite type (i.e. finitely many G-orbits in each dimension).
\begin{proposition}
    Suppose $G$ has a cocompact model for the classifying space for proper actions. Then $\mathbb{Z}$ is compact in $\tacstab(\mathbb{Z}G)$.
\end{proposition}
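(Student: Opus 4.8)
The plan is to prove that the trivial module $\mathbb{Z}$ is of type $\textup{FP}_{\infty}$ over $\mathbb{Z}G$ and then quote \Cref{compunth}. First I would observe that a cocompact model for the classifying space for proper actions is in particular finite dimensional, so $G$ is of type $\Phi_{\mathbb{Z}}$ by \cite[Corollary~2.6]{MSstabcat}; hence $\textup{glGPD}_{AC}(\mathbb{Z}G) < \infty$ by the earlier proposition that groups of type $\Phi_k$ have finite global Gorenstein AC-projective dimension, and so \Cref{compunth} is available.

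Next I would feed the cocompact model $X$ for $\underline{E}G$ into its augmented cellular chain complex. Since $X$ is contractible this is an exact sequence of $\mathbb{Z}G$-modules
\[0 \to C_n \to C_{n-1} \to \cdots \to C_0 \to \mathbb{Z} \to 0,\]
and cocompactness (finite dimension, finitely many $G$-orbits of cells in each degree) forces each $C_i$ to be a finite direct sum of modules of the form $L{\uparrow}_F^G$, where $F \leq G$ is a (finite) cell stabiliser and $L$ is a finitely generated $\mathbb{Z}F$-module. Each such $L{\uparrow}_F^G$ is of type $\textup{FP}_{\infty}$ over $\mathbb{Z}G$: the ring $\mathbb{Z}F$ is noetherian, so $L$ has a resolution by finitely generated free $\mathbb{Z}F$-modules, and the exact functor $-{\uparrow}_F^G$ turns this into a resolution of $L{\uparrow}_F^G$ by finitely generated free $\mathbb{Z}G$-modules. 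Consequently each $C_i$ is $\textup{FP}_{\infty}$, and since the $\textup{FP}_{\infty}$ modules form an extension-closed subcategory of $\textup{Mod}(\mathbb{Z}G)$ which is closed under kernels of epimorphisms between its objects, an easy induction on $n$ along the displayed complex shows that $\mathbb{Z}$ is of type $\textup{FP}_{\infty}$. A module of type $\textup{FP}_{\infty}$ admits a resolution by finitely generated free modules, so condition $(iv)$ of \Cref{compunth} holds with $\mathbb{Z}$ itself as the module, and therefore condition $(i)$ holds: $\mathbb{Z}$ is compact in $\tacstab(\mathbb{Z}G)$.

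I do not expect a real obstacle; the only point needing care is the inductive passage of the $\textup{FP}_{\infty}$ property through the finite-length exact complex, which is routine homological algebra. If one prefers to avoid \Cref{compunth}, the same conclusion follows directly: writing $d = \textup{Gcd}_{\mathbb{Z}}(G) < \infty$ and taking a resolution of $\mathbb{Z}$ by finitely generated frees, the syzygy $\Omega^d(\mathbb{Z})$ is an $\textup{FP}_{\infty}$ Gorenstein projective module, hence $\mathcal{GP}$-finitary by \Cref{fhsao} and thus compact in $\tacstab(\mathbb{Z}G)$ by \Cref{idcomps} (using $\mathcal{GP}^{\perp} = {}^{\perp}\mathcal{GI}$ from \Cref{eqperpsd}); since each projective is a trivial object, the exact sequence $0 \to \Omega^d(\mathbb{Z}) \to P_{d-1} \to \cdots \to P_0 \to \mathbb{Z} \to 0$ yields $\mathbb{Z} \cong \Omega^{-d}(\Omega^d(\mathbb{Z}))$ in $\tacstab(\mathbb{Z}G)$, and compactness is preserved by the suspension $\Omega^{-1}$.
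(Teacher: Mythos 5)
Your proof is correct and follows the same overall route as the paper: feed the cocompact model for $\underline{E}G$ into its augmented cellular chain complex, observe that this is a finite resolution of $\mathbb{Z}$ by finitely generated induced modules, conclude that $\mathbb{Z}$ is $\textup{FP}_{\infty}$, and hence compact. The one genuine difference is how you see that the chain modules are $\textup{FP}_{\infty}$: the paper notes that each $C_i$ is a finitely generated Gorenstein projective and invokes \Cref{cute}, which ultimately rests on the compactness machinery of \Cref{idcomps}; you instead argue directly that $L{\uparrow}_F^G$ is $\textup{FP}_{\infty}$ because $\mathbb{Z}F$ is noetherian and induction is exact and preserves finitely generated frees. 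Your version of this step is more elementary and also slightly more careful about the possible orientation twist on the cell modules. Your closing alternative (passing to $\Omega^d(\mathbb{Z})$, using \Cref{fhsao} and \Cref{idcomps}, and desuspending) is essentially the justification the paper leaves implicit in ``and hence compact.''

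One small correction: for the induction along $0 \to C_n \to \dots \to C_0 \to \mathbb{Z} \to 0$ you must peel off from the left, so the closure property you actually need is that the \emph{cokernel of a monomorphism} between $\textup{FP}_{\infty}$ modules is $\textup{FP}_{\infty}$ (this is what the paper states), not closure under kernels of epimorphisms. Both properties do hold, by Brown's criterion (\Cref{extcrit}) and the five lemma applied to the long exact sequence in $\textup{Ext}$, so this is a mislabelling rather than a gap, but as written the cited property does not drive the induction.
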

\begin{proof}
    The augmented cellular chain complex of the assumed model is of the form $0 \to C_n \to \dots \to C_0 \to \mathbb{Z} \to 0$ where each $C_i$ is a finite direct sum of modules $\mathbb{Z}{\uparrow}_F^G$ for some finite subgroup $F \leq G$. Hence, this is a resolution of $\mathbb{Z}$ given by finitely generated Gorenstein projectives. 
    \par 
    We know from \Cref{cute} that each finitely generated Gorenstein projective is $\textup{FP}_{\infty}$. Also, the cokernel of a monomorphism between $\textup{FP}_{\infty}$ modules is also $\textup{FP}_{\infty}$ - this is easy to see from \Cref{extcrit}. It follows that $\mathbb{Z}$ is itself $\textup{FP}_{\infty}$ and hence compact.
\end{proof}
\begin{proposition}\label{fpgobo}
    Suppose $G$ is a group of type $\Phi_{\mathbb{Z}}$ or an $\lhf$ group. Suppose that $G$ has finitely many conjugacy classes of finite subgroups and the normaliser of each nontrivial finite subgroup is $\textup{FP}_{\infty}$. Then $\mathbb{Z}$ is compact in $\tacstab(\mathbb{Z}(G))$.
\end{proposition}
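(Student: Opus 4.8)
The plan is to reduce everything to \Cref{compunth}, whose equivalent conditions we then verify using the two finiteness hypotheses. First one must check the standing assumption $\textup{glGPD}_{AC}(\mathbb{Z}G) < \infty$. For a group of type $\Phi_{\mathbb{Z}}$ this is already recorded above. For an $\lhf$ group, finitely many conjugacy classes of finite subgroups forces a bound on the orders of the finite subgroups, and an $\lhf$ group with bounded orders of finite subgroups acts on a finite dimensional contractible complex with finite stabilisers by a theorem of Kropholler and Mislin \cite{kropmis}; the augmented cellular chain complex of such an action is a finite length resolution of $\mathbb{Z}$ by Gorenstein projective permutation modules $\mathbb{Z}{\uparrow}_F^G$ with $F$ finite, so $\textup{Gcd}_{\mathbb{Z}}(G) < \infty$. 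Since a finite group has finite global Gorenstein AC-projective dimension over $\mathbb{Z}$, every $\lhf$ group is an $\textup{LH}\mathfrak{G}_{AC}$ group, so \Cref{ddff} upgrades this to $\textup{glGPD}_{AC}(\mathbb{Z}G) < \infty$.

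With \Cref{compunth} available, it suffices to verify condition $(ii)$ there: that $\widehat{H}^*(G,-) = \widehat{\textup{Ext}}^*_{\mathbb{Z}G}(\mathbb{Z},-)$ commutes with filtered colimits of modules (equivalently, by \Cref{idcomps} and \Cref{thetwoext}, that $\mathbb{Z}$ is $\mathcal{GP}$-finitary). The key point I would exploit is that complete cohomology vanishes on projective modules, so it is controlled only by the \emph{singular} part of a model for $\underline{E}G$. Fix any model for $\underline{E}G$ (infinite dimensional in general, e.g.\ for type $\Phi_{\mathbb{Z}}$ groups); its augmented cellular chain complex is a resolution of $\mathbb{Z}$ by Gorenstein projectives induced from finite subgroups. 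The hypotheses --- finitely many conjugacy classes of finite subgroups, and $\textup{FP}_{\infty}$ normaliser (equivalently $\textup{FP}_{\infty}$ Weyl group) of each nontrivial finite subgroup --- allow one, via a standard pushout construction in the style of Lück and Weiermann, to arrange that the subcomplex spanned by cells with nontrivial isotropy is finitely generated in each degree, while the quotient complex consists of free $\mathbb{Z}G$-modules. A Mayer--Vietoris argument --- which in the $\textup{H}_1\mathfrak{F}$ case is literally the sequence coming from an action on a tree with finite stabilisers and finitely many orbits of cells --- then expresses $\widehat{H}^*(G,-)$ in terms of complete cohomology built from this finite-type singular complex and the functors $\widehat{H}^*(F,-)$ for the finitely many finite subgroups $F$ appearing; the latter commute with filtered colimits since finite groups are $\textup{FP}_{\infty}$, and finiteness of the singular complex in each degree transports this to $\widehat{H}^*(G,-)$. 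Alternatively one could aim for condition $(iv)$ of \Cref{compunth} directly, splicing an eventually finitely generated free resolution of $\mathbb{Z}$ (up to a summand) out of this data using \Cref{cute}, \Cref{oooo} and \Cref{thicksubcateescom}.

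The main obstacle is turning the slogan ``the free part is invisible to complete cohomology'' into an honest computation: the chain complex of $\underline{E}G$ is not exact in the singular direction, so relating $\widehat{H}^*(G,-)$ to the finite-type singular complex and to the $\widehat{H}^*(F,-)$ is an equivariant (Bredon) cohomology bookkeeping argument, not a one-line reduction. This is essentially \cite[Theorem~A]{finconCEKT}, and the work here is to check that its proof only uses the finiteness of the singular part --- which our hypotheses deliver --- and not finite dimensionality or finite type of $\underline{E}G$ itself; the reduction to $\textup{glGPD}_{AC}(\mathbb{Z}G) < \infty$ for $\lhf$ groups is comparatively routine. Once $\widehat{H}^*(G,-)$ commutes with filtered colimits, \Cref{idcomps} (valid because $\mathcal{GP}^{\perp} = {}^{\perp}\mathcal{GI}$ for $\lhg$ groups by \Cref{eqperpsd}) shows $\mathbb{Z}$ is compact in $\tacstab(\mathbb{Z}G)$, completing the proof.
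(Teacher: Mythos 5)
There is a genuine gap, and it sits exactly where you lean hardest. Your route runs through \Cref{compunth}, so you must first establish $\textup{glGPD}_{AC}(\mathbb{Z}G) < \infty$; for the $\lhf$ case you derive this from the claim that an $\lhf$ group with bounded orders of finite subgroups acts on a finite-dimensional contractible complex with finite stabilisers ``by Kropholler--Mislin''. That is not what Kropholler--Mislin prove: their hypothesis is type $\textup{FP}_{\infty}$, not a bound on the orders of the finite subgroups, and the implication you want is false. The free abelian group of countable rank (see \Cref{freabex}) is $\lhf$, has only the trivial finite subgroup --- so it satisfies every hypothesis of the proposition, the normaliser condition vacuously --- yet $\textup{Gcd}_{\mathbb{Z}}(G) = \infty$ (the paper notes in \Cref{freabex} that $\mathbb{Z}$ admits no complete resolution here), so \Cref{compunth} is simply not applicable. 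The proposition still holds for this group because its stable category is zero, but your argument cannot see that. Separately, even in the type $\Phi_{\mathbb{Z}}$ case where $\textup{glGPD}_{AC}(\mathbb{Z}G) < \infty$ is genuine, the heart of your verification of condition $(ii)$ --- the L\"uck--Weiermann pushout plus Mayer--Vietoris bookkeeping relating $\widehat{H}^*(G,-)$ to the singular part of $\underline{E}G$ --- is left as a slogan with an acknowledged ``main obstacle''; as written it is not a proof.

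The paper's argument avoids both problems by never passing through \Cref{compunth} and never needing any finiteness of $\underline{E}G$. Following Hamilton, one takes the simplicial complex $\Delta$ of chains of nontrivial finite subgroups (finite-dimensional, say of dimension $r$, because finitely many conjugacy classes bounds the orders), embeds it in an $(r-1)$-connected $r$-dimensional $G$-CW complex $Y$ on which $G$ acts freely off $\Delta$, and obtains a finite exact sequence $0 \to \tilde{H}_r(Y) \to C_r(Y) \to \dots \to C_0(Y) \to \mathbb{Z} \to 0$. The top term is stably trivial by \Cref{corfsubfin}, and each $C_i(Y)$ agrees stably with $C_i(\Delta)$, a finite direct sum of modules $\mathbb{Z}{\uparrow}_N^G$ with $N$ of finite index in some $N_G(F_i)$ and hence of type $\textup{FP}_{\infty}$; these are compact, so $\mathbb{Z}$ is compact as a finite iterated extension in the triangulated category. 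To salvage your approach you would need either to restrict to the type $\Phi_{\mathbb{Z}}$ case and actually carry out the Bredon/Mayer--Vietoris computation, or, better, to argue directly with a finite-length resolution by stably compact modules as above.
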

\begin{proof}
    We follow the idea of \cite[Theorem~3.8]{HAMILTON2011}. Consider the simplicial complex $\Delta$ constructed from chains of nontrivial finite subgroups of $G$. By assumption there must be a bound on the orders of the finite subgroups and so this is of finite dimension and we call its dimension $r$. From \cite[Proposition~3.4]{HAMILTON2011} we can embed $\Delta$ into an $r$-dimensional $G$-CW complex $Y$ which is $(r-1)$-connected and is such that $G$ acts freely outside of $\Delta$. Therefore, we have the following exact sequence. 
    \[0 \to \tilde{H}_r(Y) \to C_r(Y) \to \dots \to C_0(Y) \to \mathbb{Z} \to 0\]
    As in the proof of \cite[Theorem~3.8]{HAMILTON2011}, we see that $\tilde{H}_r(Y){\downarrow}_F^G$ has finite projective dimension for all finite subgroups and hence is isomorphic to $0$ in the stable category by \Cref{corfsubfin}. 
    \par 
    We wish to show that each $C_i(Y)$ is compact in the stable category for each $0 \leq i \leq r$, which would imply that $\mathbb{Z}$ is compact as desired. Since $G$ acts freely on $Y$ outside of $\Delta$, it follows that $C_i(Y)$ and $C_i(\Delta)$ differ only by free modules. Therefore, it suffices to show that each $C_i(\Delta)$ is compact. 
    \par 
    We see that each $C_i(\Delta)$ is isomorphic to a direct sum of permutation modules $\mathbb{Z}{\uparrow}_N^G$, where $N$ is the intersection of the normalisers of the given chain of finite subgroups and the direct sum is taken over a set of representatives for conjugacy classes of finite subgroups. By assumption, this direct sum is finite. We are therefore reduced to showing that if $F_0 \leq \dots \leq F_i$ is a chain of finite subgroups and $N = \bigcap\limits_{j = 0}^iN_{G}(F_j)$ then $\mathbb{Z}{\uparrow}_{N}^G$ is compact. 
    \par 
    It is easy to see that $C_G(F_i) \leq N \leq N_G(F_i)$ and hence $N$ must have finite index in $N_G(F_i)$. By assumption, we know that $N_G(F_i)$ is a group of type $\textup{FP}_{\infty}$ and therefore $N$ must also be of type $\textup{FP}_{\infty}$. It then follows that $\mathbb{Z}{\uparrow}_N^G$ is $\textup{FP}_{\infty}$ and hence is compact.
\end{proof}
\begin{proposition}\label{prop:fdmod}
    Let $G$ be an $\lhf$ group of type $\Phi_{\mathbb{Z}}$. Assume that $\mathbb{Z}$ is compact in $\tacstab(\mathbb{Z}G)$. Then $G$ has a finite dimensional model for the classifying space for proper actions.
\end{proposition}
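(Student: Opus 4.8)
The plan is to reduce the statement to the finiteness of the Bredon cohomological dimension $\underline{\textup{cd}}_{\mathbb{Z}}(G)$ relative to the family of finite subgroups: by the theorem of L\"uck and Meintrup a group with $\underline{\textup{cd}}_{\mathbb{Z}}(G)=n<\infty$ admits a model for the classifying space for proper actions of dimension $\max\{3,n\}$, so it suffices to bound $\underline{\textup{cd}}_{\mathbb{Z}}(G)$. To set up, observe first that since $G$ is of type $\Phi_{\mathbb{Z}}$ it has $\textup{glGPD}_{AC}(\mathbb{Z}G)<\infty$, hence in particular $\textup{Gcd}_{\mathbb{Z}}(G)<\infty$, so \Cref{compunth} is available. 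The hypothesis that $\mathbb{Z}$ is compact then supplies, via conditions $(ii)$ and $(iv)$ of that theorem, that the complete cohomology $\hat{H}^{*}(G,-)$ commutes with filtered colimits of $\mathbb{Z}G$-modules and that $\mathbb{Z}$ is a direct summand of a $\mathbb{Z}G$-module admitting a free resolution which is finitely generated in all sufficiently large degrees. After a syzygy shift and an Eilenberg swindle in the spirit of \Cref{oooo} we may thus work with a module whose high syzygies are of type $\textup{FP}_{\infty}$ and which has $\mathbb{Z}$ among its stable summands.

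The first substantial step is to deduce that $G$ has a bound on the orders of its finite subgroups. This is exactly Kropholler's argument already invoked in \Cref{boundlem}: the finitariness of $\hat{H}^{0}(G,-)$, together with $G\in\lhf$, forces such a bound, since from finite subgroups of unbounded order one builds a directed system of (co)induced modules across which complete cohomology fails to commute with the colimit. The relevant argument of \cite{kropfin,kropmis} applies to all of $\lhf$ and not merely to $\textup{H}_1\mathfrak{F}$, so nothing is lost.

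Next I would establish $\underline{\textup{cd}}_{\mathbb{Z}}(G)<\infty$ by transfinite induction up the $\lhf$ hierarchy, following Kropholler and Mislin. For $\textup{H}_0\mathfrak{F}=\mathfrak{F}$ the bound is trivial. At a successor stage, the given cellular action on a finite dimensional contractible complex with stabilisers lower in the hierarchy yields, via a Bredon Mayer--Vietoris argument (or the associated equivariant cohomology spectral sequence), a bound on $\underline{\textup{cd}}_{\mathbb{Z}}$ of the group by the dimension of the complex plus the supremum of $\underline{\textup{cd}}_{\mathbb{Z}}$ over the cell stabilisers; the bound on the orders of finite subgroups obtained above is what makes this supremum finite. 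For the local part of the hierarchy, where $G$ is a directed union of subgroups of smaller cardinality, the role played by the type $\textup{FP}_{\infty}$ hypothesis in \cite{kropmis} is taken over by the compactness of $\mathbb{Z}$ --- equivalently by the finitariness of $\hat{H}^{*}(G,-)$ from the first paragraph --- which lets one carry finiteness of the proper cohomological dimension through the union, with \Cref{corfsubfin} controlling triviality by restriction to finite subgroups. Combining the successor and limit stages gives $\underline{\textup{cd}}_{\mathbb{Z}}(G)<\infty$, and hence a finite dimensional model for $\underline{E}G$.

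The main obstacle will be this last step: adapting the Kropholler--Mislin induction to hypotheses weaker than type $\textup{FP}_{\infty}$. The bounded-torsion and successor-ordinal parts are essentially formal once the hypotheses are assembled, but the local part genuinely needs a finiteness condition stronger than $\textup{Gcd}_{\mathbb{Z}}(G)<\infty$, and one must check that ``$\mathbb{Z}$ is a stable summand of a module with an eventually finitely generated free resolution'' (equivalently, $\hat{H}^{*}(G,-)$ finitary) is enough. This should go through because $\underline{\textup{cd}}_{\mathbb{Z}}(G)<\infty$ is detected by finite length --- rather than finite type --- resolutions of $\mathbb{Z}$ by permutation modules on finite subgroups, so the passage to direct summands and the Eilenberg swindle of \Cref{oooo} introduce no real difficulty; this recovers the remaining implication of \cite[Theorem~A]{finconCEKT} within the present framework.
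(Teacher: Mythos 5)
Your opening reduction --- using \Cref{compunth} (equivalently \Cref{thetwoext} together with \Cref{idcomps}) to translate compactness of $\mathbb{Z}$ into the statement that the complete cohomology $\hat{H}^*(G,-)$ is finitary --- is exactly the paper's first and essentially only step: since $G$ is of type $\Phi_{\mathbb{Z}}$ it has finite Gorenstein cohomological dimension, so $\mathcal{GP}$-finitary means completely finitary, and at that point the paper simply quotes \cite[Theorem~2.7]{kropfin}, which says that an $\lhf$ group with finitary complete cohomology admits a finite dimensional model for the classifying space for proper actions. What you propose instead is to re-prove that theorem by a L\"uck--Meintrup reduction and a transfinite induction up the hierarchy, and that is where the gaps lie.

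Two concrete problems. First, at a successor stage your bound is $\dim X$ plus the supremum of $\underline{\mathrm{cd}}_{\mathbb{Z}}$ over the cell stabilisers, and you assert that the bound on the orders of the finite subgroups of $G$ makes this supremum finite. It does not: the stabilisers at a successor stage are $\textup{H}_{\beta}\mathfrak{F}$ groups, typically infinite, and there are in general infinitely many orbits of cells, so the inductive hypothesis gives each stabiliser \emph{some} finite Bredon dimension but no uniform bound, and the supremum can be infinite; moreover even with a uniform bound at each stage, a bound that grows by $\dim X$ at every successor ordinal never closes a transfinite induction. This is precisely why the Kropholler--Mislin argument (and Kropholler's finitary refinement) must extract a single bound on $\underline{\mathrm{cd}}_{\mathbb{Z}}(G)$ from the global finiteness hypothesis rather than accumulate one along the hierarchy. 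Second, you flag the limit/local stage as ``the main obstacle'' and assert it ``should go through,'' but this is exactly where the finitariness of $\hat{H}^*(G,-)$ is used in a delicate way, applied to a directed system of modules induced or coinduced from the subgroups in the union, and your sketch does not carry it out. So the proposal reproduces the paper's reduction correctly but replaces the citation of \cite[Theorem~2.7]{kropfin} with an outline of its proof that fails at the two points where that proof is genuinely hard; either cite the theorem as the paper does, or supply the actual Kropholler--Mislin-style argument in full.
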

\begin{proof}
    We have identified the compact objects with $\mathcal{GP}$-finitary modules. Furthermore, since groups of type $\Phi$ have finite Gorenstein cohomological dimension, the $\mathcal{GP}$-finitary modules are exactly the completely finitary modules. In this language, the statement is now just \cite[Theorem~2.7]{kropfin}. 
\end{proof}
The converse of the above does not hold, as the follow examples show. 
\begin{example}
    Consider $G = C_2 \times \mathbb{Q}^+$. This has finite virtual cohomological dimension and it is clearly abelian-by-finite and therefore it is locally polycyclic-by-finite. It follows from \cite[Lemma~3]{HAMILTON2011} that it has a finite dimensional model for the classifying space for proper actions. However, as it is abelian, the normaliser of the copy of $C_2$ is the whole group, which is infinitely generated. Therefore, by \cite[Theorem~2]{HAMILTON2011} it cannot have almost everywhere finitary cohomology over $\mathbb{Z}$ and hence the trivial module $\mathbb{Z}$ is not compact.
\end{example}
\begin{example}
    Consider Houghton's group $H_n$ as in \Cref{houghtonex}. As mentioned, this has a finite dimensional model for the classifying space for proper actions. Note that there can be no bound on the orders of the finite subgroups of $H_n$; it contains, for example, the symmetric group $S_m$ for each $m \geq 1$. From \Cref{boundlem} we now see that $\mathbb{Z}$ cannot be a compact object in the stable category. Note that for $n \geq 3$ Brown \cite{BROWNHOUGHTON} has shown that $H_n$ is finitely presented. In particular, $\mathbb{Z}$ is a finitely presented module which is not compact.
\end{example}
We also cannot remove the requirement that $G$ be of type $\Phi$ in \Cref{prop:fdmod}.
\begin{example}
Let $G$ be the free abelian group of countable rank as in \Cref{freabex}. It is not hard to check that this is not of type $\Phi$, e.g. \cite[Example~2.7]{MSstabcat}. Hence, there can be no finite dimensional model for the classifying space for proper actions (since this would imply it is of type $\Phi$ \cite[Corollary~2.6]{MSstabcat}). However, as we mentioned in \Cref{freabex} the trivial module $\mathbb{Z}$ is a compact object in the stable category.
\par 
For an example when the stable category is non-zero, we can again consider the group $G' = C_2 \ast G$. This has no finite dimensional model for the classifying space for proper actions but $\mathbb{Z}$ is a compact object as we showed in \Cref{freabex}.
\end{example}
    If type $\Phi$ is a Weyl group closed property (by which we mean that if $G$ is type $\Phi$ so is $N_G(F)/F$ for every finite subgroup $F \leq G$) then, by carefully inspecting the proof of \cite[Theorem~2.7]{kropfin}, we can see that the assumption of $G$ being in $\lhf$ may be dropped in \Cref{prop:fdmod}. 
  \par

We can also compare our finiteness conditions to the homotopy theoretic stable finiteness conditions in \cite{homotopything}. We refer to that paper for the definition of the category $\textup{Ho(Sp}_G)$. 
\begin{proposition}\label{cdac}
    Let $G$ be a group of type $\Phi_{\mathbb{Z}}$. If the sphere spectrum is a compact object in $\textup{Ho(Sp}_G)$ then the trivial module is compact in $\tacstab(\mathbb{Z}G)$. However, the converse does not hold.
\end{proposition}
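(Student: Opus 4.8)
The plan is to deduce the forward implication from the group-theoretic finiteness that underlies compactness of the sphere spectrum, and to obtain the failure of the converse from an explicit example in the spirit of \Cref{freabex}. For the implication, I would first record what compactness of $\mathbb{S}$ in $\textup{Ho(Sp}_G)$ says about $G$: by the characterisation in \cite{homotopything} this forces a Bredon-style finiteness condition, and in particular that $G$ has only finitely many conjugacy classes of finite subgroups and that the normaliser of every nontrivial finite subgroup is of type $\textup{FP}_\infty$. Given that $G$ is assumed to be of type $\Phi_{\mathbb{Z}}$, these are precisely the hypotheses of \Cref{fpgobo}, which then yields that $\mathbb{Z}$ is compact in $\tacstab(\mathbb{Z}G)$. (If instead one extracts from \cite{homotopything} that a compact sphere produces a cocompact model for the classifying space for proper actions, the conclusion follows even more directly from the corresponding proposition above, and the type $\Phi$ hypothesis is then only needed so that \Cref{idcomps} and \Cref{compunth} describe the compact objects cleanly.) I expect the main obstacle to be exactly this translation step: pinning down the precise group-theoretic content of ``$\mathbb{S}$ is compact in $\textup{Ho(Sp}_G)$'' and matching it to the input of \Cref{fpgobo}; everything downstream is then a direct appeal to results already established.

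For the converse, I would take $G$ to be the free group of countably infinite rank. It acts freely on a tree, so it has a $1$-dimensional model for $\underline{E}G$ and is in particular of type $\Phi_{\mathbb{Z}}$; being torsion-free, \Cref{corfsubfin} shows that $\tacstab(\mathbb{Z}G)$ is the zero category, so $\mathbb{Z}$ is trivially a compact object there. On the other hand $G$ is not finitely generated, hence not of type $\textup{FP}_\infty$, so the finiteness condition of \cite{homotopything} fails and $\mathbb{S}$ is not compact in $\textup{Ho(Sp}_G)$. To upgrade this to an example with nonzero stable category I would pass to $G' = C_2 \ast G$: its Bass--Serre decomposition gives a $1$-dimensional model for $\underline{E}G'$ (so $G'$ is still of type $\Phi_{\mathbb{Z}}$), and, arguing exactly as in \Cref{freabex} with $G$ in place of the free abelian group of countable rank, the Mayer--Vietoris short exact sequence $\mathbb{Z}{\uparrow}_1^{G'} \to \mathbb{Z}{\uparrow}_G^{G'} \oplus \mathbb{Z}{\uparrow}_{C_2}^{G'} \to \mathbb{Z}$ collapses in $\tacstab(\mathbb{Z}G')$ (the first two terms having finite projective dimension) to a stable isomorphism $\mathbb{Z} \cong \mathbb{Z}{\uparrow}_{C_2}^{G'}$; the latter is an $\textup{FP}_\infty$ Gorenstein projective, hence compact by \Cref{fhsao} and \Cref{idcomps}, while $G'$ remains infinitely generated so $\mathbb{S}$ is still not compact.

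The only points requiring care beyond these two steps are verifying that $C_2 \ast F_\infty$ really is of type $\Phi_{\mathbb{Z}}$ (it is $\lhf$ and has finite Gorenstein cohomological dimension, being built from a finite and a free group, and inherits a finite-dimensional model for $\underline{E}G$ from those of the factors together with the tree), and checking that no reasonable reading of the finiteness condition of \cite{homotopything} can hold for an infinitely generated group; both of these I expect to be routine. I would present the forward implication and the example as the two halves of the proof, with the translation via \Cref{fpgobo} being the substantive content.
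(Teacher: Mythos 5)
Your forward implication is exactly the paper's argument: extract from the compactness of the sphere spectrum (via the characterisation in \cite{homotopything}) that $G$ has finitely many conjugacy classes of finite subgroups with $\textup{FP}_\infty$ normalisers, then feed this into \Cref{fpgobo}. (The paper is slightly more careful on one bookkeeping point: \cite{homotopything} gives that the Weyl groups $N_G(F)/F$ are $\textup{FP}_\infty$, and one passes to $N_G(F)$ itself being $\textup{FP}_\infty$ because $F$ is finite; your phrasing skips this, but it is routine.) Where you genuinely diverge is the counterexample for the converse. The paper takes $G = \mathbb{Q}^+ \rtimes C_2$, a locally polycyclic-by-finite group of finite virtual cohomological dimension, and invokes Hamilton's criterion that finitely generated normalisers of finite subgroups force $\mathbb{Z}$ to be completely finitary, hence compact; non-compactness of the sphere then follows, as in your argument, from $G$ not being finitely generated. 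You instead take $G' = C_2 \ast F_\infty$ and run the \Cref{freabex} argument: the Bass--Serre/Mayer--Vietoris sequence collapses stably to $\mathbb{Z} \cong \mathbb{Z}{\uparrow}_{C_2}^{G'}$, which is a finitely generated Gorenstein projective and hence compact by \Cref{cute} and \Cref{idcomps}, while $G'$ is infinitely generated. Your example checks out --- $C_2 \ast F_\infty$ is the fundamental group of a graph of finite groups, so it acts on a tree with finite stabilisers, giving a one-dimensional model for $\underline{E}G'$ and hence type $\Phi_{\mathbb{Z}}$ --- and it has the advantage of staying entirely within the paper's own machinery rather than importing Hamilton's theorem on almost-everywhere-finitary cohomology; the paper's example, on the other hand, exhibits the failure in a group where the stable isomorphism type of $\mathbb{Z}$ is not so transparently induced from a finite subgroup, which is arguably a more robust illustration. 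Your preliminary example $G = F_\infty$ with zero stable category is also valid (unlike the free abelian group of \Cref{freabex}, the free group is of type $\Phi_{\mathbb{Z}}$ since $\mathbb{Z}F_\infty$ has finite global dimension), though degenerate.
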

\begin{proof}
    If the sphere spectrum is compact in $\textup{Ho(Sp}_G)$ then by \cite[Theorem~5.1]{homotopything} it follows that there are only finitely many conjugacy classes of finite subgroups of $G$ and the Weyl group of each finite subgroup, that is $N_G(F)/F$, is $\textup{FP}_{\infty}$. Since $F$ is of type $\textup{FP}_{\infty}$ it follows from \cite[Proposition~2.7]{bieri1981homological} that $N_G(F)$ is also $\textup{FP}_{\infty}$. From \Cref{fpgobo} we know that $\mathbb{Z}$ is compact in $\tacstab(\mathbb{Z}G)$. 
    \par 
    In fact, this implies that the Weyl group of the trivial subgroup, which is just $G$, is also $\textup{FP}_{\infty}$. 
    \par 
    To show the converse does not hold, consider the semidirect product $G = \mathbb{Q}^+ \rtimes C_2$ where the nontrivial element of $C_2$ acts on $\mathbb{Q}^+$ by multiplication by $-1$. In particular, this is abelian by finite and of finite virtual cohomological dimension. It is not hard to see that the normaliser of every finite subgroup is finitely generated, and so by \cite[Theorem~20]{HAMILTON2011} it follows that $\mathbb{Z}$ is completely finitary (which corresponds to being $\mathcal{GP}$-finitary since groups of type $\Phi$ have finite Gorenstein cohomological dimension) and therefore it is compact in $\tacstab(\mathbb{Z}G)$. However, $G$ is not finitely generated and so is not $\textup{FP}_{\infty}$. We have shown above that if the sphere spectrum is compact in $\textup{Ho(Sp}_G)$ then the group is $\textup{FP}_{\infty}$ and so we conclude that the sphere spectrum is not compact in $\textup{Ho(Sp}_G)$.
\end{proof}

\bibliographystyle{amsplain}

\bibliography{infgrpref} 

\providecommand{\bysame}{\leavevmode\hbox to3em{\hrulefill}\thinspace}
\providecommand{\MR}{\relax\ifhmode\unskip\space\fi MR }
\providecommand{\MRhref}[2]{%
  \href{http://www.ams.org/mathscinet-getitem?mr=#1}{#2}
}
\providecommand{\href}[2]{#2}
\begin{thebibliography}{10}

\bibitem{ARcats}
J.~Adamek and J.~Rosicky, \emph{Locally presentable and accessible categories},
  London Mathematical Society Lecture Note Series, Cambridge University Press,
  1994.

\bibitem{ADTgor}
Abdolnaser Bahlekeh, Fotini Dembegioti, and Olympia Talelli, \emph{Gorenstein
  dimension and proper actions}, Bulletin of the London Mathematical Society
  \textbf{41} (2009), no.~5, 859--871.

\bibitem{belstcat}
Apostolos Beligiannis, \emph{The homological theory of contravariantly finite
  subcategories:{A}uslander-{B}uchweitz contexts, {G}orenstein categories and
  (co-)stabilization}, Communications in Algebra \textbf{28} (2000), no.~10,
  4547--4596.

\bibitem{BelKrause}
Apostolos Beligiannis and Henning Krause, \emph{Thick subcategories and
  virtually {G}orenstein algebras}, Illinois Journal of Mathematics \textbf{52}
  (2006).

\bibitem{BCtate}
D.~J. Benson and Jon~F. Carlson, \emph{Products in negative cohomology}, J.
  Pure Appl. Algebra \textbf{82} (1992), no.~2, 107--129. \MR{1182934}

\bibitem{bensoninf1}
D.J Benson, \emph{Complexity and varieties for infinite groups, {I}}, Journal
  of Algebra \textbf{193} (1997), no.~1, 260--287.

\bibitem{bensoninf}
\bysame, \emph{Complexity and varieties for infinite groups, {II}}, Journal of
  Algebra \textbf{193} (1997), no.~1, 288--317.

\bibitem{bieri1981homological}
R.~Bieri, \emph{Homological dimension of discrete groups}, Mathematics notes,
  Mathematics Department, Queen Mary College, University of London, 1981.

\bibitem{Bravo2014TheSM}
Daniel Bravo, James Gillespie, and Mark Hovey, \emph{The stable module category
  of a general ring}, 2014, arXiv preprint arXiv:1405.5768.

\bibitem{BRAVOdcoh}
Daniel Bravo and Marco~A. Pérez, \emph{Finiteness conditions and cotorsion
  pairs}, Journal of Pure and Applied Algebra \textbf{221} (2017), no.~6,
  1249--1267.

\bibitem{Brown1975}
Kenneth~S. Brown, \emph{Homological criteria for finiteness.}, Commentarii
  mathematici Helvetici \textbf{50} (1975), 129--136.

\bibitem{BROWNHOUGHTON}
Kenneth~S. Brown, \emph{Finiteness properties of groups}, Journal of Pure and
  Applied Algebra \textbf{44} (1987), no.~1, 45--75.

\bibitem{homotopything}
Noé Bárcenas, Dieter Degrijse, and Irakli Patchkoria, \emph{Stable finiteness
  properties of infinite discrete groups}, Journal of Topology \textbf{10}
  (2017), no.~4, 1169--1196.

\bibitem{Cisinski2003}
Denis-Charles Cisinski, \emph{Images directes cohomologiques dans les
  catégories de modèles}, Annales mathématiques Blaise Pascal \textbf{10}
  (2003), no.~2, 195--244 (fre).

\bibitem{finconCEKT}
Jonathan Cornick, Ioannis Emmanouil, Peter Kropholler, and Olympia Talelli,
  \emph{Finiteness conditions in the stable module category}, Advances in
  Mathematics \textbf{260} (2014), 375--400.

\bibitem{CKcompres}
Jonathan Cornick and Peter~H. Kropholler, \emph{On complete resolutions},
  Topology Appl. \textbf{78} (1997), no.~3, 235--250. \MR{1454602}

\bibitem{cortésizurdiaga2023cotorsion}
Manuel Cortés-Izurdiaga and Jan Šaroch, \emph{The cotorsion pair generated by
  the {G}orenstein projective modules and $\lambda$-pure-injective modules},
  2023, arXiv preprint arXiv:2104.08602.

\bibitem{demtalres}
Fotini Dembegioti and Olympia Talelli, \emph{A note on complete resolutions},
  Proceedings of The American Mathematical Society \textbf{138} (2010),
  3815--3815.

\bibitem{EkTrl}
Paul Eklof and Jan Trlifaj, \emph{How to make {E}xt vanish}, Bulletin of the
  London Mathematical Society \textbf{33} (2000).

\bibitem{em22}
Ioannis Emmanouil, \emph{On the finiteness of {G}orenstein homological
  dimensions}, Journal of Algebra \textbf{372} (2012), 376--396.

\bibitem{Eprecov}
\bysame, \emph{Precovers and orthogonality in the stable module category},
  Journal of Algebra \textbf{478} (2017), 174--194.

\bibitem{ETgcd}
Ioannis Emmanouil and Olympia Talelli, \emph{Gorenstein dimension and group
  cohomology with group ring coefficients}, Journal of the London Mathematical
  Society \textbf{97} (2018), no.~2, 306--324.

\bibitem{Estrada_Gillespie_2019}
Sergio Estrada and James Gillespie, \emph{The projective stable category of a
  coherent scheme}, Proceedings of the Royal Society of Edinburgh: Section A
  Mathematics \textbf{149} (2019), no.~1, 15–43.

\bibitem{gilhermodcat}
James Gillespie, \emph{Hereditary abelian model categegories}, Bulletin of the
  London Mathematical Society \textbf{48} (2015).

\bibitem{gillespie_2019}
\bysame, \emph{A{C}-{G}orenstein rings and their stable module categories},
  Journal of the Australian Mathematical Society \textbf{107} (2019), no.~2,
  181–198.

\bibitem{Gillespie2020CanonicalRI}
James Gillespie, \emph{Canonical resolutions in hereditary abelian model
  categories}, Pacific Journal of Mathematics (2020).

\bibitem{Hamfin}
Martin Hamilton, \emph{{Finitary group cohomology and Eilenberg–MacLane
  spaces}}, Bulletin of the London Mathematical Society \textbf{41} (2009),
  no.~5, 782--794.

\bibitem{HAMILTON2011}
\bysame, \emph{When is group cohomology finitary?}, Journal of Algebra
  \textbf{330} (2011), no.~1, 1--21.

\bibitem{happel_1988}
Dieter Happel, \emph{Triangulated categories in the representation of finite
  dimensional algebras}, London Mathematical Society Lecture Note Series,
  Cambridge University Press, 1988.

\bibitem{HOLM2004167}
Henrik Holm, \emph{Gorenstein homological dimensions}, Journal of Pure and
  Applied Algebra \textbf{189} (2004), no.~1, 167--193.

\bibitem{hovmod}
Mark Hovey, \emph{Cotorsion pairs, model category structures, and
  representation theory}, Math. Z. \textbf{241} (2002), 553--592.

\bibitem{hovey2007model}
\bysame, \emph{Model categories}, Mathematical surveys and monographs, American
  Mathematical Society, 2007.

\bibitem{krause_2021}
Henning Krause, \emph{Homological theory of representations}, Cambridge Studies
  in Advanced Mathematics, Cambridge University Press, 2021.

\bibitem{krop}
Peter~H. Kropholler, \emph{On groups of type {$({\rm FP})_\infty$}}, J. Pure
  Appl. Algebra \textbf{90} (1993), no.~1, 55--67.

\bibitem{kropfin}
\bysame, \emph{Groups with many finitary cohomology functors}, Topology and
  geometric group theory, Springer Proc. Math. Stat., vol. 184, Springer,
  [Cham], 2016, pp.~153--171.

\bibitem{kropmis}
Peter~H. Kropholler and Guido Mislin, \emph{Groups acting on finite dimensional
  spaces with finite stabilizers}, Commentarii Mathematici Helvetici
  \textbf{73} (1998), 122--136.

\bibitem{MSstabcat}
Nadia Mazza and Peter Symonds, \emph{The stable category and invertible modules
  for infinite groups}, Advances in Mathematics \textbf{358} (2019), 1--26
  (English).

\bibitem{Saorn2010OnEC}
Manuel Saor{\'i}n and Jan Šťovíček, \emph{On exact categories and
  applications to triangulated adjoints and model structures}, Advances in
  Mathematics \textbf{228} (2010), 968--1007.

\bibitem{SAORINSTOV}
Manuel Saorín, Jan Šťovíček, and Simone Virili, \emph{t-{S}tructures on
  stable derivators and {G}rothendieck hearts}, Advances in Mathematics
  \textbf{429} (2023), 109139.

\bibitem{talphi}
Olympia Talelli, \emph{On groups of type {$\Phi$}}, Arch. Math. (Basel)
  \textbf{89} (2007), no.~1, 24--32.

\bibitem{aroch2016ApproximationsAM}
Jan Šaroch, \emph{Approximations and {M}ittag-{L}effler conditions the tools},
  Israel Journal of Mathematics \textbf{226} (2016), 737--756.

\bibitem{stovsarmodel}
Jan Šaroch and Jan Št'ovíček, \emph{Singular compactness and definability
  for {$\Sigma$}-cotorsion and {G}orenstein modules}, Selecta Math. (N.S.)
  \textbf{26} (2020), no.~2, Paper No. 23, 40.

\end{thebibliography}

\end{document}